\documentclass[a4paper, 11pt]{amsart}
\usepackage{enumitem, amsmath, amssymb, amsthm, xspace, caption, subcaption, comment, mathtools}
\usepackage[hidelinks]{hyperref}
\usepackage{multirow}
\usepackage{marvosym}
\usepackage{xcolor}
\usepackage[normalem]{ulem} 
\usepackage{fullpage}

\usepackage{xr}
\externaldocument[FIRSTPAPER.]{firstpaperFORREFERENCESONLY}

\newtheorem{thm}{Theorem}[section]
\newtheorem{lem}[thm]{Lemma}
\newtheorem{prop}[thm]{Proposition} 

\newtheorem{conj}[thm]{Conjecture}
\newtheorem{qu}[thm]{Question}

\newtheorem{claim}[thm]{Claim}
\newtheorem{fact}[thm]{Fact}

\newtheorem{remark}[thm]{Remark}

\newtheorem{defi}[thm]{Definition}
\theoremstyle{definition}
\newtheorem*{claim*}{Claim}
\theoremstyle{plain}

\newenvironment{romenumerate}[1][-0.25em]{%
\vspace{#1}
\begin{enumerate}%
\itemsep0pt \parskip0pt \parsep0pt%
 }
 {\end{enumerate} }

\DeclareMathOperator*{\esssup}{ess\,sup}
\DeclareMathOperator*{\essinf}{ess\,inf}
\DeclareMathOperator{\limcut}{{\square}-{\lim}}
\DeclareMathOperator{\liminfty}{{\infty}-{\lim}}
\DeclareMathOperator{\var}{Var} %
\DeclareMathOperator{\ber}{Ber} %

\makeatletter

\newif\ifdraft

\ifdraft 
    \usepackage[notcite,notref]{showkeys}

  \else
    \renewcommand{\sout}[1]{} %
\fi

\newcommand{\By}[2]{\overset{\mbox{\tiny{#1}}}{#2}}
\newcommand{\ByRef}[2]{   \By{\eqref{#1}}{#2} }

\newcommand{\geBy}[1]{    \By{#1}{\ge} }
\newcommand{\eqByRef}[1]{ \ByRef{#1}{=} }

\newcommand{\leByRef}[1]{ \ByRef{#1}{\le} }
\newcommand{\geByRef}[1]{ \ByRef{#1}{\ge} }
\newcommand{\justify}[1]{\fbox{\tiny{#1}}\quad}

\linespread{1}

\title{Prominent examples of flip processes}
\author{Pedro Ara\'ujo}
\author{Jan Hladk\'y}
\author{Eng Keat Hng}
\author{Matas \v{S}ileikis}
\email{araujo|hladky|hng|matas@cs.cas.cz}
\address{Institute of Computer Science of the Czech Academy of Sciences, Pod Vod\'arenskou v\v{e}\v{z}\'i~2, 182~07, Praha~8, Czechia. }

\thanks{With institutional support RVO:67985807. JH and EKH were supported by the Czech Science Foundation, grant number 21-21762X; PA and M\v{S} were supported by the Czech Science Foundation, grant number 20-27757Y}

\newcommand{\Dirac}{\boldsymbol{\delta}}
\newcommand{\sm}{\setminus}
\newcommand{\co}[1]{\overline{#1}}
\newcommand{\lgr}[1]{\mathcal{H}_{#1}}
\newcommand{\tlgr}[1]{\mathcal{F}_{#1}^{\bullet\bullet}}
\newcommand{\rul}{\mathcal{R}}
\newcommand{\Z}{\mathbb{Z}}
\newcommand{\R}{\mathbb{R}}
\newcommand{\N}{\mathbb{N}}
\newcommand{\ca}{\mathcal{A}}
\newcommand{\cb}{\mathcal{B}}
\newcommand{\cc}{\mathcal{C}}

\newcommand{\cf}{\mathcal{F}}
\newcommand{\cp}{\mathcal{P}}

\newcommand{\cu}{\mathcal{U}}
\newcommand{\eps}{\varepsilon}
\newcommand{\vphi}{\varphi}
\newcommand{\Prob}{\mathbb{P}}
\newcommand{\prob}[1]{\Prob \left( #1 \right)}
\newcommand{\Pc}[2]{\Prob \left(#1\,|\,#2\right)}
\newcommand{\E}{\mathbb{E}}
\newcommand{\Ec}[2]{\E \left(#1\,|\,#2  \right)}
\newcommand{\tind}{{t_{\mathrm{ind}}}}
\newcommand{\tr}[1]{t^{#1}}
\newcommand{\tindr}[1]{t_{\mathrm{ind}}^{#1}}
\newcommand{\Troot}[2]{T_{#1}^{#2}}
\newcommand{\cutn}[1]{ \left\lVert #1\right\rVert_{\square}}
\newcommand{\cutm}{\delta_\square}
\newcommand{\cutnd}{d_\square}
\newcommand{\Lone}[1]{ \left\lVert #1\right\rVert_{1}}
\newcommand{\Linf}[1]{ \left\lVert #1\right\rVert_{\infty}}
\newcommand{\tocutn}{\xrightarrow{\square}}
\newcommand{\toLinf}{\xrightarrow{\infty}}
\newcommand{\vel}[1][\:]{\mathfrak{V}_{#1}}
\newcommand{\traj}[2]{\Phi^{#1}#2}
\newcommand{\trajrul}[3]{\Phi_{#1}^{#2}#3}
\newcommand{\fl}[2]{\operatorname{Fl}^{\rul}_{#1}(#2)}
\newcommand{\vv}{\mathbf{v}}
\newcommand{\x}{\mathbf{x}}
\newcommand{\HH}{\mathbf{H}}
\newcommand{\D}{\,\mathrm{d}}
\newcommand{\indic}{\mathbb{I}}
\newcommand{\ind}[1]{\indic_{ \left\{ #1 \right\}}}
\newcommand{\Kernel}{\mathcal{W}}
\newcommand{\Gra}{\Kernel_{0}}
\newcommand{\sign}{\operatorname{sign}}
\newcommand{\G}{\mathbb{G}}
\newcommand{\hh}{\mathbb{H}}
\newcommand{\ff}{\mathbb{F}}
\newcommand{\bik}{\textstyle\binom{k}{2}}
\newcommand{\flo}[1]{ \left\lfloor #1 \right\rfloor}
\newcommand{\cei}[1]{ \left\lceil #1 \right\rceil}
\newcommand{\dest}{\mathrm{dest}}
\newcommand{\orig}{\mathrm{orig}}
\newcommand{\sdiff}{\ominus}
\newcommand\Bin{\operatorname{Bin}}
\newcommand\Po{\operatorname{Po}}
\newcommand{\age}[1]{\mathrm{age}(#1)}
\newcommand{\mdom}[1]{\mathcal{D}_{#1}}
\newcommand{\life}[1]{I_{#1}}
\newcommand{\downto}{\downarrow}

\def\endofClaim{\hfill\scalebox{.6}{$\blacksquare$}}
\newcommand{\oldqed}{}
\newenvironment{claimproof}[1][Proof]{
  \renewcommand{\oldqed}{\qedsymbol}
  \renewcommand{\qedsymbol}{\endofClaim}
  \begin{proof}[#1]
}{
  \end{proof}
  \renewcommand{\qedsymbol}{\oldqed}
}

\begin{document}
\begin{abstract}
	Flip processes, introduced in [\emph{Garbe, Hladk\'y, \v Sileikis, Skerman: From flip processes to dynamical systems on graphons}], are a class of random graph processes defined using a \emph{rule} which is just a function $\mathcal{R}:\mathcal{H}_k\rightarrow \mathcal{H}_k$ from all labelled graphs of a fixed order $k$ into itself. The process starts with an arbitrary given $n$-vertex graph $G_0$. In each step, the graph $G_i$ is obtained by sampling $k$ random vertices $v_1,\ldots,v_k$ of $G_{i-1}$ and replacing the induced graph $G_{i-1}[v_1,\ldots,v_k]$ by $\mathcal{R}(G_{i-1}[v_1,\ldots,v_k])$.
	
   Using the formalism of dynamical systems on graphons associated to each such flip process from \emph{ibid.}\ we study several specific flip processes, including the triangle removal flip process and its generalizations, `extremist flip processes' (in which $\mathcal{R}(H)$ is either a clique or an independent set, depending on whether $e(H)$ has less or more than half of all potential edges), and `ignorant flip processes' in which the output $\mathcal{R}(H)$ does not depend on $H$.
\end{abstract}
\maketitle

{\small
\tableofcontents}

\ifdraft %
\section*{\LaTeX{} macros}

\begin{center}
	\begin{tabular}{| l | c | p{8cm} |}
		\hline
		\verb"\Dirac_x" & $\Dirac_x$ & probability measure on a singleton $ \left\{ x \right\}$\\ \hline
		\verb"\sm" & $\sm$ & set difference, $A \sm B = A \cap \co{B}$ \\ \hline
		\verb"\co{A}" & $\co{A}$ & complement of a set; for a graph on $n$ vertices by $\co{G}$ we denote the graph $([n], E(K_n) \sm E(G))$ \\ \hline
		\verb"\lgr{k}" & $\lgr{k}$	& the set of labelled graphs on vertex set $[k] = \left\{ 1, \dots, k \right\}$ \\  \hline
		\verb"\tlgr{k}" & $\tlgr{k}$	& the set of isomorphism classes of two labelled vertices and $k$ vertices in total \\  \hline
		\verb"\rul" & $\rul$ & a rule, a square matrix indexed by $\lgr{k}$, which is stochastic, ie, for every $G \in \lgr{k}$ we have $\sum_{H \in \lgr{k}} \rul_{G,H} = 1$\\ \hline
		\verb"\N" & $\N$ & natural numbers, $1, 2, \dots$ \\ \hline
		\verb"\Z" & $\Z$ & integers \\ \hline
		\verb"\R" & $\R$ & real numbers \\ \hline
		\verb"\cf" & $\cf$ &  \\ \hline
		\verb"\eps" & $\eps$ & the nice epsilon \\ \hline
		\verb"\vphi" & $\vphi$ & the nice phi \\ \hline
		\verb"\Prob" & $\Prob$ & probability letter \\ \hline
		\verb"\prob{X \ge x}" & $\prob{X \ge x}$ & probability \\ \hline
		\verb"\Pc{A}{B}" & $\Pc{A}{B}$ & conditional probability \\ \hline
		\verb"\E" & $\E$ & expectation, mean \\ \hline
		\verb"\Ec{X}{B}" & $\Ec{X}{B}$ & conditional expectation \\ \hline
		\verb"\var" & $\var$ & variance \\ \hline
		\verb"\tind(G,W)" & $\tind(G,W)$ & induced density of graph $G$ in graphon $W$\\ \hline
		\verb"\tr{x}(G,W)" & $\tr{x}(G,W)$ & induced density of graph $G$ in graphon $W$ with partial vertices $J$ mapped to $x$ (we asume that $x$ is $J$-indexed so that there is no ambiguity\\ \hline
			\verb"\tindr{x}(G,W)" & $\tindr{x}(G, W)$ & induced density of a rooted graph $G = (R, V, E)$ in graphon $W$ with vertices $R$ mapped to $x$\\ \hline
			\verb"\cutn{W}" & $\cutn{W}$ & the cut norm \\ \hline
			\verb"\cutnd(U,W)" & $\cutnd(U,W)$ & the cut norm distance \\ \hline			
			\verb"\cutm(U,W)" & $\cutm(U,W)$ & the cut distance\\ \hline						
			\verb"\Troot{G}{a}" & $\Troot{G}{a}$ & operator of induced density of graph $G$  partial label vector $a$,  \\ \hline
		\verb"\Lone{f}" & $\Lone{f}$ & $L^1$-norm, $\int |f(x)| $ \\ \hline
		\verb"\Linf{f}" & $\Linf{f}$ & $L^\infty$-norm, $\operatorname{ess\,sup} f$ \\ \hline
		\verb"\tocutn" & $\tocutn$ & convergence in the cut norm $\cutn{\cdot}$ \\ \hline
		\verb"\toLinf" & $\toLinf$ & convergence in $L^\infty$-norm \\ \hline
	\end{tabular}
	\newpage
	\begin{tabular}{| l | c | p{8cm} |}
	  \hline
		\verb"\vel = \vel[\rul]" & $\vel = \vel[\rul]$ & a smooth function (field/operator) from $\Kernel$ to $\Kernel$ generated by rule $\rul$ so that $\vel W = \vel[\rul]W$ is the time derivative of $\traj{t}{W}$\\ \hline
		\verb"\traj{t}{W}" & $\traj{t}{W}$ & Trajectory/orbit of: position of $W$ at time $t$ \\ \hline
		\verb"\fl{t}{G}" & $\fl{t}{G}$ & flip process starting with graph $G$ after $t$ steps, Matas: so far I did not use it, find quite cumbersome \\ \hline
		\verb"\vv" & $\vv$ & a $k$-tuple chosen at a certain step of flip process \\ \hline
		\verb"\D" & $\D$ & differential symbol for integration \\ \hline
		\verb"\HH" & $\HH$ & the graph to which we flip at a certain step \\ \hline
		\verb"\x" & $\x$ & vector $(x_i : i \in S)$ for some set $S$ \\ \hline
		\verb"\indic_A" & $\indic_A$ & indicator (without subscript) \\ \hline
		\verb"\ind{x > 0}" & $\ind{x > 0}$ & indicator of a condition, which is put in curly brackets \\ \hline
		\verb"\Kernel" & $\Kernel$ & set of kernels  \\ \hline
		\verb"\Gra" & $\Gra$ & set of graphons, that is, $ \left\{ W \in \Kernel : W \in [0,1] \right\}$\\ \hline
		\verb"\Gra[\eps]" & $\Gra[\eps]$ & $\eps$-almost graphons\\
		\hline
		\verb"\sign" & $\sign$ &  \\ \hline
		\verb"\G(n,W)" & $\G(n,W)$ & a $W$-random graph: for every $i \in [k]$ sample independent $\pi$-distributed element $x_i$ from $\Omega$ and connect $i$ to $j$ with probability $W(x_i, x_j)$ \\ \hline
		\verb"\bik" & $\bik$ & $k$ choose $2$, the small typesetting (``text style'') \\ \hline
		\verb"\flo{x}" & $\flo{x}$ &  \\ \hline
		\verb"\cei{x}" & $\cei{x}$ &  \\ \hline
		\verb"\limcut" & $\limcut$ &  \\ \hline
		\verb"\liminfty" & $\liminfty$ &  \\ \hline
		\verb"A \sdiff B" & $A \sdiff B$ & symmetric difference of sets \\ \hline
		\verb"\Bin" & $\Bin(n,p)$ & binomial distribution \\ \hline
		\verb"\Po" & $\Po(\lambda)$ & Poisson distribution \\ \hline
		\verb"\mdom{W}" & $\mdom{W}$ & the maximal open interval on which $\traj{\cdot}{W}$ is defined \\ \hline
		\verb"\life{W}" & $\life{W}$ & times when the trajectory is inside the graphon space, $\{t \in \mdom{W} : \traj{t}{W} \in \Gra \}$ \\ \hline
		\verb"\age{W}" & $\age{W}$ & the age of graphon $W$, $\age{W} = -\inf\life{W}$ \\ \hline
		\hline
	\end{tabular}
\end{center}

\fi %

\section{Introduction}
\subsection{Random graph processes}\label{ssed:RGP}
Graphs are among the most universal structures, and they have therefore served as mathematical models in many settings. It is often the case that these settings evolve in time according to a stochastic rule. A variety of random graph processes have been introduced and studied extensively to this end. Most of these processes are defined so that they start with a specific initial graph on vertex set $[n]$ (for each $n\in\N$), and this vertex set remains unchanged. Further, it is usually asymptotic properties as $n$ tends to infinity which are investigated. Let us give some examples. The simplest such process is the \emph{Erd\H{o}s--R\'enyi graph process}, introduced in the seminal paper~\cite{MR125031} of Erd\H{o}s and R\'enyi in~1960. The process starts with the edgeless graph $G_0$ on $[n]$. In each step $t=1,\ldots,\binom{n}{2}$, the graph $G_t$ is obtained by taking $G_{t-1}$ and turning a uniformly random non-edge into an edge. Obviously, the graph $G_{\binom{n}{2}}$ is complete. Another process introduced by Bollob\'as and Erd\H{o}s at a ``conference in Fairbanks, Alaska, in 1990, while admiring the playful bears'' \cite{MR1615568}, is the \emph{triangle removal process}. The process starts with the complete graph $G_0$ on $[n]$. In each step $t=1, 2, \ldots$, the graph $G_t$ is obtained by taking $G_{t-1}$, choosing a randomly selected triangle (uniformly among the set of all triangles in $G_{t-1}$), and erasing its three edges. The process stops at the (random) time when $G_t$ is triangle free. A major question raised already by Bollob\'as and Erd\H{o}s in 1990 was what the number of edges of the final graph $G_t$ is. This has been a subject of a long line of research, culminating in~\cite{MR3350225} which establishes that $e(G_t)=n^{3/2+o(1)}$. General $H$-removal processes were studied much less, mostly motivated by extending the setting of the Erd\H{o}s--Hanani conjecture resolved in~\cite{Rodl:ErdHan}, see e.g.~\cite{PippSpe:Asymptotic,Spe:AsymptoticPacking}. A process which looks like a dual process in many aspects is the following one, called the \emph{$H$-free process}. Let $H$ be a fixed graph. We start with the edgeless graph $G_0$. In each step~$t$ we select, uniformly at random, a non-edge whose addition to $G_{t-1}$ does not create a copy of $H$, and turn it into an edge.  Hence, when the process stops, $G_t$ is a maximal $H$-free graph. The case $H=K_3$ is now well understood due to analyses by Bohman and Keevash~\cite{MR4201797} and Fiz Pontiveros, Griffiths, and Morris~\cite{MR4073152}. In particular, these results imply the currently best known lower bound for the off-diagonal Ramsey number $R(3,k)$. Among the numerous further papers studying the triangle-free process and other $H$-free processes, let us highlight~\cite{MR1799803,MR2657427,MR3214202}. The last class of processes we mention before turning to the main subject of this paper are the \emph{Achlioptas processes}, introduced in~\cite{MR2502843}. We start with the edgeless graph on $[n]$. In each step we select $k=2$ random non-edges (natural generalizations for larger $k$ exist) $f_1=x_1y_1$ and $f_2=x_2y_2$. Then, we choose one of them to add into the graph according to a predetermined rule. For example, the so-called `product rule' selects the non-edge $f_i$ which minimizes the product of the sizes of the connected components to which $x_i$ and $y_i$ belong. Indeed, the motivation behind many Achlioptas rules was to alter the time of the emergence of the giant component. See, e.g.\ \cite{MR2985166,MR3366817}, for more on Achlioptas processes.

Let us note that an obvious generalization of the processes above would be to start them from graphs other than the complete or the edgeless one. It seems that the reason such generalizations were not studied was that it was not clear what the observed features should be.

\subsection{Flip processes, basics}
In~\cite{Flip1}, the following general class of random graph processes was introduced. Let $k\in\N$ be given and let $\lgr{k}$ be the set of all graphs on the vertex set $[k]$. Let $\rul \in [0,1]^{\lgr{k}\times \lgr{k}}$ be a matrix which is stochastic, that is, for every $F \in \lgr{k}$ we have
$\sum_{H\in \lgr{k}}\rul_{F,H}=1$. We call $k$ the \emph{order} and $\rul$ the \emph{rule} of the flip process. Given $\rul$, an integer $n \ge k$ and an \emph{initial graph} $G_0$ with vertex set $[n]$, the corresponding \emph{flip process} proceeds as follows. For each $t\in \N$, we sample uniformly at random an ordered tuple $\mathbf{v} = (v_1,\cdots,v_k)$ of distinct vertices in $[n]$. Let $F\in\lgr{k}$ be the \emph{drawn graph}, that is, the graph on $[k]$ such that $ij \in E(F)$ if and only if $v_iv_j\in G_{t-1}$. Next, we sample a graph $H$, which we term the \emph{replacement graph}, according to the probability distribution $(\rul_{F,H})_H$ and replace $G_{t-1}[\mathbf{v}]$ by $H$ in the graph $G_{t-1}$ to obtain $G_t$. To be precise, we place a copy of $H$ on $\mathbf{v}$ so that $ij \in E(H)$ if and only if $v_iv_j \in G_t$. A flip process is a discrete time Markov process. It is (unlike all the previously mentioned processes) defined for all times.

We observe that the Erd\H{o}s--R\'enyi process and the triangle removal process have flip process variants. The \emph{Erd\H{o}s--R\'enyi flip process} starts with an arbitrary graph on $n \ge 2$ vertices. In each step, a random pair of vertices is chosen and turned into an edge. If the pair is already an edge then the step is idle. The \emph{triangle removal flip process} starts with an arbitrary graph on $n \ge 3$ vertices. In each step, a random triple of vertices is chosen. If the triple forms a triangle then its three edges are removed. Otherwise, the step is idle. Note that the difference between the original processes and their flip process variants is the idle steps in the latter. While $H$-removal processes can be viewed as flip processes, other classes mentioned in Section~\ref{ssed:RGP}, like the $H$-free process and Achlioptas processes, cannot.

In~\cite{Flip1}, flip processes were studied from the perspective of dense graph limits. Recall that the main objects of this theory, introduced in~\cite{Lovasz2006,MR2455626}, are two-variable symmetric measurable functions defined on the square of a probability space $(\Omega,\pi)$ (which we assume to have a separable sigma-algebra and atomless measure) with values in $[0,1]$. These functions are called \emph{graphons}. We denote the set of all graphons by $\Gra$. Recall that one of the most important metrics on $\Gra$ is the \emph{cut norm distance}, which is given by
\begin{equation}\label{eq:defcnd}
\cutnd(U,W) =\cutn{U-W}=\sup_{S,T\subseteq\Omega} \left|\int_{S\times T}(U-W)\D\pi^2\right|\;.
\end{equation}
Recall also that each finite graph $G$ can be represented as a graphon. To this end, take an arbitrary partition $(\Omega_v:v\in V(G))$ of $\Omega$ into sets of equal measure. Let the graphon $W_G$ be defined to be~$1$ on each rectangle $\Omega_u\times \Omega_v$ for which $uv\in E(G)$, and~0 elsewhere. Note that this graphon representation depends on the choice of the partition $(\Omega_v:v\in V(G))$. However, we will assume that graphs on the same vertex set are represented using the same partition of $\Omega$.
Since each step of a flip process of order $k$ flips at most $\binom{k}{2}$ pairs of vertices (from a non-edge to an edge or vice versa), it follows that we need to perform $\Omega(n^2)$ steps to see a change in the evolution of an $n$-vertex graph through the lens of dense graph limits. Indeed, one of the main results of~\cite{Flip1} is the following correspondence.
\begin{thm}[Informal, with loose quantification; see Theorem~\ref{FIRSTPAPER.thm:conc_proc} in~\cite{Flip1}]\label{thm:correspondenceInformal}
  For every $k\in\N$ and for every rule $\rul$ of order $k$, there exists an \emph{evolution function} $\traj{}{}:\Gra\times [0,\infty)\rightarrow\Gra$ so that, writing $\traj{t}{W} = \Phi(W, t)$ and treating $\Gra$ as the metric space with metric $\cutnd$, we have the following.
\begin{romenumerate}
  \item For any fixed graphon $W\in \Gra$, we have $\traj{0}W = W$ ($W$ is the `initial graphon') and the map $t \mapsto \traj{t}{W}:[0,\infty)\rightarrow\Gra$, which we call the \emph{trajectory starting at $W$}, is continuous.
  \item The trajectory depends continuously on the initial conditions, i.e., for any fixed time $t\ge 0$, the map $\traj{t}{(\cdot)}:\Gra\rightarrow\Gra$ is continuous. The variable $t$ is `time'.
  \item \label{item:thm-corr-traj-flip-transfer} Let $G_0, G_1, \dots$ be the flip process starting with an $n$-vertex graph $G_0$. Denote by $W_\ell$, $\ell = 0, 1, \dots$, the (random) graphon that represents the graph $G_\ell$. For every $\eps>0$ and $T\ge 0$, with probability $1-\exp(-\Omega(n^2))$ for every $\ell\in\N\cap [0,Tn^2]$ we have $\cutnd(W_\ell,\traj{\ell/n^2}{W_0})<\eps$.
\end{romenumerate}
\end{thm}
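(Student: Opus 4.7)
The plan is to first construct the evolution function $\traj{}{}$ by associating to the rule $\rul$ a vector-field-like operator $\vel = \vel[\rul] \colon \Gra \to \Kernel$ that encodes the expected instantaneous rate of change of a graphon under the flip process, and then to solve the corresponding ordinary differential equation $\partial_t \traj{t}{W} = \vel(\traj{t}{W})$ with initial condition $\traj{0}{W} = W$. Concretely, $(\vel W)(x,y)$ should equal the expected signed contribution of a single flip step to the edge $(x,y)$, averaged over the choice of the $k$-tuple and the replacement graph, conditioned on $x,y$ being among the two sampled coordinates. Expanding the expectation shows this is a fixed polynomial in $W$ of degree at most $\binom{k}{2}$ whose coefficients are determined by $\rul$ and by densities of partially rooted graphs in $\lgr{k}$, so $\vel$ is well defined and sends $\Gra$ into $\Kernel$.

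Next I would verify that $\vel$ is Lipschitz in the cut norm, i.e.\ that there is $C_\rul$ with $\cutn{\vel U - \vel W} \le C_\rul \cutnd(U,W)$ for all $U,W \in \Gra$. This reduces to a standard counting-lemma-type statement for the polynomial density operators $\Troot{G}{a}$ that build $\vel$. With Lipschitz continuity of $\vel$ in hand, a Picard-type iteration produces a unique solution $t \mapsto \traj{t}{W}$ defined for all $t \ge 0$, and items (i) and (ii) follow simultaneously: continuity in $t$ is immediate from the integral representation $\traj{t}{W} = W + \int_0^t \vel(\traj{s}{W})\D s$, while continuity in the initial datum follows from a Grönwall estimate applied to $\cutnd(\traj{t}{U}, \traj{t}{W})$.

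For the concentration statement (iii) I would discretise the interval $[0,T]$ into blocks of length $\tau$ chosen much larger than $1/n^2$ but much smaller than the timescale on which $\traj{\cdot}{W}$ changes appreciably. Within one block of $\tau n^2$ consecutive flip steps I would split the increment $W_{\ell+\tau n^2}-W_\ell$ into a drift and a fluctuation. The drift is handled by a direct first-moment computation: each step has expected effect $\tfrac{1}{n^2}\vel(W_\ell)$ up to a $O(1/n)$ error coming from sampling $k$-tuples without replacement. The fluctuation is controlled by viewing the contribution of the $\tau n^2$ steps as a martingale difference sequence with per-step increments of cut norm at most $O(1/n^2)$ (since a step alters only $\binom{k}{2}$ entries of the graphon representation), and applying Azuma--Hoeffding. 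Chaining the block-wise estimates over $T/\tau$ blocks and invoking Lipschitz continuity of $\vel$ via a discrete Grönwall inequality produces the claim that $\cutnd(W_\ell,\traj{\ell/n^2}{W_0})<\eps$ uniformly for $\ell \in \N \cap [0, Tn^2]$ with probability $1-\exp(-\Omega(n^2))$.

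The main obstacle I expect is the concentration step, because Azuma--Hoeffding is intrinsically a scalar inequality while the quantity we want to control is a supremum of $|\int_{S\times T}(\cdot)\D\pi^2|$ over the uncountable family of measurable pairs $(S,T)$. The natural workaround is to first reduce to a finite class of test pairs using a weak regularity partition of $\Omega$: the partition must be fine enough to approximate $\cutnd$ to accuracy $\eps$, yet coarse enough that the union bound over the $2^{2|\mathcal{P}|}$ test pairs only costs $\exp(o(n^2))$. Balancing these requirements, and carefully tracking how cut-norm closeness of $W_\ell$ to $\traj{\ell/n^2}{W_0}$ propagates through the non-linear operator $\vel$ without blowing up over $Tn^2$ steps, is where the main technical work lies.
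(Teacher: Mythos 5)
This theorem is stated informally and is not proved in the present paper; it is a citation to Theorem~\ref{FIRSTPAPER.thm:conc_proc} of~\cite{Flip1}, and the present paper only recaps the supporting framework in Section~3. Your overall architecture --- build a velocity operator $\vel[\rul]$ out of rooted induced densities, solve the autonomous ODE $\frac{\D}{\D t}\traj{t}{W}=\vel[\rul]\traj{t}{W}$ by Picard iteration, and then couple the discrete process to the continuous trajectory via a martingale concentration argument --- is the right one and matches what the recap sets up.

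There is, however, a genuine gap in the norm you propose to use for the existence theory. You say you would ``verify that $\vel$ is Lipschitz in the cut norm'' and then run Picard iteration. The ODE is actually solved in the Banach space $(\Kernel,\Linf{\cdot})$ (see Section~\ref{subsubsec:diff-eqn-Banach} and Definition~\ref{def:trajectory}), and what is established is that $\vel[\rul]$ is Lipschitz with respect to $\Linf{\cdot}$. This is essential: $(\Kernel,\cutn{\cdot})$ is not a Banach space in which a Picard fixed-point argument runs cleanly, and, more to the point, $\vel[\rul]$ is built from \emph{pointwise} rooted densities $\tindr{(x,y)}(F^{a,b},W)$, which are not controlled uniformly in $(x,y)$ by $\cutn{U-W}$ (the counting lemma controls unrooted densities, not rooted ones evaluated at a point). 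Continuity of $\traj{t}{(\cdot)}$ in the cut norm --- which is what item (ii) asserts and which you attribute to Gr\"onwall on $\cutnd(\traj{t}U,\traj{t}W)$ --- is a genuinely separate theorem (Theorem~\ref{thm:LipschTime}, i.e.\ Theorem~\ref{FIRSTPAPER.thm:genome} in~\cite{Flip1}), obtained after the trajectories have already been constructed in $L^\infty$; it does not come for free from a cut-norm Lipschitz bound on $\vel$.

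One smaller remark on part (iii): you flag as the ``main obstacle'' that the cut norm is a supremum over uncountably many pairs $(S,T)$ and propose to finitize via weak regularity. But the random graphons $W_\ell$ are step functions on an explicit $n$-part partition, so the relevant supremum can be taken over the $4^n$ pairs of vertex subsets of $[n]$, and the resulting union bound costs only $\exp(O(n))$, which is harmless at the target probability scale $\exp(-\Omega(n^2))$. The weak-regularity detour is therefore an unnecessary complication; the finite structure of the underlying graphs already supplies the needed reduction.
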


The evolution function $\traj{}{}:\Gra\times [0,\infty)\rightarrow\Gra$ is constructed in a way which is only partly explicit. More specifically, given a rule $\rul$, a \emph{velocity field} $\vel:\Gra\rightarrow \Kernel$ is explicitly constructed (here, $\Kernel$ is the space of 2-variable symmetric functions in $L^\infty(\Omega^2)$),
and the trajectory starting at any initial graphon $W\in\Gra$ is then defined as the unique solution of the (Banach-space-valued) differential equation
\[
  \frac{\D}{\D t}\traj{t}{W} = \vel \left(\traj{t}{W}\right)\; \mbox{with initial condition} \; \traj{0}{W}=W\;,
\]
which can be equivalently formulated as the integral equation
\begin{equation}\label{eq:integralequation}
  \traj{t} W = W + \int_0^t \vel \left(\traj{\tau} W\right) \D \tau\;.
\end{equation}
As we shall see in this paper, obtaining explicit trajectories is a difficult task even for fairly simple rules.

In the rest of this introduction we motivate further key notions in the theory of flip processes.

\subsection{Flip processes, going back in time}
Theorem~\ref{thm:correspondenceInformal}\ref{item:thm-corr-traj-flip-transfer} says that for any time $t \ge 0$ the graphon $\traj{t}{W}$ approximately describes the structure of a typical graph after $tn^2$ steps of the flip process started at an $n$-vertex graph $G_0$ which is close to $W$. We may ask a complementary question: does there exist a graphon $U$ such that, starting at an $n$-vertex graph $H_0$ that is close to $U$, after $tn^2$ steps we typically arrive to a graph which is similar to $W$? In such a case, it is natural to write $\traj{-t}{W} := U$, and the above property can be written as $\traj{t}{(\traj{-t}{W})}=W$. Note that $\traj{-t}{W}$ need not always exist. To give a specific example, in the Erd\H{o}s--R\'enyi flip process, there is no way we could arrive at an edgeless graph after performing any number of steps starting from any graph. This means that $\traj{-t}{0}$ does not exist for any $t>0$. As a generalization of this example, for any constant graphon $C\in(0,1)$, the backwards trajectory $\traj{-t}{C}$ is a constant graphon which decreases as $t$ increases until it reaches~0 (at time $t=-\frac12\ln(1-C)$; this quantity can be obtained by considering the inverse of the trajectory starting at zero, see~\cite[equation \eqref{FIRSTPAPER.eq:solutionErdosRenyi}]{Flip1}), and for larger $t$, $\traj{-t}{C}$ is not defined. Switching back to the general setting, we call the supremum of times $t$ for which $\traj{-t}{W}$ exists the \emph{age of a graphon $W$}. Furthermore, if $\age{W}<\infty$ then it can be shown that $\orig(W) := \traj{-\age{W}}{W}$ is well-defined (see Theorem~\ref{FIRSTPAPER.thm:flow}\ref{FIRSTPAPER.en:life} in~\cite{Flip1}). We call $\orig(W)$ the \emph{origin} of $W$. To return to the example of the Erd\H{o}s--R\'enyi flip process, the origin of every constant graphon $C\in[0,1)$ is the constant-$0$. The constant-$1$ does not have an origin, but it has an infinite age and $\traj{t}{1}=1$ for every time $t$, positive or negative.

\subsection{Flip processes, destinations}
For a general rule and a fixed initial graphon $W$, there are several qualitatively different possible evolutions of the trajectory $(\traj{t}{W})_{t\ge 0}$. The simplest one is when $W$ is a \emph{fixed point}, that is, $\traj{t}{W}=W$ for all $t\in\R$. The next simplest is when $\traj{t}{W}$ converges as $t\to\infty$. Here we choose the weakest sensible notion of convergence, that is, cut norm distance convergence.\footnote{However, in~\cite{Flip1} it is conjectured that this setting actually implies convergence in the $L^1$-norm and possibly even in the $L^\infty$-norm.} In such a setting we denote the limit by $\dest(W)$ and call it the \emph{destination of} $W$. There are more complicated types of trajectories. For example, in~\cite{Flip1} it was shown that a periodic trajectory exists for a certain rule. Further, when a starting graphon is taken in an proximity of such a periodic trajectory, its trajectory spirals towards the periodic one. However, at this moment we do not see a classification of trajectories beyond convergent/nonconvergent (where the former also contains fixed points).

For a convergent trajectory, it is of interest to investigate the speed of convergence. Bounds which are independent of the starting graphon are particularly interesting. More formally, suppose that $\rul$ is a rule for which every trajectory converges. We can define for each $\delta>0$ the \emph{lower convergence time}
\[
  \tau^-(\delta)=\sup \{\inf\{T:\cutnd(\traj{T}{W},\dest(W))<\delta \}:W\in \Gra\}
\]
and the \emph{upper convergence time}
\[
  \tau^+(\delta) = \sup \{\sup\{T:\cutnd(\traj{T}{W},\dest(W))\ge \delta \}:W\in \Gra\} \;.
\]

\subsection{Our results}
This paper investigates properties of some natural flip processes, with a particular emphasis on the description of their trajectories. Due to the key Theorem~\ref{thm:correspondenceInformal}, this provides information about the typical evolution of an $n$-vertex graph in the next $\Theta(n^2)$ steps.

Let us briefly and informally introduce these flip processes and highlight the main results we obtain about them. The main results that concern either explicit description of trajectories, destinations, speed of convergence or speed of these flip processes are listed in Table~\ref{t:summary}. (In addition to those, some flip processes have specific features.)
\begin{table}[t]
	\begin{tabular}{ | l | c | c | c | c | }
		\hline 		
		& trajectories  & destination  & age  & speed\tabularnewline
		\hline
		ignorant  & \multicolumn{4}{c|}{Proposition~\ref{prop:ignorant-behaviour}} \tabularnewline
		\hline 
		balanced stirring  & \multicolumn{2}{c|}{Proposition~\ref{prop:stirring-trajectory}}&Proposition~\ref{prop:agestirring} & \tabularnewline
		\hline 
		complementing  & \multicolumn{4}{c|}{Proposition~\ref{prop:complementing-behaviour}} \tabularnewline
		\hline 
		\multirow{2}{*}{extremist}  &  & Theorem~\ref{thm:extremist-threshold-bound} &  & \tabularnewline
		&  & Theorem~\ref{thm:extremist-zero-threshold-bound} &  & \tabularnewline
		\hline 
		\multirow{2}{*}{monotone}  &  & Proposition~\ref{prop:monotoneflipprocess}  &  & \tabularnewline
		 &  & Proposition~\ref{prop:NondecreasingSinksLower}  &  & \tabularnewline
		\hline 
		component completion  &  & Theorem~\ref{thm:component-completion-dests}  &  & \tabularnewline
		\hline 
		\multirow{2}{*}{removal } &  & Fact~\ref{fact:removaldests} &  & Theorem~\ref{thm:speedremoval}\tabularnewline
		&  & Theorem~\ref{thm:removalprocSinkLimit} &  & Proposition~\ref{prop:removalBipVSNonbip} \tabularnewline
		\hline 
	\end{tabular}
	\caption{Summary of our main (and easily categorizable) results}
	\label{t:summary}
\end{table}

\subsubsection{Ignorant flip processes (Section~\ref{sec:ignorant})}
Ignorant flip processes are flip processes where the distribution for the replacement graph does not depend on the drawn graph. That is, the rule $\rul$ satisfies that for any $F,F',H$, we have $\rul_{F,H} =\rul_{F',H}$. Ignorant flip processes are one of the few examples where we can describe the trajectories explicitly; this gives a complete description of their behaviour. In particular, all the trajectories of an ignorant flip process converge to the same constant graphon whose value is the expected edge density\footnote{The \emph{density} of a graph $H$ is defined as ${e(H)}/{\binom{v(H)}{2}}$.} according to the output distribution $\{\rul_{F,H}\}_H$ (here $F$ is arbitrary). The Erd\H{o}s--Renyi flip process is a prominent example of an ignorant flip process.

\subsubsection{Stirring flip processes (Section~\ref{sec:stirring})}
While the output distribution in an ignorant flip process does not depend on the drawn graph at all, in a stirring flip process it depends only on a very basic graph parameter --- its number of edges. In Section~\ref{sec:stirring} we investigate balanced stirring flip processes, in which the expected number of edges in the replacement graph is equal to the number of edges in the drawn graph. We explicitly describe the trajectories of any balanced stirring flip process. Not surprisingly, the edge density is preserved along each trajectory. The description does not depend on the actual rule but only on its order. That is, any balanced stirring flip process of a given order yields the same trajectories.

\subsubsection{Complementing flip processes (Section~\ref{sec:complementing})}
The complementing flip process is a flip process where we replace the drawn graph by its graph complement. Informally, we thus sparsify the host graph at places where its density lies above $\frac12$ and densify it at places where its density lies below $\frac12$. It follows that each trajectory should converge to the constant-$\frac12$ graphon. We work out the trajectories explicitly, confirming this intuition.

\subsubsection{Extremist flip processes (Section~\ref{sec:extremist})}
The extremist flip process is defined as follows. If the drawn graph has edge density less than $1/2$, we replace it by the edgeless graph. If the edge density is greater than $1/2$, we replace it by the clique. If the density is exactly $1/2$, then there is some freedom in defining the replacement graph (our default choice is do nothing). Intuitively, dense graph(on)s converge to the complete graph (constant-$1$ graphon) while sparse graph(on)s converge to the edgeless graph (constant-$0$ graphon). The main focus of our treatment of extremist flip processes is in making this precise. Further, we study the limit behaviour as $k$ grows.

\subsubsection{Monotone flip processes (Section~\ref{sec:monotone})}
We introduce several variants, but, roughly speaking, a monotone flip process is a flip process where in each step we add edges, or at least add edges in expectation. In such a situation we expect the trajectories to be increasing in time and in particular to converge. Indeed, most of our treatment of monotone flip processes focuses on these features (including one negative example).

\subsubsection{Component completion flip processes (Section~\ref{sec:componentcompletion})}
In a component completion flip process, we replace the drawn graph (which may have several connected components) by its so-called component completion graph, that is, we turn each connected component into a clique. Hence, the component structure of the graph does not change during the evolution. Note also that this procedure is void for orders $k=1,2$. For $k\ge3$ we prove that the trajectories converge to a graphon with the same component structure, where each component is constant-$1$.

\subsubsection{Removal flip processes (Section~\ref{sec:RemovalProces})}
Removal flip processes generalize the triangle removal flip process. We fix a graph $F\in\lgr{k}$. In each step of the process, we delete a copy of $F$, should the drawn graph contain it. More formally, for $H\in\lgr{k}$ which contains $F$ as a subgraph (preserving the labels) we have $\rul_{H,H-F}=1$ while for $H$ which does not contain $F$ as a labelled subgraph we have $\rul_{H,H}=1$. The theory of removal flip processes is particularly elegant. There are two results which we could obtain only with the help of Szemer\'edi's regularity lemma: that convergence times for removal processes are finite and uniformly bounded and that the map which maps each graphon to its destination is continuous in the cut norm distance.

\subsubsection{Quadratically many steps and beyond}
Most of the results we obtain concern trajectories. Due to the correspondence in Theorem~\ref{thm:correspondenceInformal}, this tells us about the typical evolution of a finite $n$-vertex graph in a flip process for $\Theta(n^2)$ steps. However, for the extremist and removal flip processes we found interesting questions which go beyond this time horizon.

\section{Definitions and basics}
This section recalls some basic facts, which we sometimes reprove in variants that are tailored to our applications.

For $k\in \N$, we write $(k)_2=k(k-1)$. When using little-o notation, we sometimes use a subscript to emphasize the parameter of asymptotics. For example, $\lim_{k\to\infty}\frac{o_k(k^2)}{k^2}=0$.

\subsection{Differential equations}

\subsubsection{Gr\"{o}nwall's inequality}

Here we state two variants of Gr\"{o}nwall's inequality. The following lemma is a variant of Gr\"{o}nwall's inequality in integral form.

\begin{lem}[{\cite[Theorem 1.10]{tao2006nonlinear}}]\label{lem:Groenwall}
	Let $f : [t_0, t_1] \rightarrow [0, \infty)$ be a continuous non-negative function.
	Suppose that for some constants $A\ge 0$, $C > 0$ we have
	\begin{equation}
	\label{eq:Viggen}
	f(t) \le A + C \int_{t_0}^t f(x) \D x
	\end{equation}
	for every $t \in [t_0, t_1]$. Then we have
	\begin{equation}
	f(t) \le A e^{ C (t-t_0) } \quad \text{ for all } t \in [t_0, t_1]\;.
	\end{equation}
\end{lem}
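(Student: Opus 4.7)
The plan is to reduce the integral inequality to an ordinary differential inequality for the right-hand side, which then linearizes via an integrating factor. Concretely, define $F:[t_0,t_1]\to\R$ by
\[
F(t) = A + C\int_{t_0}^t f(x)\D x\;.
\]
Since $f$ is continuous, $F$ is continuously differentiable with $F'(t) = Cf(t)$, and by hypothesis $f(t)\le F(t)$. Combining these two facts gives the differential inequality $F'(t)\le CF(t)$.

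First I would handle the case $A>0$. Then $F(t)\ge A>0$ throughout, so I may divide: $(\ln F(t))' = F'(t)/F(t)\le C$. Integrating from $t_0$ to $t$ yields $\ln F(t) - \ln F(t_0)\le C(t-t_0)$, i.e.\ $F(t)\le F(t_0)e^{C(t-t_0)} = A e^{C(t-t_0)}$. Since $f(t)\le F(t)$, the claimed bound follows. (Equivalently, one can multiply $F'(t)-CF(t)\le 0$ by the integrating factor $e^{-C(t-t_0)}$ to see that $t\mapsto F(t)e^{-C(t-t_0)}$ is non-increasing and hence bounded above by its value $A$ at $t_0$.)

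For the case $A=0$, I would simply apply the case just proved with $A$ replaced by an arbitrary $\eps>0$: the hypothesis~\eqref{eq:Viggen} continues to hold, so $f(t)\le \eps e^{C(t-t_0)}$ for every $t\in[t_0,t_1]$, and letting $\eps\downarrow 0$ gives $f(t)\le 0$, matching the desired bound $Ae^{C(t-t_0)}=0$. There is no real obstacle here; the only subtlety worth flagging is the positivity needed to take logarithms, which is precisely why the $A=0$ case is peeled off and handled by a limiting argument.
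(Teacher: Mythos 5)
Your proof is correct: defining the auxiliary function $F(t) = A + C\int_{t_0}^t f$, observing $F' = Cf \le CF$, and then using the integrating factor (with the $\eps$-perturbation to handle $A=0$) is the standard and clean way to prove Gr\"onwall's inequality. The paper itself gives no proof of this lemma, citing it directly to Tao's book, so there is nothing to compare against; your argument is essentially the one found in that reference and in most textbook treatments.
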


The following lemma is a variant of Gr\"{o}nwall's inequality for differentiable functions where the differential inequality holds subject to a condition on the value of the function.

\begin{lem} \label{lem:FODEonesidedBound}
Let $D,K>0$ and $f \colon [0,\infty)\to[0,\infty)$ be a differentiable function satisfying $f(0) \le D$ and $f'(t) \le -Kf(t)$ for all $t \in [0,\infty)$ such that $f(t) \le D$. Then for all $t\in[0,\infty)$ we have $f(t) \le f(0)e^{-Kt}$.
\end{lem}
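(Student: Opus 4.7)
The plan is to first show that the hypothesis on $f'$ actually holds globally, i.e.\ that $f(t)\le D$ persists for all $t\ge 0$, and then to deduce the exponential bound by a standard integrating-factor argument.

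For the first step, I would argue by contradiction. Suppose the set $S=\{t\ge 0: f(t)>D\}$ is nonempty, and let $t_0=\inf S$. Since $f$ is continuous, $S$ is open in $[0,\infty)$, so $t_0\notin S$ and $f(t_0)\le D$. On the other hand, if $t_0>0$ then there are points arbitrarily close to $t_0$ in $S$, so $f(t_0)\ge D$ by continuity; while if $t_0=0$ we would need $f(0)>D$ to have any neighbourhood of $0$ meeting $S$, which contradicts $f(0)\le D$ unless $f(0)=D$. In both remaining cases $f(t_0)=D$. The hypothesis then applies at $t_0$ and yields $f'(t_0)\le -KD<0$. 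By differentiability there exists $\delta>0$ such that $f(t)<f(t_0)=D$ for all $t\in(t_0,t_0+\delta)$, contradicting the definition of $t_0$ as the infimum of $S$. Hence $f(t)\le D$ for all $t\ge 0$.

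Having established that $f(t)\le D$ everywhere, the differential inequality $f'(t)\le -Kf(t)$ now holds for all $t\in[0,\infty)$. I would then consider the auxiliary function $g(t):=f(t)e^{Kt}$, whose derivative satisfies
\begin{equation*}
g'(t)=\bigl(f'(t)+Kf(t)\bigr)e^{Kt}\le 0.
\end{equation*}
Therefore $g$ is non-increasing, and so $f(t)e^{Kt}=g(t)\le g(0)=f(0)$, which rearranges to the desired bound $f(t)\le f(0)e^{-Kt}$.

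The only subtle point is the contradiction argument of the first paragraph, which is needed precisely because the differential inequality is assumed only in the region $\{f\le D\}$; the rest of the proof is the textbook Gr\"onwall/integrating-factor computation. No appeal to Lemma~\ref{lem:Groenwall} is necessary, although one could alternatively, after the first step, rewrite the inequality in integral form and invoke it directly.
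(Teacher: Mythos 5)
Your proposal is correct and follows essentially the same two-step route as the paper: first show by contradiction that $f(t)\le D$ persists (by looking at $T=\inf\{t:f(t)>D\}$, noting $f(T)=D$ by continuity, getting $f'(T)\le -KD<0$, and deducing $f$ dips strictly below $D$ just to the right of $T$, a contradiction), then apply the integrating factor $g(t)=f(t)e^{Kt}$ with $g'\le 0$. The only blemish is the slightly muddled sentence handling the case $t_0=0$ ("we would need $f(0)>D$ to have any neighbourhood of $0$ meeting $S$" is not quite the right justification), but the conclusion $f(t_0)=D$ you extract from it is correct and that is all the argument needs.
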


\begin{proof}
First we show that we have $f(t) \le D$ for all $t \in [0,\infty)$. Indeed, suppose that we have $f(t')>D$ for some $t'>0$. Let $T := \inf\{t \ge 0 : f(t) > D\}$. By the continuity of $f$ we have $f(T) = D$, so we have $f'(T) \le -Kf(T) = -KD < 0$. By the definition of $f'(T)$, there exists $\eps_0 > 0$ such that for all $0<\eps<\eps_0$ we have
\[
  f(T+\eps) \le f(T) +\eps(KD/2 + f'(T)) \le f(T) - KD\eps/2 < D.
\]
This contradicts the definition of $T$, so we have $f(t) \le D$ for all $t \in [0,\infty)$. 

The function $g \colon [0,\infty) \to [0,\infty)$ given by $g(s) := f(s)e^{Ks}$ is differentiable with $g'(s) = f'(s)e^{Ks} + Kf(s)e^{Ks} \le 0$, so for every $t \ge 0$ we have $g(t) \le g(0) = f(0)$ and hence $f(t) = g(t)e^{-Kt} \le f(0)e^{-Kt}$, as desired.
\end{proof}

\subsubsection{On notation of integrals}
For the sake of brevity and clarity we sometimes use simplified notation to denote an integral, namely we often write $\int_{x = a}^b f(x)$ (or $ \int_{x = a}^b f$) instead of $\int_a^b f(x) \D x$ for functions of a single real argument (that is, the underlying measure is the Lebesgue measure) and $\int_{x \in A} f(x)$ (or $\int_{x \in A} f$) instead of $\int_A f(x) \D \mu(x)$ for functions on some abstract measure space when the measure $\mu$ is clear from the context.

\subsubsection{Differential equations in Banach spaces} \label{subsubsec:diff-eqn-Banach}
We need to recall concepts of the derivative and the integral in Banach spaces. However we do not need almost any results from the abstract theory of differential equations in Banach spaces since we only need to recall results obtained already in~\cite{Flip1}. Let $\mathcal{E}$ be a Banach space with norm $\|\cdot\|$. Suppose that a function $f : I \to \mathcal{E}$ is defined for an open set $I \subset \R$. If $t\in I$ is such that there is an element $z \in \mathcal{E}$ for which
\[
\lim_{\eps \to 0} \left\| \frac{f(t +\eps) - f(t)}{\eps} - z \right\| = 0\;,
\]
then we say that \emph{$f$ is differentiable at $t$} and call $\frac{\D }{\D t}f(t)= z$ the \emph{derivative of $f$ at $t$}. At the heart of the theory of flip processes are first-order autonomous differential equations of the type
\begin{equation}
\label{eq:autonomousdifferentialequation}
\frac{\D}{\D t}f(t) = \mathfrak{O}(f(t)) \; \mbox{with initial condition} \; f(0)=z\;,
\end{equation}
where $z\in \mathcal{E}$, $f$ takes values in $\mathcal{E}$, and $\mathfrak{O}:\mathcal{E}\rightarrow \mathcal{E}$ is (locally) Lipschitz. Such a differential equation can be rewritten as the integral equation
\begin{equation}
\label{eq:abstractintegralequation}
f(t) =z+\int_{0}^t \mathfrak{O}(f(s))\D s\;,
\end{equation}
where the integral is the so-called Cauchy--Bochner integral in the Banach space $\mathcal{E}$ (it is easy to show that this integral is well-defined, since $s \mapsto \mathfrak{O}(f(s))$ is continuous). The integral has all usual properties of the Riemann integral of continuous real-valued functions, with the norm playing the role of the absolute value (see Section~\ref{FIRSTPAPER.app:Banach_calc} in~\cite{Flip1}).

In this paper, we specifically consider differential equations in the Banach space $(\Kernel, \Linf{\cdot})$, which is a closed subspace of $L^\infty(\Omega^2)$ (where $\Omega$ is the probability space on which our graphons are defined). Recall that the elements of $L^\infty(\Omega^2)$ are equivalence classes (where equivalent means equal almost everywhere) of measurable functions, which means that it is not formally meaningful to evaluate an element $f \in L^\infty(\Omega^2)$ at a given point $(x,y) \in \Omega^2$. However, it is much more intuitive to pretend we can do this and, for example (i) evaluate $f(t)$ (see~\eqref{eq:integralequation} for a prominent example) at point $(x,y)$ by integrating a real-valued curve $s \mapsto \vel \left( f(s) \right)(x,y)$; (ii) deduce that $f(t) \le g(t)$ for every $t \in [a,b]$ implies $\int_a^b f(t) \le \int_a^b g(t)$ (where the inequalities are understood to hold almost everywhere). In~\cite{Flip1} it was shown that this apparently non-rigorous approach can be perfectly justified. We believe that this approach of dealing with trajectories ``pointwise'' has an advantage on the intuitive level, and we stick to it, referring a meticulous reader to the required technical tools for justification in Appendix~\ref{FIRSTPAPER.app:Banach_calc} in \cite{Flip1}.

What follows is the only abstract fact we need that is not available in \cite{Flip1}. It says that a solution of a differential equation of the type in~\eqref{eq:autonomousdifferentialequation} changes only little if we perturb the velocity field. This statement is very likely standard, but we found it easier to give a four-line proof than to find a source in the literature.
\begin{lem}
	\label{lem:perturb}
	Consider a Banach space $\mathcal{E}$ with norm $\|\cdot\|$. Let $\mathcal{U} \subset \mathcal{E}$ be a set, and $A,K > 0$ be constants. Let $X,Y : \mathcal{U} \rightarrow \mathcal{E}$ be functions. Suppose that $X$ is $K$-Lipschitz, that is, we have
\begin{equation}\label{eq:AMo}
	\left\|X(x)-X(y)\right\| \le K \left\|x-y\right\| \quad\text{ for all } x,y\in \mathcal{U}\;.
\end{equation}
	Suppose further that for each $x\in\mathcal{U}$ we have
\begin{equation}\label{eq:BMo}
\|X(x)-Y(x)\| \le A\;.
\end{equation}
	
Given $x_0\in\mathcal{U}$, let $\alpha,\beta:[0,T]\rightarrow \mathcal{U}$ satisfy the integral equations
	\begin{equation*}
	\alpha(t)=x_0+\int_{s=0}^t X(\alpha(s))\quad \beta(t)=x_0+\int_{s=0}^t Y(\beta(s))\quad\mbox{for each $t\in[0,T]$}
	\end{equation*}
	Then for each $t\in[0,T]$ we have $\|\alpha(t)-\beta(t)\| \le AT\exp(K t)$.
\end{lem}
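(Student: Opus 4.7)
The plan is to subtract the two integral equations, estimate the difference using the triangle inequality together with the Lipschitz and perturbation assumptions, and then close the argument with the integral form of Gr\"onwall's inequality (Lemma~\ref{lem:Groenwall}).

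More precisely, I would first write
\[
\alpha(t) - \beta(t) = \int_0^t \bigl( X(\alpha(s)) - Y(\beta(s)) \bigr) \D s
= \int_0^t \bigl( X(\alpha(s)) - X(\beta(s)) \bigr) \D s + \int_0^t \bigl( X(\beta(s)) - Y(\beta(s)) \bigr) \D s.
\]
Taking the norm and applying~\eqref{eq:AMo} to the first integrand and~\eqref{eq:BMo} to the second, together with the standard norm estimate for the Cauchy--Bochner integral recalled in Section~\ref{subsubsec:diff-eqn-Banach}, yields
\[
\|\alpha(t) - \beta(t)\| \le K \int_0^t \|\alpha(s) - \beta(s)\| \D s + At \le AT + K \int_0^t \|\alpha(s) - \beta(s)\| \D s,
\]
where in the last step I used $t \le T$ to replace $At$ by $AT$, which is the shape needed for Gr\"onwall.

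Next I would set $f(s) := \|\alpha(s) - \beta(s)\|$; this function is continuous and non-negative on $[0,T]$ because $\alpha$ and $\beta$ are themselves continuous (they are Cauchy--Bochner integrals of continuous curves, and the norm is continuous). Applying Lemma~\ref{lem:Groenwall} with constants $A_{\text{Gr}} := AT$, $C := K$, and $t_0 := 0$ gives $f(t) \le AT \cdot e^{Kt}$ for every $t \in [0,T]$, which is the desired bound.

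There is no real obstacle here: the only point that needs a moment's care is the justification of the integral and norm manipulations in the Banach space $\mathcal{E}$, but these are exactly the properties of the Cauchy--Bochner integral recalled from~\cite{Flip1}. The proof should fit in four or five lines.
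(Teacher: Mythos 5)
Your proposal is correct and matches the paper's own four-line proof essentially verbatim: the same decomposition $X(\alpha(s))-Y(\beta(s)) = (X(\alpha(s))-X(\beta(s))) + (X(\beta(s))-Y(\beta(s)))$, the same crude bound $At\le AT$, and the same appeal to Lemma~\ref{lem:Groenwall}. Nothing to add.
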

\begin{proof}
	Set $f(t) := \|\alpha(t)-\beta(t)\|$. Then
\begin{align*}
f(t)& \le \int_{s=0}^t \left\|X(\alpha(s))-Y(\beta(s))\right\| \leByRef{eq:BMo}
\int_{s=0}^t \big(A+ \left\|X(\alpha(s))-X(\beta(s))\right\|\big)\\
& \leByRef{eq:AMo} \int_{s=0}^t \big(A + Kf(s)\big) \le AT+K\int_{s=0}^t f(t)\;.
\end{align*}
Hence, the claim follows from Lemma~\ref{lem:Groenwall}.
\end{proof}

\subsection{Graphons}
Our notation regarding graphons follows~\cite{Lovasz2012}. Recall that $(\Omega,\pi)$ is an atomless probability space with an implicit separable sigma-algebra. We write $\Kernel\subset L^\infty(\Omega^2)$ for the set of bounded symmetric measurable functions; these are called \emph{kernels}. \emph{Graphons}, which we denote by $\Gra$, are then those kernels whose range is a subset of $[0,1]$.

\subsubsection{Densities and degrees}
The notion of homomorphism densities of a finite graph is one of the cornerstones of the theory of graph limits.

For a function $f : \Omega^V \to \R$ and a subset $S \subset V$ we write
\begin{equation*}
	\int f \D\pi^S=\int_{\Omega^S} f(x_i : i \in V) \prod_{i \in S} \pi(\D x_i).
\end{equation*}
\begin{defi}
	Given a kernel $W$ and a graph $F = (V,E)$, we define the \emph{density of $F$} in $W$~as
	\begin{equation*}
	t(F,W) \coloneqq \int \prod_{ij \in E} W(x_i,x_j) \D\pi^{V} \;.	
	\end{equation*}
	and the \emph{induced density of $F$} in $W$ as
	\begin{equation*}
	\tind(F,W) \coloneqq \int \prod_{ij \in E} W(x_i,x_j) \prod_{ij \notin E} \left( 1 - W(x_i,x_j) \right) \D\pi^{V}\;.	
	\end{equation*}
    In particular, the \emph{edge density of $W$} is the number $t(K_2,W) = \int W \D\pi^2$.
\end{defi}
A \emph{rooted graph} is a triple $(R,V,E)$, where $(V,E)$ is a graph and $R$ is a subset of $V$; we call the elements of $R$ the \emph{roots}.
\begin{defi}[Rooted densities]
  \label{def:densities}
  Given a graphon $W$, a rooted graph $F = (R, V, E)$, and ${\x} = (x_i : i \in R) \in \Omega^R$ we define the functions
	\begin{align*}
	\tr{\x}(F,W) &\coloneqq \int \prod_{ij \in E} W(x_i,x_j) \D \pi^{V \sm R}\;\textrm{ and}\\
	\tindr{\x}(F,W) &\coloneqq \int \prod_{ij \in E} W(x_i,x_j) \prod_{ij \notin E} \left( 1 - W(x_i,x_j) \right) \D \pi^{V \sm R}\;.
	\end{align*}
   Further, if $\mathbf{A}=(A_i)_{i\in R}$ is a collection of sets $A_i\subset \Omega$ then we define the quantities
   	\begin{equation*}
	  \tr{\mathbf{A}}(F,W) \coloneqq \int_{\x \in \prod_i A_i} \tr{\x}(F,W) \quad \text{and} \quad \tindr{\mathbf{A}}(F,W) \coloneqq \int_{\x \in \prod_i A_i} \tindr{\x}(F,W)\;.
   \end{equation*}
\end{defi}

One of the key properties of densities is that they, as functions of a graphon, are continuous with respect to the cut norm distance.
\begin{lem}[Counting lemma, {\cite[Lemma~10.23]{Lovasz2012}}]\label{lem:countinglemma}
	Suppose that we have a graph $F$ of order $k$ and two graphons $U$ and $W$. Then we have
	\[
	  |t(F,U) - t(F,W)| \le \bik\cutnd(U, W).
	\]
\end{lem}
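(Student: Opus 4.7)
The plan is to execute the standard edge-by-edge telescoping argument. Enumerate the edges of $F$ as $e_1 = u_1 v_1, \ldots, e_m = u_m v_m$, where $m = e(F) \le \binom{k}{2}$, and for $0 \le j \le m$ set
\[
I_j := \int_{\Omega^{V(F)}} \prod_{\ell \le j} U(x_{u_\ell}, x_{v_\ell}) \prod_{\ell > j} W(x_{u_\ell}, x_{v_\ell}) \D\pi^{V(F)}.
\]
Then $I_0 = t(F,W)$ and $I_m = t(F,U)$, so $t(F,U) - t(F,W) = \sum_{j=1}^{m}(I_j - I_{j-1})$, and it suffices to show $|I_j - I_{j-1}| \le \cutnd(U,W)$ for each $j$.

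Fix $j$ and write $u = u_j$, $v = v_j$. The integrand of $I_j - I_{j-1}$ is $(U-W)(x_u, x_v)$ multiplied by a product of $[0,1]$-valued factors indexed by the edges $e_\ell$ with $\ell \neq j$. The key structural observation is that, because $F$ is simple, no such factor depends on both $x_u$ and $x_v$ simultaneously. Grouping these factors according to whether they involve $x_u$ only, $x_v$ only, or neither, Fubini yields
\[
I_j - I_{j-1} = \int_{\Omega^{V(F) \sm \{u,v\}}} \gamma(\bar x)\left(\int_{\Omega^2} (U-W)(x_u, x_v) f_{\bar x}(x_u) g_{\bar x}(x_v) \D\pi^2\right) \D\pi^{V(F) \sm \{u,v\}}(\bar x),
\]
where $f_{\bar x}, g_{\bar x} : \Omega \to [0,1]$ collect the factors involving $x_u$ and $x_v$ respectively, and $\gamma : \Omega^{V(F) \sm \{u,v\}} \to [0,1]$ collects the factors depending on neither.

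The remaining ingredient is the inequality $\left|\int_{\Omega^2} K(x,y) f(x) g(y) \D\pi^2\right| \le \cutn{K}$ valid for any kernel $K$ and any measurable $f, g : \Omega \to [0,1]$, which I would derive via the layer-cake identity $f = \int_0^1 \mathbf{1}_{\{f \ge s\}} \D s$ (and similarly for $g$): this turns the inner integral into $\int_0^1 \int_0^1 \int_{\{f \ge s\} \times \{g \ge t\}} K \D\pi^2 \D s \D t$, and each rectangular inner integral is bounded by $\cutn{K}$ in absolute value while the $(s,t)$-domain has unit area. Substituting into the previous display and using that $\gamma \in [0,1]$ on a probability space gives $|I_j - I_{j-1}| \le \cutnd(U,W)$. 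Summing over $j \in [m]$ delivers the desired bound $|t(F,U) - t(F,W)| \le m\cutnd(U,W) \le \binom{k}{2}\cutnd(U,W)$.

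The only step that uses more than routine manipulations is the simple-graph observation enabling the rank-one factorization of the inner integrand; this is also precisely where the argument would break down if $F$ were permitted to carry multi-edges.
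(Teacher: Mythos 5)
Your proof is correct and is precisely the standard edge-by-edge telescoping argument that appears in the cited reference (Lov\'asz, Lemma~10.23); the paper itself offers no proof beyond that citation. The one ingredient worth flagging as essential — and which you correctly highlighted — is the rank-one factorization of the integrand over the swapped edge $\{u,v\}$, which relies on $F$ being simple, together with the observation that $\sup_{f,g:\Omega\to[0,1]}\left|\int K(x,y)f(x)g(y)\,\mathrm{d}\pi^2\right|=\cutn{K}$, for which your layer-cake derivation is valid.
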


Lastly, we introduce the notions of degree and degree-regularity.
\begin{defi}
  The \emph{degree function} of a kernel $W$ is the function $\deg_W : \Omega \to \R$ defined as $\deg_W(x) = \int_{y \in \Omega} W(x,y)$. We say that a kernel $W$ of edge density $d$ is \emph{degree-regular} if for $\pi$-almost every $x\in \Omega$ we have $\deg_W(x)=d$.
\end{defi}

\subsubsection{Sampling}
\label{ss:sampling}
A graphon $W$ defines, for every $k\in \N$, a probability distribution on $\lgr{k}$ as follows. Sample independent $\pi$-random elements $U_v$, $v \in [k]$ of $\Omega$ and then connect each pair of distinct vertices $u, v \in [k]$ independently with probability $W(U_u, U_v)$. We write $F \sim \G(k,W)$ for the random graph obtained this way and sometimes even use $\G(k,W)$ to denote the random graph itself. For many properties of sampling we refer to Section~10 of \cite{Lovasz2012}. The following lemma asserts that for large $k$ this random graph is close to $W$ with high probability. To this end we use the notion of \emph{cut distance} between a finite graph $G$ and a graphon $W$, which is defined as $\cutm(G,W)=\inf\{\cutnd(U,W)\}$, where $U$ ranges over all graphon representations of $G$.
\begin{lem}[Sampling lemma, {\cite[Lemma~10.16]{Lovasz2012}}]
  \label{lem:sampling}
  For $k = 2, 3, \dots$ and every $W\in\Gra$ we have
\[
  \prob{\cutm(\G(k,W),W) \le 22/\sqrt{\log_2 k} } \ge 1-\exp(-k/(2\log_2 k)) \;.
\]
\end{lem}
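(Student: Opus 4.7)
The plan is to combine the weak Frieze--Kannan regularity lemma with Hoeffding-type concentration. Given the target error $\eps = 22/\sqrt{\log_2 k}$, I would first apply weak regularity to $W$ to obtain a partition $\mathcal{P} = (V_1, \dots, V_m)$ of $\Omega$ with $m \le 2^{O(1/\eps^2)}$ parts such that the step-function approximation $W_\mathcal{P}$ (constant on each block $V_i \times V_j$ with value equal to the average of $W$ on that block) satisfies $\cutn{W - W_\mathcal{P}} \le \eps$.

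Sampling $U_1, \dots, U_k$ produces counts $n_i := |\{j : U_j \in V_i\}|$ which, by Hoeffding's inequality and a union bound over $i$, satisfy $|n_i/k - \pi(V_i)| \le \eps/m$ for every $i$ except with probability $\exp(-\Omega(k/m^2))$. Conditioning on this event, the edges of $\G(k,W)$ inside each pair of blocks form (conditionally) independent Bernoulli variables whose averaged success probability equals the $(i,j)$-value of $W_\mathcal{P}$. A further application of Hoeffding together with a union bound over the $m^2$ blocks produces an ``empirical step-graphon'' $\widehat{W}$ (built by distributing the sampled vertices measure-preservingly back inside the $V_i$'s) that satisfies $\cutn{\widehat{W} - W_\mathcal{P}} \le O(\eps)$.

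The crucial observation at this stage is that both $\widehat W$ and $W_\mathcal{P}$ are step functions with respect to $\mathcal{P}$, so the supremum defining their cut norm is realised by $S$ and $T$ that are unions of parts of $\mathcal{P}$. The union bound over the $2^{2m}$ choices of $(S, T)$ survives provided $\exp(-ck/m^2)$ beats $2^{-2m}$, i.e., provided $2m \log 2 \lesssim k/m^2$; with $m = 2^{O(1/\eps^2)}$ and $\eps = \Theta(1/\sqrt{\log_2 k})$, this is precisely the threshold regime in which the exponent $-k/(2\log_2 k)$ falls out. Combining with the triangle inequality yields $\cutm(\G(k,W), W) \le O(\eps)$.

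The main obstacle is juggling the three scales --- the regularity partition size $m$, the sample size $k$, and the error $\eps$ --- so that $m$ is small enough to allow the union bound over $2^{2m}$ subsets of parts while still being fine enough to achieve cut norm approximation error $\eps$ in the initial regularity step. Chasing through the constants from the Frieze--Kannan lemma and Hoeffding carefully is what pins down the explicit $22$ and the $1/(2\log_2 k)$ in the statement; verifying this arithmetic (and separately arguing that passing from $\widehat W$ to an actual graphon representation of $\G(k,W)$ costs nothing in $\cutm$, by choosing the measure-preserving rearrangement inside each $V_i$ appropriately) is the most tedious though essentially routine part.
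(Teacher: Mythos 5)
The paper does not prove this lemma; it is cited verbatim from \cite[Lemma~10.16]{Lovasz2012} (the Second Sampling Lemma for graphons), so there is no in-paper proof to compare against. Your sketch, while aimed in the right direction, has a genuine gap. You assert that ``both $\widehat W$ and $W_{\mathcal{P}}$ are step functions with respect to $\mathcal{P}$, so the supremum defining their cut norm is realised by $S$ and $T$ that are unions of parts of $\mathcal{P}$.'' This is false for $\widehat W$: a graphon representation of the $k$-vertex random graph $\G(k,W)$ is a step function on $\Theta(k)$ atoms (one per sampled vertex), not on the $m$ parts of $\mathcal{P}$. Inside a block $V_i\times V_j$ it takes individual $0/1$ values rather than the block average of $W$, so $\widehat W - W_{\mathcal{P}}$ is a step function only with respect to the much finer common refinement. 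The cut-norm supremum therefore ranges over $2^{\Theta(k)}$ candidate pairs $(S,T)$ rather than $2^{2m}$, and the per-pair Hoeffding tail $\exp(-\Theta(k\eps^2))=\exp(-\Theta(k/\log_2 k))$ cannot absorb such a union bound.

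This is precisely the obstruction that the proof in Lov\'asz's book circumvents by a different route: one first bounds the \emph{expected} cut distance $\E\,\cutm(\G(k,W),W)$ via the First Sampling Lemma --- an argument that never performs a union bound over all cut sets --- and then obtains high-probability concentration around that expectation by Azuma's martingale inequality, applied to $\cutm(\G(k,W),W)$ viewed as a bounded-differences function of the sampled points $U_1,\dots,U_k$; this is what produces the exponent $k/(2\log_2 k)$. To salvage your weak-regularity approach you would need an extra device before the Hoeffding-plus-union-bound step, e.g.\ averaging $\widehat W$ over the blocks and separately controlling the within-block fluctuations, or reducing to polynomially many candidate cuts via a randomized or algorithmic cut-norm approximation.
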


\subsubsection{Regularity lemma}
In Section~\ref{sec:RemovalProces} we study removal flip processes. To study their speed of convergence, we need to recall a regularity lemma. While for many purposes in the area of graph limits the Frieze--Kannan regularity lemma is sufficient, here we need Szemer\'edi's regularity lemma. The notions of density and regularity are standard, but typically appear in the setting of finite graphs. The graphon setting is used for example in~\cite{MR2306658}.
\begin{defi}
  Suppose that $W$ is a graphon and $C,D\subset \Omega$ are sets of positive measure. The \emph{density of $(C,D)$} in $W$ is defined as 
  \[
    d(C,D) := \frac{1}{\pi(C)\pi(D)}\int_{C\times D}W \D \pi^2 \; .
  \]
  Given $\alpha>0$, we say that $(C,D)$ is \emph{$\alpha$-regular} in $W$ if for every $C'\subset C$ and $D'\subset D$ we have
\begin{equation}\label{eq:IAmRegular}
  \left|\pi(C')\pi(D') d(C,D) - \int_{C'\times D'}W \D \pi^2\right| \le \alpha\pi(C)\pi(D)\;.
\end{equation}
\end{defi}
\begin{defi}
	Suppose that $W$ is a graphon. For $\alpha>0$, we say that a partition $\{P_1,\ldots,P_M\}$ of $\Omega$ is an \emph{$\alpha$-regular partition of $W$} if
	\begin{itemize}
		\item $\alpha\ge 1/M$,
		\item for each $i$ we have $\pi(P_i)=1/M$,
		\item the number of pairs $(i,j)\in[M]^2$ for which $(P_i,P_j)$ is not an $\alpha$-regular pair in $W$ is at most $\alpha M^2$.
	\end{itemize}
	We call the sets $P_1,\ldots,P_M$ \emph{clusters}.
\end{defi}
With this terminology, we can state a version of Szemer\'edi's regularity lemma tailored to our purposes.
\begin{thm}\label{thm:RL}
  For each $\alpha>0$ there exists $M_0\in \N$ so that for each graphon $W$ there exists an $\alpha$-regular partition of $W$ with at most $M_0$ sets.
\end{thm}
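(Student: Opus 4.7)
The plan is to follow the standard energy-increment proof of Szemer\'edi's regularity lemma, adapted to the graphon setting (as done for instance in~\cite{Lovasz2012} or \cite{MR2306658}). For a partition $\mathcal{P} = \{P_1,\ldots,P_M\}$ of $\Omega$ into sets of positive measure, define the \emph{energy}
\[
  q(\mathcal{P}) := \sum_{i,j\in[M]} \pi(P_i)\pi(P_j)\, d(P_i,P_j)^2\;.
\]
By Jensen/Cauchy--Schwarz, $q(\mathcal{P})\in[0,1]$ and $q$ is monotone under refinement.

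First I would prove the key increment lemma: if $\mathcal{P}$ is a partition of $\Omega$ into at most $M$ classes of equal measure and an $\alpha$-fraction of pairs $(P_i,P_j)$ fail to be $\alpha$-regular, then there is a refinement $\mathcal{P}'$ of $\mathcal{P}$ (splitting each $P_i$ into at most $2^M$ subclasses) with $q(\mathcal{P}') \ge q(\mathcal{P}) + \alpha^5$. This is obtained pairwise: whenever $(P_i,P_j)$ is not $\alpha$-regular, the witness sets $C'\subset P_i$, $D'\subset P_j$ violating~\eqref{eq:IAmRegular} are used to split $P_i$ and $P_j$, and a short application of the Cauchy--Schwarz inequality shows the local contribution to the energy grows by at least $\alpha^3\pi(P_i)\pi(P_j)$; summing over the $\ge \alpha M^2$ irregular pairs gives the bound $\alpha^5$.

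Second, I would iterate. Starting from the trivial partition (a single class), at each step either the current partition is $\alpha$-regular (in the sense of the statement, modulo the equal-measure condition) and we stop, or we apply the increment lemma. Since $q\le1$, the iteration terminates after at most $\alpha^{-5}$ steps, so the final partition has at most $T := 2^{O(\alpha^{-5})}$ classes, giving a bound depending only on~$\alpha$.

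Finally I would address the equipartition and the condition $\alpha \ge 1/M$. Given the $\alpha$-regular partition from the previous step with $T$ classes, choose a large integer $M$ that is a multiple of $T$ with $M \ge 1/\alpha$. Refine each class into equal-measure pieces of measure $1/M$ (possible since $\pi$ is atomless). Any pair of new classes lying inside an $\alpha$-regular old pair remains $\alpha$-regular up to a multiplicative loss that is absorbed by slightly weakening $\alpha$ to $2\alpha$ at the start, and the number of irregular new pairs is bounded by (irregular old pairs)$\cdot(M/T)^2 \le \alpha M^2$. The only genuinely delicate step is the increment lemma; the rest is bookkeeping. Choosing $M_0 = M$ above yields the theorem.
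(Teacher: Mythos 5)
The paper does not prove Theorem~\ref{thm:RL}; it states it as a version of Szemer\'edi's regularity lemma tailored to the graphon setting and takes it as known, so there is no in-paper proof to compare against. Your energy-increment outline is the right kind of argument, and the core ``increment then terminate because energy is bounded'' step is sound. Two minor numerical quibbles: your own arithmetic $\alpha M^2\cdot\alpha^3/M^2$ gives $\alpha^4$, not $\alpha^5$; and with the cut-norm-style condition~\eqref{eq:IAmRegular} a single defect Cauchy--Schwarz applied against the indicator of $C'\times D'$ yields increment at least $4\alpha^2\pi(P_i)\pi(P_j)$ per irregular pair, hence total increment at least $4\alpha^3$, which is better than the $\alpha^5$ one gets from the usual relative-density definition.

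The genuine gap is in your final step, which you dismiss as bookkeeping. You run the iteration without maintaining equal measures and then propose to ``refine each class into equal-measure pieces of measure $1/M$.'' First, this is impossible as stated: after the iteration the classes will generally not have measures divisible by $1/M$, and you give no mechanism for disposing of the leftover pieces. Second, and more seriously, the claimed inheritance of regularity with a fixed multiplicative loss is false for the definition~\eqref{eq:IAmRegular}. If $(C,D)$ is $\alpha$-regular and $C_1\subset C$, $D_1\subset D$ with $\pi(C_1)=\pi(C)/r$ and $\pi(D_1)=\pi(D)/r$, then applying~\eqref{eq:IAmRegular} with $C'=C_1$, $D'=D_1$ gives only $|d(C_1,D_1)-d(C,D)|\le\alpha r^2$, and the triangle inequality then shows that $(C_1,D_1)$ is $2\alpha r^2$-regular but no better. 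Since $r=M/T$ is unbounded, this loss cannot be absorbed by weakening $\alpha$ by a fixed factor at the outset; you would need to start with regularity parameter of order $\alpha T^2/M^2$, which depends on the final $M$, a circularity. The standard repair is to maintain an equipartition throughout the iteration rather than fix it at the end: after splitting an $M$-part equipartition by its witness sets, take $M'$ a large multiple of $M$, split each atom into as many measure-$1/M'$ pieces as possible with a leftover of measure $<1/M'$, then gather all leftovers (their total measure is an exact multiple of $1/M'$) and partition them too into measure-$1/M'$ pieces. The resulting equipartition agrees with a genuine refinement of the witness partition outside a set of measure at most $M\cdot4^M/M'$, and the resulting energy loss can be made less than half the increment by taking $M'$ large. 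Iterating then terminates at an equipartition, and no regularity-inheritance argument is needed.
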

In the setting of regular pairs, homomorphism densities can be controlled. This is the subject of the counting lemma, which we state in a version tailored to our purposes.
\begin{lem}\label{lem:RLcounting}
  Let $\alpha>0$ and $W$ be a graphon. Let $F = (R, R, E)$ be a graph which satisfies $|R| \ge 3$ and whose vertices are all roots. Suppose that $\mathbf{A}=(A_i)_{i\in R}$ be a tuple of subsets of $\Omega$ such that for every $i,j\in R$ we have $\pi(A_i)= \pi(A_j)$, and for every $\{i,j\}\in E$ the pair $(A_i,A_j)$ is an $\alpha$-regular pair in $W$. Then
\begin{equation}
\label{eq:CountingLemma}
t^{\mathbf{A}}(F,W)= \left( \prod_{ij \in E}d(A_i, A_j) \pm \alpha |R|^{|R|} \right) \prod_{i\in R}\pi(A_i)\;.
\end{equation}
\end{lem}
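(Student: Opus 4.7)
The plan is to apply a standard telescoping identity that replaces each factor $W(x_i,x_j)$ by the constant $d(A_i,A_j)$ one edge at a time, bounding each replacement via the $\alpha$-regularity of the corresponding pair. Since every vertex of $F$ is a root,
\[
  t^{\mathbf{A}}(F,W)=\int_{\x \in \prod_{i\in R} A_i}\prod_{ij\in E} W(x_i,x_j) \,\D\pi^R.
\]
Fix any enumeration $e_1,\dots,e_m$ of $E$, and abbreviate $W_k=W(x_i,x_j)$ and $d_k=d(A_i,A_j)$ for $e_k=\{i,j\}$. I would then use the telescoping identity
\[
  \prod_{k=1}^m W_k-\prod_{k=1}^m d_k=\sum_{k=1}^m\Bigl(\prod_{\ell<k}d_\ell\Bigr)(W_k-d_k)\Bigl(\prod_{\ell>k}W_\ell\Bigr).
\]

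The next step is to show that, upon integrating over $\x\in\prod A_i$, each of the $m$ summands contributes at most $\alpha\prod_{i\in R}\pi(A_i)$ in absolute value. Fix $k$ and let $e_k=\{i^*,j^*\}$. The constants $d_\ell\in[0,1]$ for $\ell<k$ pull out of the integral, so it suffices to bound $\bigl|\int_{\x\in\prod A_i}(W_k-d_k)\prod_{\ell>k}W_\ell\bigr|$. For any fixed choice of $(x_h)_{h\ne i^*,j^*}$, the remaining integral over $(x_{i^*},x_{j^*})$ takes the form
\[
  \int_{A_{i^*}\times A_{j^*}}\bigl(W(x_{i^*},x_{j^*})-d_k\bigr)F(x_{i^*})G(x_{j^*})\,\D\pi^2,
\]
where $F,G\colon\Omega\to[0,1]$ collect the factors $W_\ell$ with $\ell>k$ that depend on $x_{i^*}$ and $x_{j^*}$ respectively (edge-distinctness prevents any $e_\ell$ with $\ell>k$ from coinciding with $e_k$, so no single factor depends on both variables), while factors touching neither pull out as a constant in $[0,1]$.

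To pass from the rectangular test sets of \eqref{eq:IAmRegular} to the $[0,1]$-valued weights $F,G$, I would write $F(x)=\int_0^1\ind{F(x)>s}\,\D s$ and similarly for $G$; Fubini and the $\alpha$-regularity of $(A_{i^*},A_{j^*})$ applied to each superlevel rectangle then bound the displayed integral by $\alpha\pi(A_{i^*})\pi(A_{j^*})$. Integrating over the remaining coordinates yields the per-edge bound $\alpha\prod_{i\in R}\pi(A_i)$. Summing the $m\le\binom{|R|}{2}\le|R|^{|R|}$ contributions (using $|R|\ge 3$) gives
\[
  \Bigl|t^{\mathbf{A}}(F,W)-\prod_{ij\in E}d(A_i,A_j)\prod_{i\in R}\pi(A_i)\Bigr|\le\alpha|R|^{|R|}\prod_{i\in R}\pi(A_i),
\]
which is exactly \eqref{eq:CountingLemma}. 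The only step with any subtlety is the superlevel-set trick that extends \eqref{eq:IAmRegular} to $[0,1]$-valued test functions; everything else is elementary bookkeeping.
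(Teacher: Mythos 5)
Your argument is correct, and it takes a genuinely different route from the paper's proof. You prove the statement from scratch via a telescoping identity together with the superlevel-set (layer cake) trick to upgrade rectangular regularity to bounded test functions; the per-edge error is bounded fiber-wise after freezing the non-incident coordinates, and the crude estimate $m\le\binom{|R|}{2}\le|R|^{|R|}$ (for $|R|\ge 3$) gives the stated constant. The paper instead reduces to a known counting lemma for \emph{decorated graphs} on $[0,1]$ (Lemma~10.24 in Lov\'asz's book): it constructs a measure-preserving map $f:[0,1)\to\Omega$ that ``zooms in'' on the sets $A_i$, defines two decorated graphs $u$ and $u'$ whose homomorphism densities recover $t^{\mathbf{A}}(F,W)$ and $\prod_e d_e\cdot\prod_i\pi(A_i)$ respectively, and then shows $\cutn{U_e-U'_e}\le 2\alpha r^{-2}$ from $\alpha$-regularity so that the cited lemma finishes the job. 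Your approach is more self-contained and elementary, avoiding both the measure-isomorphism bookkeeping and the appeal to the decorated-graph counting lemma; the paper's approach is shorter modulo those tools and makes the relationship to the Frieze--Kannan-style counting lemma explicit, which is the point of the accompanying remark about transferring between the Szemer\'edi and weak-regularity settings. The only place where your write-up could be slightly tightened is to note explicitly that the sets $\{F>s\}$ and $\{G>t\}$ are intersected with $A_{i^*}$ and $A_{j^*}$ before applying \eqref{eq:IAmRegular}, and that the bound $\alpha\pi(A_{i^*})\pi(A_{j^*})$ is uniform over the frozen outer coordinates so Fubini applies cleanly; both points are straightforward.
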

\begin{proof}
We derive this from the version of the counting lemma stated as Lemma~10.24 in~\cite{Lovasz2012}, where we refer to the notion of decorated graphs and the corresponding homomorphism density. That counting lemma is tailored to the notion of cut norm distance, which is linked to the weak regularity lemma. On the other hand, Lemma~\ref{lem:RLcounting} is phrased in the setting of Szemer\'edi's regularity lemma, which is used much less in the theory of graph limits.\footnote{Actually, we are not aware of a similar application of Szemer\'edi's regularity lemma in the setting of graphons.} In particular, the error bound~\eqref{eq:CountingLemma} is relative to $\prod_{i\in R}\pi(A_i)$. So, the routine work below is to transfer to that setting, which we do so by zooming in on the sets $A_i$.

Further, assume that $R = [r]$ for some $r\in\N$. For $i\in [r]$, write $I_i := [\frac{i - 1}r,\frac{i}{r})$. Further, write $I=\bigcup_{i=1}^r I_i=[0,1)$. For every $i$ we define two probability measures: the $r$-times rescaled Lebesgue measure $\mu_i (\cdot)= r \mathrm{Leb}(\cdot)$ on $I_i$ and $\pi_i(\cdot) = \frac{\pi(\cdot)}{\pi(A_i) }$ on $A_i$. Since $(\Omega,\pi)$ is atomless and separable, the Isomorphism theorem for measure spaces tells us that there exists a measure-preserving map $f_i : (I_i, \mu_i) \to (A_i, \pi_i)$. For example, if $\Omega = ([0,1], \mathrm{Leb})$, we can define $f_i: I_i \rightarrow A_i$ as
$f_i(z) := \inf\{t\in A_i:\pi(A_i\cap[0,t])\ge (rz - (i - 1) )\cdot \pi(A_i)\}$.

Define $f : I \to \Omega$ by setting $f(z) = \sum_{i \in R} f_i(z) \indic_{z \in I_i}$.
Define $W^f:I^2\rightarrow[0,1]$ by setting
\[
  W^f(x,y) := W \left(f(x),f(y)\right)\;.
\]

Consider the complete graph $K$ on the vertex set $R$. We construct two decorated graphs $u = (U_e:e\in E(K))$ and $u' = (U'_e:e\in E(K))$ on the ground space $I$ as follows. For every $e = \{i,j\}\in E(K)$, writing $d_e = d(A_i, A_j)$, we define
\begin{align*}
  U_e&=
  \indic_{I_i \times I_j \cup I_j \times I_i} \cdot \begin{cases}
    W^f , &\quad \text{if  $e\in E$,} \\
    1, &\quad \text{if $e\in E(K)\setminus E$,}
  \end{cases}\\
  U'_e&=
  \indic_{I_i \times I_j \cup I_j \times I_i} \cdot \begin{cases}
    d_e, &\quad \text{if  $e\in E$,} \\
  1, &\quad \text{if $e\in E(K)\setminus E$.}
  \end{cases}
\end{align*}
It is obvious that for every $x_1,\ldots,x_r\in I$ we have
\begin{equation}
\label{eq:indicator_polka}
\prod_{ij \in E(K)}U_{ij}(x_i, x_j) = \prod_{i,j \in E} W^f(x_i, x_j )\prod_{i \in [r]} \indic_{I_i}(x_i).
\end{equation}
Similarly,
\begin{equation}
\label{eq:indicator_jive}
\prod_{ij \in E(K)}U'_{ij}(x_i, x_j) = \prod_{e \in E} d_e \prod_{i \in [r]} \indic_{I_i}(x_i).
\end{equation}
Note that $(x_1, \dots, x_r) \mapsto ( f(x_1), \dots f(x_r) )$ is a measure-preserving map between the product of probability spaces $(I_i, \mu_i)$ and the product of probability spaces $\left( A_i, \pi_i \right)$. Therefore by \eqref{eq:indicator_polka},
\[
  r^{r} t(K, u) = \int_{\prod_i I_i} \prod_{ij \in E} W^f(x_i, x_j) \prod_i \D \mu_i 
  = \int_{\prod_i A_i} \prod_{ij\in E} W(y_i, y_j) \prod_i \D \pi_i =  \frac{ t^\mathbf{A}(F,W)}{\prod_i\pi(A_i)}
\]
and, by \eqref{eq:indicator_jive},
\[
  r^{r} t(K, u') = \int_{\prod_i I_i} \prod_{e \in E} d_{e} \prod_i \D \mu_i = \prod_{e \in E} d_e .
\]
To sum up,
\begin{align}
\begin{split}
\label{eq:comparezwei}
t(K,u)& = t^{\mathbf{A}}(F,W)\cdot \frac{r^{-r}}{\prod_{i\in R}\pi(A_i)} \;\quad \mbox{and}\\
t(K,u')&=\prod_{e\in E}d_e\cdot r^{-r}\;.
\end{split}
\end{align}
We claim that for each $e\in E(K)$ we have $\cutn{U_e - U'_e} \le 2\alpha r^{-2}$. We shall postpone the proof of this to the end, and first show how it implies the asserted statement. By Lemma~10.24 in~\cite{Lovasz2012}, we have
\begin{equation*}
|t(K,u) - t(K,u')| \le \sum_{e\in E(K)}\cutn{U_e - U'_e} \le \binom{r}{2}2\alpha r^{-2}<\alpha\;.
\end{equation*}
Substituting the expressions from~\eqref{eq:comparezwei} and multiplying by $r^r\cdot \prod_{i\in R}\pi(A_i)$, we obtain
\[
  \left|
  t^{\mathbf{A}}(F,W)
  -
  \prod_{e\in E}d_e \cdot \prod_{i\in R}\pi(A_i)
  \right|
  \le \alpha \cdot r^r \cdot \prod_{i\in R}\pi(A_i) \;,
\]
as needed.

We now return to the deferred claim that for each $e = \{a,b\}\in E(K)$ we have $\cutn{U_e - U'_e} \le 2\alpha r^{-2}$. Note that for $e = ab \notin E$ we have $U_{\{a, b\}} = U'_{\{a, b\}}= \indic_{I_a \times I_b \cup I_b \times I_a}$ hence trivially $\cutn{U_e - U'_e} = 0$. It remains to consider the case $e \in E$. Suppose that $S,T\subset I$ are arbitrary as in~\eqref{eq:defcnd}. Let $C_a := S\cap I_a$, $C_b := S\cap I_b$, $D_a:= T\cap I_a$, and $D_b:= T\cap I_b$. Since $I_a, I_b$ are disjoint and $U_e, U'_e$ are zero outside of $I_a \times I_b \cup I_b \times I_a$, we have
\begin{align*}
  \left|\int_{S\times T}(U_e - U'_e)\D x_a \D x_b\right| & \le \left|\int_{C_a\times D_b}(U_e - U'_e) \D x_a \D x_b\right| + \left|\int_{C_b\times D_a}(U_e - U'_e) \D x_a \D x_b \right|\;.
\end{align*}
Since $(x_a, x_b) \mapsto (f(x_a), f(x_b))$ is a measure preserving map between $(I_a, \mu_a) \times (I_b, \mu_b)$ and $(A_a, \pi_a) \times (A_b, \pi_b)$, we have
\begin{align*}
  r^2 &\left| \int_{C_a\times D_b}(U_e - U'_e)\D x_a \D x_b \right| = \left| \int_{C_a\times D_b}(W^f - d_e)\D \mu_a \D \mu_b \right| \\
  &= \left| \int_{f(C_a) \times f(D_b)} (W - d_e) \D \pi_a \D \pi_b \right| \\
  &= \frac{1}{\pi(A_a) \pi(A_b)}\left| \int_{f(C_a) \times f(D_b)} W  \D \pi^2 - \pi(f(C_a))\pi(f(D_b))d_e \right| \leByRef{eq:IAmRegular} \alpha.
\end{align*}
To conclude,
\[
  \left|\int_{C_a\times D_b}(U_e - U'_e)\D\pi^2\right| \le \alpha r^{-2}\;.
\]
Together with the same bound for $ \left|\int_{C_b\times D_a}(U_e - U'_e)\D\pi^2\right|$ we get that $ \left|\int_{S\times T}(U_e - U'_e)\D\pi^2\right| \le 2\alpha r^{-2}$, as was needed.
\end{proof}

\subsubsection{Inequality between clique densities}
Here we shall prove an inequality which relates clique densities in graphons. This is a consequence of the Kruskal--Katona theorem \cite{MR0154827, MR0290982}. In fact, our proof uses a slightly weaker version due to Lov\'asz~\cite[Exercise 13.31(b)]{MR1265492}. While we need this inequality only for the case $r=2$ (which can be derived from \cite{erdos1962number}), we shall state and prove the general case for possible future use. For $m\in\N$ and a real number $x \ge m$ we write $\binom{x}{m} := \frac{x(x-1)\ldots(x-m+1)}{m!}$.
\begin{prop} \label{prop:KruskalKatona}
  Let $W\in \Gra$ be a graphon and $r, k\in\N$ satisfy $r < k$. Then we have
  \[
    t(K_r,W)\ge t(K_k,W)^{r/k}\;.
  \]
  In particular, for all $k \ge 3$ we have $\int W \D \pi^2 = t(K_2,W) \ge t(K_k,W)^{2/k}$.
\end{prop}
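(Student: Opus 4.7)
My plan is to transfer the corresponding inequality for finite graphs --- the content of the Lov\'asz-style Kruskal--Katona bound in \cite[Exercise 13.31(b)]{MR1265492} --- to the graphon setting via sampling. Concretely, that exercise should supply, for every finite graph $G$ and $r < k$, a bound of the form $t(K_r, G) \ge t(K_k, G)^{r/k}$ (possibly with an $o(1)$ slack that vanishes as $|V(G)| \to \infty$, arising from the non-integer arithmetic of $\binom{x}{k}$).

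The first step is to fix the graphon $W$ and, for each $n \ge k$, sample the $W$-random graph $G_n \sim \G(n, W)$. By the Sampling Lemma (Lemma~\ref{lem:sampling}), with probability tending to $1$ we have $\cutm(G_n, W) \to 0$; combining this with the Counting Lemma (Lemma~\ref{lem:countinglemma}) yields $t(K_j, G_n) \to t(K_j, W)$ in probability for $j \in \{r, k\}$. Hence, for every $\eps > 0$ and all sufficiently large $n$, a realisation of $G_n$ exists with $|t(K_j, G_n) - t(K_j, W)| < \eps$ for both $j \in \{r, k\}$.

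The second step is to apply the finite-graph inequality to any such realisation, obtaining $t(K_r, G_n) \ge t(K_k, G_n)^{r/k}$. Sending $\eps \downarrow 0$ (and thus $n \to \infty$) and using continuity of $x \mapsto x^{r/k}$ on $[0,1]$ then delivers $t(K_r, W) \ge t(K_k, W)^{r/k}$. The ``in particular'' clause for $r = 2$ and $k \ge 3$ is immediate from $t(K_2, W) = \int W \D \pi^2$.

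I do not foresee a serious obstacle. The only technical point is the bookkeeping of the two vanishing errors --- the possible $o(1)$ slack in the Lov\'asz finite-graph inequality, and the $o_n(1)$ sampling error --- but both are benign and drop out in the limit. The substantive content of the proof is entirely encapsulated in the cited finite-graph exercise; the rest is a textbook sampling-to-graphon transfer.
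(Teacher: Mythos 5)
Your proposal is correct and follows essentially the same route as the paper: reduce to the Lov\'asz (Exercise~13.31) form of the Kruskal--Katona bound on a convergent sequence of finite graphs and pass to the limit. The paper simply "picks" a graph sequence converging to $W$ (by compactness of the cut metric) rather than producing one via the sampling lemma, and it carries out explicitly the $\binom{u_n}{k}$/$\binom{u_n}{r}$ bookkeeping that you correctly anticipate as an $o(1)$-slack issue, but these are cosmetic differences.
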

\begin{proof}
  Define $\alpha := t(K_k,W)^{1/k}$. Pick a sequence $(G_n)_{n \in \N}$ of graphs with $v(G_n) = n$ which converges to the graphon $W$. Let $(u_n)_{n \in \N}$ be a sequence of real numbers so that the number of $k$-cliques in $G_n$ is $\binom{u_n}{k}$, that is, $t(K_k, G_n) = {\binom{u_n}{k}}/{\binom{n}{k}}$; this implies $u_n \sim \alpha n$. 
Exercise 13.31 in Lov\'asz~\cite{MR1265492} (for $H$ consisting of the vertex sets of $k$-cliques) implies that the number of $r$-cliques in $G_n$ is at least $\binom{u_n}{r}$, whence
\[
  t(K_r, W) = \lim_{n \to \infty} t(K_r, G_n) \ge \lim_{n \to \infty} \frac{\binom{u_n}{r}}{\binom{n}{r}} = \alpha^r = t(K_k,W)^{r/k} \;,
\]
as required.
\end{proof}

\subsection{Probability}

The classical Berry--Esseen theorem establishes quantitative bounds on the accuracy of the approximation of the probability distribution of a sum of independent and identically distributed random variables by a normal distribution with the same mean and variance. We will apply this to prove Theorem~\ref{thm:extremist-threshold-upper}.

\begin{thm}[Berry--Esseen theorem] \label{thm:berry-esseen}
There is a positive constant $C$ such that the following holds. Suppose that $X_1,\dots,X_k$ are i.i.d.\ random variables with $\E X_i = 0$, $\var X_i = \sigma^2 < \infty$ and $\E|X_i|^3 = \beta < \infty$ for all $i\in[k]$. Let $X=\sum_{i=1}^{k}X_i$. Then for all $x\in\R$ we have
\[
  \left| \prob{X \le x\sigma\sqrt{k}} - \prob{Z \le x} \right| \le \frac{C\beta}{\sigma^3\sqrt{k}},
\]
where $Z\sim N(0,1)$ is a standard Gaussian random variable.
\end{thm}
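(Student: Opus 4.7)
The Berry--Esseen theorem is classical, and the plan is a Fourier-analytic proof based on Esseen's smoothing inequality. Normalize by setting $S_k := X/(\sigma\sqrt{k})$, writing $F_k$ for its CDF and $\varphi_k(t) := \E e^{itS_k}$ for its characteristic function, and letting $\Phi$ denote the standard Gaussian CDF with characteristic function $\psi(t) := e^{-t^2/2}$. The goal reduces to bounding $\sup_x|F_k(x)-\Phi(x)|$ by $C\beta/(\sigma^3\sqrt k)$. The first step is to invoke Esseen's smoothing lemma: there is an absolute constant $c_1>0$ such that for every $T>0$,
\[
\sup_x|F_k(x)-\Phi(x)| \;\le\; \frac{1}{\pi}\int_{-T}^{T} \frac{|\varphi_k(t)-\psi(t)|}{|t|}\,\D t \;+\; \frac{c_1}{T}.
\]
I would cite this (see e.g.\ Feller, \emph{An Introduction to Probability Theory and its Applications}, Vol.~II) or derive it by convolving $F_k-\Phi$ with a mollifier whose Fourier transform is supported in $[-T,T]$ and applying Fourier inversion together with the fact that $\Phi$ has a bounded density.

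The second step is a characteristic-function estimate. Since the $X_i$ are i.i.d.\ with $\E X_1 = 0$, $\E X_1^2 = \sigma^2$, $\E|X_1|^3 = \beta$, writing $\varphi_1$ for the characteristic function of $X_1/\sigma$ we have $\varphi_k(t)=\varphi_1(t/\sqrt k)^k$ and a third-order Taylor expansion yields
\[
\varphi_1(s) = 1 - \tfrac{s^2}{2} + r(s), \qquad |r(s)| \le \tfrac{\beta|s|^3}{6\sigma^3}.
\]
Taking logarithms (valid on $|t|\le c_0\sigma^3\sqrt{k}/\beta$, so that $\varphi_1(t/\sqrt k)$ stays in a small neighbourhood of $1$), summing the $k$ terms, and exponentiating yields, for absolute constants $c_0, C_2 > 0$ and for all $|t|\le c_0\sigma^3\sqrt{k}/\beta$,
\[
|\varphi_k(t)-e^{-t^2/2}| \;\le\; C_2 \cdot \frac{\beta|t|^3}{\sigma^3\sqrt{k}}\cdot e^{-t^2/4}.
\]

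The last step is to optimize the cutoff by choosing $T := c_0\sigma^3\sqrt{k}/\beta$. Plugging the characteristic-function estimate into the smoothing inequality, the integral is bounded above by $C_2\beta/(\sigma^3\sqrt k)$ times $\int_\R |t|^2 e^{-t^2/4}\,\D t = O(1)$, while the tail term $c_1/T$ is also $O(\beta/(\sigma^3\sqrt k))$. Adding the two bounds produces the claimed inequality with a suitable absolute constant $C$. The main obstacle is the bookkeeping in the Taylor--log--exp chain: one must ensure that, after exponentiating, the bound still carries a Gaussian factor $e^{-t^2/4}$ that absorbs the $|t|^3$ growth when integrated up to $T$, and this is precisely where the finiteness of the third absolute moment is essential. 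Everything else is standard manipulation.
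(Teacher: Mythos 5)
The paper does not prove this statement: Theorem~\ref{thm:berry-esseen} is the classical Berry--Esseen theorem, stated without proof and used as a black box (to prove Theorem~\ref{thm:extremist-threshold-upper}). So there is no ``paper proof'' to compare your argument against. That said, your Fourier-analytic outline is the standard textbook proof and is essentially correct: Esseen's smoothing inequality, the third-order Taylor expansion of $\log\varphi_1$ on a window $|t|\lesssim \sigma^3\sqrt{k}/\beta$ (which, since $\beta/\sigma^3\ge 1$ by Lyapunov's inequality, is indeed within the region where $\varphi_1(t/\sqrt k)$ stays close to $1$), the resulting estimate $|\varphi_k(t)-e^{-t^2/2}| \lesssim (\beta|t|^3/\sigma^3\sqrt k)\,e^{-t^2/4}$, and the choice $T\asymp \sigma^3\sqrt k/\beta$ balancing the two error terms. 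The one place where a full write-up would need real care is the ``Taylor--log--exp chain'': obtaining a factor $e^{-t^2/4}$ rather than $e^{-t^2/2}$ on the right-hand side requires showing that $\mathrm{Re}\,\log\varphi_1(s) \le -s^2/4$ on the relevant range, and separately bounding $|e^{a}-e^{b}|\le |a-b|\,e^{\max(\mathrm{Re}\,a,\mathrm{Re}\,b)}$ — you flag exactly this as the main obstacle, which is the right instinct. Since the authors simply cite a classical result rather than reprove it, for the purposes of this paper you would be expected to do the same.
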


\section{Flip processes as dynamical systems on graphons: a recap}

In this section, we review the theory from~\cite{Flip1}. With the kind permission of the authors of~\cite{Flip1}, we include the relevant parts into this section with varying degrees of additional edits.

\subsection{The velocity operator} \label{ss:velocity}

Let $W$ be a graphon and let $\rul$ be a rule of order $k$.
Given a graph $F \in \lgr{k}$ and an ordered pair $1 \le a \neq b \le k$ define an operator $\Troot{F}{a,b}:\Kernel \rightarrow L^\infty(\Omega^2)$ by setting
\[
(\Troot{F}{a,b} W) (x,y) = \tindr{(x,y)}(F^{a,b}, W)\;.
\]
The \emph{velocity operator for $\rul$} is the operator $\vel[\rul] :\Kernel \to \Kernel$ given by
\begin{equation} \label{eq:velocity}
\vel[\rul] W := \sum_{F,H\in\lgr{k}} \sum_{1 \le a\neq b \le k} \rul_{F,H} \cdot \Troot{F}{a,b}W \cdot \left(\ind{ab \in H \sm F} - \ind{ab \in F \sm H}\right)\;,
\end{equation}
where $ab$ is shorthand for $ \left\{ a,b \right\}$.

It is convenient to have alternative reformulations of the velocity operator. Let $W$ be a graphon and $\rul$ be a rule of order $k$. Let the random variables $(U_v : v \in [k])$ and the random graph $\G(k,W)$ be as defined in Section~\ref{ss:sampling}. Let $(\ff,\hh)$ be a pair of random graphs in $\lgr{k}$ such that $\ff \sim \G(k,W)$ and $\Pc{\hh = H}{\ff = F} = \rul_{F,H}$ for all $F,H\in\lgr{k}$. Then we have (see equation~\eqref{FIRSTPAPER.eq:vel_prob} in \cite{Flip1})
\begin{equation}
\label{eq:vel_prob}
\vel[\rul]W(x,y) = \sum_{a \neq b} \left[ \Pc{ ab \in \hh }{ U_a = x, U_b = y } - W(x,y) \right].
\end{equation}
It follows that
\begin{align}
\label{eq:vel_integral}
\int \vel[\rul] W \D \pi^2 &= 2 \E \left( e(\hh) - e(\ff) \right) \\
\label{eq:vel_integral2}
&= 2 \E e(\hh) - (k)_2 t(K_2, W).
\end{align}

\subsection{Trajectories}
In this subsection, we recap the notion of trajectories which we mentioned informally in Theorem~\ref{thm:correspondenceInformal}. We make several adjustments, following the way trajectories were constructed in~\cite{Flip1}. Firstly, we define trajectories not only for graphons but for general kernels. Secondly, we define not only forward evolution in time but also backward evolution. It turns out that in order to allow for these generalizations, we cannot prescribe the time domain for a trajectory in advance.
\begin{defi}
  \label{def:trajectory}
  Given a rule $\rul$ and a kernel $W \in \Kernel$, a \emph{trajectory starting at $W$} is a differentiable function $\traj{\cdot}{W} : I \to (\Kernel, \Linf{\cdot})$ defined on an open interval $I \subseteq \R$ containing zero, that satisfies the autonomous differential equation
\begin{equation}
\label{eq:PhiDeriv}
\frac{\D}{\D t}\traj{t}{W} = \vel \traj{t}{W}\;
\end{equation}
with the initial condition
\begin{equation}
\label{eq:Phi_initial}
\traj{0}{W} = W\;.\\
\end{equation}
\end{defi}
The following theorem implies that if a trajectory starts a graphon (rather than a general kernel), it is unique (provided we extended it maximally) and is defined for all positive times. Such a trajectory is always defined for some negative times (as $\mdom{W}$ is open), but it might leave the graphon space immediately (i.e., we might have $\life{W} = [0, \infty)$).
\begin{thm}[{Theorem~\ref{FIRSTPAPER.thm:flow} in \cite{Flip1}}]
  \label{thm:flow}
  Suppose that $\rul$ is a rule and $W \in \Kernel$ is a kernel. Then the following hold.
\begin{romenumerate}
\item \label{en:exist}
  There is an open interval $\mdom{W} \subseteq \R$ containing $0$ and a trajectory $\traj{\cdot}{W} : \mdom{W} \to \Kernel$ starting at $W$ such that any other trajectory starting at $W$ is a restriction of $\traj{\cdot}W$ to a subinterval of $\mdom{W}$.

\item \label{en:semi}
  For any $u \in \mdom{W}$ we have $\mdom{\traj{u}{W}} = \{ t \in \R : t + u \in \mdom{W}\}$ and for every $t \in \mdom{\traj{u}{W}}$ we have
  \begin{equation*}
    \traj{t}{ \traj{u}{W} } = \traj{t+u}{W}.
  \end{equation*}

\item \label{en:life}
  If $W \in \Gra$ is a graphon, the set $\life{W} := \{ t \in \mdom{W} : \traj{t}W \in \Gra\}$ is a closed interval containing $[0, \infty)$.
\end{romenumerate}

\end{thm}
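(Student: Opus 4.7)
The plan is to view the initial value problem \eqref{eq:PhiDeriv}--\eqref{eq:Phi_initial} as an autonomous ODE in the Banach space $(\Kernel,\Linf{\cdot})$ and invoke the standard Picard--Lindel\"of machinery, which is available because $\vel$ is polynomial in $W$. Indeed, each operator $\Troot{F}{a,b}$ acts by $(\Troot{F}{a,b}W)(x,y)=\tindr{(x,y)}(F^{a,b},W)$, a product of at most $\binom{k}{2}$ factors of the form $W(x_i,x_j)$ or $1-W(x_i,x_j)$ integrated over $\Omega^{k-2}$; expanding and using that for kernels with $\Linf{W},\Linf{U}\le M$ one has $|W_1\cdots W_m-U_1\cdots U_m|\le mM^{m-1}\Linf{W-U}$, I would show that $\vel:\Kernel\to\Kernel$ is Lipschitz on each $\Linf{\cdot}$-ball with constant depending only on $k$ and $M$.

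For part~\ref{en:exist} I would then apply Picard--Lindel\"of in $(\Kernel,\Linf{\cdot})$ to obtain a unique local solution on some open interval around $0$, and take $\mdom{W}$ to be the union of all open intervals containing $0$ on which a trajectory starting at $W$ exists; local uniqueness forces any two such trajectories to agree on the intersection of their domains, so these pieces glue to a single maximal trajectory $\traj{\cdot}{W}$. Part~\ref{en:semi} is then immediate from uniqueness: for $u\in\mdom{W}$ the shifted map $s\mapsto\traj{s+u}{W}$ satisfies $\frac{\D}{\D s}\psi(s)=\vel\psi(s)$ with $\psi(0)=\traj{u}{W}$ by the chain rule, so it is a trajectory starting at $\traj{u}{W}$ and must be a restriction of $\traj{\cdot}{\traj{u}{W}}$; running the same argument with shift $-u$ pins down both that the two domains coincide and that the trajectories agree on them.

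For part~\ref{en:life}, closedness of $\life{W}$ in $\mdom{W}$ is clean: $\Gra$ is $\Linf{\cdot}$-closed in $\Kernel$ and $t\mapsto\traj{t}{W}$ is continuous. Convexity (being an interval) will follow from the forward-invariance claim below combined with \ref{en:semi}: if $t_1<t_2$ both lie in $\life{W}$ and $t\in[t_1,t_2]$, the semigroup identity gives $\traj{t}{W}=\traj{t-t_1}{\traj{t_1}{W}}$, which is a graphon by forward invariance applied to the graphon $\traj{t_1}{W}$. So the heart of the matter is proving $[0,\infty)\subseteq\life{W}$. For this I would read off from \eqref{eq:vel_prob} that where $W(x,y)=0$ each summand $\Pc{ab\in\hh}{U_a=x,U_b=y}-W(x,y)$ is non-negative, giving $\vel[\rul]W(x,y)\ge 0$, and symmetrically that $\vel[\rul]W(x,y)\le 0$ at points with $W(x,y)=1$. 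This inward-pointing boundary behaviour, coupled with a Gr\"onwall argument (in the spirit of Lemma~\ref{lem:FODEonesidedBound}) applied to the excess functions $\max(0,\traj{t}{W}-1)$ and $\max(0,-\traj{t}{W})$, would show they remain identically zero for $t\ge 0$; since $\traj{t}{W}$ then stays uniformly bounded in $\Linf{\cdot}$ on $[0,\infty)\cap\mdom{W}$, the usual blow-up criterion for ODEs in Banach spaces promotes local to global forward existence, yielding $[0,\infty)\subseteq\mdom{W}$ and hence $[0,\infty)\subseteq\life{W}$.

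The main obstacle is the rigorisation of this last Gr\"onwall argument: elements of $L^\infty(\Omega^2)$ are equivalence classes, so speaking of the velocity at a single point $(x,y)$ is not directly meaningful and the pointwise inequalities above hold only almost everywhere. I would legitimise them via the pointwise Banach calculus outlined in Section~\ref{subsubsec:diff-eqn-Banach} and developed in Appendix~\ref{FIRSTPAPER.app:Banach_calc} of \cite{Flip1}, which lets one integrate almost-everywhere scalar inequalities along the trajectory. Once this technical bridge is in place, applying Gr\"onwall to the excess functions closes the invariance argument and all three parts of the theorem fall out.
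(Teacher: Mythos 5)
This theorem is imported verbatim from~\cite{Flip1} and the present paper supplies no proof of it, so there is no in-paper argument to compare against line by line; I can only assess your proposal on its own terms, and it takes what is surely the canonical route. Local Lipschitzness of $\vel$ on $\Linf{\cdot}$-balls from its polynomial structure, Picard--Lindel\"of plus gluing by uniqueness for~\ref{en:exist}, and the shift argument for~\ref{en:semi} are all correct and standard. The delicate part is~\ref{en:life}, and there you should be a bit more careful than the sketch suggests: the inequalities you read off from~\eqref{eq:vel_prob} (namely $\vel W\ge 0$ where $W=0$ and $\vel W\le 0$ where $W=1$, and more generally $-(k)_2 W\le\vel W\le(k)_2(1-W)$ as in Lemma~\ref{lem:velocity-bound}) are derived under the hypothesis $W\in\Gra$, since the probabilistic representation via $\ff\sim\G(k,W)$ presupposes that $W$ is a graphon. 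So running Gr\"onwall on $\max(0,\traj{t}W-1)$ would be invoking a differential bound precisely at the moment the state may have left the region where the bound is known. Your appeal to Lemma~\ref{lem:FODEonesidedBound} is exactly the right fix, because that lemma is engineered for a differential inequality that holds only conditionally on the state being below a barrier, but you should phrase the invariance step directly in that conditional form rather than via the excess functions $\max(0,\cdot)$, which are not differentiable at the crossing point and would require one-sided (Dini) derivatives or a smoothing to make the Gr\"onwall step rigorous in the $L^\infty$ Banach setting. With that adjustment, and the a.e.\ justification you already flag, the argument closes.
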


\begin{defi}
  For $W\in\Gra$ we write $\age{W} := -\inf \life{W}\in [0, \infty]$\index{$\age(W)$}.
\end{defi}
\begin{remark}[{Remark~\ref{FIRSTPAPER.rem:integral_form} in \cite{Flip1}}]
  \label{rem:integral_form}
  A trajectory $\traj{\cdot}W : I \to \Kernel$ satisfies
  \begin{equation*}
    \traj{t} W = W + \int_0^t \vel[\rul] \traj{\tau} W \D \tau, \quad t \in I,
  \end{equation*}
where the integral is defined with respect to the norm $\Linf{\cdot}$.
\end{remark}

The last preliminary result we shall need about trajectories says the following. Let $\rul$ be a rule and $\ca$ be a set of graphons which have a certain profile on a fixed rectangle $\Omega_1\times \Omega_2$. Suppose that for each graphon in $\ca$, the velocity restricted to the rectangle $\Omega_1\times \Omega_2$ is zero. Then each trajectory which starts in $\ca$ stays confined to $\ca$.
\begin{prop}[Proposition~\ref{FIRSTPAPER.prop:zeroonasection} in \cite{Flip1}]\label{prop:zeroonasection}
  Let $\rul$ be a rule and $\alpha:S\rightarrow[0,1]$ be a symmetric function where $S := \Omega_1\times\Omega_2\cup \Omega_2\times\Omega_1 \subset \Omega^2$. Let $\ca=\{W\in\Gra:W{\restriction_S}=\alpha\}$. Suppose that for each $W\in \ca$ we have $(\vel W){\restriction_S}=0$. Then for each $U\in \ca$ and each $t\ge 0$ we have $\traj{t}{U}\in \ca$.
\end{prop}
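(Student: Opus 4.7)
The plan is to compare $\traj{t}{U}$ with a graphon obtained by "projecting" it onto $\mathcal{A}$, using the integral form of the trajectory (Remark~\ref{rem:integral_form}), the Lipschitz property of $\vel$, and Gr\"onwall's inequality (Lemma~\ref{lem:Groenwall}). Throughout, we restrict attention to $t \in [0, \infty) \subseteq \life{U}$, which is legal by Theorem~\ref{thm:flow}\ref{en:life}, so we may work entirely within the graphon space $\Gra$.

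The key step is the following pointwise bound. Given any graphon $W \in \Gra$, define the symmetric function $W^\star \in \Gra$ by $W^\star := \alpha$ on $S$ and $W^\star := W$ on $\Omega^2 \setminus S$. Since $S$ is symmetric and $\alpha$ is symmetric, $W^\star$ is symmetric, and by construction $W^\star \in \mathcal{A}$; therefore by hypothesis $(\vel W^\star){\restriction_S} = 0$. Because $\vel$ is a polynomial of bounded degree in $W$ (see formula~\eqref{eq:velocity}), it is $K$-Lipschitz with respect to $\Linf{\cdot}$ on the bounded set $\Gra$ for some constant $K = K(\rul)$ (this is established in~\cite{Flip1}). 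Hence
\[
\Linf{(\vel W){\restriction_S}} = \Linf{(\vel W - \vel W^\star){\restriction_S}} \le \Linf{\vel W - \vel W^\star} \le K\Linf{W - W^\star} = K\Linf{(W - \alpha){\restriction_S}}.
\]

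Now set $g(t) := \Linf{(\traj{t}{U} - \alpha){\restriction_S}}$. We have $g(0) = 0$ since $U \in \mathcal{A}$. Applying the pointwise bound to $W = \traj{\tau}{U}$ and using the integral form $\traj{t}{U} = U + \int_0^t \vel \traj{\tau}{U}\D \tau$ together with the fact that the restriction $V \mapsto V{\restriction_S}$ is a bounded linear operator (so it commutes with Bochner integration and does not increase the $L^\infty$-norm), we obtain for every $t \ge 0$
\[
g(t) \le \int_0^t \Linf{(\vel \traj{\tau}{U}){\restriction_S}} \D\tau \le K \int_0^t g(\tau)\D\tau.
\]
Lemma~\ref{lem:Groenwall} with $A = 0$ then gives $g(t) \le 0 \cdot e^{Kt} = 0$ for all $t \ge 0$, so $\traj{t}{U}{\restriction_S} = \alpha$ almost everywhere, i.e.\ $\traj{t}{U} \in \mathcal{A}$.

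The main (essentially bookkeeping) obstacle is ensuring that restriction to $S$ interacts correctly with the Banach-space calculus from Section~\ref{subsubsec:diff-eqn-Banach}: one must verify that the bounded linear operator $V \mapsto V{\restriction_S}$ commutes with the Cauchy--Bochner integral and that the Lipschitz constant of $\vel$ on $\Gra$ is indeed uniform (not merely local). Both are standard and already available from~\cite{Flip1}; once granted, the Gr\"onwall step is immediate.
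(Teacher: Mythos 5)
Your proof is correct: the auxiliary graphon $W^\star$ is indeed in $\mathcal{A}$, the $L^\infty$-Lipschitz property of $\vel$ on $\Gra$ (which follows from the polynomial structure of~\eqref{eq:velocity} together with the boundedness of graphon values) yields the key pointwise bound $\Linf{(\vel W){\restriction_S}} \le K\Linf{(W-\alpha){\restriction_S}}$, and Gr\"onwall's inequality (Lemma~\ref{lem:Groenwall}) with $A=0$ forces $g\equiv 0$. The present paper only cites this result from~\cite{Flip1} without reproducing the proof, but your Gr\"onwall-comparison argument is the natural and standard invariance argument for a flow whose velocity field is tangent to the affine constraint set, and it closes without gaps.
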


\subsection{Continuity properties of trajectories}
In this subsection we assume that $\rul$ is a rule of order $k$.

\begin{lem}[{Lemma~\ref{FIRSTPAPER.lem:contderLinfty} in~\cite{Flip1}}] \label{lem:velocity-bound}
For every $W\in \Gra$ we have
\begin{equation} \label{eq:vel_bound_by_W}
  -(k)_2 \cdot W \le \vel[\rul] W \le (k)_2\cdot (1 - W)\;.
\end{equation}
\end{lem}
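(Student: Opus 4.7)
The plan is to read off the bounds directly from the probabilistic reformulation of the velocity operator given in equation~\eqref{eq:vel_prob}. Recall that
\[
\vel[\rul] W(x,y) = \sum_{a \neq b} \bigl[ \Pc{ab \in \hh}{U_a = x, U_b = y} - W(x,y) \bigr],
\]
where the sum is over the $(k)_2 = k(k-1)$ ordered pairs $(a,b)$ with $a \ne b$.

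For each such pair, the quantity $\Pc{ab \in \hh}{U_a = x, U_b = y}$ is a (conditional) probability and therefore lies in $[0,1]$ for $\pi^2$-almost every $(x,y)$. Hence each summand lies in the interval $[-W(x,y),\, 1 - W(x,y)]$. Summing over all $(k)_2$ ordered pairs yields
\[
-(k)_2 W(x,y) \;\le\; \vel[\rul] W(x,y) \;\le\; (k)_2 (1 - W(x,y))
\]
for almost every $(x,y)\in\Omega^2$, which is precisely~\eqref{eq:vel_bound_by_W}.

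There is no real obstacle here: the only thing to check is that~\eqref{eq:vel_prob} is available (it is cited from~\cite{Flip1}) and that the interpretation of the inequalities is the standard ``almost everywhere'' one used throughout the paper (cf.\ the discussion in Section~\ref{subsubsec:diff-eqn-Banach}). Consequently the entire proof is essentially one display after invoking~\eqref{eq:vel_prob}.
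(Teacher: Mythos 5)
Your proof is correct. Since the paper cites Lemma~\ref{lem:velocity-bound} directly from~\cite{Flip1} without reproducing a proof, there is nothing in this document to compare against; but your argument -- reading off each summand in~\eqref{eq:vel_prob} as lying in $[-W(x,y),\,1-W(x,y)]$ (since the conditional probability is in $[0,1]$ and $W$ is a graphon) and summing over the $(k)_2$ ordered pairs -- is the standard and essentially unavoidable derivation from that probabilistic reformulation, and it is complete.
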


\begin{lem}[{Lemma~\ref{FIRSTPAPER.lem:LinftyCont} in~\cite{Flip1}}] \label{lem:traj-cont-Linfty}
Write $C_k = 2 \binom{k}{2}^2 2^{\binom{k}{2}}$. For every $W \in \Gra$ and every $t,u \in [-\age{W},+\infty)$ we have
\begin{equation*}
\Linf{\traj{t}{W} - \traj{u}{W}} \le (k)_2 |t - u|,
\end{equation*}
and for every $\delta > 0$ we have
\begin{equation}\label{eq:LinfPhi_eps}
  \Linf{\frac{\traj{\delta}{W} - W}{\delta} - \vel[\rul]W} \le C_k(k)_2 \delta.
\end{equation}
\end{lem}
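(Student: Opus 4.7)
The plan is to derive both bounds from the integral form of the trajectory given in Remark~\ref{rem:integral_form} together with the pointwise velocity bound from Lemma~\ref{lem:velocity-bound}. For the first inequality, Theorem~\ref{thm:flow}\ref{en:life} guarantees that $\traj{\tau}{W} \in \Gra$ for every $\tau$ in the interval between $u$ and $t$, so Lemma~\ref{lem:velocity-bound} yields $\Linf{\vel[\rul] \traj{\tau}{W}} \le (k)_2$. Assuming without loss of generality that $u \le t$, I would combine Remark~\ref{rem:integral_form} with the semigroup property in Theorem~\ref{thm:flow}\ref{en:semi} to write
\[
  \traj{t}{W} - \traj{u}{W} = \int_u^t \vel[\rul] \traj{\tau}{W} \D \tau,
\]
and then take $L^\infty$ norms using standard properties of the Cauchy--Bochner integral. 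Boundary points of $[-\age{W}, +\infty)$ are handled by the continuity of the trajectory in time.

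For the second inequality, the main step is establishing that $\vel[\rul]\colon \Gra \to \Kernel$ is Lipschitz in the $L^\infty$ norm with constant exactly $C_k$. Grouping the $H$-sum in~\eqref{eq:velocity} for fixed $F$, $a$, $b$, the scalar coefficient $\sum_H \rul_{F,H}(\ind{ab \in H \sm F} - \ind{ab \in F \sm H})$ has absolute value at most~$1$, because $\sum_H \rul_{F,H} = 1$ and the indicator difference lies in $\{-1,0,1\}$. For each $F\in\lgr{k}$ and each ordered pair $(a,b)$, a telescoping argument applied pointwise to the product of $\binom{k}{2}$ factors in $[0,1]$ defining $\tindr{(x,y)}(F^{a,b}, W)$ yields
\[
  \Linf{\Troot{F}{a,b} W - \Troot{F}{a,b} W'} \le \tbinom{k}{2}\Linf{W - W'}.
\]
Summing over the $2^{\binom{k}{2}}$ choices of $F$ and the $(k)_2$ ordered pairs $(a,b)$ gives Lipschitz constant $\tbinom{k}{2}(k)_2 \, 2^{\binom{k}{2}} = C_k$. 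This combinatorial bookkeeping is the main obstacle, and the exponential factor $2^{\binom{k}{2}}$ in $C_k$ directly reflects the unavoidable summation over all possible drawn graphs $F \in \lgr{k}$ in the velocity formula.

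To conclude, Remark~\ref{rem:integral_form} gives the identity
\[
  \frac{\traj{\delta}{W} - W}{\delta} - \vel[\rul] W = \frac{1}{\delta} \int_0^\delta \left( \vel[\rul] \traj{\tau}{W} - \vel[\rul] W \right) \D \tau,
\]
and combining the Lipschitz bound on $\vel[\rul]$ with the first part of the lemma applied to $\Linf{\traj{\tau}{W} - W}$ yields
\[
  \Linf{\vel[\rul] \traj{\tau}{W} - \vel[\rul] W} \le C_k \Linf{\traj{\tau}{W} - W} \le C_k (k)_2 \tau
\]
for $\tau \in [0,\delta]$. Integrating over $\tau \in [0,\delta]$ produces a final bound of $C_k (k)_2 \delta / 2$, which is strictly within the claimed $C_k (k)_2 \delta$, so the slack accommodates any constant factor loss from the boundary or from a looser grouping of terms.
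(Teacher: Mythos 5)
This lemma is stated in the paper only as a citation of Lemma~\ref{FIRSTPAPER.lem:LinftyCont} from~\cite{Flip1}, with no proof reproduced here, so I can only assess your argument on its own merits; it is correct. Your derivation of the $L^\infty$-Lipschitz bound on $\vel[\rul]$ (telescoping $\binom{k}{2}$ factors, then summing over $2^{\binom{k}{2}}$ graphs $F$ and $(k)_2$ ordered pairs) yields exactly $C_k = 2\binom{k}{2}^2 2^{\binom{k}{2}}$ via $(k)_2 = 2\binom{k}{2}$, which matches the constant announced in the lemma and strongly suggests this is the argument in~\cite{Flip1} as well; the integral-form reductions are the obvious ones, the final bound $C_k(k)_2\delta/2$ is a factor of $2$ stronger than required, and the only nitpicks are that you establish $C_k$ as an upper bound on the Lipschitz constant rather than ``exactly'' $C_k$, and that no limiting argument is needed at $t=-\age{W}$ since $\mdom{W}$ is open and contains $\life{W}$.
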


\begin{thm}[{Theorem~\ref{FIRSTPAPER.thm:genome} in \cite{Flip1}}] \label{thm:LipschTime}
  Write $C_{\square, k} = 4\bik^2 2^{\binom{k}{2}}$.  Then for each $U,W \in \Gra$ and $t \in [-\min(\age{U},\age{W}),+\infty)$, we have
	\[
	  \cutn{\traj{t}{U} - \traj{t}{W}} \le \exp(C_{\square, k} |t|)\cdot \cutn{U - W}\;.
	\]
\end{thm}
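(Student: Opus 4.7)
The plan is to reduce the theorem to a cut-norm Lipschitz bound on the velocity operator and then apply Gr\"onwall's inequality (Lemma~\ref{lem:Groenwall}) to the integral form of the trajectory equation. The key intermediate claim is the estimate
\[
\cutn{\vel[\rul] U - \vel[\rul] W} \le C_{\square, k} \cdot \cutn{U - W} \qquad \text{for all } U, W \in \Gra.
\]
Expanding the difference via the defining formula~\eqref{eq:velocity}, using $\sum_H \rul_{F, H} = 1$ together with $|\ind{ab \in H \sm F} - \ind{ab \in F \sm H}| \le 1$, the triangle inequality reduces this to a rooted cut-norm counting lemma for the operators $\Troot{F}{a, b}$, namely
\[
\cutn{\Troot{F}{a, b} U - \Troot{F}{a, b} W} \le 2\bik \cdot \cutn{U - W}
\]
for every $F \in \lgr{k}$ and every ordered pair $a \neq b$ in $[k]$. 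Combining this with the $2^{\binom{k}{2}} \cdot (k)_2$ surviving terms gives the constant $C_{\square, k} = 4\bik^2 2^{\binom{k}{2}}$.

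To establish the rooted counting lemma, the approach is a standard telescoping swap. The integrand defining $\tindr{(x, y)}(F^{a, b}, \cdot)$ is a product of $\bik$ factors, each of the form $K(x_i, x_j)$ or $1 - K(x_i, x_j)$ (where $K$ is the kernel argument) for various pairs $\{i, j\}$ of vertices from $[k]$. Swapping $U$ for $W$ (and correspondingly $1 - U$ for $1 - W$) one factor at a time produces $\bik$ difference terms, each containing exactly one factor of $\pm(U - W)$ and all other factors in $[0, 1]$. The cut norm of each such term is bounded by a constant multiple of $\cutn{U - W}$: after integrating out all non-root variables, we are left with $(U - W)$ multiplied by a $[0, 1]$-valued weight, which is controlled by a standard weighted cut-norm estimate.

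With the Lipschitz bound in hand, the Gr\"onwall argument proceeds as follows. For $t \ge 0$, Remark~\ref{rem:integral_form} gives
\[
\traj{t}{U} - \traj{t}{W} = (U - W) + \int_0^t \left(\vel[\rul]\traj{s}{U} - \vel[\rul]\traj{s}{W}\right) \D s,
\]
where the integral is a Bochner integral in $(\Kernel, \Linf{\cdot})$. Since $\cutn{\cdot} \le \Linf{\cdot}$ on a probability space, the integrand is Bochner integrable also with respect to $\cutn{\cdot}$, and taking cut norms yields
\[
f(t) \le \cutn{U - W} + C_{\square, k} \int_0^t f(s) \D s,
\]
with $f(t) := \cutn{\traj{t}{U} - \traj{t}{W}}$. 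Lemma~\ref{lem:Groenwall} then delivers $f(t) \le \cutn{U - W}\cdot \exp(C_{\square, k} t)$. For $t \in [-\min(\age{U}, \age{W}), 0]$, the integral identity reads $f(t) \le \cutn{U - W} + C_{\square, k} \int_t^0 f(s) \D s$, which after the substitution $u = -s$ becomes the same Gr\"onwall inequality on $[0, |t|]$ for $g(u) := f(-u)$, yielding the bound with $|t|$ in place of $t$.

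The main obstacle is the rooted cut-norm counting lemma. While the telescoping itself is routine, bounding the cut norm of each individual term requires some care---particularly when the swapped pair involves one or both of the roots $a, b$, where the usual "integrate out and take the supremum over rectangles" trick must be replaced by a weighted cut-norm argument so that no more than a $k$-dependent constant is lost. Assembling the Lipschitz constant and applying Gr\"onwall (including the negative-time extension) is then mechanical.
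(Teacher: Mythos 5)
This theorem is recalled from~\cite{Flip1} (as Theorem~\ref{FIRSTPAPER.thm:genome} there) and is not proved in the present paper, so there is no in-paper proof to compare against. That said, your plan --- establish a cut-norm Lipschitz bound $\cutn{\vel[\rul]U - \vel[\rul]W}\le C_{\square,k}\cutn{U-W}$ and feed it into Gr\"onwall via the integral form of the trajectory --- is precisely what the shape of the constant $C_{\square,k}=4\bik^2 2^{\binom{k}{2}}$ points at, and the Gr\"onwall part of your argument (passing $\cutn{\cdot}\le\Linf{\cdot}$ under the Bochner integral, and the reflection for negative $t$) is carried out correctly.

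The gap is in how you bound each telescoped term of the rooted counting lemma. You propose to ``integrate out all non-root variables,'' reducing the term to $(U-W)$ times a $[0,1]$-valued weight $h(x_a,x_b)$, and then to invoke a ``standard weighted cut-norm estimate.'' As stated this does not work. First, when the swapped pair is not $\{a,b\}$ the factor $\pm(U-W)$ depends on a non-root coordinate and does not survive that integration. Second, even when the swapped pair is $\{a,b\}$, the weight $h$ that results is generically not a product $f(x_a)g(x_b)$, and the cut norm is \emph{not} stable under pointwise multiplication by a bounded function: there is no inequality $\cutn{K\cdot h}\le C(k)\cutn{K}$ for arbitrary $h\colon\Omega^2\to[0,1]$ (e.g.\ $K(x,y)=\cos(2\pi Nx)\cos(2\pi Ny)$ with $h=(1+K)/2$ has $\cutn{K}\to0$ yet $\int Kh$ bounded away from $0$). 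The correct argument --- the same mechanism as in Lemma~\ref{lem:countinglemma} --- reverses the order of operations. Write $\sup_{S,T}\bigl|\int_{S\times T}(\Troot{F}{a,b}U-\Troot{F}{a,b}W)\bigr|$, after telescoping, as a $k$-fold integral in which one factor is $\pm(U-W)(x_i,x_j)$ for the swapped pair $\{i,j\}$, one factor is $\indic_S(x_a)\indic_T(x_b)$, and the remaining $\bik-1$ factors lie in $[0,1]$ and each depend on at most two coordinates, none on both $x_i$ and $x_j$. \emph{Condition} on all coordinates other than $x_i,x_j$ before integrating: the residual weight then factors as a $[0,1]$-valued function of $x_i$, a $[0,1]$-valued function of $x_j$, and a $[0,1]$-valued constant, so the inner double integral over $(x_i,x_j)$ is at most $\cutn{U-W}$ by the functional form $\sup\{|\int K f g\,\D\pi^2|:f,g\colon\Omega\to[0,1]\}=\cutn{K}$, and integrating out the remaining coordinates preserves the bound. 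This gives $\cutn{\Troot{F}{a,b}U-\Troot{F}{a,b}W}\le\bik\,\cutn{U-W}$; summed over the $2^{\binom{k}{2}}$ graphs $F$ and the $(k)_2$ ordered pairs (using $\sum_H\rul_{F,H}=1$), this produces $C_{\square,k}/2$, so your claimed constant holds with room to spare, and the rest of your argument is complete.
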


\subsection{Destinations and speed of convergence}
Let $\rul$ be a rule. We say that the trajectory of a graphon $U$ is \emph{convergent} if there exists a \emph{destination} of $U$, that is, a graphon denoted by $\dest_\rul(U)$ such that $\traj{t}{U} \tocutn \dest_\rul(U)$ as $t \to \infty$. We say that the flip process with rule $\rul$ is \emph{convergent} if all its trajectories converge.
\begin{prop}[{\cite[Proposition~\ref{FIRSTPAPER.prop:destinations}]{Flip1}}]\label{prop:dests}
  If $W = \dest_{\rul}(U)$ for some $U \in \Gra$, then $\vel[\rul] W~=~0$.
\end{prop}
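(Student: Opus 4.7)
The plan is to show that $W$ is a fixed point of the flow, meaning $\traj{s}{W} = W$ for all $s \ge 0$, and then conclude $\vel[\rul] W = 0$ by differentiating at $s = 0$ using~\eqref{eq:PhiDeriv}. Here I rely on the fact that $W\in\Gra$, so Theorem~\ref{thm:flow}\ref{en:life} ensures $\traj{s}{W}$ is defined for all $s \ge 0$.

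To prove the fixed-point property, I would fix an arbitrary $s \ge 0$ and compute $\traj{s}{W}$ as the cut-norm limit of $\traj{s+t}{U}$ in two different ways. On the one hand, since $\traj{t}{U} \tocutn W$ by hypothesis, the same holds with $t$ replaced by $s + t$: that is, $\traj{s+t}{U} \tocutn W$ as $t \to \infty$. On the other hand, the semigroup identity (Theorem~\ref{thm:flow}\ref{en:semi}) gives $\traj{s+t}{U} = \traj{s}{(\traj{t}{U})}$, and Theorem~\ref{thm:LipschTime} yields
\[
\cutn{\traj{s}{(\traj{t}{U})} - \traj{s}{W}} \le \exp(C_{\square,k} s) \cdot \cutn{\traj{t}{U} - W}\;,
\]
so $\traj{s+t}{U} \tocutn \traj{s}{W}$ as $t \to \infty$. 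Since the cut norm $\cutn{\cdot}$ is a genuine norm on $\Kernel$ (its vanishing on a kernel forces the kernel to be zero almost everywhere), cut-norm limits are unique, and therefore $\traj{s}{W} = W$.

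Having established $\traj{s}{W} = W$ for every $s \ge 0$, the curve $s \mapsto \traj{s}{W}$ is constant in $\Kernel$. Its derivative at $s = 0$ (which exists by Definition~\ref{def:trajectory}) is therefore zero, and combining this with the differential equation~\eqref{eq:PhiDeriv} yields $\vel[\rul] W = 0$, as required. I do not anticipate any real obstacle: the one subtlety is checking that convergence in cut norm to the destination can be upgraded (via the stability of trajectories in Theorem~\ref{thm:LipschTime}) to convergence on the whole time-shifted trajectory, but this is exactly what Theorem~\ref{thm:LipschTime} was designed to deliver.
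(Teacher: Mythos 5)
Your argument is correct. The paper itself does not prove this statement but cites it from~\cite{Flip1}, so I cannot compare against an in-paper proof; nonetheless, your reasoning is sound and uses exactly the tools that this paper makes available. In particular: you correctly invoke Theorem~\ref{thm:flow}\ref{en:life} to have $\traj{s}{W}$ defined for all $s\ge 0$ (since $W$, being a destination, is by definition a graphon); the semigroup identity and the cut-norm Lipschitz bound of Theorem~\ref{thm:LipschTime} (applied to the graphons $\traj{t}{U}$ and $W$ with nonnegative time $s$) legitimately let you pass the limit through $\traj{s}{\cdot}$; and the observation that $\cutn{\cdot}$ separates points in $\Kernel$ (hence in $L^\infty(\Omega^2)$) upgrades the cut-norm limit to an a.e.\ equality $\traj{s}{W}=W$. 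One small point worth spelling out: $\traj{s}{W}=W$ is established only for $s\ge 0$, whereas Definition~\ref{def:trajectory} requires two-sided differentiability on an open interval. This is harmless --- either note that the one-sided derivative at $0$ vanishes and must agree with the (existing) two-sided derivative, or simply differentiate at any interior point $s>0$ of $[0,\infty)$, where the curve is constant on a neighbourhood. With that observation, $\vel[\rul]W = 0$ follows from~\eqref{eq:PhiDeriv} as you say.
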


For a rule $\rul$, and $\delta>0$ we define the \emph{lower convergence time} $\tau_{\rul}^-(\delta)$ and the \emph{upper convergence time} $\tau_{\rul}^+(\delta)$ by
\begin{align*}
\tau_{\rul}^-(\delta) &:= \sup \{\inf\{t : \cutnd(\traj{t}{W},\dest_{\rul}(W))<\delta \}:W\in \Gra\}\;,\\
\tau_{\rul}^+(\delta) &:= \sup \{\sup\{t : \cutnd(\traj{t}{W},\dest_{\rul}(W))\ge \delta \}:W\in \Gra\}\;.
\end{align*}

\section{Ignorant flip processes}\label{sec:ignorant}
The first and in a sense the most trivial class of flip processes we introduce are the ignorant flip processes. These are flip processes in which the replacement graph is selected independently of the drawn graph.

\begin{defi}
Let $k\in\N$. We say that a rule $\rul$ of order $k$ is \emph{ignorant} if for any $F,F',H\in\lgr{k}$ we have $\rul_{F,H} =\rul_{F',H}$. The \emph{ignorant output density} is $d := \frac{1}{\binom{k}{2}} \sum_{H\in\lgr{k}}\rul_{F,H}e(H)$ (here $F$ is arbitrary).
\end{defi}
Observe that the Erd\H{o}s--R\'enyi flip process is an ignorant flip process of order~2 with output distribution $\{\rul_{F,H}\}_{H\in\lgr{2}} = \Dirac_{K_2}$ for each $F\in\lgr{2}$.

Our main result about ignorant rules is Proposition~\ref{prop:ignorant-behaviour}, which explicitly describes their trajectories, destinations, speed of convergence and age.

\begin{prop} \label{prop:ignorant-behaviour}
  Let $k\in\N$. Suppose that $\rul$ is an ignorant rule of order $k$ with ignorant output density $d$. Then for each graphon $W \in \Gra$ and $t\in \life{W}$ we have
\begin{equation} \label{eq:ignoranttrajectory}
\traj{t}W = d + ( W - d ) \cdot e^{-(k)_2t}.
\end{equation}
Hence, we have $\dest(W) \equiv d$ and
\begin{equation} \label{eq:ignorant-convergence}
\tau_{\rul}^-(\delta)=\tau_{\rul}^+(\delta)=\frac{1}{(k)_2}\ln\left(\frac{\max\{d,1-d\}}{\delta}\right)
\end{equation}
for each $0 < \delta < \frac{1}{2}$. Furthermore, writing $i_W := \essinf(W)$ and $s_W := \esssup(W)$, we have
\begin{equation}\label{eq:ageignorant}
  \age{W} =
  \begin{cases}
    \frac{1}{(k)_2} \min \left\{\ln \frac{d}{d-i_W} \;,\; \ln \frac{1-d}{s_W-d}\right\} \quad & \text{if } i_W < d < s_W\;, \\
    \frac{1}{(k)_2} \ln \frac{d}{d-i_W} \quad &\text{if } i_W < s_W \le d\;, \\
    \frac{1}{(k)_2} \ln \frac{1-d}{s_W-d} \quad &\text{if } d \le i_W < s_W\;, \\
    \infty & \text{ if } W \equiv d.
  \end{cases}
\end{equation}
In particular, $\age{W}$ is finite unless $W \equiv d$.
\end{prop}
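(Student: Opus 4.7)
The plan is to compute the velocity operator in closed form, solve the resulting linear ODE, and then read off all the remaining claims from the explicit trajectory. Starting from the probabilistic formula~\eqref{eq:vel_prob}: because the rule is ignorant, $\hh$ is independent of $\ff$ and hence of $(U_1,\dots,U_k)$, so $\Pc{ab \in \hh}{U_a = x, U_b = y} = \prob{ab \in \hh}$, independent of $x,y$. Summing over ordered pairs $a \neq b$ yields $\sum_{a\neq b}\prob{ab\in\hh} = 2\E e(\hh) = (k)_2 d$ by the definition of the ignorant output density, while the $-W(x,y)$ terms contribute $-(k)_2 W(x,y)$. Thus
\[
\vel[\rul] W \;=\; (k)_2 (d - W).
\]
A direct differentiation shows that the right-hand side of~\eqref{eq:ignoranttrajectory} satisfies~\eqref{eq:PhiDeriv} and~\eqref{eq:Phi_initial}, and by the uniqueness part of Theorem~\ref{thm:flow}\ref{en:exist} this is the trajectory on $\mdom{W} \supseteq \life{W}$.

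From the explicit formula, $\Linf{\traj{t}{W} - d} = \Linf{W-d}\cdot e^{-(k)_2 t} \to 0$ as $t\to\infty$, giving convergence in $L^\infty$ and hence in cut norm distance, so $\dest_\rul(W) \equiv d$. For the convergence times, the identity $\cutnd(\traj{t}{W}, d) = e^{-(k)_2 t}\cutn{W-d}$ is strictly decreasing in $t$, so for each fixed $W$ both $\inf\{t:\cutnd(\traj{t}{W},d)<\delta\}$ and $\sup\{t:\cutnd(\traj{t}{W},d)\ge\delta\}$ equal $\frac{1}{(k)_2}\ln(\cutn{W-d}/\delta)$. Taking the supremum over $W\in\Gra$ reduces to evaluating $\sup_W \cutn{W-d}$; this equals $\max\{d,1-d\}$, with the upper bound coming from $\cutn{\cdot}\le\Lone{\cdot}$ and the pointwise estimate $|W-d|\le\max\{d,1-d\}$, and with equality attained at the constant graphon $0$ or $1$. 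This yields~\eqref{eq:ignorant-convergence}.

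Finally, for the age I would analyse when the backward trajectory $\traj{-s}{W} = d + (W-d)e^{(k)_2 s}$ first leaves $\Gra$. A pixel with $W<d$ decreases and reaches $0$ at time $\frac{1}{(k)_2}\ln(d/(d-W))$; a pixel with $W>d$ increases and reaches $1$ at time $\frac{1}{(k)_2}\ln((1-d)/(W-d))$; pixels with $W=d$ stay put. Since both expressions are decreasing in $|W-d|$ on their respective ranges, taking essential infima/suprema shows that the earliest lower exit (if any) occurs at $\frac{1}{(k)_2}\ln(d/(d-i_W))$ and the earliest upper exit (if any) at $\frac{1}{(k)_2}\ln((1-d)/(s_W-d))$. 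Splitting according to which of $i_W<d$ and $s_W>d$ holds yields the four cases of~\eqref{eq:ageignorant}; the case $W\equiv d$ gives $\vel[\rul]W=0$ and hence infinite age. The only technical point requiring care is the bookkeeping around essential infima and suprema (the pointwise reasoning must be interpreted almost everywhere), which is otherwise routine.
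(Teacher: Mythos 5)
Your proposal is correct and follows essentially the same approach as the paper: compute the velocity $\vel[\rul]W=(k)_2(d-W)$ from~\eqref{eq:vel_prob} using independence of $\hh$ from the sampled vertices, verify the closed-form trajectory via the ODE and uniqueness, and read off destination, convergence times and age from the explicit formula. The only cosmetic difference is that you bound $\cutn{W-d}$ via $\Lone{\cdot}$ whereas the paper routes through $\Linf{\cdot}$; both give $\max\{d,1-d\}$ with the same extremal examples, so this is not a distinct argument.
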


\begin{proof}
  Let $W\in\Gra$. We first compute the velocity $\vel[\rul]$ using the notation preceding equation~\eqref{eq:vel_prob}. Since the distribution of $\hh$ is independent of $(U_v, v \in [k])$, by~\eqref{eq:vel_prob} we have
\begin{align*}
\vel[\rul]W(x,y) &= \sum_{a \neq b} \left[ \Pc{ ab \in \hh }{ U_a = x, U_b = y } - W(x,y) \right] \\ &= 2 \E e(\hh) - (k)_2 W(x,y) = (k)_2 \left( d - W(x,y) \right).
\end{align*}

It is clear that the initial condition $\traj{t}{W} = W$ is satisfied.	
By the uniqueness of trajectories, it is enough to verify that the function on the right hand side of \eqref{eq:ignoranttrajectory} satisfies the differential equation \eqref{eq:PhiDeriv}. For $x,y \in \Omega$ we set $f(t) = d + (W(x,y) - d) \cdot e^{-(k)_2t}$. Observe that for each $t \ge 0$ we have
\[
  \frac{f(t + \eps) - f(t)}{\eps} \to f'(t) = (k)_2 \left(d - W(x,y)\right) e^{-(k)_2t} = (k)_2 \left(d - f(t)\right)
\]
as $\eps \to 0$, where the convergence is uniform over all choices of $(x,y)\in\Omega^2$ by Taylor's theorem because $W$ is bounded. This implies that~\eqref{eq:ignoranttrajectory} indeed describes the trajectory starting at $W$ under the ignorant rule $\rul$.

Clearly, all trajectories converge to the same destination, the constant-$d$ graphon. By~\eqref{eq:ignoranttrajectory} we have $\cutnd(\traj{t}{W},d)\le \Linf{\traj{t}{W}-d} \le \max\{d,1-d\} e^{-(k)_2t}$, with both equalities attained at the constant-$0$ graphon if $d \ge 1/2$ and at the constant-$1$ graphon if $d \le 1/2$. Hence, we have $\tau_{\rul}^-(\delta)=\tau_{\rul}^+(\delta)=\frac{1}{(k)_2}\ln\left(\frac{\max\{d,1-d\}}{\delta}\right)$ for each $\delta\in(0,\frac{1}{2})$. Now observe that~\eqref{eq:ignoranttrajectory} tells us that for a point $(x,y)$ where $W(x,y)<d$, the value of $\traj{-t}{W}$ dips below~$0$ at $t = \frac{1}{(k)_2}\ln\frac{d}{d-W(x,y)}$, and for a point $(x,y)$ where $W(x,y)>d$, the value of $\traj{-t}{W}$ exceeds~$1$ at $t = \frac{1}{(k)_2}\ln\frac{1-d}{W(x,y)-d}$. This implies~\eqref{eq:ageignorant}.
\end{proof}

\section{Stirring flip processes}\label{sec:stirring}
All trajectories of ignorant flip processes converge to a constant given by the ignorant output density. Being a bit less ignorant this time, we introduce \emph{stirring flip processes}, which takes into account the edge count (but nothing else) of the drawn graph. In this section we focus on \emph{balanced} stirring flip processes, in which the expected number of edges in the replacement graph is equal to the number of edges in the drawn graph. In Section~\ref{sec:extremist} we will study \emph{extremist flip processes}, which are ``very unbalanced'' stirring flip processes.
\begin{defi}
Let $k\in\N$. We say that a rule $\rul$ of order $k$ is \emph{stirring} if for each $F$ and $F'$ with $e(F)=e(F')$ and each $H$, we have $\rul_{F,H} =\rul_{F',H}$. We say that the rule $\rul$ is \emph{balanced} if for each $F\in\lgr{k}$ we have $e(F) = \sum_{H\in\lgr{k}}\rul_{F,H}e(H)$.
\end{defi}
Two natural examples of a balanced stirring rule are the rule where a drawn graph with $\ell$ edges is replaced by the Erd\H{o}s--R\'enyi uniform random graph with $\ell$ edges and the rule where a drawn graph with $\ell$ edges is replaced by the Erd\H{o}s--R\'enyi binomial random graph with edge probability $\ell/\binom{k}{2}$.

Our main result describes the trajectories of a stirring flip process in the most important balanced case.
\begin{prop} \label{prop:stirring-trajectory}
  Suppose that $\rul$ is a balanced stirring rule of order $k \ge 3$ and let $W \in \Gra$. Then for every $t\in \life{W}$ we have
\begin{equation} \label{eq:stirring-trajectory}
\begin{split}
\traj{t}{W}(x,y) &= \Lone{W} + (\deg_W(x) + \deg_W(y) - 2\Lone{W}) \cdot e^{-t(k-1)_2} \\
&\quad + (W(x,y) - \deg_W(x) - \deg_W(y) + \Lone{W}) \cdot e^{-t((k)_2-2)}\;.
\end{split}
\end{equation}
In particular, the edge density stays constant along the trajectory and $\dest(W) = \Lone{W}$.
\end{prop}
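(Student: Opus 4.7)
The plan is: (i) compute $\vel[\rul]W$ in closed form for any graphon $W$; (ii) verify by direct differentiation that the right-hand side of~\eqref{eq:stirring-trajectory} satisfies the resulting autonomous differential equation with initial value $W$; (iii) conclude by uniqueness of trajectories (Theorem~\ref{thm:flow}).

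For (i), I begin from~\eqref{eq:vel_prob}, which gives
\[
\vel[\rul]W(x,y) = \sum_{a\neq b}\Pc{ab\in\hh}{U_a=x,U_b=y} - (k)_2 W(x,y).
\]
Since the rule is stirring, $\Pc{ab\in\hh}{\ff=F}$ depends on $F$ only through $e(F)$; denote this conditional probability by $\phi_{ab}(e(F))$. The pivotal observation is that the conditional law of $e(\ff)$ given $U_a=x,U_b=y$ is the same for every ordered pair $(a,b)$ with $a\neq b$ --- the edge count $e(\ff)$ is a permutation-invariant statistic of the vertices and the remaining $k-2$ positions are i.i.d.\ $\pi$-distributed. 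Call this common law $\nu_{x,y}$. Balancedness gives $\sum_{a\neq b}\phi_{ab}(m) = 2\E[e(\hh)\mid e(\ff)=m] = 2m$, so
\[
\sum_{a\neq b}\Pc{ab\in\hh}{U_a=x,U_b=y} = \sum_m \nu_{x,y}(m)\sum_{a\neq b}\phi_{ab}(m) = 2\E_{\nu_{x,y}}[e(\ff)].
\]
A standard linearity-of-expectation calculation over the $\binom{k}{2}$ edges of $\ff$ (separating those incident to both, one, or none of $\{a,b\}$) gives $\E_{\nu_{x,y}}[e(\ff)] = W(x,y) + (k-2)(\deg_W(x)+\deg_W(y)) + \binom{k-2}{2}\Lone{W}$. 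Simplifying with $(k)_2-2=(k-2)(k+1)$ and $2\binom{k-2}{2}=(k-2)(k-3)$ yields
\[
\vel[\rul]W(x,y) = (k-2)\bigl[2(\deg_W(x)+\deg_W(y)) + (k-3)\Lone{W} - (k+1)W(x,y)\bigr].
\]

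For (ii), decompose $W = U_0 + U_\ell + U_r$ with $U_0 := \Lone{W}$, $U_\ell(x,y) := \deg_W(x)+\deg_W(y)-2\Lone{W}$, and $U_r(x,y) := W(x,y)-\deg_W(x)-\deg_W(y)+\Lone{W}$, so that the right-hand side of~\eqref{eq:stirring-trajectory} reads $\Psi(t) := U_0 + e^{-(k-1)_2 t}U_\ell + e^{-((k)_2-2)t}U_r$. Direct integration yields $\Lone{U_\ell}=\Lone{U_r}=0$, $\deg_{U_\ell}(x)=\deg_W(x)-\Lone{W}$ and $\deg_{U_r}(x)=0$; hence $\Lone{\Psi(t)}\equiv\Lone{W}$ and $\deg_{\Psi(t)}(x) = \Lone{W}+e^{-(k-1)_2 t}(\deg_W(x)-\Lone{W})$. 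Plugging these into the formula for $\vel[\rul]$ and using $(k-2)(k-1)=(k-1)_2$ gives
\[
\vel[\rul]\Psi(t) = -(k-1)_2 e^{-(k-1)_2 t}U_\ell - ((k)_2-2)e^{-((k)_2-2)t}U_r = \frac{\D}{\D t}\Psi(t).
\]
Since $\Psi(0)=W$, Theorem~\ref{thm:flow} identifies $\Psi$ with $\traj{\cdot}{W}$ on $\life{W}$. The constancy of $\Lone{\traj{t}{W}}$ and $\dest(W)\equiv\Lone{W}$ then follow by letting $t\to\infty$.

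The main obstacle is the velocity computation: a stirring rule need not be invariant under relabellings of $[k]$, so $\phi_{ab}(m)$ can genuinely depend on $ab$ and individual conditional probabilities $\Pc{ab\in\hh}{U_a=x,U_b=y}$ are not available in closed form. The resolution rests on the two symmetries above --- that the scalar $e(\ff)$ (though not the random graph $\ff$ itself) has an $(a,b)$-independent conditional distribution, and that balancedness collapses $\sum_{a\neq b}\phi_{ab}(m)$ to $2m$ --- together ensuring that only the ``label-symmetric'' data of $W$, namely $W$ itself, $\deg_W$ and $\Lone{W}$, enter the final expression for $\vel[\rul]W$.
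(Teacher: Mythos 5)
Your proposal is correct and follows essentially the same route as the paper: the same use of~\eqref{eq:vel_prob}, the same two symmetries (stirring collapses $\Pc{ab\in\hh}{\ff=F}$ to a function of $e(F)$, permutation-invariance of $\ff$ makes the conditional law of $e(\ff)$ independent of $(a,b)$, and balancedness gives $\sum_{a\ne b}\phi_{ab}(m)=2m$), the same formula for $\Ec{e(\ff)}{U_1=x,U_2=y}$, and then verification of the candidate formula against the velocity followed by the uniqueness theorem. Your reorganization of the verification step into the decomposition $W=U_0+U_\ell+U_r$ (with $U_\ell,U_r$ in the kernels of $\deg$ and $\int\cdot\D\pi^2$, respectively) is a slightly cleaner way to carry out the same differentiation that appears in the paper, and it also makes the uniformity over $(x,y)$ of the $L^\infty$-difference-quotient limit transparent.
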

\begin{proof}
Let $W \in \Gra$ and set $d := \Lone{W}$. The following is our key claim.

\begin{claim} \label{claim:stirring-velocity}
Given a graphon $W\in\Gra$, for all $x,y\in\Omega$ we have
\begin{equation} \label{eq:stirring-velocity}
\vel[\rul]W(x,y) = (2k-4)(\deg_W(x) + \deg_W(y)) + (k-2)_2 d - \left( (k)_2 - 2 \right) W(x,y).
\end{equation}
\end{claim}

Before we prove this claim, let us first demonstrate how it implies our theorem. By the uniqueness of trajectories, it is enough to verify that \eqref{eq:stirring-trajectory} satisfies the differential equation \eqref{eq:PhiDeriv}. Write $f_{(x,y)}(t)$ for the right-hand side of \eqref{eq:stirring-trajectory}, set $U_t$ to be the graphon given by $U_t(x,y) := f_{(x,y)}(t)$ and set $g_x(t) := \deg_{U_t}(x) = d + (\deg_W(x) - d)e^{-t(k-1)_2}$. Observe that
\begin{align*}
f'_{(x,y)}(t) &= -(k-1)_2(\deg_W(x)+\deg_W(y)-2d)e^{-t(k-1)_2} \\
&\;\quad - ((k)_2-2)(W(x,y)-\deg_W(x)-\deg_W(y)+d)e^{-t((k)_2-2)} \\
&= \left( (k)_2 - 2 \right) (d - f_{(x,y)}(t)) + 2(k-2)(\deg_W(x) + \deg_W(y) - 2d)e^{-t(k-1)_2} \\
&= (2k-4)(g_x(t) + g_y(t)) + (k-2)_2 d - \left( (k)_2 - 2 \right) f_{(x,y)}(t)\;.
\end{align*}
Furthermore, for each $t \ge 0$ the convergence in $(f_{(x,y)}(t + \eps) - f_{(x,y)}(t))/\eps \to f'_{(x,y)}(t)$ is uniform over all choices of $(x,y)\in\Omega^2$ by Taylor's theorem because $W$ is bounded. This implies that $\frac{\D}{\D t} U_t = \vel[\rul] U_t$ in the norm $\Linf{\cdot}$. Moreover, we have that $U_0 = W$. Hence, by uniqueness \eqref{eq:stirring-trajectory} indeed describes the trajectory starting at $W$ under a balanced stirring rule.

It remains to prove Claim~\ref{claim:stirring-velocity}.

\begin{claimproof}[Proof of Claim~\ref{claim:stirring-velocity}]
  Let $W\in\Gra$ and $x,y\in\Omega$. Using the notation preceding equation~\eqref{eq:vel_prob}, observe that
\begin{equation} \label{eq:stirring-cond-edges-F}
\begin{split}
\Ec{e(\ff)}{U_1 = x, U_2 = y} &= \frac{1}{2}\sum_{a \neq b} \Pc{ ab \in \ff }{ U_1 = x, U_2 = y } \\
&=W(x,y) + (k-2) \left(\deg_W(x) + \deg_W(y)\right) + \binom{k-2}{2}d.
\end{split}
\end{equation}
Setting $q_m(ab) := \Pc{ab \in \hh}{e(\ff) = m}$, we have
\begin{equation} \label{eq:cond-edge-prob-ab}
\sum_{a \neq b}  q_m(ab) = 2\Ec {e(\hh)}{e(\ff) = m} = 2m.
\end{equation}
Since the distribution of $\ff$ is invariant under permutation of vertices, for any pair of distinct vertices $a$ and $b$ the quantity $\Pc{e(\ff) = m}{U_a = x, U_b = y}$ is the same; we shall denote it by $p_{x,y}(m)$. We have
\begin{equation} \label{eq:cond-edges-ab}
\Pc{ ab \in \hh }{ U_a = x, U_b = y } = \sum_{m = 0}^{\bik} p_{x,y}(m)q_m(ab).
\end{equation}
By summing~\eqref{eq:cond-edges-ab} over all pairs $a,b$ of distinct vertices, interchanging the order of summation, and applying~\eqref{eq:cond-edge-prob-ab}, we obtain
\begin{align*}
\sum_{a \neq b} \Pc{ ab \in \hh }{ U_a = x, U_b = y } = \sum_{m = 0}^{\bik} p_{x,y}(m) \sum_{a \neq b}  q_m(ab) &= \sum_{m = 0}^{\bik} \left[ p_{x,y}(m) \cdot 2 m \right] \\
&= 2\Ec{e(\ff)}{U_1 = x, U_2 = y}.
\end{align*}
Now by recalling \eqref{eq:vel_prob} and applying~\eqref{eq:stirring-cond-edges-F}, we obtain
\begin{align*}
\vel[\rul]W(x,y) &= \sum_{a \neq b} \left[ \Pc{ ab \in \hh }{ U_a = x, U_b = y } - W(x,y) \right], \\
&= 2\Ec{e(\ff)}{U_1 = x, U_2 = y} - (k)_2W(x,y) \\
&= (2k-4)(\deg_W(x) + \deg_W(y)) + (k-2)_2 d - W(x,y) \left( (k)_2 - 2 \right)
\end{align*}
as required.
\end{claimproof}
This completes the proof of the proposition.
\end{proof}

\subsection{Age of a balanced stirring process}
Recall that the age of a graphon $W$ is the infimum of times $t\ge 0$ for which the range of $\traj{-t}{W}$ is not wholly contained in $[0,1]$. That is, given a balanced stirring rule as in Proposition~\ref{prop:stirring-trajectory}, we want to know the infimum of $t\ge0$ where we have for some $x,y\in\Omega$ that
\begin{equation} \label{eq:Iwlos}
\begin{split}
\Lone{W} &+ (\deg_W(x) + \deg_W(y) - 2\Lone{W}) \cdot e^{t(k-1)_2} + \\
&+ (W(x,y) - \deg_W(x) - \deg_W(y) + \Lone{W}) \cdot e^{t((k)_2-2)}\;
\end{split}
\end{equation}
is more than~1 or less than~0. While there is no simple formula to deduce the age from~\eqref{eq:Iwlos} in general because we have two exponential terms with different coefficients and exponents, in the next proposition we give a formula for the age in two special cases: degree-regular graphons and graphons with a separable form. These correspond to the cases where only one exponential term has a positive coefficient. Further, we show that the age is finite except when the graphon is constant.

\begin{prop}\label{prop:agestirring}
Suppose that $\rul$ is a balanced stirring rule of order $k\ge3$. Then the following hold for all $W\in \Gra$. Write $d := \Lone{W}$. If $W = d$, then $\age{W} = \infty$. Otherwise, we have $\age{W} < \infty$ and in particular:
\begin{romenumerate}
\item\label{en:FF1} if $W$ is degree-regular, then
\begin{equation*}
  \age{W} = \frac{1}{(k)_2-2} \cdot \min \left\{\ln \frac{d}{d-\essinf(W)} \;,\; \ln \frac{1-d}{\esssup(W)-d}\right\}\;;
\end{equation*}
\item\label{en:FF1b} if $W$ is of the form $W(x,y) = f(x) + f(y)$, then
\begin{equation*}
  \age{W} = \frac{1}{(k-1)_1} \cdot \min \left\{\ln \frac{d}{d-\essinf(W)} \;,\; \ln \frac{1-d}{\esssup(W)-d}\right\}\;.
\end{equation*}
\end{romenumerate}
\end{prop}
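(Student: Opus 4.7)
The strategy is to invert time in the explicit trajectory formula from Proposition~\ref{prop:stirring-trajectory}, analyze when the resulting expression leaves $[0,1]$ on a positive-measure subset of $\Omega^2$, and then mirror the age computation in Proposition~\ref{prop:ignorant-behaviour}. Set $d := \Lone{W}$, $c_1(x,y) := \deg_W(x) + \deg_W(y) - 2d$, and $c_2(x,y) := W(x,y) - \deg_W(x) - \deg_W(y) + d$. Replacing $t$ by $-t$ in~\eqref{eq:stirring-trajectory} yields
\[
  \traj{-t}{W}(x,y) = d + c_1(x,y) e^{t(k-1)_2} + c_2(x,y) e^{t((k)_2-2)}.
\]
Note that $c_1 + c_2 = W - d$; that $c_1 \equiv 0$ a.e.\ precisely when $W$ is degree-regular; and that a short computation (using $\Lone{W} = 2\int f$) shows $c_2 \equiv 0$ a.e.\ whenever $W(x,y) = f(x)+f(y)$.

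If $W \equiv d$, then $c_1 \equiv c_2 \equiv 0$, so $\traj{-t}{W} \equiv d$ for all $t$ and $\age{W} = \infty$. For cases~(i) and~(ii) with $W \not\equiv d$, exactly one of the exponential terms survives, yielding
\[
  \traj{-t}{W}(x,y) = d + (W(x,y) - d)\, e^{t C},
\]
with $C = (k)_2 - 2$ in case~(i) and $C = (k-1)_2$ in case~(ii). From here the last step of the proof of Proposition~\ref{prop:ignorant-behaviour} carries over verbatim: the value exceeds~$1$ on a positive-measure set first at $t = \frac{1}{C}\ln\frac{1-d}{\esssup W - d}$ (when $\esssup W > d$) and first dips below~$0$ at $t = \frac{1}{C}\ln\frac{d}{d - \essinf W}$ (when $\essinf W < d$); the minimum of these times is the age.

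It remains to show $\age{W} < \infty$ whenever $W \not\equiv d$. If $c_2 \equiv 0$ a.e., then $W(x,y) = \deg_W(x) + \deg_W(y) - d$ a.e., which puts $W$ into the separable form of case~(ii) with $f(x) := \deg_W(x) - d/2$, and finiteness follows from what was just established. Otherwise, choose $\delta > 0$ and a positive-measure set $A \subset \Omega^2$ on which $c_2$ has constant sign and $|c_2| \ge \delta$. Since $|c_1| \le 4$ everywhere and the gap $(k)_2 - 2 - (k-1)_2 = 2(k-2)$ is strictly positive for $k \ge 3$, the $c_2$-term eventually dwarfs the $c_1$-term on $A$, so $\traj{-t}{W}$ escapes $[0,1]$ on a positive-measure subset of $A$ in finite time.

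The main technical subtlety is this last step, where both $c_1$ and $c_2$ may be nontrivial and can transiently cancel; the strict gap between the exponential growth rates, valid precisely when $k \ge 3$, is what guarantees that the $c_2$-term ultimately wins. The simplifications for cases~(i) and~(ii) are purely algebraic, reducing the problem to the single-exponential analysis already carried out in the ignorant setting.
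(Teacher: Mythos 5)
Your proof follows essentially the same route as the paper: invert time in the explicit trajectory formula~\eqref{eq:stirring-trajectory}, observe that one of the two exponential terms vanishes in each special case, and then mirror the age computation from the ignorant setting. The finiteness argument differs only in presentation --- the paper observes directly that, since the two exponential rates differ, the expression can remain bounded only if both coefficients vanish a.e., whereas you split on whether $c_2\equiv 0$ and use a domination argument in the generic case. Both versions are correct and morally identical, hinging on the strict gap $(k)_2-2-(k-1)_2 = 2(k-2)>0$.

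However, you should flag a discrepancy you in fact discovered. In case~\ref{en:FF1b}, your (correct) derivation gives the prefactor $\frac{1}{(k-1)_2}$, coming from the $e^{-t(k-1)_2}$ in~\eqref{eq:stirring-trajectory}; one can also confirm it directly, since for $W(x,y)=f(x)+f(y)$ the stirring velocity reduces to $\vel W = (k-1)_2(d-W)$. The statement you were asked to prove, as well as the corresponding line in the paper's own proof, instead write $\frac{1}{(k-1)_1}$. This is a typo in the paper: $(k-1)_1 = k-1 \ne (k-1)(k-2) = (k-1)_2$ for $k\ge 4$. You silently ``proved'' a formula different from the one stated without remarking on it; a proof-check should call that out explicitly rather than quietly substitute the corrected constant. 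As a minor aside, $|c_1|\le 4$ can be sharpened to $|c_1|\le 2$ (each of $\deg_W(x),\deg_W(y),d$ lies in $[0,1]$), though this does not affect your argument, which only needs $c_1$ uniformly bounded.
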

\begin{proof}
  $W = d$ is clearly a fixed point, so we have $\age{W} = \infty$. We now show that $\age{W} = \infty$ only if $W = d$. Suppose that $\age{W} = \infty$. 
  Since $(k-1)_2 \neq (k)_2 - 2$, the expression in~\eqref{eq:Iwlos} can stay in $[0,1]$ for all $x,y \in\Omega$ only if the coefficients of both exponential terms are zero. 
  The first one implies $\deg_W \equiv d$ and then the second one implies $W \equiv d$.

  Hence, we further assume that $W$ is not constant, which implies $\essinf W < d < \esssup W$. In the case~\ref{en:FF1}, the equation~\eqref{eq:Iwlos} simplifies to
\begin{equation*}
d+(W(x,y) - d) \cdot e^{t((k)_2-2)}\;,
\end{equation*}
and then the formula for age indeed expresses the time when the values of $\traj{-t}{W}$ first cross one of the two thresholds $0$ and $1$.

  The case~\ref{en:FF1b} is equivalent to $W(x,y) = \deg_W(x) + \deg_W(y) - d$. Hence \eqref{eq:Iwlos} simplifies to
\begin{equation*}
d+(W(x,y) - d) \cdot e^{t(k-1)_1}\;,
\end{equation*}
which implies the age in the same way as in the case~\ref{en:FF1}.
\end{proof}

\section{Complementing flip processes} \label{sec:complementing}

The \emph{complementing rule of order $k$} is the rule of order $k$ where the replacement graph is the complement of the drawn graph. Our main result about complementing rules is Proposition~\ref{prop:complementing-behaviour}, which explicitly describes their trajectories, destinations, speed of convergence and age.

\begin{prop} \label{prop:complementing-behaviour}
  Suppose that $\rul$ is the complementing rule of order $k\ge 2$. Then for each graphon $W\in\Gra$ and $t\in \life{W}$ we have
\begin{equation} \label{eq:complementing-trajectory}
\traj{t}{W}(x,y) = \frac{1}{2} + \left( W(x,y) - \frac{1}{2} \right) \cdot e^{-2(k)_2t}\;.
\end{equation}
Hence, we have $\dest(W) \equiv 1/2$ and 
\begin{equation} \label{eq:complementing-convergence}
\tau_{\rul}^-(\delta)=\tau_{\rul}^+(\delta)=\frac{-\ln(2\delta)}{2(k)_2}
\end{equation}
for each $0 < \delta < \frac{1}{2}$. Furthermore, writing $i_W := \essinf(W)$ and $s_W := \esssup(W)$, we have
\begin{equation}\label{eq:complementing-age}
  \age{W} =
  \begin{cases}
    \frac{1}{2(k)_2} \min \left\{\ln \frac{1}{1-2i_W} \;,\; \ln \frac{1}{2s_W-1}\right\} \quad & \text{if } i_W < 1/2 < s_W\;, \\
    \frac{1}{2(k)_2} \ln \frac{1}{1-2i_W} \quad &\text{if } i_W < s_W \le 1/2\;, \\
    \frac{1}{2(k)_2} \ln \frac{1}{2s_W} \quad &\text{if } 1/2 \le i_W < s_W\;, \\
    \infty & \text{ if } W \equiv 1/2.
  \end{cases}
\end{equation}
\end{prop}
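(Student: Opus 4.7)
The proof should closely mirror the structure used for Proposition~\ref{prop:ignorant-behaviour}. I would split the argument into four parts: (a) compute the velocity operator explicitly, (b) verify the trajectory formula, (c) read off the destination and convergence times, and (d) compute the age.

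For (a), I would use the formulation of the velocity operator in~\eqref{eq:vel_prob} with the random pair $(\ff,\hh)$ defined in Section~\ref{ss:velocity}. Under the complementing rule we have $\hh = \co{\ff}$ deterministically, hence for any ordered pair $a\ne b$ of indices in $[k]$,
\[
\Pc{ab\in\hh}{U_a=x,U_b=y} = 1-\Pc{ab\in\ff}{U_a=x,U_b=y} = 1-W(x,y).
\]
Summing over the $(k)_2$ ordered pairs yields
\[
\vel[\rul]W(x,y) = (k)_2\bigl(1-W(x,y)\bigr)-(k)_2 W(x,y) = -2(k)_2\left(W(x,y)-\tfrac{1}{2}\right).
\]

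For (b), I mimic the verification in the proof of Proposition~\ref{prop:ignorant-behaviour}. Define $f_{x,y}(t) := \frac{1}{2}+(W(x,y)-\frac{1}{2})e^{-2(k)_2 t}$ and let $U_t$ be the kernel with $U_t(x,y):=f_{x,y}(t)$. Then $f_{x,y}'(t) = -2(k)_2(f_{x,y}(t)-\frac12) = \vel[\rul]U_t(x,y)$ pointwise. By Taylor's theorem, since $W$ is bounded, the convergence $(f_{x,y}(t+\eps)-f_{x,y}(t))/\eps \to f_{x,y}'(t)$ is uniform over $(x,y)\in\Omega^2$, so $\frac{\D}{\D t}U_t = \vel[\rul]U_t$ in $(\Kernel,\Linf{\cdot})$. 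Since $U_0=W$, uniqueness of trajectories (Theorem~\ref{thm:flow}\ref{en:exist}) gives $\traj{t}{W}=U_t$, establishing~\eqref{eq:complementing-trajectory}.

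For (c), as $t\to\infty$ the right-hand side of~\eqref{eq:complementing-trajectory} converges uniformly to $\tfrac12$, so $\dest(W)\equiv \tfrac12$. To bound $\cutnd(\traj{t}{W},\tfrac12)$, observe
\[
\cutnd\bigl(\traj{t}{W},\tfrac{1}{2}\bigr) \le \Linf{\traj{t}{W}-\tfrac{1}{2}} \le \tfrac{1}{2}e^{-2(k)_2 t},
\]
with equality achieved at the constant-$0$ (or constant-$1$) graphon. Solving $\tfrac12 e^{-2(k)_2 t}=\delta$ yields $\tau_{\rul}^-(\delta)=\tau_{\rul}^+(\delta)=\frac{-\ln(2\delta)}{2(k)_2}$.

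For (d), the age is the supremum of $t\ge 0$ for which $\traj{-s}{W}(x,y)\in[0,1]$ for almost every $(x,y)$ and every $s\in[0,t]$. Plugging $-t$ into~\eqref{eq:complementing-trajectory}, the pointwise value equals $\tfrac{1}{2}+(W(x,y)-\tfrac12)e^{2(k)_2 t}$. For $W(x,y)>\tfrac12$ this value first exceeds $1$ at $t=\frac{1}{2(k)_2}\ln\frac{1}{2W(x,y)-1}$, while for $W(x,y)<\tfrac12$ it first drops below $0$ at $t=\frac{1}{2(k)_2}\ln\frac{1}{1-2W(x,y)}$. The tightest constraints come from the essential extrema $s_W$ and $i_W$, and splitting into the four cases in~\eqref{eq:complementing-age} yields the claim; the case $W\equiv\tfrac{1}{2}$ gives a fixed point (since $\vel[\rul](\tfrac12)=0$) and hence infinite age. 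The main thing to be careful about is that the essential sup/inf of $W$ correctly control the time at which the trajectory leaves $\Gra$, which is straightforward here because all pointwise thresholds are monotone functions of $W(x,y)$.
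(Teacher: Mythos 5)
Your proposal is correct and follows essentially the same route as the paper's own proof: compute $\vel[\rul]W(x,y) = (k)_2(1-2W(x,y))$ via~\eqref{eq:vel_prob} using $\hh = \co{\ff}$, verify the explicit trajectory formula pointwise with uniform convergence (Taylor) and invoke uniqueness from Theorem~\ref{thm:flow}, then read off the destination, convergence times, and age from the pointwise formula exactly as the paper does. No gaps or meaningful deviations.
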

\begin{proof}
  We calculate the velocity using the notation preceding the formula~\eqref{eq:vel_prob}. Note that $ab \in \hh$ if and only if $ab \notin \ff$, so
\[
  \Pc{ ab \in \hh }{ U_a = x, U_b = y } = \Pc{ ab \notin \ff }{ U_a = x, U_b = y } = 1 - W(x,y)\;.
\]
Hence, by~\eqref{eq:vel_prob} we have
\[
  \vel[\rul]W(x,y) = \sum_{1 \le a \neq b \le k} (1 - W(x,y) - W(x,y)) = (k)_2 (1 - 2W(x,y))\;.
\]
Write $f_{(x,y)}(t)$ for the right-hand side of~\eqref{eq:complementing-trajectory}. Observe that for each $t \ge 0$ we have
\[
  \frac{f_{(x,y)}(t + \eps) - f_{(x,y)}(t)}{\eps} \to f'_{(x,y)}(t) = (k)_2(1-2W(x,y))e^{-2(k)_2t} = (k)_2 \left(1-2f_{(x,y)}(t)\right)
\]
as $\eps \to 0$, where the convergence is uniform over all choices of $(x,y)\in\Omega^2$ by Taylor's theorem because $W$ is bounded. Since the trajectory $\traj{\cdot}W$ is unique by Theorem~\ref{thm:flow}, we have $\traj{t}{W}(x,y)=f_{(x,y)}(t)$ as desired.

Clearly, the unique destination is the constant-$\frac{1}{2}$ graphon. By~\eqref{eq:ignoranttrajectory} we have $\cutnd(\traj{t}{W},\frac{1}{2})\le \Linf{\traj{t}{W}-\frac{1}{2}} \le \frac{1}{2} e^{-2(k)_2t}$, with both equalities attained at the constant-$0$ and constant-$1$ graphons. Hence, we have $\tau_{\rul}^-(\delta)=\tau_{\rul}^+(\delta)=\frac{-\ln(2\delta)}{2(k)_2}$ for each $\delta\in(0,\frac{1}{2})$. Note that~\eqref{eq:complementing-trajectory} tells us that for a point $(x,y)$ where $W(x,y)<\frac{1}{2}$, the value of $\traj{-t}{W}$ dips below~$0$ at $t = -\frac{1}{2(k)_2}\ln(1-2W(x,y))$, and for a point $(x,y)$ where $W(x,y)>\frac{1}{2}$, the value of $\traj{-t}{W}$ exceeds~$1$ at $t = -\frac{1}{2(k)_2}\ln(2W(x,y)-1)$. This implies~\eqref{eq:complementing-age}.
\end{proof}

We conjecture that the complementing flip process exhibits the fastest convergence among flip processes whose trajectories converge to a single destination. Observe that the convergence time of the complementing flip process (see~\eqref{eq:complementing-convergence}) is half that of an ignorant process with output density $1/2$ (cf.~\eqref{eq:ignorant-convergence}).

\begin{conj} \label{conj:complementingfastest}
Suppose that $\rul$ is a convergent flip process of order $k$ with a single destination. Then for each $\delta\in(0,\frac{1}{2})$ we have $\tau_{\rul}^-(\delta)\ge \frac{-\ln(2\delta)}{2(k)_2}$.
\end{conj}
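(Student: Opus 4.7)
The plan is to exhibit two ``extremal'' initial graphons whose trajectories cannot merge faster than rate $2(k)_2$, and then deduce the lower bound on $\tau_\rul^-(\delta)$ by a triangle inequality. Take $W_0 \equiv 0$ and $W_1 \equiv 1$; since $\rul$ has a unique destination $D$, both $\traj{t}{W_0}$ and $\traj{t}{W_1}$ tend to $D$ in cut norm. The key quantity to track is the edge-density gap $\Psi(t) := t(K_2, \traj{t}{W_1}) - t(K_2, \traj{t}{W_0})$, which starts at $\Psi(0) = 1$ and tends to $0$. Integrating equation~\eqref{eq:vel_integral2} for each trajectory and subtracting gives
\[
\Psi'(t) = 2 \left( \E e(\hh_1) - \E e(\hh_0) \right) - (k)_2 \Psi(t),
\]
where $\hh_i$ denotes the replacement graph when the drawn graph is sampled from $\G(k, \traj{t}{W_i})$.

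The central technical step is to establish
\[
\E e(\hh_1) - \E e(\hh_0) \ge -\binom{k}{2} \Psi(t) \qquad \text{for all } t \ge 0,
\]
which rearranges to $\Psi'(t) \ge -2(k)_2 \Psi(t)$ and hence (by the standard Gr\"onwall argument applied to $t \mapsto e^{2(k)_2 t} \Psi(t)$) to $\Psi(t) \ge e^{-2(k)_2 t}$. Equivalently, this inequality reads as $\E(e(\ff) + e(\hh))(\traj{t}{W_1}) \ge \E(e(\ff) + e(\hh))(\traj{t}{W_0})$, that is, the functional $W \mapsto \E(e(\ff) + e(\hh))(W)$ is monotone along the pair of trajectories. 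Note that for the complementing rule $e(\ff) + e(\hh) = \binom{k}{2}$ identically, so this functional is constant and the inequality holds with equality, consistent with the complementing rule being extremal.

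Given the bound $\Psi(t) \ge e^{-2(k)_2 t}$, taking $S = T = \Omega$ in the definition~\eqref{eq:defcnd} of the cut norm shows $\cutnd(\traj{t}{W_0}, \traj{t}{W_1}) \ge \Psi(t) \ge e^{-2(k)_2 t}$. The triangle inequality then gives
\[
\cutnd(\traj{t}{W_0}, D) + \cutnd(\traj{t}{W_1}, D) \ge e^{-2(k)_2 t},
\]
so at each time $t$ at least one of the two cut norm distances is at least $\tfrac12 e^{-2(k)_2 t}$, and for $t < -\ln(2\delta)/(2(k)_2)$ this exceeds $\delta$. A small additional argument, exploiting that cut norm convergence is eventually monotone along one of $W_0, W_1$ (controlled via the linearization of $\vel$ at $D$), converts this ``at each time, at least one trajectory is far'' statement into the required $\tau_\rul^-(\delta) \ge -\ln(2\delta)/(2(k)_2)$.

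The main obstacle is the monotonicity claim on $W \mapsto \E(e(\ff) + e(\hh))(W)$. For stirring rules it reduces to a polynomial inequality in the edge density which can be checked by hand. For general rules, however, the functional depends on the full graphon and not merely on its edge density, and the single-destination hypothesis must be leveraged. I expect the crux is to show that the linearization of $\vel$ at $D$ has no eigenvalue exceeding $2(k)_2$ in magnitude on the relevant subspace; this infinitesimal version of the conjecture already captures the essential difficulty, and a proof of it would likely combine the pointwise velocity bound of Lemma~\ref{lem:velocity-bound} with a convexity or Kruskal--Katona-type inequality to control the higher-order terms of $\E e(\hh)$ viewed as a graphon functional.
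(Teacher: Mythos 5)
First, note that this statement is an open conjecture in the paper (Conjecture~\ref{conj:complementingfastest}); no proof is offered there, so there is no authorial argument to compare against. Your proposal is an independent attempt, and I should point out that the secondary concern you raise at the end is actually not a real issue for your chosen pair: a convergent rule with a single destination must have a constant destination $c$ (since the constant-$0$ and constant-$1$ trajectories stay constant and must converge to it), and then the cut-norm distance from a constant trajectory to the constant-$c$ graphon is just $|p_i(t)-c|$, which is monotone because the scalar ODE $p'=g(p)$ can have only one zero in $[0,1]$ under the single-destination hypothesis. So the reduction to a statement about the two constant trajectories is clean.

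The genuine gap is exactly the one you flag as the ``main obstacle'': the inequality $\Psi(t)\ge e^{-2(k)_2 t}$, equivalently the monotonicity of $W\mapsto \E\bigl(e(\ff)+e(\hh)\bigr)(W)$ along the pair, is not merely unproven but false. Its infinitesimal form at the destination is $\tfrac{\D}{\D p}\E e(\hh)(p)\big|_{p=c}\ge -\binom{k}{2}$, and nothing in the hypotheses forces this. Take the rule of order $3$ that replaces $\overline{K_3}$ by $K_3$ and replaces every other drawn graph by $\overline{K_3}$. On the constant-$p$ graphon one finds $\E e(\hh)(p)=3(1-p)^3$ and hence $g(p)=6(1-p)^3-6p$, whose unique zero in $[0,1]$ is $c$ satisfying $(1-c)^3=c$ (numerically $c\approx 0.318$); then $g'(c)=-18(1-c)^2-6\approx -14.4<-2(k)_2=-12$. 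Consequently $\Psi(t)=p_1(t)-p_0(t)\sim C e^{g'(c)t}$ as $t\to\infty$, so $\Psi(t)e^{2(k)_2 t}\to 0$ and your bound fails for all large $t$. Concretely, $\Psi'\ge -2(k)_2\Psi$ is equivalent to $(1-p_0)^2+(1-p_0)(1-p_1)+(1-p_1)^2\le 1$, which holds with equality at $t=0$ but fails in the limit since $3(1-c)^2\approx 1.40>1$. (One would still need to check that this particular rule has a single destination to promote it to a formal counterexample to your lemma under the conjecture's hypotheses, but the computation already shows that the hypotheses place no useful constraint on the slope of $p\mapsto\E e(\hh)(p)$ at $c$.) So the approach through the edge-density gap of the $\{0,1\}$ pair cannot work as stated; a proof of the conjecture would have to find a different quantity that the single-destination hypothesis actually forces to decay at rate at most $2(k)_2$. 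Note also that this does not refute the conjecture itself: $\tau_{\rul}^-$ is a supremum over \emph{all} $W$, and in the example above the trajectory of $W_1\equiv 1$ moves at speed only $|g(1)|=6$ near $t=0$, so the \emph{global} convergence time may well still obey the conjectured bound.
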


\section{Extremist flip processes} \label{sec:extremist}

In this section we define the extremist rules and prove results about their behaviour.

\begin{defi}[Extremist rule]
Let $k\in\N$. The \emph{extremist} rule $\rul$ of order $k$ is defined by
\begin{equation} \label{eq:extremist-rule}
\rul_{F,H}=
\begin{cases}
1&\textrm{ if }e(F)<\frac{1}{2}\binom{k}{2}\textrm{ and }H=\overline{K_k}, \\
1&\textrm{ if }e(F)>\frac{1}{2}\binom{k}{2}\textrm{ and }H=K_k, \\
1&\textrm{ if }e(F)=\frac{1}{2}\binom{k}{2}\textrm{ and }H=F, \\
0&\textrm{ otherwise }.
\end{cases}
\end{equation}
\end{defi}

A possible variation of the extremist rule is to permit the replacement of graphs with $\binom{k}{2}/2$ edges by a clique or empty graph with equal probabilities. In fact, most proofs from this section (except for Theorem~\ref{thm:extremist-zero-threshold-bound}) work for this variation, but for the sake of simplicity we shall not consider it.

\subsection{Behaviour on constants}

We begin by considering the behaviour of trajectories under extremist rules on constant graphons. Recall that a trajectory that starts with a constant graphon takes values only among constant graphons and the velocity is a constant function (see \cite[Corollary~\ref{FIRSTPAPER.coro:steps_stay}]{Flip1}). Therefore, we shall identify the constant-$p$ kernel with the number $p$. We prove the following proposition, which implies that the trajectory of an extremist rule which starts from the constant-$p$ graphon converges to $0$ if $p< 1/2$, to $1$ if $p > 1/2$ and is fixed for $p = 1/2$.

\begin{prop} \label{prop:extremist-constant-velocity}
If $\rul$ is the extremist rule of order $k \ge 3$, then we have $\vel[\rul] (p) = 0$ for $p \in \left\{ 0, 1/2, 1 \right\}$, $\vel[\rul] (p) < 0$ for $p\in (0,1/2)$ and $\vel[\rul] (p) > 0$ for $p\in (1/2,1)$.
\end{prop}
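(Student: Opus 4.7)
My plan is to reduce the statement to a one-variable binomial calculation. Since $W \equiv p$ is constant, the velocity $\vel[\rul] W$ is also constant (as recalled from \cite{Flip1}), so \eqref{eq:vel_integral} gives
\[
  \vel[\rul](p) \;=\; \int \vel[\rul] W \D \pi^2 \;=\; 2\E\bigl(e(\hh) - e(\ff)\bigr),
\]
where $\ff \sim \G(k,p)$ and $\hh$ is its image under the extremist rule. Setting $N := \binom{k}{2}$ and $B := e(\ff) \sim \Bin(N,p)$, the definition of $\rul$ gives
\[
  \vel[\rul](p) \;=\; 2\sum_{m=0}^{N} \Prob(B = m) \cdot g(m), \qquad g(m) := \begin{cases} -m & \text{if } m < N/2, \\ 0 & \text{if } m = N/2, \\ N - m & \text{if } m > N/2. \end{cases}
\]

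The cases $p \in \{0,1\}$ are immediate: $B$ is concentrated at an index where $g$ vanishes. For the remaining values $p \in (0,1)$, I would pair each index $m$ with $N/2 < m \le N$ with its mirror $N - m$, producing the joint contribution $(N - m)\bigl[\Prob(B = m) - \Prob(B = N - m)\bigr]$. The binomial formula yields
\[
  \frac{\Prob(B = m)}{\Prob(B = N - m)} \;=\; \left(\frac{p}{1-p}\right)^{2m - N},
\]
so for every $N/2 < m < N$ this ratio is strictly less than, equal to, or greater than $1$ according as $p$ is less than, equal to, or greater than $1/2$. Hence each such pair contributes with the strict sign of $p - 1/2$, while the endpoint $m = N$ (for which $N - m = 0$) and the possible middle term $m = N/2$ contribute zero.

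To finish, I need at least one strictly contributing pair when $p \neq 1/2$. The hypothesis $k \ge 3$ yields $N \ge 3$, and then $m := N - 1$ satisfies $N/2 < m < N$, producing such a pair. Summing, $\vel[\rul](p)$ has the strict sign of $p - 1/2$ for $p \in (0,1) \setminus \{1/2\}$, while $\vel[\rul](1/2) = 0$ since all pair-differences vanish at $p = 1/2$. The only mild obstacle is a bookkeeping one: the pairing around $N/2$ is what makes the cancellation at $1/2$ transparent and simultaneously delivers the correct sign elsewhere, and the condition $k \ge 3$ enters precisely to guarantee a single strictly contributing pair.
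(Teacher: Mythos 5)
Your proof is correct, and it takes a genuinely different route from the paper. The paper reduces $\vel[\rul](p)$ (via \eqref{eq:vel_integral2}) to $n\bigl(f_n(p)-2p\bigr)$ with $f_n(p)=\Prob(B_{n,p}>n/2)+\Prob(B_{n,p}\ge n/2)$, and then proves the sign by a calculus argument: it differentiates the binomial PMF, shows $f_n'$ is a constant times $(p(1-p))^{\flo{(n-1)/2}}$ (hence strictly increasing on $(0,1/2)$ and strictly decreasing on $(1/2,1)$), concludes that $f_n$ is strictly convex on $(0,1/2)$ and strictly concave on $(1/2,1)$, and finally reads off the sign of $\vel[\rul](p)$ from the fact that a strictly convex function vanishing at $0$ and $1/2$ is strictly negative in between. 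Your argument instead works directly with $2\E\bigl(e(\hh)-e(\ff)\bigr)$, splits the sum by conditioning on $e(\ff)=m$, and exploits the reflection $m\leftrightarrow N-m$: each pair contributes $(N-m)\bigl[\Prob(B=m)-\Prob(B=N-m)\bigr]$, whose sign is governed by the ratio $\bigl(p/(1-p)\bigr)^{2m-N}$. This is more elementary — no derivative of the PMF, no convexity, no appeal to the intermediate-value structure of a convex function — and it makes the symmetry around $p=1/2$ transparent. The paper's approach has the minor advantage of giving slightly more quantitative information (an explicit formula for $f_n'$), but for the sign statement alone your pairing argument is cleaner. The one place to be careful (and you are) is the need for a strictly contributing pair: $k\ge 3$ gives $N\ge 3$, so $m=N-1$ satisfies $N/2<m<N$ and the corresponding coefficient $N-m=1$ is positive, which is exactly where $k\ge 3$ is used.
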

\begin{proof}
  Since $\vel[\rul](p)$ is a constant kernel, we have $\vel[\rul](p) \equiv \int_{\Omega^2} \vel[\rul](p)$. Therefore, by~\eqref{eq:vel_integral2} and denoting by $\G$ the random graph $\G(k,p)$, we have
\begin{align*}
  \vel[\rul] (p) 	&=    2 \left[ \bik \Prob \left(e(\G) > \bik/2\right) + \bik/2 \cdot \Prob \left(e(\G) = \bik/2\right)  \right] - (k)_2 p.
\end{align*}
Writing $n = \binom{k}{2}$ and $B_{n,p}$ for a binomial random variable with parameters $n$ and $p$, we have
\begin{align*}
	\vel[\rul] (p) &=  2n \Prob(B_{n,p} > n/2) + n \Prob(B_{n,p} = n/2)  - 2np \\
	&= n \big( \Prob(B_{n,p} > n/2) + \Prob(B_{n,p} \ge n/2) - 2p \big).
\end{align*}
It is clear that $\vel[\rul] (p) = 0$ for $p \in \left\{ 0,1/2,1 \right\}$. To prove the proposition, it is enough to show that for the function
\[
  f_n(p) := \Prob(B_{n,p} > n/2) + \Prob(B_{n,p} \ge n/2)
\]
its derivative $f_n'$ is strictly increasing on $(0,1/2)$ and strictly decreasing on $(1/2, 1)$. Indeed, this implies that $p \mapsto f_n(p)$ is strictly convex on $(0,1/2)$ and strictly concave on $(1/2, 1)$. Then, these properties are inherited by $p \mapsto \vel[\rul](p)$, thereby implying that $\vel[\rul] (p)$ is negative on $(0,1/2)$ and positive on $(1/2,1)$. Observe that
\begin{align*}
	\frac{\D}{\D p} \left( \Prob(B_{n,p} = k) \right) &= \frac{\D}{\D p} \left( \binom{n}{k}p^k(1-p)^{n-k} \right) \\
	&= \binom{n}{k} \left(  kp^{k-1}(1-p)^{n-k} - (n-k)p^k(1-p)^{n-k-1} \right) \\
	&= n \left[ \binom{n-1}{k-1}  p^{k-1}(1-p)^{n-k} - \binom{n-1}{k}p^k(1-p)^{n-k-1} \right] \\
	&= n \left[ \Prob(B_{n-1,p} = k - 1) - \Prob(B_{n-1,p} = k ) \right].
	\end{align*}
	Therefore, we have
	\begin{align*}
	f_n'(p)
	&= n\sum_{k > n/2} \left[ \Prob(B_{n-1,p} = k - 1) - \Prob(B_{n-1,p} = k) \right] \\
	&\quad + n\sum_{k \ge n/2} \left[ \Prob(B_{n-1,p} = k - 1) - \Prob(B_{n-1,p} = k) \right] \\
	&= n \cdot \left( \prob{B_{n-1,p} = \flo{\frac{n-1}{2}}} + \prob{B_{n-1,p} = \cei{\frac{n-1}{2}}} \right) \\
	&= n \cdot \begin{cases}
	2\Prob(B_{n-1,p} = \flo{\frac{n-1}{2}}) , &\quad n \text{ odd} \\
	\Prob(B_{n-1,p} = \flo{\frac{n-1}{2}}) \left(1+\frac{p}{1-p} \right) &\quad n \text{ even}
	\end{cases} \\
	&= \left( 1 + \indic_{n \text{ odd}} \right) n \cdot \binom{n-1}{\flo{\frac{n-1}{2}}} (p(1-p))^{\flo{\frac{n-1}{2}}},
\end{align*}
Now $k \ge 3$ implies that $\flo{\frac{n-1}{2}} \ge 1$, so the polynomial $f'_n(p)$ inherits from the parabola $p(1-p)$ the property of being strictly increasing on $(0,1/2)$ and strictly decreasing on $(1/2,1)$. This completes our proof.
\end{proof}

\subsection{Density thresholds}
In the previous subsection we saw that for constant graphons we have $\dest_\rul(W) = 0$ as soon as the density of $W$ is below $1/2$. In this subsection we consider when we start with an arbitrary initial graphon $W$ and investigate what density guarantees $\dest_\rul(W) = 0$. We define the \emph{density threshold} for the extremist rule $\rul$ of order $k\ge3$ to be
\[
  \theta(k) := \sup\{p \in [0,1] : \forall W \in \Gra \ \Lone{W} \le p \text{ implies } \dest_\rul(W) = 0 \}.
\]

In the following proposition, we collect a few useful properties related to the density threshold.

\begin{prop}
  \label{prop:extremist_symm}
  The following holds for the extremist rule $\rul$ of order $k\ge3$.
  \begin{romenumerate}
    \item \label{item:extremist_sym} For every $W\in\Gra$ and every $t \in \life{W}$ we have $\traj{t}(1-W) = 1 - \traj{t}W$. In particular, if $\dest_{\rul}(W) = 0$, then $\dest_{\rul}(1-W) = 1$.
  \item \label{item:extremist_sym_upper} For each $W\in\Gra$ satisfying $\Lone{W} > 1 - \theta(k)$ we have $\dest_{\rul}(W) = 1$.
  \item \label{item:extremist_sym_half} $\theta(k) \le 1/2$.
  \end{romenumerate}
\end{prop}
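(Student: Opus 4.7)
The plan is to handle the three parts in order, with part~(i) providing the main observation and parts~(ii) and~(iii) following quickly. The key point for part~(i) is that the extremist rule is equivariant under graph complementation: for every $F \in \lgr{k}$ one has $\rul(\co{F}) = \co{\rul(F)}$, which follows from~\eqref{eq:extremist-rule} since complementation swaps the cases $e(F) < \binom{k}{2}/2$ and $e(F) > \binom{k}{2}/2$ (and correspondingly $\co{K_k} \leftrightarrow K_k$), while the balanced case $e(F) = \binom{k}{2}/2$ is preserved and the rule acts as the identity there. Coupling $\G(k, 1-W)$ and $\G(k, W)$ via shared independent uniforms so that the former is the complement of the latter, formula~\eqref{eq:vel_prob} then yields
\begin{equation*}
\vel[\rul](1-W)(x,y) = \sum_{a \neq b} \left[ 1 - \Pc{ab \in \rul(\G(k,W))}{U_a = x, U_b = y} - (1-W(x,y)) \right] = -\vel[\rul]W(x,y).
\end{equation*}
Hence $\Psi_t := 1 - \traj{t}W$ satisfies $\Psi_0 = 1-W$ and $\tfrac{\D}{\D t}\Psi_t = -\vel[\rul](\traj{t}W) = \vel[\rul](\Psi_t)$, so uniqueness in Theorem~\ref{thm:flow} forces $\Psi_t = \traj{t}(1-W)$ throughout the common domain. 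The destination claim in~(i) then follows because $\cutnd(1-\traj{t}W, 1) = \cutnd(\traj{t}W, 0)$.

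Part~(ii) is then one line: $\Lone{W} > 1 - \theta(k)$ gives $\Lone{1-W} < \theta(k)$, so by the supremum definition of $\theta(k)$ there exists $p$ with $\Lone{1-W} \le p < \theta(k)$ witnessing $\dest_{\rul}(1-W) = 0$, and part~(i) applied with $1-W$ in place of $W$ yields $\dest_{\rul}(W) = 1$.

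For part~(iii), I would show that every constant-$p$ graphon with $p \in (1/2, 1)$ has destination~$1$, which forces $\theta(k) \le p$ for every such $p$ and hence $\theta(k) \le 1/2$. By~\cite[Corollary~\ref{FIRSTPAPER.coro:steps_stay}]{Flip1} the trajectory $\traj{\cdot}p$ stays constant-valued, so it reduces to the scalar ODE $y'(t) = \vel[\rul](y(t))$ with $y(0) = p$. By Proposition~\ref{prop:extremist-constant-velocity}, the right-hand side is strictly positive on $(1/2, 1)$ and vanishes only at $1/2$ and $1$ within $[0,1]$, so $y$ is strictly increasing, stays in $(p, 1)$ for all $t > 0$, and converges monotonically to a fixed point of $\vel[\rul]$, which must equal $1$. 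I do not anticipate a real obstacle: the crux is the sign-flip identity in part~(i), which is built into the symmetric definition of the extremist rule.
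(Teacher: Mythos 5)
Your proof is correct. For parts~(i) and~(ii) it follows the same strategy as the paper: you establish the sign-flip identity $\vel[\rul](1-W) = -\vel[\rul]W$ from~\eqref{eq:vel_prob} (you flesh out the ``easy to see'' step via the rule's equivariance under complementation and a coupling of $\G(k,W)$ and $\G(k,1-W)$, which is a fine way to make it explicit) and then invoke uniqueness of trajectories from Theorem~\ref{thm:flow}; part~(ii) is the same one-line deduction as the paper's. For part~(iii) you take a mild detour: the paper simply observes that by Proposition~\ref{prop:extremist-constant-velocity} the constant-$1/2$ graphon is a fixed point, so its destination equals $1/2 \ne 0$, and this directly forces $\theta(k) \le 1/2$. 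You instead show $\dest_{\rul}(p) = 1$ for every constant $p \in (1/2,1)$ via a scalar ODE argument (also resting on Proposition~\ref{prop:extremist-constant-velocity}) and then let $p \to 1/2^+$. Both routes are valid, but the fixed-point observation at $p = 1/2$ saves the monotone-convergence argument and the limit step.
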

\begin{proof}
  First we show \ref{item:extremist_sym}.
  From formula \eqref{eq:vel_prob} it is easy to see that for any graphon $W$ we have $\vel(1-W)=-\vel W$, so $\frac{\D}{\D t}(1-\traj{t}W)=-\vel\traj{t}W=\vel(1 - \traj{t}(W))$, that is, $t \mapsto 1 - \traj{t} W$ is a trajectory. Since it starts at the same graphon as the trajectory $t \mapsto \traj{t}(1 - W)$, by uniqueness (see Theorem~\ref{thm:flow}) it follows that $\traj{t}(1-W) = 1 - \traj{t}W$ for every $t \in \life{W}$. The second conclusion is obvious.

  To prove~\ref{item:extremist_sym_upper}, let $W\in\Gra$ satisfy $\Lone{W} > 1 - \theta(k)$. Since $\Lone{1-W} = 1 - \Lone{W} <\theta(k)$, we have $\dest_{\rul}(1-W) = 0$ and therefore~\ref{item:extremist_sym} implies $\dest_{\rul}(W) = 1$.

  Part~\ref{item:extremist_sym_half} follows from the fact that the constant-$1/2$ graphon $W$ satisfies $\dest_{\rul}(W) = 1/2$.
\end{proof}

We prove the following lower bound on the density threshold. In particular, we show that the density threshold is strictly positive for $k \ge 5$.

\begin{thm} \label{thm:extremist-threshold-bound}
Let $k\ge5$ be an integer and define $\sigma(k) := \max\{2^{-4-24/(k-5)}, 1/2-\sqrt{48/k}\}$ for $k\ge 6$ and $\sigma(5)=1/30$. For the extremist rule $\rul$ of order $k$ we have that
\[\theta(k)\ge\sigma(k)>0.\]
\end{thm}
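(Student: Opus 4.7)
The plan is to show that every graphon $W$ with $p:=\Lone{W}\le\sigma(k)$ is driven to the zero graphon by the extremist flip process: setting $p(t):=\Lone{\traj{t}{W}}$, I aim for exponential decay $p(t)\to 0$, which forces $\traj{t}{W}\to 0$ in $L^1$ and hence in the cut norm, yielding $\dest_\rul(W)=0$. The starting identity from \eqref{eq:vel_integral2} gives
\[p'(t) \;=\; \int \vel[\rul]\traj{t}{W}\D\pi^2 \;=\; 2\E e(\hh_t) - (k)_2\,p(t),\]
where $\hh_t$ is the replacement graph when the drawn graph is $\ff_t\sim\G(k,\traj{t}{W})$. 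Since $e(\hh_t)\le\binom{k}{2}$ always and $e(\hh_t)=0$ whenever $e(\ff_t)<\binom{k}{2}/2$, we have $\E e(\hh_t)\le\binom{k}{2}\cdot q_{\traj{t}{W}}$, where $q_W := \Prob(e(\G(k,W))\ge\binom{k}{2}/2)$. Hence it suffices to prove the key estimate: there exists $\epsilon=\epsilon(k)>0$ such that $q_W\le(1-\epsilon)\Lone{W}$ for every graphon $W$ with $\Lone{W}\le\sigma(k)$. Once established, $p'(t)\le -\epsilon\,(k)_2\,p(t)$ whenever $p(t)\le\sigma(k)$, and Lemma~\ref{lem:FODEonesidedBound} (with $D=\sigma(k)$ and $K=\epsilon(k)_2$) delivers $p(t)\le p\,e^{-\epsilon(k)_2 t}$, closing the argument.

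The two quantities in the maximum defining $\sigma(k)$ call for independent proofs of the key estimate. For the bound $\sigma(k)\ge 1/2-\sqrt{48/k}$, the natural tool is Chebyshev's inequality. Expanding $\operatorname{Var}(e(\G(k,W)))$ by cases (two edges coincide, share a vertex, or are disjoint) and using $\int\deg_W^2\D\pi\le\Lone{W}=p$ (valid since $\deg_W\in[0,1]$) yields $\operatorname{Var}(e(\G(k,W)))\le k(k-1)(k-3/2)\,p$. Chebyshev then gives
\[q_W\le\frac{\operatorname{Var}(e(\G(k,W)))}{\bigl[\binom{k}{2}(1/2-p)\bigr]^2}\le\frac{4(k-3/2)}{k(k-1)(1/2-p)^2}\,p,\]
and substituting $(1/2-p)^2\ge 48/k$ bounds the prefactor by $(k-3/2)/[12(k-1)]\le 1/12$, so one may take $\epsilon=11/12$.

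For the bound $\sigma(k)\ge 2^{-4-24/(k-5)}$ (relevant when $k$ is moderate), Chebyshev is too wasteful and I would instead union-bound over labelled graphs with many edges:
\[q_W \;=\; \sum_{\substack{H\in\lgr{k}\\ e(H)\ge\binom{k}{2}/2}}\tind(H,W) \;\le\; \sum_H t(H,W),\]
bounding each $t(H,W)$ by two complementary estimates. First, the Kruskal--Katona inequality (Proposition~\ref{prop:KruskalKatona}) gives $t(H,W)\le t(K_{\omega(H)},W)\le p^{\omega(H)/2}$. Second, for any matching $M\subseteq H$ of size $\nu(H)$, one has $t(H,W)\le t(M,W)=p^{\nu(H)}$, obtained by dropping the factors $W\le 1$ for edges in $E(H)\sm E(M)$ and noting that a matching factorises over its edges. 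Stratifying the $H$'s by $(\omega(H),\nu(H))$ (using Tur\'an to control triangle-free graphs with $\ge\binom{k}{2}/2$ edges and Erd\H{o}s--Gallai to bound their matching numbers below) and counting each stratum should produce a polynomial bound of the form $q_W\le A(k)\,p^{3/2}+B(k)\,p^{\nu^*(k)}$ with $\nu^*(k)\ge 3$; the precise value $\sigma(k)=2^{-4-24/(k-5)}$ should be what makes $A(k)\sigma(k)^{1/2}\le 1-\epsilon$ for some fixed $\epsilon$. The case $k=5$ I would treat by direct enumeration, since $\binom{5}{2}/2=5$ is small: every $5$-vertex graph with $\ge 5$ edges has matching number exactly~$2$, and those containing~$K_3$ satisfy the stronger bound $t(H,W)\le p^{3/2}$, leading to $\sigma(5)=1/30$ after tabulating graphs of each type.

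I expect the main obstacle to be the combinatorial accounting in the small-$p$ sub-argument: extracting the specific constant $2^{-4-24/(k-5)}$ requires careful counting of labelled $k$-vertex graphs stratified by clique number and matching number, and optimising the trade-off between the size of each stratum and the exponent on $p$ it contributes. The Chebyshev sub-argument, by contrast, is essentially a one-line second-moment calculation.
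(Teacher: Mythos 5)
Your overall strategy matches the paper's: reduce to the tail bound $q_W := \Prob(e(\G(k,W)) \ge \binom{k}{2}/2) \le cp$ for some $c<1$ whenever $p := \Lone{W} \le \sigma(k)$, integrate the velocity to get $\frac{\D}{\D t}\Lone{\traj{t}{W}} \le -(1-c)(k)_2\Lone{\traj{t}{W}}$, and close via Lemma~\ref{lem:FODEonesidedBound}. Your Chebyshev argument for the regime $p \le 1/2 - \sqrt{48/k}$ is correct and is a genuinely different route from the paper's, which instead bounds the upper tail of $e(\ff)$ by the concentration inequality for functions with bounded vertex-influence (\cite[Theorem~10.3]{Lovasz2012}, applied to $e(G)/(v(G)-1)$). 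Your covariance decomposition together with $\int\deg_W^2 \D\pi \le p$ is elementary and self-contained; both arguments land on the same threshold, so the trade-off here is mostly a matter of taste.

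There is, however, a genuine gap in your treatment of the $\sigma(k)\ge 2^{-4-24/(k-5)}$ bound and the $k=5$ case. You propose $q_W \le \sum_{H:\,e(H)\ge\binom{k}{2}/2} t(H,W)$ and then bound each summand by $\min\{p^{\omega(H)/2},\,p^{\nu(H)}\}$. This sum has $\Theta(2^{\binom{k}{2}})$ terms, but the best exponent available is $\nu(H)\le\lfloor k/2\rfloor$, and Kruskal--Katona gives nothing better than $p$ itself for triangle-free $H$ (which can carry $\binom{k}{2}/2$ edges). With $p = \sigma(k)$ bounded below (it increases to $1/2$ as $k\to\infty$), the quantity $2^{\Theta(k^2)}p^{O(k)}$ is astronomically larger than $p/2$, so no stratification by $(\omega,\nu)$ can rescue this union bound. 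Concretely for $k=5$: there are $\sum_{m\ge5}\binom{10}{m}=638$ labelled graphs with at least $5$ edges, each with $\nu(H)=2$, so your bound yields $q_W\le 638\,p^2$, which is at most $p/2$ only for $p\le 1/1276$ --- far short of $\sigma(5)=1/30$. The missing idea is to union-bound over a small family of \emph{witness structures} rather than over all heavy graphs: for $k=5$, the event $e(\ff)\ge5$ forces $\ff$ to contain one of the $15$ labelled $2$-matchings of $K_5$, giving $q_W\le 15p^2\le p/2$ for $p\le 1/30$; for $k\ge6$, partition $E(K_k)$ into $r\le k$ maximum matchings $M_1,\dots,M_r$, observe that $e(\ff)\ge\binom{k}{2}/2$ forces $e(\ff\cap M_i)\ge k':=\lfloor\tfrac{k+2}{4}\rfloor$ for some $i$ by pigeonhole, and union-bound over the $r\binom{\lfloor k/2\rfloor}{k'}\le 2^k$ resulting events, each of probability $p^{k'}$ by independence of disjoint matching edges. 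That pits $2^k$ (not $2^{\binom{k}{2}}$) against $p^{(k-1)/4}$, which is what makes the balance work at $p\le 2^{-4-24/(k-5)}$.
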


\begin{proof}
  
The following is our key claim.

\begin{claim} \label{claim:extremist-threshold-vel-int}
Given a graphon $U \in \Gra$ satisfying $\Lone{U} \le \sigma(k)$, we have
\begin{equation} \label{eq:extremist-threshold-vel-int}
\int \vel[\rul] U \D\pi^2 \le -\binom{k}{2}\Lone{U}.
\end{equation}
\end{claim}

Before we prove this claim, let us first demonstrate how it implies our theorem. Fix $W \in \Gra$ satisfying $\Lone{W} \le \sigma(k)$. Consider the function $f(t) := \Lone{\traj{t}W}$ defined for $t \in [0, \infty)$. Aiming to apply Lemma~\ref{lem:FODEonesidedBound} with $D = \sigma(k)$, we note that $f(0) \le \sigma(k)$ and $f(t) \ge 0$ for all $t \ge 0$. We argue that $f'(t) = \int \vel[\rul] \traj{t}W \D \pi^2$. Indeed, we have that
\[
  f'(t)=\lim_{\varepsilon\rightarrow 0}\frac{\Lone{\traj{t+\varepsilon}W}-\Lone{\traj{t}W}}{\varepsilon}=\lim_{\varepsilon\rightarrow0}\int\frac{\traj{t+\varepsilon}W-\traj{t}W}{\varepsilon}\D \pi^2.
\]
Now by Lemma~\ref{lem:traj-cont-Linfty} the integrand is $O(\varepsilon)$-close to $\vel[\rul]\traj{t}W$ in the $L^{\infty}$-norm (and thus in the $L^1$-norm), so in the limit we get $f'(t) = \int \vel[\rul] \traj{t}W \D \pi^2$. Together with \eqref{eq:extremist-threshold-vel-int} we get that $f'(t) \le -\bik f(t)$ whenever $f(t) \le \sigma(k)$, so by Lemma~\ref{lem:FODEonesidedBound} we have $f(t) \le f(0)e^{-\binom{k}{2}t}$ for all $t \in [0,\infty)$. Therefore, it follows that $\dest_\rul(W) = 0$ and we conclude that $\theta(k) \ge \sigma(k) > 0$.

It remains to prove Claim~\ref{claim:extremist-threshold-vel-int}.

\begin{claimproof}[Proof of Claim~\ref{claim:extremist-threshold-vel-int}]
Set $p := \Lone{U}$. Sample $\ff \sim \G(k,U)$ and sample $\hh$ according to the distribution $\Pc{\hh = H}{\ff = F} = \rul_{F,H}$. We first obtain an upper bound for $\E e(\hh)$. Since $\Ec{e(\hh)}{\ff=F} = 0$ for any $F\in\lgr{k}$ with fewer than $\frac{1}{2}\binom{k}{2}$ edges, we have
\begin{equation} \label{eq:expectation_H-extremist-upper}
\E e(\hh) \le \binom{k}{2}\Prob \left(e(\ff) \ge \frac{1}{2}\binom{k}{2}\right).
\end{equation}

Now suppose that we have
\begin{equation} \label{eq:prob_F-upper}
\Prob \left(e(\ff)\ge\frac{1}{2}\binom{k}{2}\right) \le \frac{p}{2}.
\end{equation}
Then by~\eqref{eq:vel_integral2} and~\eqref{eq:expectation_H-extremist-upper} we conclude that
\[
  \int \vel[\rul] U \D \pi^2 = 2\E e(\hh) - (k)_2 \cdot p \le p \left(\binom{k}{2} - (k)_2\right) = -p \binom{k}{2},
\]
which yields~\eqref{eq:extremist-threshold-vel-int}. Hence, it remains to prove~\eqref{eq:prob_F-upper}.

We begin with $k=5$ and $p \le 1/30$. Note that if $\ff$ has at least $\frac{1}{2}\bik = 5$ edges then it has a matching with $2$ edges.
There are $\binom{5}{2}\binom{3}{2}/2=15$ such matchings in $K_5$ and each of them appears in $\ff$ with probability $p^2$ (as each edge occurs independently with probability $p$), so we have
\[
  \Prob \left(e(\ff) \ge  \frac{1}{2}\binom{k}{2}\right) \le \prob{\ff \text{ contains a $2$-matching}} \le 15p^2 \le \frac{p}{2}.
\]

Next, we consider $k\ge6$ and $p \le 2^{-4-24/(k-5)}$. We partition $K_k$ into disjoint maximum matchings $M_1,\dots,M_r$, with $r=\binom{k}{2}/\flo{\frac{k}{2}} \in \{k-1,k\}$. If $e(\ff) \ge \binom{k}{2}/2$ then there exists $i\in [r]$ such that
\[
  e(\ff\cap M_i) \ge \cei{\frac{\bik}{2r}} = \cei{\frac{\flo{k/2}}{2}}= \flo{\frac{k+2}{4}} =: k'\;.
\]
Therefore, using $r \le k \le 2^{k/2}$ and $\binom{\lfloor k/2 \rfloor}{k'} \le 2^{k/2}$, we have
\[
  \Prob \left(e(\ff) \ge \frac{1}{2}\binom{k}{2} \right) \le \sum_{i\in[r]} \Prob \left(e(\ff\cap M_i) \ge k'\right) \le r \binom{\lfloor k/2 \rfloor}{k'}p^{k'} \le 2^kp^{(k-1)/4} \le \frac{p}{2}.
\]

Finally, we consider $2^{-4-24/(k-5)} \le p \le 1/2-\sqrt{48/k}$; here we may assume that $k\ge192$. We consider the function $f(G) := e(G)/(v(G)-1)$. Since $f$ varies by at most $1$ under the editing of edges incident to any fixed vertex, by~\cite[Theorem 10.3]{Lovasz2012} we obtain
\[
  \prob{e(\ff) \ge \frac{1}{2}\binom{k}{2}} = \prob{f(\ff) \ge \E f(\ff) + (\tfrac{1}{2}-p)k/2} \le \exp \left(-\frac{(1/2 -p)^2k}{8}\right).
\]
Now we have $e>2$ and $\frac{(1/2 -p)^2 k}{8} \ge 6 \ge 5+\frac{24}{k-5}$, so we obtain
\[
  \prob{e(\ff) \ge \bik/2} \le 2^{-5-24/(k-5)} \le \frac{p}{2}
\]
as required.
\end{claimproof}
This completes our proof.
\end{proof}

Now we show that the results of Theorem~\ref{thm:extremist-threshold-bound} cannot be extended to $k\in \{3,4\}$.

\begin{thm} \label{thm:extremist-zero-threshold-bound}
$\theta(3) = \theta(4) = 0$.
\end{thm}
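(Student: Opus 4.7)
The plan is to show, for each $k \in \{3,4\}$ and each $\varepsilon > 0$, the existence of a graphon $W_0$ with $\Lone{W_0} \le \varepsilon$ whose $\rul$-destination is nonzero. Fix $a \in (0, \varepsilon/2)$ and a measurable set $A \subseteq \Omega$ with $\pi(A) = a$; write $S := (A \times \Omega) \cup (\Omega \times A)$ and set $W_0 := \indic_S$, so that $\Lone{W_0} = 2a - a^2 < \varepsilon$.

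The main obstacle is to show that for $k \le 4$ the section $\mathcal{A} := \{W \in \Gra : W|_S \equiv 1\}$ is invariant under the flow; by Proposition~\ref{prop:zeroonasection} this reduces to checking $\vel[\rul] W \equiv 0$ on $S$ for every $W \in \mathcal{A}$. Given $(x,y) \in S$, one may assume $x \in A$; then $W(x, U_i) = 1$ deterministically for every auxiliary vertex $U_i$, so the drawn graph $\ff$ contains the $k - 1$ edges incident to vertex $1$. For $k = 3$ this forces $e(\ff) \ge 2 > \bik/2$ and hence $\hh = K_3$; for $k = 4$ one has $e(\ff) \ge 3 = \bik/2$, and either $e(\ff) = 3$ (whence $\hh = \ff$) or $e(\ff) \ge 4$ (whence $\hh = K_4$), with the deterministic value $X_{12} = 1$ placing $12 \in \hh$ in either case. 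So $\Pc{12 \in \hh}{U_1 = x, U_2 = y} = 1 = W(x,y)$, and~\eqref{eq:vel_prob} yields $\vel W(x,y) = 0$. This argument fails precisely for $k \ge 5$ because $k - 1 < \bik/2$ then permits $e(\ff) = k - 1$ and hence $\hh = \emptyset$; this is the source of the numerical restriction in the theorem.

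With $\mathcal{A}$ invariant, restrict to the one-parameter subfamily $W_c := \indic_S + c\,\indic_{A^c \times A^c}$, $c \in [0,1]$. Because the joint distribution of $\ff$ conditional on $U_1, U_2 \in A^c$ only involves the indicators $\ind{U_i \in A}$ and the scalar $c$, the value $\vel W_c(x,y)$ is independent of $(x,y) \in A^c \times A^c$ and equals some scalar $g(c)$. Let $c(\cdot)$ solve the autonomous scalar ODE $c'(t) = g(c(t))$ with $c(0) = 0$; then $t \mapsto W_{c(t)}$ is differentiable in $\Linf{\cdot}$ with derivative $g(c(t))\,\indic_{A^c \times A^c} = \vel W_{c(t)}$, so by the uniqueness in Theorem~\ref{thm:flow} we conclude $\traj{t}{W_0} = W_{c(t)}$. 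A short case analysis on how many of $U_3, \dots, U_k$ lie in $A$ gives $g(0) = 6a > 0$ for $k = 3$ (the contributing event is $U_3 \in A$, already yielding $e(\ff) = 2$ and $\hh = K_3$) and $g(0) = 12 a^2 > 0$ for $k = 4$ (the contributing event is $\{U_3, U_4\} \subseteq A$, yielding $e(\ff) = 5$ and $\hh = K_4$). Hence $c$ is strictly increasing for small $t$ and, being monotone and bounded in $[0,1]$, converges to some $c^* > 0$; therefore $\dest_\rul(W_0) = W_{c^*}$ satisfies $\Lone{W_{c^*}} \ge \pi(S) > 0$. Since $\varepsilon$ was arbitrary, $\theta(k) = 0$.
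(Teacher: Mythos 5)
Your proof is correct and uses essentially the same core idea as the paper: show that the ``complete-bipartite-like'' strip where the initial graphon is constantly $1$ is invariant under the flow, and then observe that the trajectory therefore stays bounded away from the zero graphon in cut norm. The paper works with $S_n = ((\Omega\setminus\Omega_n)\times\Omega_n)\cup(\Omega_n\times(\Omega\setminus\Omega_n))$, i.e.\ excludes the small square, and verifies directly from~\eqref{eq:velocity} that each $\tindr{(x,y)}(F^{a,b},U)$ vanishes; you instead take $S = (A\times\Omega)\cup(\Omega\times A)$, which makes the ``$e(\ff)\ge k-1$ with $12\in\ff$'' count even cleaner since $W(x,\cdot)\equiv 1$ for $x\in A$. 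Both choices work and force $\hh\supseteq\{12\}$ for $k\le 4$, so $\vel W\restriction_S=0$ via~\eqref{eq:vel_prob}. Your remark about why the count fails at $k\ge 5$ (since $k-1<\binom{k}{2}/2$) is a nice explanation of the threshold between this theorem and Theorem~\ref{thm:extremist-threshold-bound}.

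The second half of your argument---the reduction to the scalar ODE $c'=g(c)$ on the one-parameter family $W_c$, the computation of $g(0)$, and the identification of the exact destination $W_{c^*}$---is correct but more than what is needed. Once invariance of $\ca$ is established, $\traj{t}{W_0}\ge \indic_S$ pointwise for all $t\ge 0$, hence $\cutn{\traj{t}{W_0}}\ge \pi(S)>0$, which already shows that the destination (if it exists) is not the constant-$0$ graphon; that is what the paper does, and it suffices since $\theta(k)$ only requires $\dest_\rul(W_0)=0$ to fail for some $W_0$ of density below $p$. The monotonicity of $c(t)$ that you invoke is a standard fact for scalar autonomous ODEs with a Lipschitz right-hand side (you implicitly need that $g$ inherits Lipschitzness from $\vel$); it is true but stated without justification, and in any case could be dropped together with the whole ODE reduction.
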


\begin{proof}
Let $\rul$ be the extremist rule of order $k\in\{3,4\}$. Define
\begin{align*}
B_1 &:= \left\{(F,a,b) : F\in\lgr{k}, e(F)<\frac{1}{2}\binom{k}{2}, ab\in E(F)\right\}\ \textrm{ and } \\
B_2 &:= \left\{(F,a,b) : F\in\lgr{k}, e(F)>\frac{1}{2}\binom{k}{2}, ab\notin E(F)\right\}.
\end{align*}
By~\eqref{eq:velocity} and~\eqref{eq:extremist-rule}, for each $W\in\Gra$ and each $(x,y)\in\Omega^2$ we have
\begin{equation} \label{eq:extremist-velocity}
\vel[\rul]W(x,y) = \sum_{(F,a,b) \in B_2}\tindr{(x,y)}(F^{a,b},W) - \sum_{(F,a,b) \in B_1}\tindr{(x,y)}(F^{a,b},W).
\end{equation}

We shall construct a sequence $(W_n)_{n\in\N}$ of graphons so that $\Lone{W_n}\to0$ as $n\to\infty$ and for each $n\in\N$ we have $\cutn{\traj{t}W_n}\nrightarrow0$ as $t\to\infty$. Fix an arbitrary sequence $(\Omega_n)_{n\in\N}$ of subsets of $\Omega$ such that for each $n\in\N$ we have $\pi(\Omega_n)=1/n$ and $\Omega_{n+1}\subseteq\Omega_n$. For each $n\in\N$ set $S_n := ((\Omega\setminus\Omega_n)\times\Omega_n)\cup(\Omega_n\times(\Omega\setminus\Omega_n))$ and $\ca_n := \{ W\in\Gra : W{\restriction_{S_n}} = 1 \}$. For each $n\in\N$ define the graphon $W_n := \indic_{S_n}$. Note that $\Lone{W_n}=\frac{2}{n} \left(1-\frac{1}{n}\right)\to0$ as $n\to\infty$.

Fix $n\in\N$. We have $W_n\in\ca_n$. By Theorem~\ref{thm:flow} there is a unique trajectory $\traj{t}W_n$ on $[0,\infty)$. Let $U\in\ca_n$. For any $(F,a,b) \in B_1 \cup B_2$ and $(x,y)\in S_n$ we have $\tindr{(x,y)}(F^{a,b},U)=0$, so by~\eqref{eq:extremist-velocity} we have $(\vel[\rul]U){\restriction_{S_n}} = 0$. Now by applying Proposition~\ref{prop:zeroonasection} with $\ca_n$ we have $\traj{t}{W_n}\in\ca_n$ for all $t \in [0,\infty)$. We have $\cutn{\traj{t}W_n}\ge\frac{2}{n} \left(1-\frac{1}{n}\right)$ for all $t \in [0,\infty)$; in particular, $\cutn{\traj{t}W_n}\nrightarrow0$ as $t\to\infty$. Hence, we have $\theta(3) = \theta(4) = 0$.
\end{proof}

We also prove an upper bound on the density threshold.

\begin{thm} \label{thm:extremist-threshold-upper}
There exists $K \in \N$ such that for every $k \ge K$, setting $C_k := (k)_2^22^{\binom{k}{2}-1}$, we have $\theta(k) \le \frac{1}{2}-\frac{(k)_2}{2\cdot10^5C_k}$.
\end{thm}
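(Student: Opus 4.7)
The plan is to exhibit a specific graphon $W_0$ with $\Lone{W_0}$ just below $\tfrac12$ whose destination is nonzero; by the definition of $\theta(k)$ this directly yields $\theta(k) \le \Lone{W_0}$. Set $\beta := \bigl(\tfrac{(k)_2}{4 \cdot 10^5 C_k}\bigr)^{1/2}$, fix a measurable $A \subset \Omega$ with $\pi(A) = \tfrac12 - \beta$, and take $W_0 := \indic_S$ where $S := \bigl(A \times (\Omega \sm A)\bigr) \cup \bigl((\Omega \sm A) \times A\bigr)$ is the symmetric ``bipartite'' set. Then $\Lone{W_0} = 2\pi(A)(1-\pi(A)) = \tfrac12 - 2\beta^2 = \tfrac12 - \tfrac{(k)_2}{2 \cdot 10^5 C_k}$, matching the target density.

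Since $W_0$ is $\{0,1\}$-valued, conditional on the sampled $k$-tuple $(U_1,\dots,U_k)$ with $j$ vertices in $A$, the drawn graph $\ff$ is deterministically the complete bipartite graph $K_{j, k-j}$, so $e(\ff) = j(k-j)$. Writing $n := \binom{k}{2}$, the event $e(\ff) > n/2$ is equivalent to $|j - k/2| < \sqrt{k}/2$, and $j \sim \Bin(k, \tfrac12 - \beta)$. Applying Theorem~\ref{thm:berry-esseen} to the standardised $j$ (using that $\sqrt{k}\beta \to 0$ by our extremely small choice of $\beta$), the Gaussian approximation together with a routine two-tail estimate gives
\[
  \Prob(e(\ff) > n/2) = 2\Phi(1) - 1 + O(1/\sqrt{k}) \ge c_1 > \tfrac12
\]
for an absolute constant $c_1$ and all sufficiently large $k$, where $\Phi$ is the standard Gaussian CDF. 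Combining this with equation~\eqref{eq:vel_integral2}, which for the extremist rule becomes $\int \vel[\rul] W_0 \D\pi^2 = (k)_2\bigl(\Prob(e(\ff) > n/2) - \Lone{W_0}\bigr)$, we obtain $\int \vel[\rul] W_0 \D\pi^2 \ge c_0 (k)_2$ for some absolute $c_0 > 0$.

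To convert this into trajectory information, I would invoke the $L^\infty$ Taylor bound from Lemma~\ref{lem:traj-cont-Linfty}, $\Linf{\traj{\delta}W_0 - W_0 - \delta \vel[\rul]W_0} \le C_k (k)_2 \delta^2$. Integrating against $\pi^2$ and choosing $\delta := c_0/(2 C_k)$ yields
\[
  \Lone{\traj{\delta}W_0} \;\ge\; \Lone{W_0} + \frac{c_0^{2}(k)_2}{4 C_k} \;>\; \tfrac12,
\]
where the last inequality holds by our choice of $\beta$ (the numerical constant $10^5$ in the statement is exactly what absorbs $4/c_0^2$). Then one continues the trajectory: iterating this short-time boost, with the velocity controlled along the way by the Lipschitz estimate underlying Theorem~\ref{thm:LipschTime}, pushes $\Lone{\traj{t}W_0}$ above $1 - \sigma(k)$, where $\sigma(k) > 0$ is the lower bound on $\theta(k)$ furnished by Theorem~\ref{thm:extremist-threshold-bound}. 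At that moment Proposition~\ref{prop:extremist_symm}\ref{item:extremist_sym_upper} gives $\dest_{\rul}(\traj{t}W_0) = 1$, which by Theorem~\ref{thm:flow}\ref{en:semi} equals $\dest_{\rul}(W_0)$; in particular $\dest_{\rul}(W_0) \ne 0$ and hence $\theta(k) \le \Lone{W_0}$, as required.

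The main obstacle is the final iteration. A single application of Lemma~\ref{lem:traj-cont-Linfty} only gains $O((k)_2/C_k)$ in density, whereas reaching the threshold $1 - \sigma(k) \approx \tfrac12 + \sqrt{48/k}$ requires a cumulative gain of order $k^{-1/2}$, so the short-time step must be chained many times. The delicate point is that the Lipschitz constant of $\vel[\rul]$ in the $L^\infty$ (or cut) norm is as large as $C_k = (k)_2^2 \, 2^{n-1}$, so one must either argue structurally that $\traj{t}W_0$ remains approximately bipartite throughout the iteration (so that the Berry--Esseen computation of Step 2 keeps producing $\int \vel[\rul]\traj{t}W_0 \ge \Omega((k)_2)$), or use a finer perturbation argument in the spirit of Lemma~\ref{lem:perturb} that controls the accumulated error across all steps. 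Tracking the constants through this iteration is what produces the explicit factor $10^5$ in the bound.
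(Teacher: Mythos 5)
Your construction and first two steps are essentially the paper's: the choice of an almost-balanced bipartite indicator graphon, the observation that for a $\{0,1\}$-valued bipartite graphon the drawn graph is deterministically $K_{j,k-j}$ with $j$ binomial, the identity $e(\ff)>\binom{k}{2}/2 \iff |j-k/2|<\sqrt{k}/2$, the Berry--Esseen estimate producing $\Prob(e(\ff)>\binom{k}{2}/2)\ge c_1>\tfrac12$, the resulting bound $\int \vel[\rul]W_0 \ge c_0(k)_2$, and the single short-time Taylor step via Lemma~\ref{lem:traj-cont-Linfty} with $\delta\sim 1/C_k$ are all correct and mirror the paper's argument quite closely.

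However, the final ``iterate to push the density past $1-\sigma(k)$'' step is a genuine gap, and you yourself correctly flag it. The threshold $1-\sigma(k)\approx \tfrac12 + \sqrt{48/k}$ is at distance $\Theta(k^{-1/2})$ from the starting density, whereas one Taylor step gains only $\Theta\bigl((k)_2/C_k\bigr)$, which is doubly exponentially small in $k$; chaining the argument would require $\Theta\bigl(2^{\binom{k}{2}}k^{-1/2}\bigr)$ steps, and along the way the trajectory does not remain a bipartite step graphon (indeed $\vel[\rul]W_0>0$ on $A^2$ even though $W_0=0$ there), so the Berry--Esseen lower bound on the velocity cannot simply be reused. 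Your proposal does not give a way to control this, so the argument as written does not close.

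The key idea you are missing is that the paper argues \emph{by contradiction}, which makes the iteration unnecessary. Assume for contradiction that $\theta(k) > \tfrac12 - f(k)$ with $f(k) := \frac{(k)_2}{2\cdot10^5 C_k}$. Under this assumption one chooses a bipartite $W$ with $\Lone{W} = \theta(k)-f(k) < \theta(k)$; by the definition of $\theta(k)$ this forces $\dest(W)=0$. But Proposition~\ref{prop:extremist_symm}\ref{item:extremist_sym_upper} requires only that the density cross $1-\theta(k)$, and \emph{under the contradiction assumption} $1-\theta(k) < \tfrac12 + f(k)$, which is only $O\bigl((k)_2/C_k\bigr)$ above the starting density $\theta(k)-f(k) > \tfrac12 - 2f(k)$. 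That gap is exactly what a single Taylor step of length $T=\tfrac{1}{200 C_k}$ supplies, yielding $\dest(\traj{T}W)=1$ and hence $\dest(W)=1\neq 0$, a contradiction. In short, tying the threshold to the (unknown but assumed large) $\theta(k)$ rather than to the coarse lower bound $\sigma(k)$ from Theorem~\ref{thm:extremist-threshold-bound} collapses the required density gain from $\Theta(k^{-1/2})$ to $O\bigl((k)_2/C_k\bigr)$, which is precisely what one short-time step delivers. With that single change your proposal becomes essentially the paper's proof.
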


\begin{proof}
Let $C$ be the constant returned by Theorem~\ref{thm:berry-esseen} and set $K := 400C^2$. Let $\rul$ be the extremist rule of order $k \ge K$ and set $f(k) := \frac{(k)_2}{2\cdot10^5C_k}$. Suppose for a contradiction that $\theta(k) > 1/2 - f(k)$.

Set $\delta := \frac{1}{2}\sqrt{1-2\theta(k)+2f(k)}$ and $\gamma := 1/2-\delta$. Fix a subset $\Omega'\subseteq\Omega$ such that $\pi(\Omega')=\gamma$. Set $S:=((\Omega\setminus\Omega')\times\Omega')\cup(\Omega'\times(\Omega\setminus\Omega'))$. Let $W := \indic_S$. Note that $\Lone{W} = \theta(k) - f(k) < \theta(k)$. Sample $\ff \sim \G(k,W)$ and sample $\hh$ according to the distribution $\Pc{\hh = H}{\ff = F} = \rul_{F, H}$. Note that
\begin{equation} \label{eq:expectation_H-extremist-lower}
\E e(\hh) \ge \binom{k}{2} \prob{e(\ff) > \frac{1}{2}\binom{k}{2}}.
\end{equation}
Set $Y := e(\ff)$. By the structure of $W$ we have $B \sim \Bin(k,\gamma)$ such that $Y = B(k-B)$. Write $B = \sum_{i=1}^k B_i$ as the sum of $k$ independent random variables $B_i\sim\ber(\gamma)$. Set $X_i := B_i-\gamma$ for $i\in[k]$ and $X := \sum_{i=1}^k X_i$. We have
\begin{align*}
\prob{Y > \frac{1}{2}\binom{k}{2}} = \prob{ \left|B - \frac{k}{2}\right| < \frac{\sqrt{k}}{2}} &\ge \prob{|X| < \frac{\sqrt{k}}{2} - k\delta} \\
&\ge \prob{\frac{|X|}{\sqrt{\var(X)}} < \frac{1 - \sqrt{4kf(k)}}{\sqrt{1-4f(k)}}} \\
&\ge \prob{\frac{|X|}{\sqrt{\var(X)}} < 0.99}.
\end{align*}
Now by Theorem~\ref{thm:berry-esseen} we have
\[
  \prob{\frac{|X|}{\sqrt{\var(X)}} < 0.99} \ge \prob{Z < 0.99} - \frac{1}{9} \ge 0.55,
\]
where $Z\sim N(0,1)$ is a standard Gaussian random variable. Then by~\eqref{eq:vel_integral2} and~\eqref{eq:expectation_H-extremist-lower} we have
\begin{equation} \label{eq:extremist-postive-velocity-bipartite-near-half}
\int \vel W \D\pi^2 \ge (k)_2 \left(\prob{Y > \frac{1}{2}\binom{k}{2}} - \Lone{W}\right) \ge \frac{(k)_2}{100}.
\end{equation}

Set $T:=\frac{1}{200C_k}$. By~\eqref{eq:LinfPhi_eps} in Lemma~\ref{lem:traj-cont-Linfty} we have
\begin{equation*}
\int \left( \traj{T}{W} - W - T\vel[\rul]W \right) \D\pi^2 \le \Linf{\traj{T}{W} - W - T\vel[\rul]W} \le C_k(k)_2T^2,
\end{equation*}
so by~\eqref{eq:extremist-postive-velocity-bipartite-near-half} we have
\begin{align*}
\Lone{\traj{T}{W}} = \int \traj{T}{W} \D\pi^2 &\ge \int W \D\pi^2 + T \int \vel W \D\pi^2 - C_k(k)_2T^2 \\
&\ge \Lone{W} + \frac{(k)_2}{2\cdot10^4C_k} - \frac{(k)_2}{4\cdot10^4C_k} \ge \frac{1}{2} + \frac{(k)_2}{8\cdot10^4C_k} > 1-\theta(k).
\end{align*}
Hence, by Proposition~\ref{prop:extremist_symm}\ref{item:extremist_sym_upper} we have $\dest(W) = \dest(\traj{T}W) = 1$. On the other hand, $\Lone{W} < \theta(k)$ implies that $\dest(W) = 0$. This is a contradiction, so we must have $\theta(k) \le 1/2 - f(k)$.
\end{proof}

\subsection{Trajectory rescaling for large \texorpdfstring{$k$}{k}}\label{ssec:rescalingextremist}
Observe that the order of a rule can significantly restrict the maximum rate of change in its trajectories. Indeed, suppose that $\rul$ is a rule of order $k\in\N$. Then, by~\eqref{eq:vel_bound_by_W} we have $-(k)_2 \le \vel[\rul] W \le (k)_2$ for all graphons $W$. To adjust for the fact that larger values of $k$ permit greater rates of change, we shall introduce the following limit notion of rescaling. Let $(\rul_k)_{k\ge2}$ be a sequence of rules such that $\rul_k$ is a rule of order $k$ for each $k\ge2$. We say that $(\rul_k)_{k\ge2}$ is \emph{rescalable} at a graphon $W$ if the Banach-space limit
\[
  \lim_{k\to\infty} \trajrul{\rul_k}{t/(k)_2}{W}
\]
exists for all $t\in[0,\infty)$. Then, we write $\Psi^t(W):=\lim_{k\to\infty} \trajrul{\rul_k}{t/(k)_2}{W}$ and call this a \emph{rescaled trajectory}. Let $\mathcal{B}\subseteq\Gra$ be the set of graphons at which $(\rul_k)_{k\ge2}$ is rescalable. We define the \emph{rescaling core} of $(\rul_k)_{k\ge2}$, which we denote by $\mathcal{B}^\bullet$, to be the set of elements in the interior of $\mathcal{B}$ (with respect to the topology induced by the cut norm) at which the maps $W \mapsto \Psi^t(W)$, for $t\ge 0$, are continuous (with respect to the topology induced by the cut norm). In particular, the rescaling core of $(\rul_k)_{k\ge2}$ is a set of graphons $W$ such that $\Psi^{t(k)_2}(W)$ is a good approximation for $\trajrul{\rul_k}{t}{(\tilde{W})}$ when $k$ is sufficiently large and $\tilde{W}$ is sufficiently close to $W$. (Let us note that in principle, we might investigate rescaled trajectories outside of the rescaling core as well, and this indeed seems to be doable using the main result from~\cite{MR4313198}. However, the favourable continuity property is lost in this case, and in particular such rescaled trajectories would have no connection to flip processes on finite graphs, as provided by Theorem~\ref{thm:correspondenceInformal}.)

Theorem~\ref{thm:extremistrescaled} characterizes the rescaling core of the sequence of extremist rules and explicitly describes the rescaled trajectories.

\begin{thm}\label{thm:extremistrescaled}
	The rescaling core of the sequence $(\rul_k)_{k\ge2}$ of extremist rules is the set of graphons whose density is not $\frac{1}{2}$. The following holds for each graphon $W\in\Gra$.
	\begin{romenumerate}
		\item\label{en:rescaledBIGGER} If $\Lone{W} > \frac{1}{2}$ then $\Psi^tW = 1 - e^{-t} \cdot (1-W)$ for each $t \ge 0$.
		\item\label{en:rescaledSMALLER} If $\Lone{W} < \frac{1}{2}$ then $\Psi^tW = e^{-t} \cdot W$ for each $t \ge 0$.
	\end{romenumerate}
\end{thm}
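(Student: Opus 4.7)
The plan is to identify the limit of the rescaled velocity $\tfrac{1}{(k)_2}\vel[\rul_k]$ as $k\to\infty$ on cut-norm-open sets of graphons whose density is bounded away from $1/2$, transfer this to trajectories via the perturbation Lemma~\ref{lem:perturb}, and finally check the continuity clause in the definition of rescaling core. I will treat the case $\Lone{W}>1/2$ in detail; the case $\Lone{W}<1/2$ is symmetric. Starting from~\eqref{eq:vel_prob}, the extremist rule gives
\[
\vel[\rul_k]W(x,y) = \sum_{a\neq b}\bigl[\Pc{ab\in\hh}{U_a=x,U_b=y} - W(x,y)\bigr],
\]
where $ab\in\hh$ precisely when $e(\ff)>\binom{k}{2}/2$, or $e(\ff)=\binom{k}{2}/2$ with $ab\in\ff$.

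Fix $\eta>0$ and set $\mathcal{U}_\eta := \{U\in\Gra:\Lone{U}>1/2+\eta\}$. Conditioning on $U_a=x, U_b=y$, a direct computation as in~\eqref{eq:stirring-cond-edges-F} shows $\Ec{e(\ff)}{U_a=x,U_b=y} = \binom{k-2}{2}\Lone{W} + O(k)$ uniformly in $x,y$, so the expectation exceeds $\binom{k}{2}/2$ by $\Theta_\eta(k^2)$ on $\mathcal{U}_\eta$. The variance of $e(\ff)$ (conditional on $U_a, U_b$) is $O(k^3)$ uniformly in $W\in\Gra$ and $x,y\in\Omega$, being dominated by the variance of the degree-$2$ U-statistic $\sum_{c<d, c,d\neq a,b} \ind{cd\in\ff}$. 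Chebyshev's inequality therefore yields
\[
\sup_{x,y\in\Omega}\Pc{e(\ff)\le \tfrac{1}{2}\binom{k}{2}}{U_a=x,U_b=y} = O_\eta(1/k)
\]
uniformly over $W\in\mathcal{U}_\eta$, so $\Pc{ab\in\hh}{U_a=x,U_b=y} = 1 - O_\eta(1/k)$ uniformly, whence
$\Linf{\tfrac{1}{(k)_2}\vel[\rul_k]W - (1-W)} = O_\eta(1/k)$
uniformly in $W\in\mathcal{U}_\eta$; analogously the rescaled velocity is uniformly $O(1/k)$-close to $-W$ on $\{U\in\Gra:\Lone{U}<1/2-\eta\}$.

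Now fix $W$ with $\Lone{W}>1/2$ and choose $\eta>0$ with $\Lone{W}>1/2+3\eta$. The target curve $\beta(t):=1-e^{-t}(1-W)$ solves $\beta'=1-\beta$ with $\beta(0)=W$ and $\Lone{\beta(t)}\ge\Lone{W}$, so $\beta(t)\in\mathcal{U}_\eta$ for all $t\ge 0$. The field $X(U)=1-U$ is $1$-Lipschitz in $L^\infty$, and by the previous step $Y_k(U):=\tfrac{1}{(k)_2}\vel[\rul_k]U$ satisfies $\Linf{X-Y_k}\le A_k=O_\eta(1/k)$ on $\mathcal{U}_\eta$. To apply Lemma~\ref{lem:perturb} I must keep both curves in $\mathcal{U}_\eta$, which I ensure by a continuation argument: for each fixed $T>0$ and $k$ large enough that $A_k T e^T < \eta$, if the rescaled trajectory $\alpha_k(t):=\trajrul{\rul_k}{t/(k)_2}{W}$ were to first exit $\mathcal{U}_\eta$ at some time $T^*\in(0,T]$, then Lemma~\ref{lem:perturb} applied on $[0,T^*]$ would give $\Linf{\alpha_k(T^*)-\beta(T^*)}\le A_k T e^T < \eta$, which together with $|\Lone{U}-\Lone{V}|\le\Linf{U-V}$ on a probability space and $\Lone{\beta(T^*)}>1/2+3\eta$ forces $\Lone{\alpha_k(T^*)}>1/2+2\eta$, contradicting $\Lone{\alpha_k(T^*)}=1/2+\eta$. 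Hence $\alpha_k$ stays in $\mathcal{U}_\eta$ on $[0,T]$ and Lemma~\ref{lem:perturb} yields $\Linf{\alpha_k(t)-\beta(t)}\to 0$ uniformly on $[0,T]$ for every $T$, proving~\ref{en:rescaledBIGGER}.

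Finally, the limit formulas $1-e^{-t}(1-W)$ and $e^{-t}W$ are affine in $W$ and hence cut-norm continuous, and since $|\Lone{U}-\Lone{W}|\le\cutn{U-W}$ on graphons, the set $\{\Lone{\cdot}\neq 1/2\}$ is cut-norm open; together with the trajectory convergence this places every $W$ with $\Lone{W}\neq 1/2$ into the rescaling core. Conversely, for $W$ with $\Lone{W}=1/2$, any cut-norm neighborhood of $W$ contains graphons $W'$ with $\Lone{W'}>1/2$ and $W''$ with $\Lone{W''}<1/2$ arbitrarily close to $W$ (indeed the maps $\eps \mapsto \min(W+\eps,1)$ and $\eps \mapsto \max(W-\eps,0)$ do the job). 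Their rescaled trajectories converge to $1-e^{-t}(1-W)$ and $e^{-t}W$, which differ for every $t>0$, so $\Psi^t$ cannot be continuous at $W$ and $W$ is excluded from the rescaling core. The main obstacle is the uniform concentration in the second paragraph — uniform in $(x,y)$, uniform over $W\in\mathcal{U}_\eta$, and with rate vanishing as $k\to\infty$; once that is secured, the transfer to trajectories via Lemma~\ref{lem:perturb} with the bootstrap and the continuity check for the rescaling core are essentially routine.
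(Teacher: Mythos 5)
Your proof is correct, and the overall architecture matches the paper's: establish a uniform approximation $\tfrac{1}{(k)_2}\vel[\rul_k]U \approx 1-U$ on graphons of density at least $\tfrac12+\eta$, confine both the rescaled trajectory and the target curve to that set, close via Lemma~\ref{lem:perturb}, and then handle the rescaling-core dichotomy directly. Where you genuinely diverge is in the proof of the velocity approximation. The paper de-roots the velocity sum $\eqref{eq:velocity}$ to a sum over $(k-2)$-vertex graphs $J$ and invokes the Sampling Lemma (Lemma~\ref{lem:sampling}) to show $\sum_{J\notin\lgr{k-2}^{d',\eps/3}}\tind(J,U)=o_k(1/k^2)$, from which the $o_k(k^2)$ error follows. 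You instead work conditionally on $(U_a,U_b)=(x,y)$, bound the conditional variance of $e(\ff)$ by $O(k^3)$ (the count of overlapping pairs in the degree-two U-statistic), note the conditional mean exceeds $\binom{k}{2}/2$ by $\Theta_\eta(k^2)$, and apply Chebyshev, yielding the explicit rate $O_\eta(1/k)$. Your route is more elementary and quantitative; the paper's is a clean application of an off-the-shelf lemma and would give an exponentially small bad-probability, though it only records $o_k(\cdot)$. Your confinement step also differs mildly: the paper argues directly from the short-time $L^\infty$ estimate \eqref{eq:LinfPhi_eps} that the density cannot drop at a first exit time, whereas you bootstrap via Lemma~\ref{lem:perturb} against the explicit target curve $\beta$; both arguments are sound, provided (as you implicitly use) that the Lipschitz and approximation bounds extend to the closure $\overline{\mathcal{U}_\eta}$, which they do. One small loose phrase in your final paragraph: for $W'$ near $W$ with $\Lone{W'}>\tfrac12$, the rescaled trajectory converges to $1-e^{-t}(1-W')$, not $1-e^{-t}(1-W)$; but since these formulas are cut-norm continuous in $W'$, the two one-sided limits of $\Psi^t$ at $W$ are $1-e^{-t}(1-W)$ and $e^{-t}W$, which differ for every $t>0$, so your discontinuity conclusion stands.
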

We only prove Theorem~\ref{thm:extremistrescaled}\ref{en:rescaledBIGGER}. Indeed, then Theorem~\ref{thm:extremistrescaled}\ref{en:rescaledSMALLER} follows by symmetry. Further, the set of graphons of density other than $\frac12$ is open and the maps $W \mapsto \Psi^t(W)$ are continuous. So, it only needs to be argued that the rescaling core does not contain any graphon of density $\frac12$. This is because such a graphon can be approximated by graphons of density more than $\frac12$ and less than $\frac12$ at the same time, which are mapped by $\Psi^{3}(\cdot)$ to a graphon of density more than $0.9$ and less than $0.1$, respectively, demonstrating a discontinuity.
\begin{proof}[Proof of Theorem~\ref{thm:extremistrescaled}\ref{en:rescaledBIGGER}]

Fix $\eps>0$ and consider an extremist rule $\rul_k$ of sufficiently large order $k$. We write $\vel[k]$ and $\trajrul{k}{}$ for the velocity operator and the trajectory of $\rul_k$, respectively. Let $\cu$ denote the set of all graphons with density at least $\frac12+\eps$. We begin with the following claim, which approximates $\vel[k]$ for graphons in $\cu$.

\begin{claim} \label{claim:extremist-rescaled-vel-approx}
If $U\in\cu$ then we have
\[
  \Linf{\vel[k]U - (k)_2\cdot(1-U)}=o_k(k^2)\;.
\]
\end{claim}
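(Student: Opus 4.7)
The key intuition is that when $\Lone{U} \ge \tfrac{1}{2} + \eps$, a random $k$-sample from $U$ produces a drawn graph $\ff$ with more than $\bik/2$ edges with overwhelming probability; the extremist rule then replaces $\ff$ by the clique $K_k$, so each pair becomes an edge of $\hh$ with conditional probability close to $1$. This should push the velocity to the maximum possible value $(k)_2(1-U)$ up to a lower-order correction.

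To make this rigorous, I would start from formula~\eqref{eq:vel_prob}. By the permutation-symmetry of $\ff \sim \G(k,U)$, the conditional probability $\Pc{ab \in \hh}{U_a = x, U_b = y}$ depends only on $x,y$; denote it by $p(x,y)$. Then~\eqref{eq:vel_prob} rearranges to
\[
  \vel[k] U(x,y) - (k)_2 \bigl(1 - U(x,y) \bigr) = (k)_2 \bigl( p(x,y) - 1 \bigr)\;,
\]
and since $0 \le p(x,y) \le 1$, the claim reduces to showing $1 - p(x,y) = o_k(1)$ uniformly in $(x,y) \in \Omega^2$ and in $U \in \cu$.

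By the definition of the extremist rule, $p(x,y) \ge \Pc{e(\ff) > \bik/2}{U_1 = x, U_2 = y}$, so it suffices to bound the conditional probability that $e(\ff) \le \bik/2$. I plan to isolate the number $e_0$ of edges of $\ff$ lying inside the vertex set $\{3,\dots,k\}$. The useful fact is that, conditionally on $U_1=x$ and $U_2=y$, the random variable $e_0$ has the same distribution as $e(\G(k-2,U))$ with no conditioning, because the remaining vertex labels $U_3,\dots,U_k$ and the edge randomness on $\{3,\dots,k\}$ are independent of $U_1$ and $U_2$. Since $e(\ff) \ge e_0$, it is enough to show $\Prob(e_0 \le \bik/2) = o_k(1)$ uniformly in $U \in \cu$.

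The mean $\E e_0 = \binom{k-2}{2}\Lone{U} \ge (\tfrac{1}{2}+\eps)\binom{k-2}{2}$ exceeds $\bik/2$ by $\Omega(\eps k^2)$ for $k$ large in $\eps$, so I need a concentration bound for $e_0$ with tail at this scale. Here I expect the main technical step: a short variance calculation shows that the dominant contribution to $\var(e_0)$ comes from pairs of edges sharing one vertex (disjoint pairs are independent and so contribute nothing), giving $\var(e_0) = O(k^3)$; Chebyshev then yields $\Prob(e_0 \le \bik/2) = O(1/(\eps^2 k)) = o_k(1)$. Alternatively, one could apply the Sampling Lemma (Lemma~\ref{lem:sampling}) to $\G(k-2,U)$ for a sharper sub-exponential tail. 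Either way, the bound is transparently uniform in $U \in \cu$ and in $(x,y)$. Multiplying the resulting uniform estimate for $1-p(x,y)$ by $(k)_2$ gives $\Linf{\vel[k]U - (k)_2(1-U)} = o_k(k^2)$, as required.
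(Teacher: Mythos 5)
Your proof is correct, and it takes a cleaner route than the paper's. The paper works directly with the formula~\eqref{eq:velocity} for the velocity operator: it splits the drawn graphs $F$ into those whose edge density lies within $\eps/2$ of $d' := \Lone{U}$ (for which the rule always outputs $K_k$, so the corresponding positive terms sum to $(k)_2(1-U)$ via~\eqref{eq:klssH}) and the remaining ``bad'' $F$, whose total contribution is bounded by passing to the induced subgraph $J$ on $[k]\setminus\{a,b\}$ and invoking the Sampling Lemma~\ref{lem:sampling} to show the bad $J$ have induced density $o_k(1/k^2)$. You instead start from the probabilistic reformulation~\eqref{eq:vel_prob}, which collapses the whole problem in one line to estimating $1 - p(x,y)$, and then bound the relevant tail by stripping the two rooted vertices and looking at the edge count $e_0$ of the unrooted $(k-2)$-sample. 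This is structurally the same ``forget the two roots'' reduction that the paper encodes via $\mathcal{F}^{a,b}_J$, but your version is more transparent and yields a scalar concentration problem rather than a set of bad graphs. Your Chebyshev estimate ($\var(e_0)=O(k^3)$, deviation scale $\Omega(\eps k^2)$) is more elementary than the Sampling Lemma and gives $1-p(x,y) = O(1/(\eps^2 k))$, hence error $O(k/\eps^2) = o_k(k^2)$; the Sampling Lemma (which the paper uses, and you mention as an alternative) would give the stronger $o_k(1)$ error that the paper actually obtains, though only $o_k(k^2)$ is required. One small thing worth making explicit when you formalize: the permutation-exchangeability justifying that $\Pc{ab\in\hh}{U_a=x,U_b=y}$ does not depend on $(a,b)$ relies on the extremist rule being equivariant under relabelling $[k]$, which holds because the rule depends on $F$ only through $e(F)$ and outputs the label-invariant graphs $K_k$ or $\overline{K_k}$ (or $F$ itself).
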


\begin{claimproof}
Set $d':=\Lone{U}$. Observe that we have
\begin{equation}\label{eq:klssH}
\sum_{F\in \lgr{k}:ab\notin E(F)} (\Troot{F}{a,b}W)(x,y)=1-W(x,y)\;.
\end{equation}
For $1 \le a\neq b \le k$ and a labelled graph $J$ on $[k]\setminus\{a,b\}$, let $\mathcal{F}^{a,b}_J\subset \lgr{k}$ be the set of all graphs on $[k]$ from which the removal of $a$ and $b$ yields $J$. Observe that we have
\begin{equation}\label{eq:sum-derooted}
\tind(J,U) = \sum_{F \in \mathcal{F}^{a,b}_J} \tind(F,U)\;.
\end{equation}

For $\alpha,\beta\in[0,1]$ and $\ell\in \N$, let $\lgr{\ell}^{\alpha,\beta}\subset \lgr{\ell}$ be the set of all graphs whose density is $\alpha\pm \beta$. Since $\tind(J,U)$ represents the probability that $\G(k,U)=J$, Lemma~\ref{lem:sampling} tells us that
\begin{equation}\label{eq:Itellyou}
\sum_{J\in\lgr{k-2}\setminus \lgr{k-2}^{d',\eps/3}} \tind(J,U) =o_k(1/k^2)
\end{equation}
Since any graph $F\in\lgr{k}^{d',\eps/2}$ has density greater than $\frac12$, we have $\rul_{F,K_k}=1$. On the other hand, since $k$ is sufficiently large and at most $2k=o_k(k^2)$ edges are incident to $a$ or $b$, a graph $F\in\mathcal{F}^{a,b}_J$ does not have density $d'\pm \eps/2$ only if $J$ does not have density $d'\pm \eps/3$. Hence, we may rewrite~\eqref{eq:velocity} as
\begin{align*}
\vel[k]U(x,y) &= \sum_{1 \le a\neq b \le k} \sum_{F\in\lgr{k}:ab\notin E(F)} (\Troot{F}{a,b}U)(x,y)\pm 2\sum_{1 \le a\neq b \le k} \sum_{F\in\lgr{k}\setminus\lgr{k}^{d',\eps/2}}   (\Troot{F}{a,b}U)(x,y) \\
\justify{by~\eqref{eq:klssH}}&=(k)_2\cdot (1-U(x,y))\pm 2\sum_{1 \le a\neq b \le k} \sum_{J\in\lgr{k-2}\setminus \lgr{k-2}^{d',\eps/3}} \sum_{F\in\mathcal{F}^{a,b}_J}\tind(F,U)\\
\justify{by~\eqref{eq:sum-derooted} and~\eqref{eq:Itellyou}}&=(k)_2\cdot (1-U(x,y)) \pm o_k(1))\;,
\end{align*}
finishing the proof.
\end{claimproof}

Now we apply Claim~\ref{claim:extremist-rescaled-vel-approx} to show that trajectories which start in $\cu$ are confined to $\cu$.

\begin{claim}
If $U\in\cu$, then we have $\trajrul{k}{t}{U}\in\cu$ for all $t \ge 0$.
\end{claim}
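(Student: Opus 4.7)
The plan is to argue by contradiction using a first-time analysis. Set $f(t) := \Lone{\trajrul{k}{t}{U}}$. Since the trajectory is $L^\infty$-differentiable and integration against $1$ is a bounded linear functional on $\Kernel$, the scalar function $f$ is continuous and differentiable on $[0,\infty)$ with $f'(t) = \int \vel[k]\trajrul{k}{t}{U}\,\D\pi^2$. Suppose for contradiction that $f(t^*) < \tfrac12 + \eps$ for some $t^* > 0$, and let $t_0 := \inf\{t \ge 0 : f(t) < \tfrac12 + \eps\}$. Since $f(0) = \Lone{U} \ge \tfrac12 + \eps$, continuity of $f$ forces $0 < t_0 \le t^*$ and $f(t_0) = \tfrac12 + \eps$. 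In particular $\trajrul{k}{t_0}{U}$ still lies in $\cu$, so Claim~\ref{claim:extremist-rescaled-vel-approx} is available at time $t_0$.

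Integrating the $L^\infty$-bound from that claim over $\Omega^2$ then yields
\[
f'(t_0) \ge (k)_2\bigl(1 - f(t_0)\bigr) - o_k(k^2) = (k)_2\bigl(\tfrac12 - \eps\bigr) - o_k(k^2),
\]
which is strictly positive once $k$ is large enough (which was assumed at the start of this proof of Theorem~\ref{thm:extremistrescaled}\ref{en:rescaledBIGGER}). But $f'(t_0) > 0$ combined with $f(t_0) = \tfrac12 + \eps$ implies that $f(t) < \tfrac12 + \eps$ on some left-punctured neighbourhood $(t_0 - \delta, t_0)$, contradicting the defining property of $t_0$.

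There is really no obstacle here beyond verifying that the error term in Claim~\ref{claim:extremist-rescaled-vel-approx} is dominated by the main term $(k)_2(\tfrac12 - \eps)$. Since that claim supplies a uniform bound over all $U \in \cu$, and we only need its conclusion at the single time $t_0$ at which $\trajrul{k}{t_0}{U}$ lies in $\cu$ by construction, no extra uniformity or continuation argument is required.
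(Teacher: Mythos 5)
Your approach is essentially the same as the paper's: a first-escape-time argument, applying Claim~\ref{claim:extremist-rescaled-vel-approx} at the escape time $t_0$ (where the graphon still lies in $\cu$) to show that the density derivative is strictly positive there, whence a contradiction. The paper argues \emph{forward} in time from $t_0$, using the finite-difference estimate~\eqref{eq:LinfPhi_eps} to show the trajectory remains in $\cu$ slightly beyond $t_0$; you argue \emph{backward}, using $f'(t_0)>0$ to infer $f<\tfrac12+\eps$ on a left-punctured neighbourhood.

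The one gap is your justification that $t_0>0$. You assert that $f(0)\ge\tfrac12+\eps$ together with continuity forces $t_0>0$, but this only follows if the inequality at time $0$ is strict. Since $\cu$ consists of graphons of density \emph{at least} $\tfrac12+\eps$, the initial graphon may have $\Lone{U}=\tfrac12+\eps$ exactly, and then continuity alone does not rule out $t_0=0$. If $t_0=0$, the left-punctured neighbourhood $(t_0-\delta,t_0)$ lies outside the domain $[0,\infty)$ and the contradiction evaporates. The repair is cheap and uses exactly the estimate you already have: $f'(0)\ge (k)_2(\tfrac12-\eps)-o_k(k^2)>0$ forces $f(t)>\tfrac12+\eps$ on a right neighbourhood of $0$, so $t_0>0$ after all. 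Equivalently, one can argue forward from $t_0$ as the paper does (positivity of $f'(t_0)$ gives $f>\tfrac12+\eps$ on $(t_0,t_0+\delta)$, pushing the infimum past $t_0$), which treats $t_0=0$ and $t_0>0$ uniformly.
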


\begin{claimproof}
Let $U\in\cu$ and suppose that $\trajrul{k}{t}{U}\notin\cu$ for some $t \ge 0$. Set $T:=\inf\{t \ge 0 : \trajrul{k}{t}{U}\notin\cu\}$. By the continuity of $\trajrul{k}{\cdot}{U}$, the graphon $\trajrul{k}{T}{U}$ has density $\frac12+\eps$. Set $S= \left( (k)_2^3 2^{\binom{k}{2}} \right)^{-1}$ and let $0 < s \le S$. By~\eqref{eq:LinfPhi_eps} for $s$ and Claim~\ref{claim:extremist-rescaled-vel-approx}, we have
\[
  \Linf{\frac{\trajrul{k}{T+s}{U} - \trajrul{k}{T}{U}}{s} - (k)_2 \cdot (1-\trajrul{k}{T}{U})} = o_k(k^2).
\]
Then, we obtain
\[
  \Lone{\trajrul{k}{T+s}{U}} = \Lone{\trajrul{k}{T}{U}} + s((k)_2\Lone{1-\trajrul{k}{T}{U}}-o_k(k^2)) \ge \Lone{\trajrul{k}{T}{U}} = \frac{1}{2} + \eps,
\]
so we have $\trajrul{k}{T+s}{U}\in\cu$. This contradicts the definition of $T$, so $\trajrul{k}{t}{U}\in\cu$ for all $t \ge 0$.
\end{claimproof}

Let $W\in\cu$. We first consider the integral equation $\Gamma_k^\tau:=W+\int_{s=0}^\tau \mathfrak{Z}_k(\Gamma^s)\D s$, where $\mathfrak{Z}_k(U)=(k)_2\cdot(1-U)$. The solution of this integral equation is
\[
  \Gamma_k^\tau = 1 - e^{-(k)_2 \tau} \cdot (1-W)\;.
\]
Recall that in fact $\trajrul{k}{\cdot}{W}$ is the solution of the integral equation~\eqref{eq:integralequation}, that is,
\[
  \trajrul{k}{\tau}{W} = W + \int_0^\tau \vel[k] \left(\trajrul{k}{s}{W}\right) \D s\;.
\]
Now $\mathfrak{Z}_k(\cdot)$ is $(k)_2$-Lipschitz, both $\Gamma_k^\cdot$ and $\trajrul{k}{\cdot}{W}$ are confined to $\cu$ and Claim~\ref{claim:extremist-rescaled-vel-approx} tells us that $\Linf{\vel[k](\cdot)-\mathfrak{Z}_k(\cdot)} = o_k(k^2)$ on $\cu$, so Lemma~\ref{lem:perturb} gives us
\begin{equation*}
\Linf{\trajrul{k}{\tau}{W} - \Gamma^\tau_k} \le o_k(k^2)\tau \cdot e^{(k)_2 \tau}\;.
\end{equation*}
Hence, we conclude that for any fixed $t\ge 0$, we have that $\trajrul{k}{t/(k)_2} W$ converges uniformly to $1 - e^{-t}\cdot (1-W)$ as $k\to\infty$.
\end{proof}

\subsection{Stabilization in an extremist flip process}\label{ssec:extremistStabilization}
Let us consider a flip process of order $k$ with rule $\rul$. Let $G$ be an $n$-vertex graph ($n \ge k$), and $G_0:=G,G_1,G_2,\ldots$ be an evolution of the graph $G$ with respect to the flip process with rule $\rul$. The \emph{stabilization time} $\kappa_{\rul}(G)$ is a random variable defined as $\kappa_{\rul}(G) = \min\{\ell:G_{\ell}=G_{\ell+1}=G_{\ell+2}=\ldots\}$ if it exists and $\kappa_{\rul}(G) = \infty$ otherwise. For example, if $\rul_{F,F}=0$ for all $F\in\lgr{k}$ then each step of the flip process changes the graph and $\kappa_{\rul}(G)=\infty$ almost surely.

We now consider the concept of stabilization time in the context of an extremist flip process of a given order $k\ge3$. Suppose that the initial graph has $n$ vertices. It can be shown that the stabilization time is finite almost surely, and further, if $n\ge n_0(k)$ is sufficiently large, stabilization occurs (almost surely) at the moment the flip process reaches the edgeless or the complete graph (which are graphs at which stabilization obviously occurs).

We leave it as an interesting question to find universal bounds on stabilization times.
\begin{qu}\label{qu:stabilizationextremist}
For each $k\ge 3$ with the corresponding extremist rule $\rul_k$, we seek a function $b_k:\N\rightarrow\N$ growing as slowly as possible so that \[
  \min_{G \in \lgr{n}}\prob{ \kappa_{\rul_k}(G)\le b_k(n) } \to 1, \quad \text{ as } n \to \infty \;.
\]
\end{qu}

It seems very likely that quasirandom graphs of edge density around $\frac12$ have the largest stabilization time, and that we have to take $b_k(n)\approx\exp(n^C)$ for some $C\in (0,2]$. In particular, the number of steps goes far beyond $\Theta(n^2)$, and so this question is outside the framework of our graphon trajectories.\footnote{Recall Theorem~\ref{thm:correspondenceInformal}.}

\section{Monotone flip processes}\label{sec:monotone}
Broadly speaking, monotone flip processes are those where the number of edges does not decrease in any step. Formally, we define a few different notions of monotonicity.
\begin{defi}\label{def:monotone-rule}
Let $\rul$ be a rule of order $k\in\N$. We say that $\rul$ is
\begin{itemize}
	\item \emph{non-decreasing} if for each $F\in\lgr{k}$ we have $\rul_{F,H} = 0$ whenever $H$ is not a (labelled) supergraph of $F$.
	\item \emph{increasing} if it is non-decreasing and in addition for each $F\in \lgr{k}\setminus\{K_k\}$ there exists a proper supergraph $H$ of $F$ with $\rul_{F,H} > 0$.
	\item \emph{density-non-decreasing} if for each $F\in\lgr{k}$ we have $\sum_{H\in\lgr{k}}\rul_{F,H}e(H) \ge e(F)$.
	\item \emph{density-increasing} if it is density-non-decreasing and further for each $F\in\lgr{k}\setminus\{K_k\}$ the above inequality is strict.
	\end{itemize}
\end{defi}
In particular, a non-decreasing rule is density-non-decreasing and an increasing rule is density-increasing.

The following proposition asserts that non-decreasing and density-increasing flip processes are convergent, but this need not be true for density-non-decreasing flip processes.
\begin{prop}\label{prop:monotoneflipprocess}
$~$
\begin{romenumerate}
	\item\label{en:monotone1} Each trajectory of a non-decreasing rule $\rul$ has a destination.
	\item\label{en:monotone2} Let $\rul$ be a density-increasing rule. The destination of each trajectory is the constant-$1$ graphon. The age of each graphon except the constant-$1$ graphon is finite.
	\item\label{en:monotone3} There is a density-non-decreasing rule with a non-convergent trajectory.
\end{romenumerate}
\end{prop}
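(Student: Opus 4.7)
For (i), the key observation is that whenever $\rul_{F,H} > 0$ we have $E(F) \subseteq E(H)$, so the indicator $\indic_{ab \in F \setminus H}$ in~\eqref{eq:velocity} vanishes identically. This makes $\vel[\rul]W$ pointwise non-negative for every graphon $W$, and hence the function $t \mapsto \traj{t}{W}(x,y)$ is non-decreasing in $t$ for each $(x,y) \in \Omega^2$. Being bounded above by $1$ it converges pointwise to some measurable limit $D \colon \Omega^2 \to [0,1]$; the dominated convergence theorem then gives $\Lone{\traj{t}{W} - D} \to 0$, and since $\cutnd \le \Lone{\cdot}$ this yields $\dest(W) = D$.

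For (ii), I first identify the only possible destination. If $D = \dest(W)$, Proposition~\ref{prop:dests} combined with~\eqref{eq:vel_integral} gives
\[
0 = \int \vel[\rul] D \D \pi^2 = 2\sum_{F \in \lgr{k}} \tind(F, D) \Bigl(\sum_{H \in \lgr{k}} \rul_{F,H} e(H) - e(F)\Bigr).
\]
The density-increasing hypothesis makes every summand non-negative with strict positivity for $F \ne K_k$, so $\tind(F,D) = 0$ for every $F \ne K_k$. This forces $\tind(K_k, D) = 1$, and hence $D \equiv 1$ almost everywhere. To prove every trajectory actually converges and has finite age, set $h(t) := \Lone{\traj{t}{W}}$ and $c := \min_{F \ne K_k}\bigl(\sum_H \rul_{F,H} e(H) - e(F)\bigr) > 0$; the same identity combined with the Kruskal--Katona estimate $\tind(K_k, V) \le (\Lone V)^{k/2} \le \Lone V$ from Proposition~\ref{prop:KruskalKatona} yields the differential inequality $h'(t) \ge 2c(1 - h(t))$, valid throughout $\life{W}$. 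Applying Lemma~\ref{lem:FODEonesidedBound} to $1-h$ gives $1 - h(t) \le (1 - \Lone W)e^{-2ct}$ for $t \ge 0$, so $\cutnd(\traj{t}{W}, 1) \le 1 - h(t) \to 0$ and $\dest(W) \equiv 1$. Running the same inequality backward, the function $(1-h(t))e^{2ct}$ is non-increasing on $\life{W}$, so $1 - h(-s) \ge (1 - \Lone W)e^{2cs}$ for every $s \in [0, \age{W})$; the left-hand side being at most $1$ forces $\age{W} \le -\ln(1 - \Lone W)/(2c) < \infty$ whenever $\Lone W < 1$, while $W \equiv 1$ is a fixed point of the flow with infinite age.

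For (iii), the plan is to construct an explicit density-preserving (hence density-non-decreasing) rule admitting a non-convergent trajectory. The strategy is to design a rule of small order whose velocity field at some non-constant graphon $W_0$ is non-zero but integrates to zero over $\Omega^2$, and whose local flow on a fixed-density invariant set containing $W_0$ describes a periodic orbit. I would proceed either by adapting the periodic flip process of~\cite{Flip1} so that its expected replacement edge count matches the drawn edge count, or by constructing a rule from scratch whose velocity field has a built-in group-theoretic symmetry forcing rotational dynamics on a low-dimensional invariant set. The main obstacle is rigorously verifying non-convergence of the modified dynamics; I expect this to combine an invariant-subset argument in the spirit of Proposition~\ref{prop:zeroonasection} with a direct computation of the velocity field at the chosen starting graphon.
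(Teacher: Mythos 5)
Your treatments of~\ref{en:monotone1} and~\ref{en:monotone2} are correct and share the paper's key ingredients (pointwise monotonicity of the velocity for~\ref{en:monotone1}; the identity $\int\vel W = 2\sum_F \tind(F,W)(\sum_H\rul_{F,H}e(H)-e(F))$ and Proposition~\ref{prop:KruskalKatona} for~\ref{en:monotone2}). For~\ref{en:monotone2} you deviate mildly in two harmless ways: you first identify the destination via Proposition~\ref{prop:dests}, which is redundant since you then prove $\cutnd(\traj{t}W,1)\to 0$ directly; and for the age bound the paper argues from a positive lower bound on $f'$ in the negative time domain, while you run the exponential comparison backward via the non-increasing function $(1-h(t))e^{2ct}$. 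Your version gives a clean explicit bound $\age W\le -\ln(1-\Lone W)/(2c)$, which is a small bonus. One cosmetic point: you write $\tind(K_k,V)$ where Proposition~\ref{prop:KruskalKatona} controls $t(K_k,V)$; these coincide for the clique, but it is worth saying so.

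Part~\ref{en:monotone3}, however, is not a proof. You correctly guess the right starting point (the periodic-orbit rule of~\cite{Flip1}) and the right modification principle (force the expected replacement edge count to equal the drawn edge count), but you stop at ``the plan is to construct \dots'' and ``I would proceed either by adapting \dots or by constructing from scratch.'' The paper's argument rests on a concrete device you do not describe: partition $\Omega$ into $\Omega_1$, $\Omega_2$ and a large reservoir $\Omega_{\mathrm{reservoir}}$, run the oscillatory dynamics on $\Omega_1$, $\Omega_2$ exactly as in~\cite{Flip1}, and compensate any edge gain or loss inside the reservoir so that each step preserves the total edge count; the reservoir density stays strictly inside $(0,1)$ so this compensation is always feasible, and Poincar\'e--Bendixson (borrowed from~\cite{Flip1}) then gives non-convergence. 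Without naming and verifying a mechanism of this type, the non-trivial content of~\ref{en:monotone3} is missing.
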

\begin{proof}[Proof of Proposition~\ref{prop:monotoneflipprocess}\ref{en:monotone1}]
Let $W\in\Gra$ be a graphon. Observe that since $\rul$ is non-decreasing, the negative terms of~\eqref{eq:velocity} have coefficient zero. Hence, the velocity is pointwise non-negative. In other words, for each point $(x,y)$, the function $t\mapsto\traj{t}{W}(x,y)$ is non-decreasing. In this case it is obvious that the destination is the pointwise limit of $\traj{t}{W}$.
\end{proof}

\begin{proof}[Proof of Proposition~\ref{prop:monotoneflipprocess}\ref{en:monotone2}]
Suppose $\rul$ is of order $k$. Let
\[
  \delta = \min_{F \in \lgr{k}\setminus \{K_k\}} \sum_{H \in \lgr{k}} \rul_{F,H}(e(H)-e(F)).
\]
Since $\rul$ is density-increasing, we have $\delta>0$. We shall show that for every graphon $W$ we have
\begin{equation}\label{eq:change_toward_one}
\int \vel W \D\pi^2 \ge 2\delta (1-t(K_k,W))\;.
\end{equation}
Let $(\ff,\hh)$ be a pair of random graphs in $\lgr{k}$ such that $\ff \sim \G(k,W)$ and $\Pc{\hh = H}{\ff = F} = \rul_{F,H}$ for all $F,H\in\lgr{k}$. For $F \neq K_k$ we have
\begin{equation}\label{eq:yImrl}
  \Ec{e(\hh)}{\ff=F} = \sum_{H \in \lgr{k}} \rul_{F,H} e(H) \ge e(F) + \delta.
\end{equation}

Now by~\eqref{eq:vel_integral} we have
\begin{align*}	
\int \vel  W \D\pi^2
 &= 2\E \left[ e(\hh) - e(\ff) \right] = 2\sum_{F \in \lgr{k}} t(F,W) \Ec{e(\hh) - e(\ff)}{\ff=F} \\
 &= 2\sum_{F \in \lgr{k}} t(F,W) \left[ \Ec{e(\hh)}{\ff=F} - e(F) \right] \\
 &\geByRef{eq:yImrl} 2\sum_{F \neq K_k} t(F, W) \delta = 2 \left( 1 - t(K_k, W) \right)\delta\;,
\end{align*}
which shows~\eqref{eq:change_toward_one}.
	
The growth given by~\eqref{eq:change_toward_one} allows us to prove the statement. Indeed, let $U$ be an arbitrary graphon. Let $f:[-\age{U},+\infty)\rightarrow [0,1]$ be defined by $f(t):=\int \traj{t}{U}\D\pi^2$. Then~\eqref{eq:change_toward_one} gives
	\begin{equation*}
	  \frac{\D}{\D t}f(t)\ge 2\delta(1-t(K_k, \traj{t}{U})) \geBy{Prop~\ref{prop:KruskalKatona}}
	2\delta(1-f(t)^{k/2})\;.
	\end{equation*}
	Firstly, this implies that $f(t)\rightarrow 1$ as $t\rightarrow \infty$, i.e., that the destination of $U$ is the constant-$1$ graphon. Secondly, it gives that $f(t)$ is increasing, and hence for $t<0$, we have that $\frac{\D}{\D t}f(t)\ge 2\delta(1-f(0)^{2/k})$. That means that unless $f(0)=1$, the derivative of $f$ in the negative domain is bounded from below by a positive constant. Since $f\in[0,1]$, this means that the age is finite.
\end{proof}

\begin{proof}[Sketch of proof of Proposition~\ref{prop:monotoneflipprocess}\ref{en:monotone3}]
	We build on Section~\ref{FIRSTPAPER.ssec:oscillatory} of~\cite{Flip1} where a flip process with a periodic trajectory was constructed. We briefly recall this construction. Fix a partition $\Omega = \Omega_1 \cup \Omega_2$ with $\pi(\Omega_1) = \pi(\Omega_2) = 1/2$. We take $k$ large and $\alpha>0$ small (probably $k=2000$ and $\alpha=0.01$ would work). Fix $r=0.1$, $a:=0.2$, $b:=0.8$. Let $\mathsf{circle}(a,b;r)$ be the circle centered at the point $(a,b)$ and of radius $r$. For input $F\in\lgr{k}$ the rule is as follows. If $F$ consists of two components $C_1$ and $C_2$, each of order $(\frac12\pm\alpha)k$, say $C_1$ of density $d_1$ and $C_2$ of density $d_2$ with the condition that the point $(d_1,d_2)$ is within distance of $0.01$ from $\mathsf{circle}(a,b;r)$ then we add or delete edges into $C_1$ and $C_2$ so that the expected change of edges in $C_1$ and $C_2$ is given by the first and the second coordinate of $(g+h)(d_1,d_2)$, respectively. Here, $g,h:\R^2\setminus\{(a,b)\}\rightarrow\R^2$  are defined as follows,
	\begin{align*}
	g(x,y)&=\frac{1}{\sqrt{(-y+b)^2+(x-a)^2}}\cdot (-y+b,x-a)\;,\\
	h(x,y)&= \left(\tfrac{r}{\sqrt{(x-a)^2+(y-b)^2}}-1\right)\cdot (x-a,y-b)\;.
	\end{align*}
	No edges between $C_1$ and $C_2$ are added. For other $F$s, the rule is arbitrary. Observe that values of $g$ are tangent vectors of circles centered at $(a,b)$. Furthermore, $h$ vanishes on $\mathsf{circle}(a,b;r)$ and pushes towards $\mathsf{circle}(a,b;r)$ elsewhere.
	
	Fix $(x,y)\in\mathsf{circle}(a,b;r)$.
	Consider a graphon $U$ taking value $x$ on $\Omega_1\times\Omega_1$, value $y$ on $\Omega_2\times\Omega_2$ and~$0$ elsewhere. If it were true that for each such graphon we would have with probability~1 that the sampled $F$ has the two components as above and $d_1=x$ and $d_2=y$ then $\mathsf{circle}(a,b;r)$ would indeed be a periodic trajectory of the flip process. However, because the law of large numbers is not perfect, the above statement holds only with probability~$1-o_k(1)$ and only with $d_1=x\pm o_k(1)$ and $d_2=y\pm o_k(1)$. Considering a trajectory that starts at a graphon corresponding to the point $(x,y)$, the correcting function $h$ pushes the trajectory towards $\mathsf{circle}(a,b;r)$ in case of any substantial deviations (this part of the argument needs the Poincar\'e-Bendixson Theorem and we refer to~\cite{Flip1}).
	
	The above flip process is clearly not density-non-decreasing. However, we can modify it as follows. Partition $\Omega=\Omega_1\cup\Omega_2\cup \Omega_\mathrm{reservoir}$ with $\pi(\Omega_1) = \pi(\Omega_2) = 1/4$ and $\pi(\Omega_\mathrm{reservoir})=1/2$. Now, the rule is as follows. If the sampled graph $F$ is such that it consists of three components, say $C_1$, $C_2$ and $C_\mathrm{reservoir}$, such that $C_1$ and $C_2$ have orders $(\frac14\pm\alpha)k$ and $C_\mathrm{reservoir}$ has order $(\frac12\pm\alpha)k$ and the densities of $C_1$ and $C_2$ are as above\footnote{Note that in this setting, the components $C_1$, $C_2$ and $C_\mathrm{reservoir}$ are uniquely determined.} and the density of $C_\mathrm{reservoir}$ is between $0.2$ and $0.8$ then we add or delete edges from $C_1$ and $C_2$ according to $(g+h)(d_1,d_2)$ and then add or delete edges from $C_\mathrm{reservoir}$ so that the total number of edges in this step does not change. If $F$ is not of this form then we have an idle step.
	
    Since the number of edges does not change, the flip process is non-density-decreasing. Starting with a graphon $W$ whose values on $\Omega_i\times \Omega_j$ ($i,j\in[2])$ are defined as for $U$, and further which is constant-$\frac{1}{2}$ on $\Omega_\mathrm{reservoir}^2$ and zero on $(\Omega_1\cup\Omega_2)\times \Omega_\mathrm{reservoir}^2$ we see that the densities on $\Omega_1^2$ and on $\Omega_2$ evolve close to $\mathsf{circle}(a,b;r)$ while $\Omega_\mathrm{reservoir}$ just serves to keep the overall density constant. In particular, this trajectory does not have a destination.
\end{proof}

Our next result says that for a non-decreasing rule the limit of the destinations of a convergent sequence of graphons is greater than or equal to the destination of the limit.

\begin{prop}\label{prop:NondecreasingSinksLower}
Let $\rul$ be a non-decreasing rule of order $k\in\N$ and $(U_n)_{n\in\N}$ be a sequence of graphons which converges to a graphon $U$ in the cut norm distance. Suppose that the sequence $(\dest(U_n))_{n\in\N}$ converges in the cut norm distance to a graphon $S$. Then $S\ge\dest(U)$.
\end{prop}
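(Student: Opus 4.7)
The plan is to exploit monotonicity of trajectories of non-decreasing rules to compare $\dest(U_n)$ with $\traj{t}{U_n}$ and then pass to the limit $n\to\infty$ for each fixed~$t$. For a non-decreasing rule, the negative terms in~\eqref{eq:velocity} have zero coefficient, so $\vel W\ge0$ pointwise for every $W\in\Gra$, and hence by Remark~\ref{rem:integral_form} the map $t\mapsto\traj{t}{W}$ is pointwise a.e.\ non-decreasing with values in $[0,1]$. This has two consequences: first, $\dest(W)$ (which exists by Proposition~\ref{prop:monotoneflipprocess}\ref{en:monotone1}) coincides with the pointwise a.e.\ monotone limit $\lim_{t\to\infty}\traj{t}{W}$; and second, $\traj{t}{W}\le\dest(W)$ a.e.\ for every $t\ge0$.

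Fixing $t\ge0$ and applying the latter to each $U_n$ gives $\dest(U_n)-\traj{t}{U_n}\ge0$ a.e. By Theorem~\ref{thm:LipschTime} and the hypothesis, the difference $\dest(U_n)-\traj{t}{U_n}$ converges in cut norm to $g:=S-\traj{t}{U}$. The heart of the argument is then the following auxiliary lemma: \emph{if a uniformly bounded sequence $(f_n)$ of measurable functions on $\Omega^2$ with $f_n\ge0$ a.e.\ converges in cut norm to some $g$, then $g\ge0$ a.e.} Given this, $S\ge\traj{t}{U}$ a.e.\ for every $t\ge0$; taking $t\to\infty$ using the first consequence from the previous paragraph gives $S\ge\dest(U)$ a.e., as required.

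The main obstacle is the auxiliary lemma, since cut norm convergence only controls integrals over measurable rectangles, whereas $\{g<0\}$ can be an arbitrary measurable subset of $\Omega^2$. I would prove it by contradiction: if $E:=\{g<-\eps\}$ had positive $\pi^2$-measure for some $\eps>0$, then since the product $\sigma$-algebra on $\Omega^2$ is generated by rectangles, $E$ can be approximated in $\pi^2$-measure by a finite union $F$ of $N$ rectangles. Combined with boundedness of $g$, this forces $\int_F g\le-\eps\pi^2(E)/2$, while $\int_F f_n\ge0$ and $\bigl|\int_F (f_n-g)\bigr|\le N\cutn{f_n-g}\to0$, giving a contradiction.
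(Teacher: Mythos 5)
Your proposal is correct and takes essentially the same route as the paper: both fix a finite time $t$, use the Lipschitz continuity in the initial condition (Theorem~\ref{thm:LipschTime}) to pass to the limit $n\to\infty$, invoke the pointwise monotonicity $\traj{t}{U_n}\le\dest(U_n)$, and finish by sending $t\to\infty$. The only difference is one of completeness: the paper leaves the final step (from ``$\int_{A\times B}\dest(U)\le\int_{A\times B}S$ for all rectangles $A\times B$'' to ``$\dest(U)\le S$ a.e.'') as ``an exercise in measure theory,'' whereas you spell it out as your auxiliary lemma, proved by approximating $\{g<-\eps\}$ in measure by a finite union of rectangles. That argument is sound (though the union should be taken disjoint, or the bound $N\cutn{f_n-g}$ replaced by one in terms of the number of atoms of the product partition refining the rectangles), and it is a worthwhile addition since it makes the proof self-contained.
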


\begin{remark}\label{rem:converseMonotoneSink}
Observe that the converse inequality does not hold in general. Indeed, for each $k\ge 3$ consider a flip process of order $k$ in which a drawn graph containing a single edge and $k-2$ isolated vertices is replaced by $K_k$ while the step is idle for other drawn graphs. Let $(U_n)_{n\in\N}$ be the sequence of graphons where $U_n$ is the constant-$1/n$ graphon. On the one hand, this sequence of graphons converges to the constant-$0$ graphon $U$; here we have $\dest(U)=0$. On the other hand, $(\dest(U_n))_{n\in\N}$ is a sequence of constant-$1$ graphons, which clearly converges to the constant-$1$ graphon.

Nevertheless, we show in Theorem~\ref{thm:removalprocSinkLimit} that we do have equality for removal flip processes (which are also monotone, though non-increasing).
\end{remark}

\begin{proof}[Proof of Proposition~\ref{prop:NondecreasingSinksLower}]	
  We shall prove that for arbitrary sets $A,B\subset \Omega$ and an arbitrary $\eps>0$ we have $\int_{A\times B}\dest(U) \le \int_{A\times B}S +\eps$ (how this implies the proposition, is an exercise in measure theory which we leave to the reader). To this end, let $T$ be such that $\cutnd(\dest(U),\traj{T}{U})<\eps/2$. Define $\delta:=\eps/(4\exp(C_k T))$, where $C_k$ is from Theorem~\ref{thm:LipschTime}. Then by Theorem~\ref{thm:LipschTime}, for every $U_i$ for which $\cutnd(U_i,U)<\delta$, we have $\cutnd(\traj{T}U_i,\traj{T}U)<\eps/2$. In particular, this means that $\int_{A\times B}\dest(U) \le \int_{A\times B}\traj{T}U_i + \eps$. Since we have a non-decreasing flip process, $\traj{T}U_i \le \dest(U_i)$ and hence $\int_{A\times B}\dest(U) \le \int_{A\times B}\dest(U_i) + \eps$. As $\dest(U_1),\dest(U_2),\ldots\tocutn S$, we get $\int_{A\times B}\dest(U) \le \int_{A\times B}S + \eps$ as needed.
\end{proof}

\section{Component completion flip processes} \label{sec:componentcompletion}
In this section we define the component completion rule and prove Theorem~\ref{thm:component-completion-dests}, which characterizes the destinations of the component completion rule. The component completion rule completes the components of the drawn graph $F$ by adding edges between all non-edge pairs of vertices in the same connected component of $F$. We give a formal definition below.

\begin{defi}[Component completion graph]
Let $k\in\N$ and $F\in\lgr{k}$. For distinct $a,b\in[k]$ write $a \sim_F b$ to mean that there is a path between $a$ and $b$ in $F$. The \emph{component completion graph} $\cc(F)$ of $F$ is the labelled graph on vertex set $[k]$ with edge set $\{ab:a,b\in[k],a\neq b,a \sim_F b\}$.
\end{defi}

\begin{defi}[Component completion rule]
Let $k\in\N$. A rule $\rul$ of order $k$ is \emph{component completion} if for all $F,H\in\lgr{k}$ we have
\begin{equation} \label{eq:component-completion-rule}
\rul_{F,H}=
\begin{cases}
1&\textrm{ if }H=\cc(F), \\
0&\textrm{ otherwise }.
\end{cases}
\end{equation}
\end{defi}

To prepare for the statement and proof of Theorem~\ref{thm:component-completion-dests}, we give definitions of connectedness, component decomposition and component completeness for graphons.

\begin{defi}[Connectedness] \label{def:graphon-connected}
A graphon $W$ is \emph{disconnected} if $W = 0$ or there is a subset $A \subseteq \Omega$ with $\pi(A) \in (0,1)$ such that $W = 0$ a.e.\ on $A \times \left( \Omega \setminus A \right)$. Otherwise, $W$ is \emph{connected}.
\end{defi}

\begin{defi}[Component decomposition] \label{def:graphon-component-decomposition}
A \emph{component decomposition} of a graphon $W$ is a partition $\Omega = \bigsqcup_{i=0}^{K}\Omega_i$ into disjoint measurable sets $\Omega_i$ with $0 \le K \le \infty$ such that the following hold.
\begin{enumerate}[label=(GC\arabic{*})]
	\item \label{item:W-component-decomposition-measure} $\alpha_i := \pi(\Omega_i) > 0$ for all $i \ge 1$ but $\Omega_0$ may be empty.
	\item \label{item:W-component-decomposition-connected} For all $i \ge 1$ the graphon $W{\restriction_{\Omega_i^2}}$ on $ \left( \Omega_i , (\alpha_i^{-1} \pi){\restriction_{\Omega_i}} \right)$ is connected.
	\item \label{item:W-component-decomposition-no-edges-across}$W = 0$ a.e.\ on $\Omega^2 \setminus \left( \bigsqcup_{i=1}^{K}\Omega_i^2 \right)$.
\end{enumerate}
\end{defi}

\begin{defi}[Component completeness] \label{def:graphon-component-complete}
A graphon $W$ is \emph{component complete} if it has a component decomposition $\Omega = \bigsqcup_{i=0}^{K}\Omega_i$ such that $W{\restriction_{\Omega_i^2}} = 1$ for all $i \ge 1$.
\end{defi}

Theorem~\ref{thm:component-completion-dests} characterizes the destinations of the component completion rule.

\begin{thm} \label{thm:component-completion-dests}
Suppose that $\rul$ is a component completion rule of order $k \ge 3$. Then the following hold.
\begin{romenumerate}
	\item Every trajectory of $\rul$ has a destination.
	\item \label{item:component-completion-dests-complete} Every destination of $\rul$ is component complete. Moreover, the destination of a graphon with component decomposition $\Omega = \bigsqcup_{i=0}^{K}\Omega_i$ is component complete with the same component decomposition.
	\item Every component complete graphon is a destination of $\rul$.
\end{romenumerate}
\end{thm}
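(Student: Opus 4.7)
The plan is to verify the three parts in the order (iii), (i), then (ii). For part (iii), I check directly via formula~\eqref{eq:vel_prob} that $\vel[\rul] U = 0$ for any component complete graphon $U$ with decomposition $\Omega = \bigsqcup_{i=0}^K \Omega_i$: for $(x,y) \in \Omega_i^2$ with $i \ge 1$ the value $U(x,y) = 1$ forces $ab \in \ff \subseteq \hh$ almost surely, matching $U(x,y)$; for $(x,y) \notin \bigsqcup_{i\ge 1}\Omega_i^2$, since the support of $U$ lives in $\bigsqcup_{i\ge 1}\Omega_i^2$, almost surely $a$ and $b$ end up in different components of $\ff \sim \G(k,U)$ (edges in $\ff$ can only appear between vertices whose $U$-coordinates share a common $\Omega_j$ with $j \ge 1$), so $ab \notin \cc(\ff) = \hh$, matching $U(x,y) = 0$. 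Part (i) is then immediate from Proposition~\ref{prop:monotoneflipprocess}\ref{en:monotone1} applied to $\rul$, which is non-decreasing because $\cc(F) \supseteq F$; pointwise monotone convergence of the trajectory yields $L^1$-convergence via monotone convergence, which implies cut norm convergence.

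For part (ii), fix $W \in \Gra$ with component decomposition $\Omega = \bigsqcup_{i=0}^K \Omega_i$, and set $S := \Omega^2 \setminus \bigsqcup_{i=1}^K \Omega_i^2$ and $\ca := \{U \in \Gra : U{\restriction_S} = 0\}$. The same calculation as in (iii), using only that the support of $U \in \ca$ is contained in $\bigsqcup_{i\ge 1}\Omega_i^2$, shows that $\vel[\rul] U = 0$ on $S$. Thus Proposition~\ref{prop:zeroonasection} gives $\traj{t}W \in \ca$ for all $t \ge 0$, and hence $\dest(W) \in \ca$ (as $\ca$ is closed under $L^1$-limits). Since $\rul$ is non-decreasing, $\dest(W) \ge W$, so $\dest(W){\restriction_{\Omega_i^2}}$ is connected for each $i \ge 1$. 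I will then prove the key lemma \emph{``any connected graphon $V$ with $\vel[\rul] V = 0$ is identically $1$ almost everywhere''} and apply it to $V := \dest(W){\restriction_{\Omega_i^2}}$ viewed as a graphon on $(\Omega_i, \pi{\restriction_{\Omega_i}}/\pi(\Omega_i))$. The velocity condition on $V$ transfers from $\vel[\rul]\dest(W) = 0$ on $\Omega_i^2$ because, in the computations below, the intermediate integration variable $z$ is effectively restricted to $\Omega_i$ since $\dest(W)$ vanishes on $S$.

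To prove the key lemma, note that each summand of~\eqref{eq:vel_prob} is non-negative (as $\rul$ is non-decreasing), so $\vel[\rul] V = 0$ forces each to vanish; expanding $\{ab \in \hh\}$ via the union of $\{ab \in \ff\}$ and the event that a path of length at least $2$ from $a$ to $b$ exists in $\ff$, which are conditionally independent given the $U_c$'s, yields the dichotomy: for a.e.\ $(x,y)$, either $V(x,y) = 1$ or $(V^2)(x,y) := \int V(x,z)V(z,y)\D\pi(z) = 0$ (one intermediate vertex suffices, available since $k \ge 3$). From this dichotomy and connectedness, I conclude $V = 1$ a.e.\ in three steps: (a)~connectedness forces $\deg_V(x) > 0$ for a.e.\ $x$ (else the positive-measure zero-degree set induces a disconnecting partition); (b)~for such $x$, setting $N(x) := \{y : V(x,y) > 0\}$ and applying the dichotomy to $y \notin N(x)$ yields $V = 0$ a.e.\ on $N(x) \times N(x)^c$, whence connectedness forces $\pi(N(x)) = 1$; (c)~having $V > 0$ a.e., Fubini gives $(V^2) > 0$ a.e., and the dichotomy then gives $V = 1$ a.e. The main obstacle is this key lemma, particularly step~(b): bridging from the local ``no path of length $2$'' condition to the global conclusion $V = 1$ requires a delicate measure-theoretic use of connectedness, since connectedness is a property of the support of $V$ rather than of its pointwise values.
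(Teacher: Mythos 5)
Your proposal is correct and takes essentially the same approach as the paper: part (i) from Proposition~\ref{prop:monotoneflipprocess}\ref{en:monotone1}, the confinement of trajectories via Proposition~\ref{prop:zeroonasection}, the monotonicity argument $\dest(W)\ge W$ to keep components connected, and the ``no $3$-vertex induced path'' dichotomy derived from zero velocity. Your ``key lemma'' is exactly the content of the paper's Lemma~\ref{lem:no-cherries-zero-one}, and your proof of it via the neighbourhood sets $N(x)$ is a legitimate reworking of the paper's argument with $S_x, P_x, Q$ (and your step~(c), deriving $V=1$ directly from $V>0$ a.e.\ via the dichotomy, slightly streamlines the paper's detour through proving $V\in\{0,1\}$ a.e.\ first); the only minor organizational differences are that you establish part~(iii) first by a direct velocity computation rather than deducing it from part~(ii), and you phrase the transfer to the rescaled component $(\Omega_i,\pi_i)$ as a transfer of the ``velocity condition'' when strictly speaking what transfers cleanly (and is all you need) is the dichotomy.
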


To prepare for the proof of Theorem~\ref{thm:component-completion-dests}, we state two lemmas. We begin with Lemma~5.5 from~\cite{Janson08Connectedness} about component decomposition for graphons.

\begin{lem} \label{lem:graphon-component-decomposition}
Every graphon has a component decomposition.
\end{lem}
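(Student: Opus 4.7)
The plan is to build the decomposition from the $\sigma$-algebra of ``$W$-invariant'' subsets of $\Omega$ and read off components as its atoms. Define
\[
  \mathcal{C} := \left\{ A \subseteq \Omega \text{ measurable} : W = 0 \text{ a.e.\ on } A \times (\Omega \setminus A) \right\},
\]
so that $A \in \mathcal{C}$ means there are no ``cross edges'' between $A$ and its complement. Clearly $\emptyset, \Omega \in \mathcal{C}$, and $\mathcal{C}$ is closed under complements because $W$ is symmetric, so $A \times A^c$ and $A^c \times A$ are $\pi^2$-null or not together. For countable unions, given $(A_n)_{n \in \N} \subseteq \mathcal{C}$ and $A := \bigcup_n A_n$, note $A^c \subseteq A_n^c$ for each $n$, so $A_n \times A^c \subseteq A_n \times A_n^c$ is null; taking the countable union gives $A \times A^c$ is null, i.e.\ $A \in \mathcal{C}$. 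Hence $\mathcal{C}$ is a sub-$\sigma$-algebra.

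Next I would pass to the measure algebra of $(\Omega, \pi)$. Since the ambient $\sigma$-algebra is separable by standing assumption, the quotient measure algebra is separable in the $L^1$-metric $d(A,B) = \pi(A \triangle B)$. The image $\mathcal{C}/\sim$ is then a closed separable sub-$\sigma$-algebra (closedness: if $A_n \to A$ in $d$ and each $A_n \in \mathcal{C}$, then $\indic_{A_n \times A_n^c} \to \indic_{A \times A^c}$ in $L^1(\pi^2)$, and $\int W \indic_{A_n \times A_n^c} = 0$ for every $n$ forces $\int W \indic_{A \times A^c} = 0$, i.e.\ $A \in \mathcal{C}$). A separable Boolean sub-$\sigma$-algebra of a separable measure algebra has at most countably many atoms; enumerate them (modulo null sets) as $\Omega_1, \Omega_2, \dots$, where $\alpha_i := \pi(\Omega_i) > 0$, and set $\Omega_0 := \Omega \sm \bigsqcup_{i \ge 1} \Omega_i$.

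The verification of the three conditions is then direct. \ref{item:W-component-decomposition-measure} is built into the construction. For \ref{item:W-component-decomposition-no-edges-across}, each $\Omega_i$ ($i \ge 1$) lies in $\mathcal{C}$, so $W = 0$ a.e.\ on $\Omega_i \times (\Omega \sm \Omega_i) \supseteq \Omega_i \times \Omega_j$ for every $j \ne i$ and $\Omega_i \times \Omega_0$; the only remaining piece is $\Omega_0 \times \Omega_0$, for which I would argue that if $W$ were non-null there, the $\sigma$-algebra generated by $\mathcal{C}$ together with a level set $\{x : \int_{\Omega_0} W(x,\cdot) \D\pi > c\}$ for suitable $c > 0$ would produce a non-trivial $\mathcal{C}$-subset of $\Omega_0$, contradicting that the atoms of positive measure have already been exhausted. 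For \ref{item:W-component-decomposition-connected}, suppose for contradiction that $W{\restriction_{\Omega_i^2}}$ is disconnected on $(\Omega_i, \alpha_i^{-1} \pi{\restriction_{\Omega_i}})$; then there is $B \subseteq \Omega_i$ with $\pi(B), \pi(\Omega_i \sm B) > 0$ and $W = 0$ a.e.\ on $B \times (\Omega_i \sm B)$. Combining with $W = 0$ a.e.\ on $\Omega_i \times (\Omega \sm \Omega_i)$ (already established) gives $B \in \mathcal{C}$, contradicting the atomicity of $\Omega_i$.

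The main obstacle I foresee is showing that $\mathcal{C}/\sim$ really does have countably many atoms and that this exhausts $\Omega$ up to a $W$-``edgeless'' residual $\Omega_0$; the topological/order-theoretic ingredient here (closedness of $\mathcal{C}$ in the measure-algebra metric plus exhaustion via atoms in a separable Boolean $\sigma$-algebra) is a standard but non-trivial fact, which is precisely where invoking the Janson reference \cite{Janson08Connectedness} shortcuts the argument.
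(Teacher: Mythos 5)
The paper does not prove this lemma at all --- it is quoted verbatim as Lemma~5.5 of the cited Janson reference --- so any self-contained argument is by definition a different route. Your route via the $\sigma$-algebra $\mathcal{C}$ of sets with no cross-edges is viable: $\mathcal{C}$ is indeed a $\sigma$-algebra modulo null sets (your closure arguments are correct, and you rightly use $W\ge 0$ when passing limits inside the integral), its atoms are automatically countable since they are pairwise disjoint of positive measure in a probability space (no separability or closedness is actually needed for this), and your verifications of \ref{item:W-component-decomposition-measure} and of \ref{item:W-component-decomposition-connected} go through. (For \ref{item:W-component-decomposition-connected} note that the degenerate case $W{\restriction_{\Omega_i^2}}=0$ of disconnectedness is also excluded, since then \emph{every} positive-measure proper subset of $\Omega_i$ would lie in $\mathcal{C}$, contradicting atomicity; this uses that $\pi$ is atomless.)

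The one step that does not work as you describe it is the claim that $W=0$ a.e.\ on $\Omega_0\times\Omega_0$. A level set $\{x:\int_{\Omega_0}W(x,\cdot)\D\pi>c\}$ of the degree function has no reason to belong to $\mathcal{C}$ --- there is nothing forcing $W$ to vanish between the high-degree and low-degree parts --- so adjoining it to $\mathcal{C}$ produces nothing. The correct repair is the observation you implicitly set up but did not use: $\mathcal{C}$ restricted to $\Omega_0$ is \emph{atomless} (an atom contained in $\Omega_0$ would be an atom of $\mathcal{C}$, all of which were removed), so by the standard Sierpi\'nski exhaustion argument for atomless measure algebras one can, for every $n$, partition $\Omega_0$ into sets $B_1,\dots,B_n\in\mathcal{C}$ of equal measure. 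Since $W=0$ a.e.\ off $\bigcup_j B_j^2$ and $\pi^2\bigl(\bigcup_j B_j^2\bigr)=\pi(\Omega_0)^2/n\to 0$, we get $\int_{\Omega_0^2}W=0$. With that substitution your proof is complete and, in my view, cleaner than outsourcing the statement to the literature.
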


Next, we give a technical lemma which, roughly speaking, says that every graphon with zero density of $3$-vertex induced paths is $\{0,1\}$-valued and in fact complete if connected. Its proof is given at the end of this section.

\begin{lem} \label{lem:no-cherries-zero-one}
Let $W$ be a graphon which satisfies
\begin{equation} \label{eq:cherry-density-zero}
\int_{\Omega^3} (1-W(x,y))W(x,z)W(y,z) \D\pi^3 = 0.
\end{equation}
Then $W(x,y) \in \{0,1\}$ a.e.\ on $\Omega^2$. If in addition $W$ is connected, then $W = 1$.
\end{lem}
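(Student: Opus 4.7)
My plan is to reduce to the connected case, where I prove the stronger conclusion $W \equiv 1$ almost everywhere via a two-step argument centred on the support graphon. Writing $K(x,y) := \int_{\Omega} W(x,z) W(y,z) \D\pi(z)$ and noting $(1-W)K \ge 0$, the hypothesis $\int (1-W)K \D\pi^2 = 0$ is equivalent to the pointwise reformulation: for a.e.\ $(x,y)$, $W(x,y) = 1$ whenever $K(x,y) > 0$. Applying Lemma~\ref{lem:graphon-component-decomposition} to $W$, write $\Omega = \bigsqcup_{i \ge 0} \Omega_i$ for a component decomposition. By~\ref{item:W-component-decomposition-no-edges-across} $W$ vanishes off $\bigsqcup_{i \ge 1}\Omega_i^2$, so the integrand $(1-W(x,y))W(x,z)W(y,z)$ is zero unless $x, y, z$ all lie in a common $\Omega_i$; hence each $W_i := W{\restriction_{\Omega_i^2}}$, viewed as a graphon on $(\Omega_i, \alpha_i^{-1}\pi{\restriction_{\Omega_i}})$, inherits the cherry hypothesis and is connected by~\ref{item:W-component-decomposition-connected}. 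Thus it suffices to show that every connected graphon satisfying the hypothesis equals $1$ almost everywhere: this directly yields the second assertion of the lemma, and the first assertion then follows by combining $W_i \equiv 1$ on each $\Omega_i^2$ with $W = 0$ elsewhere.

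Now assume $W$ is connected. The crux is to show $W > 0$ almost everywhere on $\Omega^2$. Consider the support graphon $\widetilde{W} := \ind{W > 0}$. First, $\widetilde W$ itself satisfies the cherry hypothesis, because on any triple $(x,y,z)$ with $W(x,z), W(y,z) > 0$ the pointwise reformulation gives $W(x,y) = 1 > 0$ almost everywhere, so $\widetilde W(x,y) = 1$. Second, $W$ is connected if and only if $\widetilde W$ is, since Definition~\ref{def:graphon-connected} characterises connectedness in terms of the positivity of $\int_{A \times \co{A}} W$ over nontrivial measurable cuts, which is equivalent to positivity of $\int_{A \times \co{A}} \widetilde W$. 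Suppose toward contradiction that $\pi^2(\{\widetilde W = 0\}) > 0$. By Fubini, pick $x_0$ such that $B := \{y : W(x_0, y) = 0\}$ has positive measure, and set $A := \Omega \sm B$. Cherry applied to triples $(x_0, y, z)$ with $y \in B$ and $z \in A$ forces $W(y, z) = 0$, so $\widetilde W \equiv 0$ almost everywhere on $A \times B$. If $\pi(A) > 0$, this is a nontrivial disconnection of $\widetilde W$, contradicting connectedness; if $\pi(A) = 0$, then varying $x_0$ over the positive-measure set of such choices shows that the set of $\widetilde W$-isolated points has positive measure, again yielding a disconnection (or $\widetilde W \equiv 0$, contradicting connectedness via Definition~\ref{def:graphon-connected}).

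With $W > 0$ almost everywhere in hand, the proof concludes in one line: for a.e.\ $x$ the slice $N_x := \{y : W(x,y) > 0\}$ has full measure, hence $N_x \cap N_y$ has full measure for a.e.\ $(x,y)$, and therefore $K(x,y) > 0$, which by the pointwise reformulation forces $W(x,y) = 1$ almost everywhere. I expect the main obstacle to be the measure-theoretic bookkeeping in the middle paragraph: verifying cleanly that the cherry hypothesis transfers to $\widetilde W$ and to each component, and running the disconnection argument while carefully handling the exceptional null sets where the cherry condition may fail pointwise.
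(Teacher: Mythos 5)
Your proof is correct but organized quite differently from the paper's. The paper first proves the $\{0,1\}$-valued conclusion for general $W$, without connectedness, by introducing $M := \{W \in (0,1)\}$, $N_1 := \{(x,y) : \int_z W(x,z)W(y,z) > 0\}$ and $N_2 := \{(x,y) : \int_z W(x,z)(1-W(y,z)) > 0\}$; two relabellings of the cherry hypothesis make $M \cap N_1$ and $M \cap N_2$ null, while $M \setminus (N_1 \cup N_2)$ is null because any $x$ appearing there has zero degree. Connectedness is then used in a separate step to show the neighbourhoods $S_x$, and hence the support $S$, have full measure, and the $\{0,1\}$-valuedness then gives $W = 1$. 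You instead reduce to the connected case via Janson's component decomposition (Lemma~\ref{lem:graphon-component-decomposition}), localizing the hypothesis to each piece, and in the connected case drive the argument through the support graphon $\widetilde{W} = \ind{W>0}$: it inherits both the cherry hypothesis and connectedness, so a nontrivial zero set of $\widetilde{W}$ would produce a disconnection; once $W > 0$ a.e.\ is known, $K(x,y) > 0$ a.e.\ and the pointwise reformulation give $W = 1$ directly, with the $\{0,1\}$-statement for general $W$ dropping out as a corollary. Your route is conceptually cleaner in that both assertions follow from the connected case and the bookkeeping with $N_1$ and $N_2$ is avoided, but it leans on the component decomposition lemma, whereas the paper's $\{0,1\}$ step is self-contained. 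The disconnection step (which you flag) is sound once phrased carefully: of the positive-measure set of $x$ with $\pi(\{y : W(x,y)=0\}) > 0$ (intersected with the full-measure set of $x$ for which the cherry identity holds for a.e.\ $(y,z)$), either some such $x_0$ also has a positive-measure neighbourhood, yielding a nontrivial cut, or almost all of them have zero degree, yielding a cut along that set (or $W \equiv 0$).
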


Now we shall prove Theorem~\ref{thm:component-completion-dests}. Let us briefly describe the key ideas. The action of the non-decreasing rule $\rul$ is driven by the addition of edges between non-edge pairs of vertices which are connected by a path. Hence, the component decomposition of the initial graphon remains intact. Since our result is concerned with destinations of trajectories, which are a form of limit behaviour, $3$-vertex induced paths are particularly relevant because every induced path between non-adjacent vertices contains a $3$-vertex induced path. Lemma~\ref{lem:no-cherries-zero-one} implies that the components of a destination must be complete.

\begin{proof}[Proof of Theorem~\ref{thm:component-completion-dests}]
Since $\rul$ is a non-decreasing rule, every trajectory of $\rul$ has a destination by Proposition~\ref{prop:monotoneflipprocess}\ref{en:monotone1}. Let
\[
  B := \{(F,a,b) : F\in\lgr{k}, ab\notin E(F), a \sim_F b\}.
\]
By~\eqref{eq:velocity} and~\eqref{eq:component-completion-rule}, for each $W\in\Gra$ and each $(x,y)\in\Omega^2$ we have
\begin{equation} \label{eq:component-completion-velocity}
\vel[\rul]W(x,y)=\sum_{(F,a,b) \in B}\tindr{(x,y)}(F^{a,b},W).
\end{equation}
	
Let $U\in\Gra$, with a fixed component decomposition $\Omega = \bigsqcup_{i=0}^{K}\Omega_i$ of $U$ given by Lemma~\ref{lem:graphon-component-decomposition}. We set $\Lambda_i := ( \Omega_i \times ( \Omega \setminus \Omega_i ) ) \cup ( ( \Omega \setminus \Omega_i ) \times \Omega_i )$ for $i \in [K]$ and $\Lambda_0 := ( \Omega_0 \times \Omega ) \cup ( \Omega \times \Omega_0 )$. Set $\Lambda := \Omega^2 \setminus \left( \bigsqcup_{i=1}^{K}\Omega_i^2 \right) = \bigsqcup_{i=0}^{K}\Lambda_i$. For each $i \in [K]$ we show that the component completion rule does not change the value of $W$ at points $(x,y) \in \Lambda_i$, that is, pairs of vertices that go between $\Omega_i$ and its complement. This is intuitive because the rule would behave in the same way on graphs.

For $i \ge 0$ set $\ca_i := \{ W\in\Gra : W{\restriction_{\Lambda_i}}=0 \}$. We show that a trajectory starting at a graphon $W\in \ca_i$ stays in $\ca_i$ for every $i \ge 0$. Indeed for any $(F,a,b) \in B$ and $(x,y)\in\Lambda_i$ we have that $\tindr{(x,y)}(F^{a,b},W)=0$ exactly because any copy of $F^{a,b}$ sampled from $W$ with $x=a$ and $y=b$ sends the vertices $a$ and $b$ into different components. Hence, by~\eqref{eq:component-completion-velocity} we have $(\vel[\rul]W){\restriction_{\Lambda_i}}=0$. Then, by applying Proposition~\ref{prop:zeroonasection} with $\ca_i$ for each $i \in [K]\cup\{0\}$ we have $\traj{t}{U}=0$ on $\Lambda$ for all $t \in [0,\infty)$. Since $\traj{t}{U} \tocutn \dest(U) =: S$, it follows that $S=0$ a.e.\ on $\Lambda$. By~\eqref{eq:component-completion-velocity}, for all $t\in[0,\infty)$ we have $\vel[\rul]\traj{t}{U} \ge 0$ pointwise. Since $S = U + \int_{[0,\infty)}\vel[\rul]\traj{s}{U}{\D}s$, we have $S \ge U$. Hence, for all $i \ge 1$ the graphon $S{\restriction_{\Omega_i^2}}$ on $ \left(\Omega_i,(\alpha_i^{-1}\pi){\restriction_{\Omega_i}}\right)$ remains connected.

Define
\[
  Q := \left\{(x,y) \in \Omega^2 : (1-S(x,y)) \int_{z\in\Omega} S(x,z) S(y,z) > 0 \right\}.
\]
We aim to prove that $Q$ has measure zero. To this end, consider any $(x,y) \in Q$. Let $H$ be the labelled graph on $\{1,2,3\}$ with edge set $\{13,23\}$. Observe that we have $\tindr{(x,y)}(H^{1,2},S)>0$ and
\[
  \tindr{(x,y)}(H^{1,2},S)=\sum_{F}\tindr{(x,y)}(F^{1,2},S),
\]
where the sum is over all labelled graphs $F\in\lgr{k}$ which induce a labelled copy of $H$ on $\{1,2,3\}$. Hence, there is a labelled graph $F\in\lgr{k}$ which induces a labelled copy of $H$ on $\{1,2,3\}$ such that $\tindr{(x,y)}(F^{1,2},S)>0$. This implies that $\vel[\rul]S(x,y)>0$. Now since $S$ satisfies $\vel[\rul]S = 0$ by Proposition~\ref{prop:dests} and $\vel[\rul]S(x,y)>0$ for all $(x,y)\in Q$, the set $Q \subseteq \Omega^2$ has measure zero. Hence, we have~\eqref{eq:cherry-density-zero} for $S$ and $S{\restriction_{\Omega_i^2}}$ for all $i \ge 1$ and so by Lemma~\ref{lem:no-cherries-zero-one} we have $S = 1$ a.e.\ on $\Omega_i^2$ for all $1 \le i \le K$. Therefore, $S$ is a component complete graphon with component decomposition $\Omega = \bigsqcup_{i=0}^{K}\Omega_i$.

Finally, by~\ref{item:component-completion-dests-complete} a component complete graphon is clearly its own destination.
\end{proof}

To close this section, we provide the proof of Lemma~\ref{lem:no-cherries-zero-one}.

\begin{proof}[Proof of Lemma~\ref{lem:no-cherries-zero-one}]
Define the following sets.
\begin{align*}
M & := \{(x,y) \in \Omega^2 : W(x,y) \in (0,1)\}, \\
N_1 & := \left\{(x,y) \in \Omega^2 : \int_{z\in\Omega} W(x,z)W(y,z) \D\pi > 0\right\}, \\
N_2 & := \left\{(x,y) \in \Omega^2 : \int_{z\in\Omega} W(x,z)(1-W(y,z)) \D\pi > 0\right\}, \\
N & := N_1 \cup N_2, \\
P & := \{x \in \Omega : (x,y) \in M \setminus N \textrm{ for some } y \in \Omega\}.
\end{align*}
For each $(x,y)\notin N$ we have $\int_{z\in\Omega} W(x,z) \D\pi = 0$. By Fubini's theorem we have
\[
  0 \le \int_{M \setminus N} W(x,y) \D\pi^2 \le \int_{P} \int_{\Omega} W(x,y) \D\pi(y) \D\pi(x) = 0.
\]
Hence, we have $\pi^2(M\setminus N)=0$. By~\eqref{eq:cherry-density-zero} and Fubini's theorem we have $\pi^2(M\cap N_1)=\pi^2(M\cap N_2)=0$, so we have $\pi^2(M)=0$, i.e.\ $W(x,y) \in \{0,1\}$ a.e.\ on $\Omega^2$.
	
Now suppose that $W$ is connected. For each $x \in \Omega$ define the following.
\begin{align*}
S_x & := \{y \in \Omega : W(x,y) > 0 \}, \qquad S_x^{+} := S_x \cup \{x\}, \\
P_x & := \{ (y,z) \in S_x^{+} \times ( \Omega \setminus S_x^{+} ) : W(y,z) > 0 \}, \\
I(x) & := \int_{\Omega} W(x,y)\D\pi(y).
\end{align*}
In addition, we define the following.
\begin{align*}
S & := \{ (x,y) \in \Omega^2 : W(x,y) > 0 \}, \qquad J := \{x \in \Omega : I(x) > 0 \}, \\
P & := \{ (x,y,z) \in \Omega^3 : x \in J , (y,z) \in P_x \}, \\
Q & := \{ (x,y,z) \in \Omega^3 : x \in J , (y,z) \in S_x^2 \setminus S \}.
\end{align*}
Set $f(x,y,z) := (1-W(x,y)) W(x,z) W(y,z)$ for $(x,y,z) \in \Omega^3$.
	
Since $W$ is connected, we have $\pi^2(S) = \int_{S} \D\pi^2 > 0$. Now by Fubini's theorem we have $\int_{\Omega} I(x) \D\pi(x) = \int_{\Omega^2} W(x,y) \D\pi^2 > 0$, so we have $\pi(J) > 0$. Let $x\in J$. Since $I(x)>0$, we have $\pi(S_x)>0$. Suppose that $\pi(S_x^{+})<1$. Since $W$ is connected, we have $\pi^2(P_x)>0$. By Fubini's theorem we have
\[
  \int_{P} \D\pi^3 = \int_{J} \int_{P_x} \D\pi^2(y,z) \D\pi(x) > 0,
\]
so we have $\pi^3(P)>0$. Since $f(x,z,y)>0$ for all $(x,y,z) \in P$, we have $\int_{\Omega^3} f(x,z,y) \D\pi^3 > 0$. This contradicts~\eqref{eq:cherry-density-zero}, so we have $\pi(S_x) = \pi(S_x^{+}) = 1$.
	
Now suppose that $\pi^2(\Omega^2 \setminus S) > 0$. Since $\pi(S_x^2) = 1$, we have $\pi^2(S_x^2 \setminus S) > 0$. By Fubini's theorem we have
\[
  \int_{Q} \D\pi^3 = \int_{J} \int_{S_x^2 \setminus S}\D\pi^2(y,z) \D\pi(x) > 0,
\]
so we have $\pi^3(Q)>0$.
	
Since $f(y,z,x) > 0$ for all $(x,y,z) \in Q$, we have $\int_{Q} f(y,z,x) \D\pi^3 > 0$. This contradicts~\eqref{eq:cherry-density-zero}, so $\pi^2(S)=1$. Now since $W(x,y) \in \{0,1\}$ a.e.\ on $\Omega^2$, in fact we have $W = 1$.
\end{proof}

\section{Removal flip processes}\label{sec:RemovalProces}
In a removal flip process, we have a graph $F\in\lgr{k}$ which we remove if it is contained in the drawn graph while drawing an $F$-free graph leads to an idle step. A formal definition follows.

\begin{defi}\label{def:removal}
Let $k\in\N$ and $F\in\lgr{k}$. A rule $\rul$ of order $k$ is \emph{$F$-removal} if for each $H\in\lgr{k}$ which contains $F$ as a labelled subgraph we have $\rul_{H,H-F}=1$ while for each $H$ which does not contain $F$ as a labelled subgraph we have $\rul_{H,H}=1$. Of course, all the remaining entries of the matrix $\rul$ are~$0$.
\end{defi}

\begin{remark}
Observe that Definition~\ref{def:removal} is concerned with the removal of \emph{non-induced} copies of $F$. While we could define even flip processes removing induced copies, such a concept would not have the nice properties we study here. For example, the dual\footnote{The \emph{dual} of a flip process $\rul$ is a flip process $\rul^*$ of the same order in which $\rul^*_{F,H}$ is defined using the graph complements $\overline{F}$ and $\overline{H}$, $\rul^*_{F,H}:=\rul_{\overline{F},\overline{H}}$.} to the flip process from Remark~\ref{rem:converseMonotoneSink} shows that for such flip processes we do not have Theorem~\ref{thm:removalprocSinkLimit}.
\end{remark}

\begin{remark}
Consider Definition~\ref{def:removal} for the labelled two-edge path $F$ with $E(F)=\{12,23\}$. Observe that only in one out of three samples of $P_2$ will that path actually be removed; if a triangle is sampled, then two of its three edges are removed. While this may seem to be at odds with the spirit of $P_2$-removal, this is not actually the case; indeed, eventually all copies of $P_2$ are removed.
\end{remark}

\begin{remark}
It is not entirely apparent how to define removal rules which remove a family of graphs. For example, how should we define a removal rule of order~$5$ which removes the $4$-leaf star and the pentagon at the same time? One possibility would be to remove all edges that belong to some $4$-leaf star or to some pentagon. We leave it to the reader whether they find this definition natural and appealing.
\end{remark}

Given a graph $F\in\lgr{k}$ we say that a graphon $W$ is \emph{$F$-free} if $t(F,W)=0$.
In the rest of this section we write $(i,j)\in E(F)$ for any \emph{ordered} pair of vertices that forms an edge of a graph $F$. We substitute into~\eqref{eq:velocity} and see that for the $F$-removal rule $\rul$, we have
\begin{equation} \label{eq:removal_vel}
	\vel[\rul]W(x,y) = \sum_{(i,j) \in E(F)} \sum_{G \in \lgr{k} : F \subseteq G} \tindr{x,y}(G^{i,j},W) \cdot (-1) = - \sum_{(i,j) \in E(F)} \tr{x,y}(F^{(i,j)}, W).
\end{equation}

Most interesting features about removal flip processes concern their destinations and the way they are approached. Observe that a removal flip process is non-increasing and thus by (the dual version of) Proposition~\ref{prop:monotoneflipprocess}\ref{en:monotone1} each graphon has a destination. We have the following fact about these destinations.

\begin{fact}\label{fact:removaldests}
  If $\rul$ is the $F$-removal rule and $e(F) \ge 1$, then 
  \[
    \left\{ \dest_\rul(W) : W \in \Gra \right\} = \left\{ W \in \Gra: W \text{ is $F$-free} \right\}.
  \]
\end{fact}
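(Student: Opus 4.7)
The plan is to prove the two inclusions separately, in both cases exploiting the explicit velocity formula \eqref{eq:removal_vel} together with the pointwise non-negativity $\tr{x,y}(F^{(i,j)},W) \ge 0$ valid for every graphon $W$. The key observation that makes both directions work is that
\[
\int_{\Omega^2} \tr{x,y}(F^{(i,j)},W) \D\pi^2 \;=\; t(F,W)
\]
for each ordered edge $(i,j) \in E(F)$, which is Fubini's theorem applied to the definition of $\tr{x,y}(F^{(i,j)},W)$.

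For the inclusion $\supseteq$, I would show the stronger statement that every $F$-free graphon is its own destination. Indeed, if $t(F,W) = 0$ then the identity above together with pointwise non-negativity forces $\tr{x,y}(F^{(i,j)},W) = 0$ for $\pi^2$-almost every $(x,y)$ and for every ordered edge $(i,j) \in E(F)$. Summing these identities and invoking \eqref{eq:removal_vel} yields $\vel[\rul]W = 0$ in $\Kernel$. Consequently, the constant curve $t \mapsto W$ satisfies the integral equation from Remark~\ref{rem:integral_form} with initial value $W$, so by the uniqueness part of Theorem~\ref{thm:flow}\ref{en:exist} it is the trajectory starting at $W$; hence $\traj{t}{W} = W$ for all $t \ge 0$ and $\dest_\rul(W) = W$.

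For the inclusion $\subseteq$, suppose $S = \dest_\rul(U)$ for some $U \in \Gra$. By Proposition~\ref{prop:dests} we have $\vel[\rul]S = 0$ a.e., and applying \eqref{eq:removal_vel} together with non-negativity of each summand again gives $\tr{x,y}(F^{(i,j)},S) = 0$ for a.e.\ $(x,y)$ and each $(i,j) \in E(F)$. Integrating one such identity over $\Omega^2$ yields $t(F,S) = 0$, so $S$ is $F$-free.

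Since the assumption $e(F) \ge 1$ guarantees that the sum in \eqref{eq:removal_vel} is non-trivial (so that the implication from $\vel[\rul]W = 0$ to $F$-freeness is not vacuous), this completes the proof. I do not anticipate a genuine obstacle: both directions are immediate from the explicit formula for the velocity and the general theory recapped earlier. The only mild care required is to read equalities of the form $\vel[\rul]W = 0$ as equalities in the Banach space $\Kernel$ (i.e.\ almost everywhere on $\Omega^2$), which is exactly what the combination of pointwise non-negativity and vanishing integral delivers.
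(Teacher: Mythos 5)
Your proof is correct and follows essentially the same approach as the paper's, which simply notes that $W$ not $F$-free forces $\vel W \neq 0$ and then invokes Proposition~\ref{prop:dests}, and dismisses the converse as ``clear.'' You have spelled out the two ingredients the paper leaves implicit: the identity $\int_{\Omega^2}\tr{x,y}(F^{(i,j)},W)\D\pi^2 = t(F,W)$ (Fubini applied to the rooted density), and the fact that $F$-freeness forces $\vel[\rul]W=0$ and hence makes $W$ a fixed point by uniqueness, giving the $\supseteq$ inclusion.
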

\begin{proof}
Indeed, if $W$ is not $F$-free, then $\vel W\neq 0$ and Proposition~\ref{prop:dests} tells us that $W$ is not a destination.
The converse implication is clear.
\end{proof}

We give a useful identity. For the sake of brevity, we shall denote $\traj{\infty}{W}= \dest(W)$.
\begin{lem}\label{lem:removalrectangle}
	Let $k\in\N$, $F\in\lgr{k}$ and $\rul$ be the $F$-removal rule. For every graphon $W$, every $T_1\in (-\age{W},\infty)$ and $T_2\in (T_1, \infty]$ and every pair of sets $S_1,S_2\subset \Omega$ we have
	\begin{align*}
	\int_{S_1\times S_2} \left( \traj{T_1}{W}-\traj{T_2}{W} \right)
	= \sum_{(i,j)\in E(F)} \int_{t=T_1}^{T_2} \int_{(x,y)\in S_1\times S_2} t^{x,y}(F^{(i,j)},\traj{t}{W}) \;.
	\end{align*}
\end{lem}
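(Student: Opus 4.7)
The plan is to derive the identity directly from the integral form of the trajectory combined with the explicit velocity formula~\eqref{eq:removal_vel}, and then pass to the limit $T_2 \to \infty$ using monotonicity.

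First I would handle the case of finite $T_2 \in (T_1,\infty)$. By the semigroup property (Theorem~\ref{thm:flow}\ref{en:semi}) and Remark~\ref{rem:integral_form}, for almost every $(x,y) \in \Omega^2$ we have
\[
\left(\traj{T_1}{W} - \traj{T_2}{W}\right)(x,y) = - \int_{\tau = T_1}^{T_2} \vel[\rul] \traj{\tau}{W}(x,y) \D\tau.
\]
Plugging in~\eqref{eq:removal_vel} turns the right-hand side into
\[
\int_{\tau = T_1}^{T_2} \sum_{(i,j) \in E(F)} t^{x,y}\bigl(F^{(i,j)}, \traj{\tau}{W}\bigr) \D\tau.
\]
Now I would integrate both sides over $(x,y) \in S_1 \times S_2$ and exchange the order of integration using Fubini's theorem; this is legitimate because the integrand $t^{x,y}(F^{(i,j)}, \traj{\tau}{W})$ is non-negative and bounded by $1$, and the time interval $[T_1,T_2]$ is finite. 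This yields the claim for $T_2 < \infty$.

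Next I would address $T_2 = \infty$. Observe from~\eqref{eq:removal_vel} that $\vel[\rul] V \le 0$ pointwise for every graphon $V$, so the trajectory $t \mapsto \traj{t}{W}$ is pointwise non-increasing on $[0,\infty)$. Hence the pointwise limit $L(x,y) := \lim_{t \to \infty} \traj{t}{W}(x,y)$ exists a.e.; since the same trajectory converges in cut norm to $\dest(W) = \traj{\infty}{W}$ by definition, the two limits must agree almost everywhere. By bounded convergence $\int_{S_1 \times S_2} \traj{T_2}{W} \to \int_{S_1 \times S_2} \traj{\infty}{W}$ as $T_2 \to \infty$. On the right-hand side the integrand is non-negative, so monotone convergence upgrades the identity from finite $T_2$ to $T_2 = \infty$.

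No step here is truly difficult: the velocity formula~\eqref{eq:removal_vel} does all the work, and the Fubini/limit justifications are routine once one notes non-negativity and boundedness of the integrand together with the monotonicity of the removal trajectory. The only mildly delicate point is making the Banach-space integral in Remark~\ref{rem:integral_form} cooperate with pointwise evaluation at $(x,y)$, but this is precisely the ``pointwise'' formalism whose legitimacy is asserted in Section~\ref{subsubsec:diff-eqn-Banach} (via Appendix~\ref{FIRSTPAPER.app:Banach_calc} in~\cite{Flip1}), so I would simply invoke it.
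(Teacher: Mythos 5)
Your proposal is correct and follows essentially the same route as the paper's proof: rewrite $\traj{T_1}{W}-\traj{T_2}{W}$ via the integral form of the trajectory (Remark~\ref{rem:integral_form}), substitute the explicit removal-rule velocity~\eqref{eq:removal_vel}, apply Fubini for finite $T_2$, and then handle $T_2=\infty$ by monotone convergence using that the velocity is pointwise non-positive. The only difference is that you spell out the intermediate identification of the pointwise limit with the cut-norm destination and invoke bounded convergence explicitly, whereas the paper compresses this into a one-line application of the Monotone Convergence Theorem; this is a matter of exposition, not substance.
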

\begin{proof}
	By Remark~\ref{rem:integral_form}, the left-hand side of the equation is
	\[\traj{T_1}{W}-\traj{T_2}{W}=\int_{t=T_1}^{T_2}(-\vel\traj{t}{W}) \;.\]
	Therefore, except when $T_2 = \infty$, the Lemma follows by applying Fubini's Theorem together with \eqref{eq:removal_vel}. For $T_2 = \infty$ we have	
	\[\int_{S_1\times S_2}\lim_{T\rightarrow \infty}\int_{T_1}^{T}(-\vel\traj{t}{W}) =\lim_{T\rightarrow \infty}\int_{S_1\times S_2}\int_{T_1}^{T}(-\vel\traj{t}{W}), \]
	by the Monotone Convergence Theorem (since the velocity is always non-positive for a removal rule). Therefore, we obtain the desired outcome for $T_2 = \infty$ by applying \eqref{eq:removal_vel} and Fubini's Theorem inside the limit on the right-hand side of the equation above. 
	\end{proof}
\subsection{Destination of a limit}

Our next theorem says that the destination is continuous for removal rules. Recall that in Remark~\ref{rem:converseMonotoneSink} we saw that this feature does not hold in the more general class of monotone rules.

\begin{thm}\label{thm:removalprocSinkLimit}
  Let $k\in\N$, $F\in\lgr{k}$ and $\rul$ be an $F$-removal rule. Suppose that $e(F)>0$. Then the function $U \mapsto \dest_\rul(U)$ is continuous as a function from $ \left( \Gra, \cutnd \right)$ to itself.
\end{thm}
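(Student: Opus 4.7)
The plan is to reduce the asymptotic statement (continuity of $\dest_\rul$) to a finite-time statement (continuity of $W \mapsto \traj{T}{W}$ for any fixed $T$, which is already provided by Theorem~\ref{thm:LipschTime}). The bridge between the two is the uniform upper convergence time $\tau_\rul^+(\delta)$ for removal rules, namely Theorem~\ref{thm:speedremoval}, which guarantees that removal trajectories approach their destinations at a rate that does not depend on the starting graphon. The main conceptual obstacle is precisely this uniform-rate statement; once it is in place, the rest is an epsilon/three argument.

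Concretely, fix $\eps > 0$ and suppose $U_n \tocutn U$. Using Theorem~\ref{thm:speedremoval}, choose $T > \tau_\rul^+(\eps/3)$, so that for every graphon $V \in \Gra$ (applied in particular to $V = U$ and $V = U_n$ for all $n$),
\begin{equation*}
\cutnd\!\left(\traj{T}{V},\, \dest_\rul(V)\right) < \eps/3\;.
\end{equation*}
Then, by Theorem~\ref{thm:LipschTime}, the map $V \mapsto \traj{T}{V}$ is Lipschitz with constant $\exp(C_{\square,k} T)$, so for every sufficiently large $n$ we have $\cutnd(\traj{T}{U_n}, \traj{T}{U}) < \eps/3$. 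Combining these via the triangle inequality
\begin{equation*}
\cutnd(\dest_\rul(U_n), \dest_\rul(U)) \le \cutnd(\dest_\rul(U_n), \traj{T}{U_n}) + \cutnd(\traj{T}{U_n}, \traj{T}{U}) + \cutnd(\traj{T}{U}, \dest_\rul(U)) < \eps
\end{equation*}
yields $\dest_\rul(U_n) \tocutn \dest_\rul(U)$, as required.

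As indicated, the substantive content lies entirely in having access to a uniform upper convergence time for the removal rule (the assertion $\tau_\rul^+(\delta) < \infty$). This is not automatic: the Lipschitz bound of Theorem~\ref{thm:LipschTime} blows up with $T$, so the argument would fail if different starting graphons needed arbitrarily long times to approach their destinations. The remark in Remark~\ref{rem:converseMonotoneSink} shows a related non-decreasing (but not removal) rule for which the analogous continuity fails, precisely because the convergence time there is not uniform; so the regularity-lemma input (Theorem~\ref{thm:RL} together with the counting Lemma~\ref{lem:RLcounting}) used to establish uniform convergence times for removal processes is genuinely essential, and the present proof can be seen as an immediate corollary of that harder result.
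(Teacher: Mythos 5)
Your reduction --- a uniform convergence time together with the Lipschitz continuity of $V \mapsto \traj{T}{V}$ from Theorem~\ref{thm:LipschTime} yields continuity of $\dest_\rul$ by a triangle inequality --- is itself a correct deduction. The problem is the provenance of the uniform convergence time: you cite Theorem~\ref{thm:speedremoval}, but in the paper that theorem is proved \emph{from} Theorem~\ref{thm:removalprocSinkLimit}, not the other way around. The text states explicitly that ``Theorem~\ref{thm:removalprocSinkLimit} is an important stepping stone towards our proof of Theorem~\ref{thm:speedremoval}'', and the proof of Theorem~\ref{thm:speedremoval} invokes Theorem~\ref{thm:removalprocSinkLimit} together with the Lov\'asz--Szegedy compactness theorem applied to a cut-norm-convergent subsequence. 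So as written your argument is circular. Your closing remark also has the dependency reversed: the regularity-lemma apparatus (Theorem~\ref{thm:RL}, Lemma~\ref{lem:RLcounting}) is deployed in the paper to prove Theorem~\ref{thm:removalprocSinkLimit} directly --- fix $U$, pick $T$ with $\traj{T}{U}$ near the $F$-free graphon $\dest(U)$, deduce via Theorem~\ref{thm:LipschTime} and the counting Lemma~\ref{lem:countinglemma} that $t(F,\traj{T}{W})$ is small for $W$ near $U$, and then show by regularity and counting that $\Lone{\traj{T}{W}-\dest(W)}$ being large would force $t(F,\traj{T}{W})$ to be large, a contradiction --- and only then is uniformity in time extracted via compactness.

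That said, your intended organization (regularity $\Rightarrow$ uniform convergence time $\Rightarrow$ continuity) is a genuinely viable alternative, but it requires you to actually prove Theorem~\ref{thm:speedremoval} independently rather than cite it. A workable version: using Lemma~\ref{lem:removalrectangle}, Theorem~\ref{thm:RL}, and Lemma~\ref{lem:RLcounting} applied to the kernel $\traj{T}{W}-\dest(W)$, show that $\Lone{\traj{T}{W}-\dest(W)} \ge \gamma$ forces $t(F,\traj{T}{W}) \ge c(\gamma)>0$ for a constant $c(\gamma)$ not depending on $W$ (this mirrors the regularity argument in the paper's proof of Theorem~\ref{thm:removalprocSinkLimit}, but without needing $W$ to be close to any fixed $U$); since $t(F,\traj{t}{W})$ is non-increasing in $t$ and $\int_{\Omega^2} W \le 1$, Lemma~\ref{lem:removalrectangle} then gives $T \le 1/(2e(F)c(\gamma))$. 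Once that is supplied, the rest of your argument goes through cleanly. Until then, citing Theorem~\ref{thm:speedremoval} begs the question.
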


Theorem~\ref{thm:removalprocSinkLimit} is an important stepping stone towards our proof of Theorem~\ref{thm:speedremoval}, which shows that convergence time is finite and uniformly bounded. The proof of Theorem~\ref{thm:removalprocSinkLimit} is quite involved. In particular, it uses Szemer\'edi's regularity lemma, but the advantage is that then the proof of Theorem~\ref{thm:speedremoval} is short and conceptually simple. On the other hand, Theorem~\ref{thm:speedremoval} is very similar in spirit to the classical removal lemma. In particular, we ask whether Theorem~\ref{thm:speedremoval} could be proven using the removal lemma instead of the Szemer\'edi's regularity lemma.

\begin{proof}
The statement is obvious when $F$ has at most~2 vertices. So, let us assume that $v(F)\ge 3$.

  Given a graphon $W$, we write $W_t = \traj{t}W$ and $W_\infty = \dest(W)$. We will show that for every $\gamma>0$ (without loss of generality, $\gamma < 1$) there exists $\delta$ such that if $\cutnd(W,U)<\delta$ then $\cutnd(W_\infty,U_\infty ) < \gamma$.
With foresight of the cooking process, we set
\[
  \beta := \frac{\gamma}{8k^2}, \quad \alpha := \frac{\beta^{e(F)}}{2k^{k}}.
\]
	Let $M_0$ be given by Theorem~\ref{thm:RL} with error parameter $\alpha$.
Let 	
$\eps := \beta^{e(F)}/ (2k^2 ( M_0 )^{k})$.
Fix an arbitrary graphon $U$ and pick $T$ such that
	\begin{equation}\label{eq:5TWO}
	\cutnd(U_T, U_\infty ) < \frac{\eps}{2}\;.
	\end{equation}
Let $\delta := \eps/(2\exp(C_k T))$, where $C_k$ comes from Theorem~\ref{thm:LipschTime}.
	
From now on we fix a graphon $W$ satisfying $\cutnd(W,U)<\delta$. Theorem \ref{thm:LipschTime} implies that
	\[
	  \cutnd(W_T, U_T) \le \exp(C_k T)\cdot\cutnd(W, U) < \frac{\eps}{2}\;,
	\]
	which we combine with \eqref{eq:5TWO} to get that
	\begin{equation}\label{eq:8TWO}
	  \cutnd(W_T, U_\infty) < \eps \;.
	\end{equation}
	We will show that
	\begin{equation}
	  \label{eq:dist_to_show}
	  f := \cutnd (W_T, W_\infty) < \gamma/2
	\end{equation}
	and combine with \eqref{eq:8TWO} to finish the proof.

	First notice that since $U_\infty $ is a destination, Fact~\ref{fact:removaldests} implies that $t(F,U_\infty )=0$ and therefore Lemma~\ref{lem:countinglemma} yields
	\begin{equation}\label{eq:XtfewtrianglesTWO}
	  t(F,W_T) < k^2\cutnd( W_T, U_\infty ) \leByRef{eq:8TWO} k^2 \eps\;.
	\end{equation}
	
	The strategy is to obtain a contradiction by showing that if $W_T$ is not too close to its destination, then it has substantial density $t(F, W_T)$. Since the $F$-removal rule is non-increasing, we have $W_T \ge W_\infty$ and in particular
      \[
	f = \cutnd(W_T, W_\infty) = \int_{\Omega^2} (W_T - W_\infty) \;.
      \]
      We apply Theorem~\ref{thm:RL} to find an $\alpha$-regular partition $\cp$ of the graphon $W_T-W_\infty$ with $M \le M_0$ parts.
	
      Given $\mathbf{S}=(S_1,\ldots,S_k) \in \cp^{k}$, recall Definition~\ref{def:densities} and set
	\[	
	w_{\mathbf{S}} : =\int_{t=T}^\infty t^{\mathbf{S}}(F,W_t) \;.
          \]
	From Lemma~\ref{lem:removalrectangle} we get that
	\begin{align}
	\label{eq:claim1TWO}
	f &= \int_{\Omega^2} \left( W_T - W_\infty \right) = \sum_{(i,j)\in E(F)} \int_{t = T}^\infty \int_{(x,y) \in \Omega^2} t^{x,y}(F^{(i,j)}, W_t)\\
	& = \sum_{(i,j) \in E(F)} \int_{t = T}^\infty \sum_{\mathbf{S} \in \cp^k} t^{\mathbf{S}}(F, W_t) = \sum_{(i,j)\in E(F)} \sum_{\mathbf{S}\in\cp^k} w_{\mathbf{S}} \;.
	\end{align}
	By~\eqref{eq:claim1TWO}, there exists $(i,j)\in E(F)$ such that
 \begin{equation}\label{eq:bridge}
 \sum_{\mathbf{S} \in \cp^k}
 w_{\mathbf{S}}\ge \frac{f}{k^2}\;.
 \end{equation}

 Let $\cb_2 \subset \cp^2$ be the set of (ordered) pairs that are either not $\alpha$-regular in the graphon $W_T-W_\infty$ or have density below $\beta = \frac{\gamma}{8k^2}$.  Let $\cb_k \subset \cp^k$ be the set of $k$-tuples $\mathbf{S}$ such that there exists $(i,j) \in E(F)$ such that $(S_i,S_j) \in \cb_2$.
 We have
\begin{equation*}
 \begin{split}
   \sum_{\mathbf{S}\in \cb_k} w_{\mathbf{S}}& \le
   \sum_{(S'_1,S'_2)\in \cb_2}\sum_{(i,j)\in E(F)} \sum_{\mathbf{S} : S_i = S'_1, S_j = S'_2 }w_{\mathbf{S}} \\
   & = \sum_{(S'_1,S'_2)\in \cb_2}\sum_{(i,j)\in E(F)} \int_{t = T}^\infty \int_{(x,y)\in S_1' \times S_2'} t^{x,y}(F^{(i,j)}, W_t) \\
   \justify{Lemma \ref{lem:removalrectangle}}&= \sum_{(S'_1,S'_2)\in \cb_2} \int_{S'_1\times S'_2} (W_T-W_\infty) \;.
 \end{split}
\end{equation*}
Since, by $\alpha$-regularity of $\mathcal{P}$, the total measure of rectangles $S_1' \times S_2'$ that are not $\alpha$-regular is at most $\alpha M^2 \cdot M^{-2} = \alpha$, and the integral over the remaining rectangles is at most $\beta$, we obtain
\begin{equation}
 \label{eq:fHR}
   \sum_{\mathbf{S}\in \cb_k} w_{\mathbf{S}} \le
    \alpha + \beta \le \frac{\gamma}{4k^2}\;.
\end{equation}
We are now in a position to complete the proof by showing~\eqref{eq:dist_to_show}. Suppose for contradiction that $f \ge \gamma/2$. Then from~\eqref{eq:bridge} and~\eqref{eq:fHR} we have
\[
  \sum_{\mathbf{S}\in\cp^k\setminus \cb_k } w_{\mathbf{S}} \ge \frac{\gamma}{4k^2} > 0\;.
\]
In particular, that means that $\cp^k\setminus \cb_k \ne \emptyset$. Let us take an arbitrary $\mathbf{S}\in\cp^k\setminus \cb_k$. By the definition of $\cb_k$, for each $(a,b)\in E(F)$ we have, in the graphon $W_T - W_\infty$, that $(S_a,S_b)$ is an $\alpha$-regular pair of density at least $\beta$. Lemma~\ref{lem:RLcounting} gives that
\begin{equation*}
  t(F,W_T)\ge t^{\mathbf{S}}(F,W_T-W_\infty) \ge \left( \beta^{e(F)} - \alpha k^k\right) \left( \frac{1}{M_0} \right)^k \ge \frac{\beta^{e(F)}}{2 M_0^{k}} \ge k^2 \eps \;,
\end{equation*}
a contradiction to~\eqref{eq:XtfewtrianglesTWO}. Hence, we conclude that $f<\gamma/2$, as was needed.
\end{proof}

\begin{remark}\label{rem:removalVSregularity}
One of the main steps in the proof above is similar to the standard deduction of the removal lemma from Szemer\'edi's regularity lemma. That is, substantial density of $F$ in $W_T$ is deduced from a single copy of $F$ in the cluster graph (`$\cp^k\setminus \cb_k \ne \emptyset$'). We think that finding a way to instead directly apply the removal lemma is an interesting problem in its own right. Note that such a direct application of the removal lemma would likely yield an improvement in the implicit epsilon--delta quantification of continuity in Theorem~\ref{thm:removalprocSinkLimit} since it would allow the use of the bounds famously obtained by Fox~\cite{Fox:Removal}.
\end{remark}

\subsection{Speed of removal flip processes}
Generally, removal flip processes may converge to the destination very slowly. For example, consider the triangle removal flip process. A well known construction of Behrend (see \cite[Corollary 2.5.2]{Zhao:GraphTheoryAdd}) implies that for every $n\in \N$ there exists a graphon $L_m$ with $t(K_3,L_m)=O(1/m^2)$, $\|L_m\|_1=\exp(-\Theta(\sqrt{\log m}))$, and whose destination is $\dest(L_m)=0$. Obviously, for the velocity of the triangle removal flip process we have, for each $T\ge 0$, that $\|\vel \traj{T}{L_m}\|_1 \le \|\vel L_m\|_1 = 6 t(K_3, L_m) = O(1/m^2)$. This example shows that the convergence time\footnote{Due to monotonicity, $\tau_\rul^- = \tau_\rul^+$ for removal processes.} $\tau_\rul^-(\delta)$ cannot be bounded from above by any polynomial in $1/\delta$.
So far the reasoning for the slow speed of convergence of the triangle removal flip process\footnote{which we make formal and more general in Proposition~\ref{prop:removalBipVSNonbip}} was based on lower bound constructions for the triangle removal lemma. But a priori we cannot rule out even the case that $\tau_\rul^-(\delta)=\infty$. This is because the removal lemma says `if the graph is far from all triangle-free graphs then it has many triangles' and not `if the graph is far from its destination then it has many triangles'.

Our result shows that the convergence times of each removal rule are finite and uniformly bounded. Theorem~\ref{thm:removalprocSinkLimit} plays a key role in the proof.
\begin{thm}\label{thm:speedremoval}
  Let $k\in\N$, $F\in\lgr{k}$ and $\rul$ be the $F$-removal rule. Suppose that $e(F)>0$. Then for every $\eps>0$ there exists $T \ge 0$ so that for every graphon $W$ we have $\|\traj{T}{W} - \dest(W)\|_1 < \eps$.
\end{thm}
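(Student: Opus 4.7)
The plan is to prove this by contradiction, exploiting compactness of the graphon space together with the continuity of $\dest_{\rul}$ just established in Theorem~\ref{thm:removalprocSinkLimit}.

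The first key observation is that for an $F$-removal rule the velocity $\vel[\rul]$ is pointwise non-positive by~\eqref{eq:removal_vel}. Consequently $t \mapsto \traj{t}{W}$ is pointwise non-increasing, and in particular $\traj{T}{W} \ge \dest_{\rul}(W)$ pointwise for every $T \ge 0$. For a non-negative symmetric kernel $f$ the cut norm $\cutn{f}$ coincides with $\Lone{f}$: the bound $\cutn{f} \le \Lone{f}$ is always valid, and taking $S = T = \Omega$ in~\eqref{eq:defcnd} delivers the reverse inequality. Therefore
\[
  \Lone{\traj{T}{W} - \dest_{\rul}(W)} = \cutnd(\traj{T}{W}, \dest_{\rul}(W))
\]
for every $T \ge 0$, and the theorem reduces to the statement that for every $\eps > 0$ there is $T$ such that $\cutnd(\traj{T}{W}, \dest_{\rul}(W)) < \eps$ for every graphon $W$. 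By the same monotonicity, if the reduced statement holds for some $T_0$, it holds for every $T \ge T_0$.

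Suppose for a contradiction that no such $T$ exists for some $\eps > 0$. Then for each $n \in \N$ there is a graphon $W_n$ with $\cutnd(\traj{n}{W_n}, \dest_{\rul}(W_n)) \ge \eps$. By the Lov\'asz--Szegedy compactness theorem, after passing to a subsequence and applying suitable measure-preserving rearrangements to each $W_n$, we may assume $W_n \to W$ in $\cutnd$ for some graphon $W$; since the flip-process dynamics, and hence both $\traj{\cdot}{\cdot}$ and $\dest_{\rul}$, are equivariant under measure-preserving bijections of $\Omega$, and the $L^1$ norm is invariant under such bijections, no quantity in the hypothesised inequality changes under this rearrangement. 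Now choose $T \ge 0$ such that $\cutnd(\traj{T}{W}, \dest_{\rul}(W)) < \eps/3$, which exists by the definition of $\dest_{\rul}$. By Theorem~\ref{thm:LipschTime} we have $\cutnd(\traj{T}{W_n}, \traj{T}{W}) \to 0$, and by Theorem~\ref{thm:removalprocSinkLimit} we have $\cutnd(\dest_{\rul}(W_n), \dest_{\rul}(W)) \to 0$. The triangle inequality then yields $\cutnd(\traj{T}{W_n}, \dest_{\rul}(W_n)) < \eps$ for all sufficiently large $n$; for such $n$ with $n \ge T$ monotonicity gives $\cutnd(\traj{n}{W_n}, \dest_{\rul}(W_n)) \le \cutnd(\traj{T}{W_n}, \dest_{\rul}(W_n)) < \eps$, contradicting the choice of $W_n$.

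The crux of the proof is bundled inside Theorem~\ref{thm:removalprocSinkLimit}: once the destination map is known to be $\cutnd$-continuous, the rest is a soft compactness-plus-continuity argument. The only point that requires some care is the passage from $\cutm$-convergence produced by Lov\'asz--Szegedy to honest $\cutnd$-convergence via measure-preserving rearrangements, which is legitimate precisely because $\traj{\cdot}{\cdot}$ and $\dest_{\rul}$ are equivariant under such rearrangements.
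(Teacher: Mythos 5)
Your proof is correct and follows essentially the same strategy as the paper's: argue by contradiction, invoke the Lov\'asz--Szegedy compactness theorem together with the equivariance of trajectories and destinations under measure-preserving rearrangements, then combine the Lipschitz continuity of $\traj{T}$ (Theorem~\ref{thm:LipschTime}), the continuity of $\dest_\rul$ (Theorem~\ref{thm:removalprocSinkLimit}), and the monotonicity of the removal process to force a contradiction. Your up-front observation that $\cutn{f}=\Lone{f}$ for non-negative kernels lets you work uniformly in $\cutnd$ rather than switching between the two norms mid-argument, which is a mild stylistic simplification; the paper achieves the same effect implicitly by bounding the $L^1$-integral of each non-negative difference by the corresponding cut norm inside its chain of inequalities.
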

\begin{proof}
Suppose that this does not hold. Then there exists $\eps>0$ and a sequence of graphons $(W_n)_{n\in\N}$ so that for every $n\in \N$, we have
\begin{equation}\label{eq:spor}
\|\traj{n}{W_n}-\dest(W_n)\|_1\ge \eps\;.
\end{equation}
By the Lov\'asz--Szegedy compactness theorem (see \cite[Theorem 9.23]{Lovasz2012}) there exist indices $n_1<n_2<\cdots$, measure preserving bijections $\psi_1,\psi_2,\ldots$, and a graphon $U$ so that writing $U_i:=W_{n_i}^{\psi_i}$ we have
\begin{equation}\label{eq:2}
  \cutn{U_i-U}\rightarrow 0\;.
\end{equation}
By Theorem~\ref{thm:removalprocSinkLimit}, we have
\begin{equation}\label{eq:3}
  \cutn{\dest(U_i) - \dest(U)} \rightarrow 0\;.
\end{equation}

Take $T$ such that $\cutn{\traj{T}{U} - \dest(U)} < \eps/4$. Let $C_k$ be as in Theorem~\ref{thm:LipschTime}. By~\eqref{eq:2} and~\eqref{eq:3} we can choose $\ell\ge T$ such that 
\begin{equation}\label{eq:Gihs}
  \cutn{U_\ell-U} < \eps/(4\exp(C_kT)) \quad \text{and} \quad \cutn{\dest(U_\ell) - \dest(U)}<\eps/4. 
\end{equation}
Since $n_\ell \ge \ell \ge T$, we have
\begin{align*}
\Lone{\traj{n_\ell} U_\ell - \dest(U_\ell)} &= \int_{\Omega^2} \left( \traj{n_\ell}{U_\ell}-\dest(U_\ell) \right) \\
  & \le \int_{\Omega^2} \left( \traj{T}{U_\ell}-\dest(U_\ell) \right) \\
  & \le \cutn{\traj{T}{U_\ell}-\traj{T}{U}} + \cutn{ \traj{T}{U} - \dest(U) } + \cutn{\dest(U_\ell) - \dest(U)}\\
  \justify{\eqref{eq:Gihs} and Theorem~\ref{thm:LipschTime}} &< \frac{\eps}{4} + \frac{\eps}{4} + \frac{\eps}{4} < \eps\;.
\end{align*}
Noting that $\traj{n_\ell}U_\ell = (\traj{n_\ell}W_{n_\ell})^{\psi_{\ell}}$ and $\dest(U_\ell) = (\dest(W_{n_\ell}))^{\psi_\ell}$ (see Section~\ref{FIRSTPAPER.ssec:repermuting} in \cite{Flip1}), we obtain a contradiction to~\eqref{eq:spor}.
\end{proof}

We do not obtain any explicit bounds on the convergence time $T$ in Theorem~\ref{thm:speedremoval}. In lieu of the Lov\'asz--Szegedy compactness theorem, one could probably use the machinery of a regularity lemma (the Frieze-Kannan version seems to be sufficient) in the proof. Further, to obtain an explicit bound in Theorem~\ref{thm:speedremoval}, one would make the $\eps$--$\delta$ quantification of continuity in Theorem~\ref{thm:removalprocSinkLimit} explicit as well. Recall that Theorem~\ref{thm:removalprocSinkLimit} uses Szemer\'edi's regularity lemma (Theorem~\ref{thm:RL}) with its infamous tower-type dependency. Putting them together, we would likely get that $T\le \mathsf{tower}((1/\eps)^{O(1)})$; even if we successfully replace Szemer\'edi's regularity lemma with the removal lemma as mentioned in Remark~\ref{rem:removalVSregularity}, we would still get only $T\le \mathsf{tower}(O(\log(1/\eps)))$.
In our next proposition, however, we prove that a polynomial upper bound exists if and only if $F$ is bipartite. To this end, some preparatory notation is needed.

Suppose that $F$ is a bipartite graph. Let $m_F$ be the smallest number such that for each graphon $W$ we have 
\begin{equation}
  \label{eq:Sido_weak}
  t(F,W)\ge \|W\|_1^{m_F} \;. 
\end{equation}
We call $m_F$ the \emph{Sidorenko power of $F$}. It is easy to show that $m_F$ exists and $m_F\in[e(F),(\frac{v(F)}2)^2]$. The Sidorenko conjecture (see \cite{Lovasz2012}) asserts that $m_F=e(F)$ for every bipartite $F$.
\begin{prop}\label{prop:removalBipVSNonbip}
  Let $k\in\N$, $F\in\lgr{k}$ and $\rul$ be the $F$-removal rule. Suppose that $e(F)>0$. Then the following hold.
  \begin{romenumerate}
  \item \label{en:dest_zero} If $F$ is bipartite, then the destination of every graphon is the constant-$0$ graphon.
  \item \label{en:poly_speed} Suppose that $F$ is bipartite and its Sidorenko power is $m_F$. Then Theorem~\ref{thm:speedremoval} holds with $T=\eps^{-m_F+1}$ when $e(F) \ge 2$ and with $T = \ln (1/\eps)$ when $e(F) = 1$.
  \item \label{en:no_poly} If $F$ is not bipartite then no polynomial bound on $T$ in terms of $\eps$ exists, that is, for every $M$ there is $\eps_0>0$ so that for all $\eps \in (0, \eps_0]$ we have $\tau_\rul^-(\eps) \ge \eps^{-M}$.
      \end{romenumerate}
\end{prop}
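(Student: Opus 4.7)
I would split the argument by the three parts.

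For \ref{en:dest_zero}, a destination $\dest_\rul(W)$ exists by (the dual of) Proposition~\ref{prop:monotoneflipprocess}\ref{en:monotone1} and is $F$-free by Fact~\ref{fact:removaldests}. Applying~\eqref{eq:Sido_weak} to the graphon $U:=\dest_\rul(W)$ yields $\Lone{U}^{m_F}\le t(F,U)=0$, so $\dest_\rul(W)=0$ almost everywhere.

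For \ref{en:poly_speed} I would set $f(t):=\Lone{\traj{t}W}$ and track its evolution. Integrating \eqref{eq:removal_vel} over $\Omega^2$ and applying Fubini, followed by \eqref{eq:Sido_weak} applied to $\traj{t}W$, gives
\[
f'(t) = \int \vel\traj{t}W\,\D\pi^2 = -2e(F)\cdot t(F,\traj{t}W) \le -2e(F)\cdot f(t)^{m_F}.
\]
When $e(F)=1$, so $F=K_2$ and $m_F=1$, Gr\"onwall's inequality gives $f(t)\le e^{-2t}$, and $T=\ln(1/\eps)$ works thanks to \ref{en:dest_zero}. When $e(F)\ge 2$, so $m_F\ge 2$, I would observe that $\frac{\D}{\D t}f^{-(m_F-1)}\ge 2e(F)(m_F-1)$ wherever $f>0$, which integrates to $f(t)\le(2e(F)(m_F-1)t)^{-1/(m_F-1)}$. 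At $T=\eps^{-m_F+1}$ this is at most $\eps\cdot(2e(F)(m_F-1))^{-1/(m_F-1)}\le \eps$, since $2e(F)(m_F-1)\ge 4$.

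For \ref{en:no_poly} the plan is to exploit a Behrend-type construction. Since $F$ is non-bipartite, the classical Behrend/Ruzsa--Szemer\'edi construction (the $F=K_3$ case) and its extensions to arbitrary non-bipartite $F$ produce, for each large $n$, a graphon $W_n\in\Gra$ with
\begin{equation*}
\Lone{W_n} = \delta_n,\qquad t(F,W_n)\le \delta_n/n,\qquad \Lone{W_n-U}\ge \gamma\Lone{W_n}\text{ for every $F$-free }U\le W_n,
\end{equation*}
where $\delta_n=1/\exp(c\sqrt{\log n})$ and $\gamma=\gamma(F)>0$. Fix $M$. For $\eps$ small enough I would choose $n$ with $\gamma\delta_n\in[2\eps,3\eps]$; since $\delta_n$ decays sub-polynomially in $n$, this forces $n$ to be super-polynomial in $1/\eps$ and in particular $t(F,W_n)\le\eps^{M+2}$. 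Using the pointwise bound $\dest_\rul(W_n)\le \traj{T}W_n\le W_n$ (from non-increasingness) together with Lemma~\ref{lem:removalrectangle} applied with $S_1=S_2=\Omega$, $T_1=0$, $T_2=T$, I would conclude
\[
\Lone{\traj{T}W_n-\dest_\rul(W_n)} = \Lone{W_n-\dest_\rul(W_n)} - 2e(F)\int_0^T t(F,\traj{t}W_n)\,\D t \ge 2\eps - 2e(F)T\cdot t(F,W_n),
\]
which at $T=\eps^{-M}$ is at least $\eps$ for small $\eps$, giving $\tau_\rul^-(\eps)\ge \eps^{-M}$. The main obstacle is establishing the three Behrend-type properties displayed above for general non-bipartite $F$: one needs a construction in which every edge lies in $O(1)$ copies of $F$ and all such copies are ``forced'' by a progression-free set, so that any $F$-free sub-kernel must remove a positive fraction of the edges of $W_n$.
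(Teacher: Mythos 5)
Your proofs of \ref{en:dest_zero} and \ref{en:poly_speed} are correct and essentially the same as the paper's. Two small observations: your integration of~\eqref{eq:removal_vel} correctly gives $\int \vel W \D\pi^2 = -2e(F)\, t(F,W)$, which is cleaner than the paper's constant (the paper writes $(k)_2$ where $2e(F)$ is the exact coefficient, but only the crude lower bound is used anyway); and for \ref{en:dest_zero} you apply \eqref{eq:Sido_weak} directly to the destination (which is $F$-free by Fact~\ref{fact:removaldests}), whereas the paper applies it to the velocity and invokes Proposition~\ref{prop:dests} --- both are fine, and equally short. For \ref{en:poly_speed} you also need to justify $f'(t) = \int \vel \traj{t}{W}\D\pi^2$ (interchange of $L^\infty$-limit and integral); the paper does this by Lemma~\ref{lem:traj-cont-Linfty} / a difference-quotient argument, but this is routine.

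For \ref{en:no_poly} your outline is on the right track, and you have correctly isolated the one nontrivial ingredient you are missing: a Behrend-type construction for \emph{general} non-bipartite $F$ with the three stated properties. This is not something you can wave away; the classical Ruzsa--Szemer\'edi construction handles $F=K_3$, and the extension to arbitrary non-bipartite $F$ is a known but non-elementary result. The paper obtains it by citing Lemma~3.4 of Alon~\cite{Alon:TestingSubgraphs}: for every non-bipartite $F$ there is a $c>0$ so that for all small $\alpha>0$ and large $n$ there is an $n$-vertex graph that is $\alpha$-far from $F$-free yet has at most $(\alpha/c)^{c\log(c/\alpha)}n^{v(F)}$ copies of $F$. (Note the quantification: one chooses $\alpha$ first, then $n$ large, rather than indexing the family by $n$ as you do.) There is also one more step you gloss over: passing from ``the finite graph $G$ is $\alpha$-far from $F$-free'' to the graphon inequality $\Lone{W - \dest_\rul(W)}\ge\alpha$, where $\dest_\rul(W)$ is an a priori $[0,1]$-valued graphon. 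The paper handles this by citing Proposition~\ref{FIRSTPAPER.prop:twinsstay} of~\cite{Flip1} to ensure $\dest_\rul(W)$ is a step graphon on the steps corresponding to $G$, and then comparing with the finite $F$-free subgraph of $G$ supported on the positive cells of $\dest_\rul(W)$. In short, the remaining calculation in your sketch of \ref{en:no_poly} is fine, but to close the gap you would cite Alon's lemma (or an equivalent source) instead of trying to construct $W_n$ from scratch, and you would add the ``destination is a step graphon'' step to make the farness transfer rigorous.
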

\begin{proof}
First suppose that $F$ is bipartite. Let $W$ be a graphon. By~\eqref{eq:removal_vel} and~\eqref{eq:Sido_weak} we have
\begin{equation*}\label{eq:EVtss}
  \Lone{ \vel W } = (k)_2 \cdot t(F,W) \ge \Lone{ W }^{m_F}\;.
\end{equation*}
Then Proposition~\ref{prop:dests} tells us any destination is constant-$0$, proving \ref{en:dest_zero}.
	
We turn to \ref{en:poly_speed}. Let us consider the case $e(F) \ge 2$, which entails $m_F\ge 2$; we shall return to the case $e(F) = 1$ later. The required conclusion is trivial for the constant-$0$ graphon, so consider a graphon $W$ which is not constant-$0$.
We claim that the (continuous) function $f(t):=\Lone { \traj{t}{W} }$ satisfies 
\begin{equation} \label{eq:f_der_low}
  -f'(t) \ge f(t)^{m_F}.
\end{equation}
Indeed, Lemma~\ref{lem:removalrectangle} and~\eqref{eq:Sido_weak} imply
\begin{align*}
  f(t) - f(t + \eps) &= \int_{\Omega^2} \left( \traj{t} W - \traj{t+\eps} W  \right) = \sum_{(i,j)\in E(F)} \int_{\tau = t}^{t + \eps} \int_{(x,y)\in \Omega^2} t^{x,y}(F^{(i,j)},\traj{\tau}{W}) \\
  &\geByRef{eq:Sido_weak} (k)_2\int_t^{t + \eps} \Lone{ \traj{\tau}W }^{m_F} \ge (k)_2 \eps f(t + \eps)^{m_F} \ge \eps f(t + \eps)^{m_F}\;.
\end{align*}
Dividing by $\eps$ and taking the limit $\eps \to 0$ we obtain \eqref{eq:f_der_low}, proving our claim. Now~\eqref{eq:f_der_low} implies that the function $g(t) := f(t)^{-(m_F - 1)}$ satisfies 
\[
  g'(t) = - (m_F - 1) f(t)^{-m_F} f'(t) \ge m_F - 1,
\]
where we used the fact that $f$ takes positive values. Consequently, we have $g(t) \ge g(0) + (m_F - 1)t$, and by substituting $g$ we obtain
\[
  f(t) \le \left((m_F-1)t+f(0)^{1-m_F}\right)^{-\frac1{m_F-1}} \le  t^{-\frac1{m_F-1}}\;,
\]
and the desired conclusion in Theorem~\ref{thm:speedremoval} follows. Similarly, for the case $e(F) = 1$ we have the differential inequality $-f'(t) \ge f(t)$ with auxiliary function $g(t) = -\ln f(t)$. This gives $f(t) \le f(0) \exp(-t)$, which then implies the desired conclusion.
	
	To deduce \ref{en:no_poly}, we recall one of the main results of~\cite{Alon:TestingSubgraphs}. Namely, Lemma~3.4 in~\cite{Alon:TestingSubgraphs} asserts that if $F$ is not bipartite, then there exists $c>0$ such that for all sufficiently small $\alpha>0$ and all sufficiently large integers $n$ there exists an $n$-vertex graph $G$ that is $\alpha$-far from being $F$-free and contains at most $(\frac\alpha{c})^{c\log(\frac{c}{\alpha})} n^{v(F)}$ copies of $F$. Here, \emph{$\alpha$-far} means that more than $\alpha n^2$ edges need to be removed from $G$ so that it becomes $F$-free. Since the number of non-injective homomorphisms of $F$ into $G$ is $O(n^{v(F) - 1})$, by taking $n$ large enough (depending on $\alpha$), we can assume that the the homomorphism density $t(F,G)$ is at most $2(\frac\alpha{c})^{c\log(\frac{c}{\alpha})}$. Let $W$ be a graphon representation of $G$. Recalling that $t(F, W) = t(F, G)$, we have $
t(F,W) \le 2 \left(\frac\alpha{c}\right)^{c\log(\frac{c}\alpha)}$.
	Also, note that since $t(F, \traj{\cdot}{W})$ is nonincreasing for the $F$-removal flip process, by Lemma~\ref{lem:removalrectangle} we have
	\begin{equation}
	  \label{eq:Lone_diff_upper}
	  \Lone{W - \traj{t}{W}} = (v(F))_2 \int_{x = 0}^t t(F, \traj{x}{W}) \le t(v(F))_2 \cdot t(F, W)\le t(v(F))_2\cdot 2 \left(\frac\alpha{c}\right)^{c\log(\frac{c}\alpha)}.
	\end{equation}
	The notion of being $\alpha$-far translates in a straightforward way into a graphon representation. In this translation, the $L^1$-norm is the graphon counterpart to measuring the number of edges removed. Let us use this intuition to show that
\begin{equation} \label{eq:destalpha}
\|W-\dest(W)\|_1\ge \alpha\;.
\end{equation}
Indeed, it follows from Proposition~\ref{FIRSTPAPER.prop:twinsstay} of~\cite{Flip1} that $\dest(W)$ is a step graphon on steps $(\Omega_v:v\in V(G))$ corresponding to the representation of $G$. Let $G'\subset G$ be the subgraph of $G$ containing edges in the support of $\dest(W)$, that is, $ij\in E(G')$ if and only if $\dest(W){\restriction_{\Omega_i\times \Omega_j}}>0$. Obviously, $\frac{e(G)-e(G')}{n^2}\le \|W-\dest(W)\|_1$. At the same time, $\dest(W)$ is $F$-free, and therefore so is $G'$. Since $G$ is $\alpha$-far from $F$-freeness, we have $\frac{e(G)-e(G')}{n^2}>\alpha$. This allows us to deduce~\eqref{eq:destalpha}. 

Thus, for every $t \ge 0$ we have
\[
  \Lone{\traj{t}{W} - \dest(W)} \ge 
  \Lone{W - \dest(W)} - \Lone{W - \traj{t}{W}} 
  \geBy{\eqref{eq:destalpha}, \eqref{eq:Lone_diff_upper}}
  \alpha - t \cdot 2(v(F))_2 \left(\frac\alpha{c}\right)^{c\log(\frac{c}\alpha)}\;.
\]

Given $M > 0$, taking $\alpha>0$ small enough, we have $2(v(F))_2(\tfrac{\alpha}{c})^{c\log(\frac{c}\alpha)} \le (\alpha/2)^{M + 1}$. Setting $\eps=\alpha/2$ and $t = \eps^{-M}$ we have $\Lone {\traj{t}{W}-\dest(W)} \ge \eps$, whence $\tau_\rul^-(\eps) \ge \eps^{-M}$. This completes the proof of \ref{en:no_poly}.
\end{proof}

\subsection{Graphons with the same destination}
Fact~\ref{fact:removaldests} (see also Proposition~\ref{prop:removalBipVSNonbip}) tells us that for non-bipartite graphs $F$, the $F$-removal rule has a nontrivial set of destinations. An intriguing problem for any such $F$ is to determine equivalence classes of graphons with the same destination. A particularly important equivalence class is the class of graphons whose destination is constant-$0$. We do not have even a guess what the answer might be. 
\begin{qu}\label{qu:zerosinksFremoval}
	Suppose that $F$ is a fixed graph. Determine all graphons whose destination in the $F$-removal flip process is the constant-0 graphon.
\end{qu}
In this section we consider the simplest nontrivial example, $F=K_3$, and we study the destinations of step graphons $\left\{ W_{\alpha,\beta} : \alpha, \beta\in [0,1] \right\}$ with two steps $\Omega_1$ and $\Omega_2$ of equal size such that $W_{\alpha, \beta} = \alpha$ on $\Omega_1^2 \cup \Omega_2^2$ and $W_{\alpha, \beta} =  \beta$ elsewhere. It is easy to see (using \cite[Corollary 5.6 and Remark 3.8]{Flip1}) that the trajectory starting at $W_{\alpha,\beta}$ stays a step graphon with respect to the partition $\Omega_1 \cup \Omega_2$ and becomes a constant graphon if and only if $\alpha = \beta$. An easy symmetry argument (using \cite[Remark 3.8]{Flip1}, say), also implies that the value on $\Omega_1^2$ stays equal to the value on $\Omega_2^2$.
To sum up, for all $\alpha,\beta \in [0,1]$ there are some functions $a, b : [0, \infty) \to [0,1]$ such that we have
\begin{align}
  & \alpha = \beta \text{ implies }  a(t) = b(t) \text{ for every } t \ge 0\;, \label{eq:always_equal} \\
  & \alpha \ne \beta \text{ implies }  a(t) \ne b(t) \text{ for every } t \ge 0\;, \label{eq:never_equal}
\end{align}
and for all $t \ge 0$ we have
\begin{equation}\label{eq:functionsAB}
\traj{t}W_{\alpha, \beta} = W_{a(t),b(t)}\;.
\end{equation}

It is very intuitive that $\dest(W_{\alpha, \beta}) = W_{0, \gamma}$, with $\gamma > 0$ whenever $\alpha/\beta$ is small enough and $\gamma = 0$ when $\alpha/\beta$ is large enough. The following result says that the critical point is $\alpha=\beta$.
\begin{prop} \label{prop:rem_tri_dest_bip}
  Consider the $K_3$-removal flip process. For a step graphon $W_{\alpha, \beta}$ as defined above we have
  \[
    \dest(W_{\alpha, \beta}) =\begin{cases}
    W_{0,\beta\exp [-2\alpha/(\beta - \alpha)]}  &\text{ if } \alpha < \beta\;, \\
    W_{0,0},  &\text{ if } \alpha \ge \beta \;.
      \end{cases}
  \]
\end{prop}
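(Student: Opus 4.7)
The strategy is to reduce the problem to a two-dimensional ODE system for the functions $a(t),b(t)$ from~\eqref{eq:functionsAB} and to exhibit an explicit first integral.

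First, using~\eqref{eq:removal_vel} with $F = K_3$, the symmetry of $K_3$ collapses the six ordered-edge terms to
\[
  \vel[\rul]W(x,y) = -6\,W(x,y)\int_\Omega W(x,z)W(y,z)\D\pi(z).
\]
Substituting $W = W_{a,b}$ and using $\pi(\Omega_1)=\pi(\Omega_2)=1/2$, the integral evaluates to $(a^2+b^2)/2$ on the diagonal blocks $\Omega_1^2\cup\Omega_2^2$ and to $ab$ on the cross block. Because the trajectory stays of the form $W_{a(t),b(t)}$ and the Banach-space derivative agrees pointwise a.e.\ (Section~\ref{subsubsec:diff-eqn-Banach}), the scalar functions $a,b\colon [0,\infty)\to[0,1]$ satisfy
\begin{align*}
a'(t) &= -3\,a(t)\bigl(a(t)^2+b(t)^2\bigr), \\
b'(t) &= -6\,a(t)\,b(t)^2,
\end{align*}
with $a(0)=\alpha$, $b(0)=\beta$. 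Both functions are non-negative and non-increasing, so they converge to limits $a^\ast,b^\ast\in[0,1]$. At the destination the velocities vanish, which forces $a^\ast=0$ from the first equation; the remaining task is to compute $b^\ast$.

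A direct calculation gives $\frac{\D}{\D t}(b/a) = 3b(a-b)^2/a\ge 0$, so $u(t):=b(t)/a(t)$ is non-decreasing, and in particular the sign of $u-1$ is preserved along the trajectory. For $\alpha\neq\beta$, one finds $\frac{\D b}{\D a}=\frac{2b^2}{a^2+b^2}$; substituting $u=b/a$, separating variables, and using the partial-fraction decomposition $\frac{1+u^2}{u(u-1)^2}=\frac{1}{u}+\frac{2}{(u-1)^2}$, integration produces the first integral
\[
  \ln b(t) - \frac{2}{u(t)-1} \;=\; \ln\beta - \frac{2\alpha}{\beta-\alpha},\qquad t\ge 0.
\]

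It remains to track the limit $b^\ast$ in each regime. For $\alpha=\beta$, $u\equiv 1$, so $a=b$ obeys $a'=-6a^3$ and $a\to 0$, giving $b^\ast=0$. For $\alpha<\beta$, we have $u(t)\ge\beta/\alpha>1$; a finite limit of $u$ would force $b=ua\to 0$, whence the first integral forces $\frac{2}{u-1}\to-\infty$ and thus $u\to 1$, contradicting $u>1$. Hence $u\to\infty$, $\frac{2}{u-1}\to 0$, and the first integral delivers $b^\ast = \beta\exp(-2\alpha/(\beta-\alpha))$. For $\alpha>\beta$, either $u$ reaches $1$ in finite time (merging with the $\alpha=\beta$ regime and giving $b\to 0$) or stays in $[\beta/\alpha,1)$ with some limit $u_\infty\le 1$: if $u_\infty<1$ then $b=ua\to 0$, while if $u_\infty=1$ the first integral forces $\ln b\to -\infty$. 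The main delicate step is this limit analysis, since the first integral is singular at $u=1$; the two signs of the divergence of $\frac{2}{u-1}$ as $u\to 1^\pm$ are exactly what distinguishes the $\alpha<\beta$ and $\alpha>\beta$ regimes.
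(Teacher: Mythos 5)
Your proposal is correct and follows essentially the same route as the paper: reduce to the planar ODE system for $(a,b)$, pass to $\frac{\D b}{\D a} = \frac{2b^2}{a^2+b^2}$, substitute $u=b/a$ and integrate to the same first integral $\ln b - \frac{2}{u-1} = \ln\beta - \frac{2\alpha}{\beta-\alpha}$, then read off $b^\ast$ from the limiting behaviour of $u$ in each regime (where the paper packages this via the two branches $F^{-1}_\pm$ of $y\mapsto \ln(1+y)-2/y$). Two small remarks that do not create gaps: the assertion that ``$u$ non-decreasing preserves the sign of $u-1$'' is a non sequitur when $\alpha>\beta$ (one needs \eqref{eq:never_equal}, or the observation that the first integral blows up as $u\to1^-$), but you handle the $u\to1$ alternative separately anyway; and your ODE system carries an overall factor of $6$ relative to the paper's, which only rescales time and leaves $\D b/\D a$, the first integral, and $\dest(W_{\alpha,\beta})$ unchanged.
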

\begin{proof}
Since $\vel W(x,y) = -6\tr{x,y}(K_3^{\bullet \bullet}, W)$, it is easy to check that the functions $a$ and $b$ from~\eqref{eq:functionsAB} satisfy the system of differential equations
\begin{align*}
  a' &= -a^3/2 - ab^2/2, \\
  b' &= -ab^2
\end{align*}
with the initial condition $a(0) = \alpha, b(0) = \beta$.
Observe that since the functions $a$ and $b$ take non-negative values, it follows easily from the system of differential equations that in the case $\alpha = 0$ we have $a \equiv 0$ and $b \equiv \beta$. 
A similar argument tells us that in the case $\beta = 0$ we get $b \equiv 0$ and $a(t) = \frac{1}{\sqrt{t + 1/\alpha^2}}$.
Now if $\alpha = \beta > 0$, then by~\eqref{eq:always_equal} we actually have a single equation $a' = - a^3$, which has an explicit solution $a(t) = b(t) = \frac{1}{\sqrt{2(t + 1/(2\alpha^2) )}}$. Hence, we may assume that $\alpha \ne \beta$, $\alpha > 0, \beta > 0$. 
This implies that $a, b$ take positive values for every $t \ge 0$ (see \cite[Proposition~\ref{FIRSTPAPER.prop:valueszeroone}]{Flip1}) and hence are strictly decreasing continuous functions. In particular $\lim_{x \to \infty} a(x)$ is nonnegative; it is actually zero, in view of the differential equation for $a'$.
Hence the inverse function $a^{-1}$ is defined on $(0, \alpha]$. Define a function $f : (0, \alpha] \to (0, 1]$ by $f(x) := b(a^{-1}(x))$. Note that
\begin{equation}
  \label{eq:two_step_dest}
  \dest(W_{\alpha,\beta}) = W_{0, \lim_{x \to 0} f(x)}.
\end{equation}

In view of~\eqref{eq:two_step_dest}, it remains to determine $\lim_{x \to 0} f(x)$. To do so, we shall seek to understand the function $f$. Since $\alpha \neq \beta$, note that \eqref{eq:never_equal} and continuity implies that
\begin{equation}
  \label{eq:f_dichotomy}
  \text{either}\quad f(x) < x \ \forall x \in (0, \alpha] \quad\text{or}\quad f(x) > x \ \forall x \in (0, \alpha]
\end{equation}
We have
\begin{align*}
  f'(x) &= \frac{ b' \left( a^{-1}(x) \right) }{ a' \left( a^{-1}(x) \right) } = - \frac{ a \left( a^{-1}(x) \right)  b(a^{-1}(x)) ^2 }{ a \left( a^{-1}(x) \right)^3/2 + a \left( a^{-1}(x) \right) b \left( a^{-1}(x) \right)^2/2} \\
  &= \frac{2xf(x)^2}{x^3 + xf(x)^2} = \frac{2}{(x/f(x))^2 + 1} \;.
\end{align*}
Hence $f$ satisfies a differential equation $f' = 2/(x^2/f^2 + 1)$ for $x \in (0, \alpha]$ with initial condition $f(\alpha) = \beta$. Denoting $v(x) = f(x)/x$, we have $f' = v'x + v$, whence
\begin{align}
  \label{eq:xvprime} xv' & = f' - v = \frac{2}{v^{-2} + 1} - v = \frac{-v (v - 1)^2}{1 + v^2} \;, \\
  \notag \frac{v'}{v} & = -\frac{v^2 - 2v + 1}{x(1 + v^2)} = -\frac{1}{x} + \frac{2v}{x(1 + v^2)} \;.
\end{align}
Since $(\ln v)' = v'/v$ and $ \left( 2/(v-1) \right)' = -2v'/(v-1)^{2}$, we get
\begin{align*}
  \ln v(x)  &= \ln v(\alpha) + \int_\alpha^x \left( \frac{2v}{x(1 + v^2)} - \frac{1}{x}  \right) \D x \eqByRef{eq:xvprime} \ln \frac{f(\alpha)}{\alpha} + \ln \frac{\alpha}{x} + \int_\alpha^x \frac{-2v'}{(v - 1)^2} \\
  \justify{$f(\alpha) = \beta$} &=\ln \frac{\beta}{x} +  \frac{2}{v(x) - 1} - \frac{2}{v(\alpha)-1}.
\end{align*}

Since $v(x) = f(x) / x$ and $v(\alpha) = \beta/\alpha$, it follows that any solution $f$ satisfies equation
\begin{equation}
\label{eq:new_diffeq}
  \ln \frac{f(x)}{x} - \frac{2}{f(x)/x - 1} = \ln \frac{\beta}{x} + \frac{2}{1 - \beta/\alpha}.
\end{equation}
So, writing $y = f(x)/x - 1$, $F(y) := \ln(1 + y) - 2/y$ and defining a constant
\[
  c_0 := \ln \beta + \frac{2}{1 - \beta/\alpha},
\]
equation \eqref{eq:new_diffeq} reads
\[
  F(y) = c_0 - \ln x.
\]
Elementary calculus shows that $F$ is strictly increasing and continuous on $(-1,0)$ and on $(0, \infty)$ and undefined elsewhere (which is not a problem, since by \eqref{eq:f_dichotomy} we have $y \in (-1,0) \cup (0, \infty)$ for $x \in (0, \alpha]$). Moreover, $F( (-1,0) ) = F( (0, \infty) ) = \R$.
Hence the inverse of $F$ consists of two increasing bijections: $F^{-1}_- : \R \to (-1, 0)$ and $F^{-1}_+ : \R \to (0, \infty)$. By \eqref{eq:f_dichotomy}, and noting that $f(\alpha) = \beta$ we get that if $\alpha < \beta$, then $y(\alpha) = \beta/\alpha - 1 > 0$ and thus $y > 0$ for all $x$; similarly, if $\alpha > \beta$, then $y < 0$ for all $x$. Consequently
\begin{equation}
\label{eq:solutions}
    f(x) =  \begin{cases}
 x\left(1 + F^{-1}_-(c_0 - \ln x) \right), \quad \text{if } \alpha > \beta, \\
x\left(1 + F^{-1}_+(c_0 - \ln x) \right), \quad \text{if } \alpha < \beta.
  \end{cases}
\end{equation}
We shall now determine $\lim_{x \downto 0} f(x)$; in view of \eqref{eq:two_step_dest}, this will complete the proof. It clearly equals zero when $\alpha > \beta$, since $F^{-1}_-$ is bounded. In the case $\alpha < \beta$, we have that $F_+^{-1}(x) \to \infty$, as $x \to \infty$, so we estimate $F^{-1}_+(x)$ for large $x$ by choosing an arbitrary constant $\eps > 0$ and using inequalities
\[
  \ln (1 + y) - \eps =: L_\eps(y) \le F(y) \le L_0(y) = \ln (1 + y)
\]
that hold for large enough $y$.
We note that for large enough $x$
\[
  e^x - 1 = L_0^{-1}(x) \le F^{-1}_+ (x) \le L_\eps^{-1}(x) = e^{x + \eps} - 1,
\]
which implies that $xF^{-1}_+(c_0 - \ln x) \in [e^{c_0}, e^{c_0+ \eps}]$. Since $\eps > 0$ was arbitrary, we obtain that
\[
  \lim_{x \downto 0} x (1 + F^{-1}_+(c_0 - \ln x) ) = e^{c_0} = \beta \exp \left( -\frac{2\alpha}{\beta - \alpha} \right) \;,
\]
completing the proof.
\end{proof}

\subsection{Stabilization in a removal flip process}\label{ssec:removaluntilend}
In Section~\ref{ssec:extremistStabilization} we argued that extremist flip processes may take very long to stabilize. On the other hand, removal flip processes stabilize substantially faster. Indeed, an $F$-removal flip process started on an $n$-vertex graph stabilizes no later than the moment that each of its $(n)_k$-many distinct $k$-tuples of vertices is sampled at least once. Then, the solution to the classical coupon collector's problem tells us that $\Theta(n^k\log n)$ is asymptotically almost surely an upper bound on the stabilization time. We do not know whether this bound is optimal.

\begin{qu}\label{qu:stabilizatioremoval}
	For each graph $F$ with the corresponding $F$-removal flip process $\rul_F$, we seek a function $b_F:\N\rightarrow\N$ growing as slowly as possible so that \[
	\min_{G \in \lgr{n}}\prob{ \kappa_{\rul_F}(G)\le b_F(n) } \to 1, \quad \text{ as } n \to \infty \;.
	\]
\end{qu}

Since the stabilization time is clearly superquadratic, the limitations of Theorem~\ref{thm:correspondenceInformal} kick in. That means that \emph{a priori} we cannot connect the concept of destinations to the structure of a \emph{final graph}, that is, the graph after stabilization occurs. Our next theorem shows that, in the case of removal flip processes, destinations serve as good approximations for the final graphs.

\begin{thm}\label{thm:removaluntiltheend}
Given a graph $F$, for every $\eps>0$ there exists $n_0\in\N$ such that the following holds for all $n \ge n_0$. Suppose that $G^*$ is the final graph in the $F$-removal flip process started at an $n$-vertex graph $G$. Let $W_G$ and $W_{G^*}$ be the graphon representations of $G$ and $G^*$, respectively. Then with probability at least $1-\eps$ we have $\cutnd(\dest(W_G),W_{G^*})<\eps$.
\end{thm}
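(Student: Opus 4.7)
The plan is to combine three ingredients: Theorem~\ref{thm:speedremoval} to ensure that after a bounded time the trajectory is $L^1$-close to $\dest(W_G)$; Theorem~\ref{thm:correspondenceInformal}\ref{item:thm-corr-traj-flip-transfer} to transfer this closeness to the actual graph evolution; and the classical graph removal lemma, which asserts that for every graph $F$ and $\eta>0$ there is $\delta>0$ such that every $n$-vertex graph $H$ with $t(F,H)<\delta$ admits a set $R\subseteq E(H)$ with $|R|\le\eta n^2$ and $H\setminus R$ being $F$-free.

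First I would set $\eta:=\eps/(4e(F))$ and let $\delta$ be the value supplied by the removal lemma for $F$ and $\eta$. Using Theorem~\ref{thm:speedremoval} together with the counting lemma (Lemma~\ref{lem:countinglemma}) and the $F$-freeness of $\dest(W_G)$ (Fact~\ref{fact:removaldests}), pick $T=T(\eps,F)$ large enough that
\[
\|\traj{T}{U}-\dest(U)\|_1<\min\left\{\eps/4,\;\delta/\binom{v(F)}{2}\right\}
\]
uniformly over $U\in\Gra$. Setting $\ell_0:=\lfloor Tn^2\rfloor$ and invoking the flip-graphon transfer, for $n$ sufficiently large we simultaneously obtain $\cutnd(W_{G_{\ell_0}},\dest(W_G))<\eps/2$ and $t(F,W_{G_{\ell_0}})<\delta$ with probability at least $1-\eps$. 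Apply the removal lemma to $G_{\ell_0}$ to produce a set $R\subseteq E(G_{\ell_0})$ of size at most $\eta n^2$ with $G_{\ell_0}\setminus R$ being $F$-free.

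The combinatorial heart of the argument concerns the post-$\ell_0$ evolution. Since the removal flip process is non-increasing, $G_t\subseteq G_{\ell_0}$ for every $t\ge\ell_0$, so any labelled copy of $F$ sampled in a non-idle step after $\ell_0$ lives inside $G_{\ell_0}$; as $G_{\ell_0}\setminus R$ is $F$-free, it must use at least one edge of $R$, which is then deleted and cannot be reused by any later step. Thus the total number of non-idle steps occurring after $\ell_0$ is at most $|R|$, and since each removes exactly $e(F)$ edges,
\[
e(G_{\ell_0})-e(G^*)\;\le\; e(F)\cdot|R|\;\le\; \frac{\eps n^2}{4}.
\]
Because $W_{G^*}\le W_{G_{\ell_0}}$ pointwise, this translates to $\cutnd(W_{G^*},W_{G_{\ell_0}})=\Lone{W_{G_{\ell_0}}-W_{G^*}}\le\eps/2$, and the triangle inequality yields $\cutnd(\dest(W_G),W_{G^*})<\eps$, as required.

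The main obstacle is bridging the interval between $\ell_0$ and stabilization, which may consume super-polynomially many steps (cf.\ Question~\ref{qu:stabilizatioremoval}) and hence sits far beyond the $\Theta(n^2)$ horizon of Theorem~\ref{thm:correspondenceInformal}. The trick is to replace a stochastic long-time analysis by an extremal one: every edge deleted after $\ell_0$ is ``paid for'' by a distinct edge of the witness set $R$ produced by the removal lemma, so the residual attrition is bounded independently of the (possibly enormous) time to halt.
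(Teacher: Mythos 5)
Your proposal is correct and follows essentially the same route as the paper's proof: run the process to time $Tn^2$ with $T$ from Theorem~\ref{thm:speedremoval}, apply the graph removal lemma to $G_{Tn^2}$ (whose $F$-density is small by Fact~\ref{fact:removaldests} and the counting lemma), and bound the post-$Tn^2$ attrition by observing that every subsequent non-idle step must consume a distinct edge of the witness set $R$. The only cosmetic difference is your more careful bookkeeping of the $L^1$-to-edge-count conversion (you correctly get $\cutnd(W_{G_{\ell_0}},W_{G^*})\le\eps/2$ from $e(G_{\ell_0})-e(G^*)\le\eps n^2/4$, whereas the paper states $\eps/4$ there, though the final bound still closes).
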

The proof uses several tools, including Theorem~\ref{thm:speedremoval} and the graph removal lemma, the latter of which we now recall.
\begin{prop}[Graph removal lemma]\label{prop:graphremoval}
	Given a graph $F$, for every $\alpha>0$ there exists $\beta>0$ with the following property. Suppose that $G$ is an $n$-vertex graph whose density of $F$ is less than $\beta$. Then we can remove at most $\alpha n^2$ edges from $G$ and make it $F$-free.
\end{prop}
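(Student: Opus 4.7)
The plan is to apply Theorem~\ref{thm:speedremoval} and Theorem~\ref{thm:correspondenceInformal} to reach, after $Tn^2$ steps for a suitable constant $T$, an intermediate graph $G_{Tn^2}$ whose graphon representation is close to $D:=\dest(W_G)$ in the cut norm, and then separately control the remainder of the process from $G_{Tn^2}$ to $G^*$. The main obstacle is that stabilization may happen at time $\Theta(n^{v(F)})$, far beyond the $O(n^2)$ horizon for which Theorem~\ref{thm:correspondenceInformal} yields anything useful. I will circumvent this via the graph removal lemma (Proposition~\ref{prop:graphremoval}) combined with a deterministic bookkeeping argument on the removal flip process.

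Given $\eps>0$, I set $\alpha := \eps/(4e(F))$, obtain $\beta_0$ from Proposition~\ref{prop:graphremoval} applied with $\alpha$, and set $\beta := \min\{\beta_0,\, \eps\binom{v(F)}{2}/2\}$. By Theorem~\ref{thm:speedremoval} there exists $T$ with $\Lone{\traj{T}{W} - \dest(W)} < \beta/(2\binom{v(F)}{2})$ for every $W \in \Gra$, which also bounds the cut norm distance. By Theorem~\ref{thm:correspondenceInformal}, for every sufficiently large $n$ we have $\cutnd(W_{G_{Tn^2}}, \traj{T}{W_G}) < \beta/(2\binom{v(F)}{2})$ with probability at least $1-\eps$; combining, $\cutnd(W_{G_{Tn^2}}, D) < \beta/\binom{v(F)}{2}$. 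Since $D$ is $F$-free by Fact~\ref{fact:removaldests}, the counting lemma (Lemma~\ref{lem:countinglemma}) yields $t(F, G_{Tn^2}) = t(F, W_{G_{Tn^2}}) < \beta \le \beta_0$.

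Now Proposition~\ref{prop:graphremoval} supplies an $F$-free spanning subgraph $G' \subseteq G_{Tn^2}$ with $|A| \le \alpha n^2$, where $A := E(G_{Tn^2}) \setminus E(G')$. The key observation is the following: since the $F$-removal rule is monotone non-increasing, $G_s \subseteq G_{Tn^2}$ for every $s \ge Tn^2$, and as $G'$ is $F$-free, any labeled copy of $F$ in $G_s$ must use at least one edge of $A \cap E(G_s)$. Hence every non-idle step after time $Tn^2$ deletes at least one edge of $A$, so there are at most $|A| \le \alpha n^2$ non-idle steps before stabilization, and thus at most $e(F)\alpha n^2$ edges are removed in total. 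This gives $\Lone{W_{G_{Tn^2}} - W_{G^*}} \le 2e(F)\alpha = \eps/2$; combining with $\cutnd(\cdot,\cdot) \le \Lone{\cdot}$ and the triangle inequality, $\cutnd(D, W_{G^*}) \le \beta/\binom{v(F)}{2} + \eps/2 < \eps$, as required.
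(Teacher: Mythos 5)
Your argument does not prove the stated proposition. Proposition~\ref{prop:graphremoval} is the classical graph removal lemma: a purely combinatorial statement about a single $n$-vertex graph $G$ with $F$-density less than $\beta$, asserting that $G$ can be made $F$-free by deleting at most $\alpha n^2$ edges. What you have written is instead a proof of Theorem~\ref{thm:removaluntiltheend} (that the final graph $G^*$ of the $F$-removal flip process is close in cut norm distance to $\dest(W_G)$); indeed your conclusion is $\cutnd(\dest(W_G),W_{G^*})<\eps$, which is the conclusion of that theorem, not of the removal lemma. Worse, your second paragraph explicitly invokes Proposition~\ref{prop:graphremoval} to obtain $\beta_0$ and the $F$-free subgraph $G'$, so read as a proof of Proposition~\ref{prop:graphremoval} the argument is circular: you are using the statement to be proved as one of your main tools.

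For the record, the paper does not prove Proposition~\ref{prop:graphremoval} at all; it recalls it as a known classical result and uses it as a black box precisely in the proof of Theorem~\ref{thm:removaluntiltheend}. A self-contained proof would go via Szemer\'edi's regularity lemma (of which Theorem~\ref{thm:RL} is the graphon version): take an $\alpha'$-regular partition for a suitable $\alpha'=\alpha'(\alpha,F)$, delete all edges inside clusters, between irregular pairs, and between pairs of density below some $d_0(\alpha)$ --- at most $\alpha n^2$ edges in total --- and then observe via a counting lemma in the spirit of Lemma~\ref{lem:RLcounting} that if any copy of $F$ survived this cleaning, the original graph would contain at least $c(\alpha,F)\,n^{v(F)}$ copies of $F$, contradicting the hypothesis once $\beta$ is chosen small enough. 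Your argument, read as a proof of Theorem~\ref{thm:removaluntiltheend}, does track the paper's proof of that theorem almost step for step (the bookkeeping with the edge set $A$ and the non-idle steps is exactly the paper's argument with the set $R$); but that is a different statement from the one under review.
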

\begin{proof}[Proof of Theorem~\ref{thm:removaluntiltheend}]
Let $\eps>0$ and $F$ be a graph. Observe that the statement is trivial when $e(F) = 0$, so we shall assume that $e(F) > 0$ and $v(F) \ge 2$. Proposition~\ref{prop:graphremoval} returns $\beta > 0$ for input $\alpha := \frac{\eps}{4e(F)}$. Set $\gamma := \min\{\alpha,\beta/k^2\}$. Theorem~\ref{thm:speedremoval} returns $T \ge 0$ for error parameter $\gamma$. Lastly, let $n_0$ be sufficiently large so that we have $1-\exp(-\Omega(n_0^2))\ge 1-\eps$ in Theorem~\ref{thm:correspondenceInformal} with error parameter $\gamma$ and input $T \ge 0$.

Suppose now $G$ is an $n$-vertex graph for $n\ge n_0$. We run the $F$-removal flip process on $G$ and obtain a sequence of graphs $G_0:=G,G_1,G_2,\ldots,G_{\kappa(G)}=:G^*$. Let $\cb$ be the event that for all $\ell\in\N\cap [0,Tn^2]$ we have $\cutnd(W_{G_\ell},\traj{\ell/n^2}{W_{G_0}})<\gamma$. Theorem~\ref{thm:correspondenceInformal} tells us that the probability of this event is at least $1-\eps$. Assume that $\cb$ holds. We shall prove that $\cutnd(\dest(W_G),W_{G^*})<\eps$.

We have $\cutnd\left(\dest(W_G),\traj{T}{W_G}\right) \le \gamma$ and $\cutnd\left(\traj{T}{W_G},W_{G_{Tn^2}}\right) \le \gamma$ by Theorem~\ref{thm:speedremoval} and Theorem~\ref{thm:correspondenceInformal} respectively. Putting them together, we obtain
\begin{equation} \label{eq:removal-stabilize-dest-T-dist}
  \cutnd\left(\dest(W_G),W_{G_{Tn^2}}\right) \le \cutnd\left(\dest(W_G),\traj{T}{W_G}\right) + \cutnd\left(\traj{T}{W_G},W_{G_{Tn^2}}\right) \le 2\gamma.
\end{equation}
Since $\dest(W_G)$ is $F$-free by Fact~\ref{fact:removaldests}, it follows from~\eqref{eq:removal-stabilize-dest-T-dist} and Lemma~\ref{lem:countinglemma} that $t(F,W_{G_{Tn^2}}) < \beta$. Then, Proposition~\ref{prop:graphremoval} tells us that we can find a set $R\subset E(G_{Tn^2})$ of at most $\frac{\eps n^2}{4e(F)}$ edges whose removal would destroy all the copies of $F$ in $G_{Tn^2}$. In particular, every copy of $F$ in $G_{Tn^2}$ has an edge in $R$. Let $X \subseteq [Tn^2,\ldots,\kappa(G)]$ be the indices of the non-idle steps. In particular, $X \neq \emptyset$ if and only if $\kappa(G)>Tn^2$. Observe that in each non-idle step indexed in $X$ a copy of $F$ in $G_{Tn^2}$ is removed, so at least one edge from $R$ is removed. Hence, we have
\[
  e(G_{Tn^2}) - e(G^*) = e(F) \cdot |X| \le e(F) \cdot |R| \le \frac{\eps n^2}{4} \;,
\]
which implies $\cutnd\left(W_{G_{Tn^2}},W_{G^*}\right) \le \eps/4$. Combining this with~\eqref{eq:removal-stabilize-dest-T-dist}, we obtain
\[
  \cutnd\left(\dest(W_G),W_{G^*}\right)
  \le \cutnd\left(\dest(W_G),W_{G_{Tn^2}}\right) + \cutnd\left(W_{G_{Tn^2}},W_{G^*}\right) < \eps,
\]
as was needed.
\end{proof}

\bibliographystyle{plain}
\bibliography{DSG}

\begin{thebibliography}{10}

\bibitem{MR2502843}
D.~Achlioptas, R.~M. D'Souza, and J.~Spencer.
\newblock Explosive percolation in random networks.
\newblock {\em Science}, 323(5920):1453--1455, 2009.

\bibitem{Alon:TestingSubgraphs}
N.~Alon.
\newblock Testing subgraphs in large graphs.
\newblock {\em Random Structures Algorithms}, 21(3-4):359--370, 2002.
\newblock Random structures and algorithms (Poznan, 2001).

\bibitem{MR3350225}
T.~Bohman, A.~Frieze, and E.~Lubetzky.
\newblock Random triangle removal.
\newblock {\em Adv. Math.}, 280:379--438, 2015.

\bibitem{MR2657427}
T.~Bohman and P.~Keevash.
\newblock The early evolution of the {$H$}-free process.
\newblock {\em Invent. Math.}, 181(2):291--336, 2010.

\bibitem{MR4201797}
T.~Bohman and P.~Keevash.
\newblock Dynamic concentration of the triangle-free process.
\newblock {\em Random Structures Algorithms}, 58(2), 2021.

\bibitem{MR1615568}
B.~Bollob\'{a}s.
\newblock To prove and conjecture: {P}aul {E}rd{\H{o}}s and his mathematics.
\newblock {\em Amer. Math. Monthly}, 105(3):209--237, 1998.

\bibitem{MR2455626}
C.~Borgs, J.~T. Chayes, L.~Lov\'{a}sz, V.~T. S\'{o}s, and K.~Vesztergombi.
\newblock Convergent sequences of dense graphs. {I}. {S}ubgraph frequencies,
  metric properties and testing.
\newblock {\em Adv. Math.}, 219(6):1801--1851, 2008.

\bibitem{MR125031}
P.~Erd\H{o}s and A.~R\'{e}nyi.
\newblock On the evolution of random graphs.
\newblock {\em Magyar Tud. Akad. Mat. Kutat\'{o} Int. K\"{o}zl.}, 5:17--61,
  1960.

\bibitem{erdos1962number}
Paul Erdos.
\newblock On the number of complete subgraphs contained in certain graphs.
\newblock {\em Magyar Tud. Akad. Mat. Kutat{\'o} Int. K{\"o}zl}, 7(3):459--464,
  1962.

\bibitem{MR4073152}
G.~Fiz~Pontiveros, S.~Griffiths, and R.~Morris.
\newblock The triangle-free process and the {R}amsey number {$R(3,k)$}.
\newblock {\em Mem. Amer. Math. Soc.}, 263(1274):v+125, 2020.

\bibitem{Fox:Removal}
J.~Fox.
\newblock A new proof of the graph removal lemma.
\newblock {\em Ann. of Math. (2)}, 174(1):561--579, 2011.

\bibitem{Flip1}
F.~Garbe, J.~Hladk{}\'y, M.~{\v S}ileikis, and F.~Skerman.
\newblock From flip processes to dynamical systems on graphons.
\newblock arXiv:2201.12272v3.

\bibitem{MR4313198}
J.~Hladk\'{y}, Ch. Pelekis, and M.~\v{S}ileikis.
\newblock A limit theorem for small cliques in inhomogeneous random graphs.
\newblock {\em J. Graph Theory}, 97(4):578--599, 2021.

\bibitem{Janson08Connectedness}
S.~Janson.
\newblock Connectedness in graph limits, 2008.
\newblock arXiv:0802.3795.

\bibitem{MR0290982}
G.~Katona.
\newblock A theorem of finite sets.
\newblock In {\em Theory of graphs ({P}roc. {C}olloq., {T}ihany, 1966)}, pages
  187--207, 1968.

\bibitem{MR0154827}
J.~B. Kruskal.
\newblock The number of simplices in a complex.
\newblock In {\em Mathematical optimization techniques}, pages 251--278. Univ.
  of California Press, Berkeley, Calif., 1963.

\bibitem{MR1265492}
L.~Lov\'{a}sz.
\newblock {\em Combinatorial problems and exercises}.
\newblock North-Holland Publishing Co., Amsterdam, second edition, 1993.

\bibitem{Lovasz2012}
L.~Lov{\'a}sz.
\newblock {\em {Large networks and graph limits}}, volume~60 of {\em {American
  Mathematical Society Colloquium Publications}}.
\newblock American Mathematical Society, Providence, RI, 2012.

\bibitem{Lovasz2006}
L.~Lov{\'a}sz and B.~Szegedy.
\newblock {Limits of dense graph sequences}.
\newblock {\em J. Combin. Theory Ser. B}, 96(6):933--957, 2006.

\bibitem{MR2306658}
L.~Lov\'{a}sz and B.~Szegedy.
\newblock Szemer\'{e}di's lemma for the analyst.
\newblock {\em Geom. Funct. Anal.}, 17(1):252--270, 2007.

\bibitem{MR1799803}
D.~Osthus and A.~Taraz.
\newblock Random maximal {$H$}-free graphs.
\newblock {\em Random Structures Algorithms}, 18(1):61--82, 2001.

\bibitem{PippSpe:Asymptotic}
N.~Pippenger and J.~Spencer.
\newblock Asymptotic behavior of the chromatic index for hypergraphs.
\newblock {\em J. Combin. Theory Ser. A}, 51(1):24--42, 1989.

\bibitem{MR2985166}
O.~Riordan and L.~Warnke.
\newblock Achlioptas process phase transitions are continuous.
\newblock {\em Ann. Appl. Probab.}, 22(4):1450--1464, 2012.

\bibitem{MR3366817}
O.~Riordan and L.~Warnke.
\newblock The evolution of subcritical {A}chlioptas processes.
\newblock {\em Random Structures Algorithms}, 47(1):174--203, 2015.

\bibitem{Rodl:ErdHan}
V.~R\"{o}dl.
\newblock On a packing and covering problem.
\newblock {\em European J. Combin.}, 6(1):69--78, 1985.

\bibitem{Spe:AsymptoticPacking}
J.~Spencer.
\newblock Asymptotic packing via a branching process.
\newblock {\em Random Structures Algorithms}, 7(2):167--172, 1995.

\bibitem{tao2006nonlinear}
Terence Tao.
\newblock {\em Nonlinear dispersive equations}, volume 106 of {\em CBMS
  Regional Conference Series in Mathematics}.
\newblock Published for the Conference Board of the Mathematical Sciences,
  Washington, DC; by the American Mathematical Society, Providence, RI, 2006.
\newblock Local and global analysis.

\bibitem{MR3214202}
L.~Warnke.
\newblock The {$C_\ell$}-free process.
\newblock {\em Random Structures Algorithms}, 44(4):490--526, 2014.

\bibitem{Zhao:GraphTheoryAdd}
Y.~Zhao.
\newblock Graph theory and additive combinatorics.
\newblock preprint, February 14, 2022.

\end{thebibliography}

\end{document}